\theoremstyle{plain}
\newtheorem{theorem}{Theorem}[section]
\newtheorem{lemma}[theorem]{Lemma}
\newtheorem{corollary}[theorem]{Corollary}
\newtheorem{proposition}[theorem]{Proposition}
\theoremstyle{definition}
\newtheorem{definition}[theorem]{Definition}
\newtheorem{example}[theorem]{Example}
\theoremstyle{remark}
\newtheorem{remark}[theorem]{Remark}
\numberwithin{equation}{section}
\DeclareMathOperator{\dist}{dist}
\DeclareMathOperator{\diam}{diam}
\def\R{\mathbb R}
\def\Z{\mathbb Z}
\def\D{\mathcal D}
\def\E{\mathcal E}
\def\i{\bm i}
\def\j{\bm j}
\def\k{\bm k}
\def\bi{\mathbf i}
\def\bj{\mathbf j}
\def\bp{\mathbf p}
\def\bq{\mathbf q}
\def\bw{\mathbf w}
\def\bu{\mathbf u}
\def\vp{\varphi}
\def\l@subsection{\@tocline{2}{0pt}{2.5pc}{5pc}{}}
\def\l@subsubsection{\@tocline{2}{0pt}{5pc}{7.5pc}{}}
\begin{document}

\title[Connectedness and local cut points of GSCs]{Connectedness and local cut points of \\ generalized Sierpi\'nski carpets}

\author{Xin-Rong Dai}
\address{School of Mathematics, Sun Yat-sen University, Guangzhou, China}
\email{daixr@mail.sysu.edu.cn}

\author{Jun Luo}
\address{School of Mathematics, Sun Yat-sen University, Guangzhou, China}
\email{luojun3@mail.sysu.edu.cn}

\author{Huo-Jun Ruan}
\address{School of Mathematical Sciences, Zhejiang University, Hangzhou, China}
\email{ruanhj@zju.edu.cn}

\author{Yang Wang}
\address{Department of Mathematics, The Hong Kong University of Science and Technology, Clear Water Bay, Kowloon, Hong Kong}
\email{yangwang@ust.hk}

\author{Jian-Ci Xiao}
\address{School of Mathematical Sciences, Zhejiang University, Hangzhou, China}
\email{jcxshaw24@zju.edu.cn}

\subjclass[2010]{Primary 28A80; Secondary 54A05}

\keywords{Generalized Sierpi\'nski carpets, cut points, local cut points, connectedness, Hata graphs.}

\thanks{Corresponding author: Huo-Jun Ruan}

\maketitle


\begin{abstract}
    We investigate a  homeomorphism problem on a class of self-similar sets called generalized Sierpi\'nski carpets (or shortly GSCs). It follows from two well-known results by Hata and Whyburn that a connected GSC is homeomorphic to the standard Sierpi\'nski carpet if and only if it has no local cut points.
    On the one hand, we show that to determine whether a given GSC is connected, it suffices to iterate the initial pattern twice.
    On the other hand, we obtain two criteria: (1)  for a connected GSC to have cut points, (2) for a connected GSC with no cut points to have local cut points. With these two criteria, we characterize all GSCs that are homeomorphic to the standard Sierpi\'nski carpet.

    Our results on cut points and local cut points hold for Bara\'nski carpets, too. Moreover, we extend the connectedness result to Bara\'nski sponges. Thus, we also characterize when a Bara\'nski carpet is homeomorphic to the standard GSC.
\end{abstract}

\tableofcontents

\section{Introduction}

An iterated function system, or shortly an IFS, is a family $\Phi=\{\varphi_1,\ldots,\varphi_m\}$ of contractions on the Euclidean space $\R^d$. A well-known theorem of Hutchinson~\cite{Hut81} tells us that there is a unique non-empty compact set $K\subset\R^d$ such that $K=\bigcup_{i=1}^m \varphi_i(K)$. The set $K$ is usually called the \emph{attractor} of the IFS $\Phi$. When the IFS consists of contracting similarity (resp. affine) maps, the attractor is called a \emph{self-similar} (resp. \emph{self-affine}) set.

There is a growing body of literature that studies the topology of attractors of given IFSs, especially self-similar or self-affine sets, in the last three decades. In~\cite{Hat85}, Hata explored topological properties of attractors of general IFSs including connectedness, path connectedness, local connectedness, end points, etc. Luo, Rao and Tan~\cite{LRT02} studied the interior and boundary of planar self-similar sets generated by an IFS consisting of injective contractions and satisfying the open set condition. Hare and Sidorov~\cite{HS15} provided a detailed analysis on when a class of self-affine sets have non-empty interiors. For further work, please refer to~\cite{BK91, Kig01, LL17, Luo19}. There are also a number of researches on basic topological properties of self-similar or self-affine tiles such as~\cite{DLN18, DL11, LL07, NT05, TZ20}.

In general, the topology of a given attractor can be quite complicated. A natural and perhaps one of the simplest question is: \emph{When is the attractor connected}? In \cite{Hat85}, Hata proved the following criterion: The connectedness of the attractor and the associated Hata graph are equivalent.

\begin{definition}[\cite{Hat85}]\label{def:hata}
    Let $\Phi=\{\varphi_1,\ldots,\varphi_m\}$ be an IFS on $\R^d$ and let $K$ be its attractor. The \emph{Hata graph} associated with $\Phi$ is defined as follows. The vertex set is the index set $\Lambda:=\{1,\ldots,m\}$, and there is an edge joining $i,j\in\Lambda$ ($i\neq j$) if and only if $\varphi_i(K)\cap\varphi_j(K)\neq\varnothing$.
\end{definition}

Another one of the most fundamental question concerning the topology of fractal sets is: \emph{when are two given fractal sets homeomorphic?} Generally this is a challenging problem and there are few existing results. A pioneer theorem was provided by Whyburn in 1958. Recall that given a connected space $X$, a point $x\in X$ is called a \emph{cut point} of $X$ if $X\setminus\{x\}$ is no longer connected, and is called a \emph{local cut point} of $X$ if $x$ is the cut point of some connected neighborhood of itself.

\begin{theorem}[\cite{Why58}]\label{thm:Whyburn}
    A metric space is homeomorphic to the standard Sierpi\'nski carpet if and only if it is a planar continuum of topological dimension 1 that is locally connected and has no local cut points.
\end{theorem}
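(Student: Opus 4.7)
The forward direction is immediate: the standard Sierpi\'nski carpet is planar, connected, compact, of topological dimension $1$, locally connected, and has no local cut points, and each of these properties is preserved under homeomorphism. The substantive direction is the converse, which I would prove by a Moore-type decomposition argument that matches the complementary domains of $X$ to those of the standard carpet inside the $2$-sphere.

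The first phase is to extract the topological skeleton of $X$. I would embed $X$ into $S^2$ (via the one-point compactification of $\mathbb{R}^2$) and analyze the collection $\mathcal{U}$ of connected components of $S^2 \setminus X$. The key structural claims to establish are: (i) each boundary $\partial U$ is a Jordan curve; (ii) the boundaries $\{\partial U : U \in \mathcal{U}\}$ are pairwise disjoint; (iii) $X$ has empty interior; (iv) $\mathcal{U}$ is a null family, i.e.\ for every $\varepsilon>0$ only finitely many $U \in \mathcal{U}$ have diameter at least $\varepsilon$. Claim (i) combines local connectedness (which already makes $\partial U$ a locally connected continuum in $S^2$) with the no-local-cut-point hypothesis via a Zoretti-type argument, to rule out branches or self-identifications in $\partial U$. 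Claim (ii) is a direct contradiction: a common point $p \in \partial U \cap \partial V$ would be a local cut point of $X$, since in a small disk neighborhood of $p$ the removal of $p$ separates the approaches from $U$ and from $V$. Claim (iii) is forced by $\dim X = 1$, and claim (iv) follows from local connectedness together with compactness, since otherwise a limiting procedure produces a nondegenerate continuum component of $X$ where the diameter fails to shrink.

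The second phase is the actual construction of the homeomorphism. I would enumerate $\mathcal{U} = \{U_n\}$ and the complementary domains $\{V_n\}$ of the standard carpet in order of decreasing diameter, and inductively build homeomorphisms $h_n : S^2 \to S^2$ with $h_n(\overline{U_k}) = \overline{V_k}$ for every $k \le n$. At each step $h_n$ is obtained from $h_{n-1}$ by modifying it only in a small region (to accommodate the new pair $(U_n, V_n)$) via the Schoenflies theorem applied to the boundary Jordan curves. Passing to a uniform limit then yields $h : S^2 \to S^2$ with $h(X)$ equal to the standard carpet. The main obstacle is precisely this back-and-forth step. One must first pair $U_n$ with $V_n$ in a way that is globally consistent with the residual geometry (for instance, respecting which complementary closures are ``near'' in the ambient sense), and one must then carry out each Schoenflies extension with enough quantitative control---using the null-family property (iv) in an essential way to keep the modifications eventually uniformly small---to guarantee that the sequence $h_n$ actually converges to a bijective continuous map rather than merely a continuous surjection. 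Turning the qualitative picture of a null family of pairwise disjoint Jordan curves with dense complement into a rigorously convergent inductive construction is the technical heart of the argument.
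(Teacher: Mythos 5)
This statement is Whyburn's 1958 characterization theorem, which the paper quotes from \cite{Why58} without proof; there is therefore no in-paper argument to compare against, and your proposal must be judged against the classical proof. Your outline is faithful to that proof: the forward direction is indeed trivial, and the converse does proceed by embedding $X$ in $S^2$, showing that the complementary domains have boundaries that are pairwise disjoint simple closed curves forming a null family with $X$ having empty interior (Whyburn calls such a set an $S$-curve), and then proving that any two $S$-curves are homeomorphic by an inductive matching of complementary domains. Your claims (i)--(iv) are the right structural facts, with the small remark that (i) is the Torhorst theorem (boundary of a complementary domain of a locally connected plane continuum without cut points is a simple closed curve), and that ``no local cut points'' does imply ``no cut points'' since $X$ is a connected neighborhood of each of its points.

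That said, what you have written is an outline rather than a proof: every genuinely hard step is named but not executed. In particular, (a) the ``Zoretti-type argument'' for (i) and the derivation of a local cut point in (ii) need actual plane-topology work (prime ends or the Jordan curve theorem enter here); (b) the null-family property (iv) is a real theorem about locally connected continua in $S^2$, not an immediate compactness consequence; and (c) the back-and-forth construction --- choosing the pairing of $U_n$ with $V_n$ consistently and controlling the Schoenflies extensions so that $h_n$ converges to a homeomorphism rather than just a continuous surjection --- is, as you yourself say, the technical heart, and Whyburn's actual execution uses a common refinement scheme by finite families of disjoint simple closed curves rather than one Schoenflies extension per domain. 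So the proposal is a correct and well-organized road map of the standard argument, but it does not yet constitute a proof.
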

Here a set $X$ is called \emph{planar} if it is homeomorphic to a subset of $\R^2$.

In this paper, we will focus on one of the most classical classes of self-similar sets called  \emph{generalized Sierpi\'nski carpets}. Let $N\geq 2$ and let $\D\subset\{0,1,\ldots,N-1\}^2$ be a non-empty digit set with $1<|\D|<N^2$ (to avoid trivial cases), where $|\D|$ denotes the cardinality of $\D$. For each $i\in\{0,1,\ldots,N-1\}^2$, define a similarity map $\vp_i$ by
\[
    \vp_i(x)=\frac{1}{N}(x+i), \quad x\in\R^2.
\]
We call the self-similar attractor $F=F(N,\D)$ of the IFS $\{\vp_i:i\in\D\}$ a \emph{generalized Sierpi\'nski carpet} (or shortly a GSC). In some recent papers such as \cite {LLR13, Roi10, RW17}, $F(N,\D)$ is also called a \emph{fractal square}. Figure~\ref{fig:class} depicts the standard Sierpi\'nski carpet where $N=3$ and $\D=\{0,1,2\}^2\setminus\{(1,1)\}$.
It is well-known that  GSCs can also be generated in a geometric way by first dividing the unit square $[0, 1]^2$ into an $N\times N$ grid, selecting a subset of squares formed by the grid (usually called the \emph{initial pattern}) and then repeatedly substituting the initial pattern on each of the selected square. For convenience, we write $Q_0=[0,1]^2$ and recursively define
\begin{equation}\label{eq:qn}
    Q_n = \bigcup_{i\in\D} \vp_i(Q_{n-1}), \quad n\in\Z^+.
\end{equation}
In particular, $Q_1$ is just the initial pattern. Note that $\{Q_n\}_{n=0}^\infty$ forms a decreasing sequence of compact sets and $F=\bigcap_{n=0}^\infty Q_n$.
\begin{figure}[htbp]
    \centering
    \includegraphics[width=4.3cm]{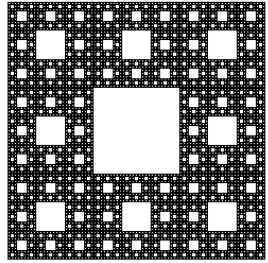}
    \caption{The standard Sierpi\'nski carpet}
    \label{fig:class}
\end{figure}

There are many works related with topological properties of GSCs in various fields of fractals, including analysis on fractals \cite{BB89,BBKT10,KuZh92}, quasisymmetric equivalence \cite{BM13, BM20},  Lipschitz equivalence \cite{LMR20,RW17,Xi20,XiXiong10} and the behaviour of their connected components \cite{DT20,LLR13,XiXiong10,Xiao21}. In particular, Lau et al. provided a characterization together with a checkable algorithm on totally disconnected GSCs in~\cite{LLR13}.

The main goal of this paper attempts to determine when a given GSC is homeomorphic to the standard Sierpi\'nski carpet. Note that a connected GSC is always a planar continuum (i.e., connected and compact) of topological dimension $1$, and a result in Hata~\cite{Hat85} tells us that it is locally connected (one can also see~\cite[Proposition 1.6.4]{Kig01} for a proof). Thus a GSC is homeomorphic to the standard Sierpi\'nski carpet if and only if it is connected and has no local cut points. For the connectedness, we find an effective criterion based on the Hata's criterion. For the existence of local cut points, we first characterize the existence of cut points of connected GSCs, and then turn to those who have no cut points but local cut points.


We remark that it might be of independent interest to study the existence of cut points of connected GSCs. For example, cut points play an important role in the bi-Lipschitz classification of GSCs in a recent work~\cite{RW17} of Ruan and Wang. One can also refer to~\cite{ADTW09, DTW13, LLST16} for relevant studies on cut points of fractal sets.

Before stating our results, let us introduce several notations. As a matter of convenience, we regard the digit set $\D$ as the index set of the IFS $\{\vp_i:i\in\D\}$ instead of enumerating it by $\vp_1,\ldots,\vp_{|\D|}$, where we denote by $|A|$ the cardinality of a set $A$. Under this setting, we list the following commonly used notations.
\begin{enumerate}
    \item For $k\in\Z^+$, $\D^k:=\{\i=i_1\cdots i_k: i_1,\ldots,i_k\in\D\}$. Let $\D^0=\{\vartheta\}$. We call $\vartheta$ the empty word. For $k\geq 0$ and $\i\in \D^k$, we call $\i$ a word of length $|\i|:=k$.
    \item $\D^*:=\bigcup_{k=1}^\infty\D^k$ and $\D^\infty:=\{i_1i_2\cdots: i_j\in\D \text{ for all } j\in\Z^+\}$.
    \item For $1\leq k\leq n$ and $\i=i_1\cdots i_n\in\D^n$, $\i|_k:=i_1\cdots i_k$ stands for its prefix of length $k$. For $\bi\in\D^\infty$ and $k\geq 1$, $\bi|_k$ is similarly defined.
    \item For $\i,\j\in\D^*$, we write $\i\prec\j$ if $\i$ is a prefix of $\j$.
    \item For $\i\in\D^*$ and $k\in\Z^+$, $\i\D^k:=\{\i\j: \j\in\D^k\}$.
    \item For $k\geq 1$ and $\i=i_1\cdots i_k\in\D^k$, $\vp_{\i}:=\vp_{i_1}\circ\cdots \circ\vp_{i_k}$. Define $\vp_{\vartheta}$ to be the identity map. Denote by  $\vp_{\j}^k$ the $k$-fold composition of $\vp_{\j}$ for all $\j\in \D^*$.
\end{enumerate}

It turns out that the existence of cut points of GSCs is closely related to structures of a sequence of associated ``Hata graphs''.

\begin{definition}
    Let $F=F(N,\D)$ be a GSC. For $k\geq 1$, the $k$-th Hata graph $\Gamma_k=\Gamma_k(N,\D)$ of $F$ is defined as follows. The vertex set is $\D^k$, and there is an edge joining $\i,\j\in\D^k$ ($\i\neq\j$) if and only if $\vp_{\i}(F)\cap \vp_{\j}(F)\neq\varnothing$.
\end{definition}

The Hata graph sequence has been used in previous literature. Please see \cite{Kig01,LLST16,Str06,TZ20} for examples. It is clear that the Hata graph defined in Definition~\ref{def:hata} is just $\Gamma_1$. Moreover, since $F=\bigcup_{\i\in \D^k} \vp_{\i}(F)$, if $F$ is connected then $\Gamma_k$ is connected for all $k$.


Given a graph $G=(V,E)$ and a vertex $v\in V$, we denote by $G-\{v\}$ the subgraph of $G$ such that its vertex set is $V\setminus \{v\}$, and its edge set is the subset of $E$ by deleting all edges incident with $v$. We call $v$ a \emph{cut vertex} of $G$ if the subgraph $G-\{v\}$ is disconnected.

\begin{definition}\label{def:chi}
    Let $G=(V,E)$ be a connected graph.  Given a cut vertex $v\in V$ of $G$, let $G_1(v),\ldots,G_m(v)$ be all connected components of $G-\{v\}$ with $|G_1(v)|\geq |G_2(v)|\geq \cdots \geq |G_m(v)|$, where $|G_i(v)|$ is the number of vertices in $G_i(v)$, $1\leq i\leq m$. We define
    \[
        \chi(G) = \max\{|G_2(v)|:\, \textrm{$v$ is a cut vertex of $G$}\}
    \]
    if $G$ has cut vertices, and $\chi(G)=0$ if $G$ has no cut vertices.
\end{definition}

The following two theorems are main results of this paper.

\begin{theorem}\label{thm:main}
    A connected GSC $F=F(N,\D)$ has cut points if and only if $\chi(\Gamma_k)\geq|\D|^{k-1}-1$ for all $k\geq 2$.
\end{theorem}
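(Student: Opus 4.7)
My plan for proving Theorem~\ref{thm:main} uses the first-digit \emph{clan} structure of $\D^k$: at each level $k\ge 2$, $\D^k=\bigsqcup_{j\in\D}C_j$ with $C_j=j\D^{k-1}$, each clan having $|\D|^{k-1}$ vertices, and the induced subgraph of $\Gamma_k$ on each $C_j$ is $\vp_j$-isomorphic to $\Gamma_{k-1}$; hence every clan is connected since $F$ is connected and Hata's theorem applies at every level. The bound $|\D|^{k-1}-1$ is precisely ``one clan minus one vertex''.

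For the necessity direction, I would start from a cut point $p$ and a decomposition $F\setminus\{p\}=A\sqcup B$ with $\bar A\cap\bar B=\{p\}$. Fixing $k\ge 2$, partition $\D^k$ into $\mathcal A_k=\{\i:\vp_\i(F)\subset\bar A\setminus\{p\}\}$, $\mathcal B_k$ analogously, and $\mathcal P_k=\{\i:p\in\vp_\i(F)\}$; this is a valid partition because each $\vp_\i(F)$ is connected. Since $\bar A\setminus\{p\}$ and $\bar B\setminus\{p\}$ are disjoint, $\Gamma_k$ carries no edge between $\mathcal A_k$ and $\mathcal B_k$, and each clan $C_j$ lies entirely in $\mathcal A_k\cup\mathcal P_k$ or in $\mathcal B_k\cup\mathcal P_k$. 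The structural claim I aim to prove is: there exist a clan $C_{j_0}$ and a single vertex $\i_k\in\mathcal P_k\cap C_{j_0}$ whose removal detaches the connected subgraph on $C_{j_0}\setminus\{\i_k\}$ (of size $|\D|^{k-1}-1$) from $\bigcup_{j\ne j_0}C_j$. I would construct $\i_k$ by choosing $j_0\in J_p:=\{j\in\D:p\in\vp_j(F)\}$ so that $\vp_{j_0}(F)$ meets $F\setminus\vp_{j_0}(F)$ only through $p$, then iteratively applying the same analysis to $\vp_{j_0}^{-1}(p)\in F$ inside $\vp_{j_0}(F)$ via self-similarity to pinpoint the unique bridging cell $\i_k$ at level $k$.

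For the sufficiency direction, I would fix a witnessing cut vertex $\i_k$ for each $k\ge 2$ and, by successive pigeonholing of the prefixes $\i_k|_n$, construct $\bi\in\D^\infty$ such that $K_n:=\{k\ge n:\i_k|_n=\bi|_n\}$ is infinite for every $n\ge 2$. Set $p:=\bigcap_n\vp_{\bi|_n}(F)\in F$. For any $n\ge 2$ and $k\in K_n$ with $k>n$, each connected component $G$ of $\Gamma_k-\{\i_k\}$ gives a closed set $U_G:=\bigcup_{\i\in G}\vp_\i(F)$; two distinct $U_G,U_{G'}$ meet only within $\vp_{\i_k}(F)\subset\vp_{\bi|_n}(F)$, since non-adjacent cells cannot share a point outside the removed cell. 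Hence $\{U_G\cap(F\setminus\vp_{\bi|_n}(F))\}_G$ forms a clopen partition of $F\setminus\vp_{\bi|_n}(F)$ whose two top parts are nonempty because $|G_1(\i_k)|,|G_2(\i_k)|\ge|\D|^{k-1}-1>|\D|^{k-n}$ (the number of level-$k$ cells fitting inside $\vp_{\bi|_n}(F)$) whenever $n\ge 2$. Thus $F\setminus\vp_{\bi|_n}(F)$ is disconnected for every $n\ge 2$. Combining this with local connectedness of $F$ (via Hata) and a Blaschke-type extraction of Hausdorff-convergent subsequences of $U_{G_1(\i_k)}$ and $\bigcup_{j\ge 2}U_{G_j(\i_k)}$, I would produce closed sets $X,Y\subset F$ with $X\cup Y=F$, both proper, and $X\cap Y$ a closed set containing $p$; the disconnectedness of $F\setminus\vp_{\bi|_n}(F)$ at every scale $n$ then forces $X\cap Y$ to yield a genuine global cut point of $F$.

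The principal obstacle will be the structural claim in the necessity direction when $|J_p|>1$, i.e.\ when $p$ sits at a common junction of several level-1 cells: the tight bound $|\D|^{k-1}-1$ forces a unique single-vertex bridge, so the naive choice ``the cell containing $p$'' must be refined via the self-similar iteration $\vp_{j_0}^{-1}(p)$. A secondary challenge is the closing step in the sufficiency direction, which requires a careful compactness argument to ensure that the candidate cut set $X\cap Y$ reduces to (or at least contains) a single point that is a genuine global cut point of $F$, rather than merely a cut set of cardinality greater than one.
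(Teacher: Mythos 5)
Your overall architecture (clan decomposition, a limiting point obtained by pigeonholing, a two-sided clopen splitting) is in the right spirit, but both directions rest on steps that are false or unproved as stated, and these are exactly where the paper's real work lies.

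For the necessity direction, the structural claim --- that for each $k$ there is a clan $C_{j_0}=j_0\D^{k-1}$ and a \emph{single} vertex $\i_k\in\mathcal P_k\cap C_{j_0}$ through which all connections from $C_{j_0}$ to the other clans pass --- is false in general, and the failure is not confined to the case $|J_p|>1$ that you flag. If such an $\i_k\in\Omega_k(p)$ existed for every $k$ (with the $\i_k$ nested, as your iteration produces), then $\vp_{j_0}(F)\cap\bigcup_{j\neq j_0}\vp_j(F)\subset\bigcap_k\vp_{\i_k}(F)=\{p\}$, i.e.\ $F$ would be fragile with one side of the decomposition a single digit (compare Proposition~\ref{prop:1and3}). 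A cut point can instead arise from a fragile decomposition $\D=\D_1\cup\D_2$ with $|\D_1|,|\D_2|\geq 2$, where the detached part spans several clans minus one vertex, or --- much harder --- from a non-fragile carpet, for which the paper proves the strictly stronger bound $\chi(\Gamma_k)\geq|\D|^{k-1}$ (Theorem~\ref{thm:nonfragile}) by first producing a cut point with a \emph{unique} symbolic representation (Proposition~\ref{thm:unirep}) and then showing that two distinct level-one digits are already well separated by it; that reduction (the claim $n_0=1$ in Subsection 4.3, via Lemmas~\ref{lem:case1new}--\ref{lem:interKKprim}) is the technical heart of the paper and has no counterpart in your plan. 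Your iteration of $\vp_{j_0}^{-1}(p)$ recovers Corollary~\ref{cor:unique}, but it only relocates the cut point; it does not manufacture a single-vertex bridge, nor does it guarantee that the detached side is connected and is the \emph{second largest} component, which is what $\chi$ measures.

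For the sufficiency direction, the step you defer to ``a careful compactness argument'' is the crux, not a technicality. Knowing that $F\setminus\vp_{\bi|_n}(F)$ is disconnected for every $n$ does \emph{not} imply that $p=\bigcap_n\vp_{\bi|_n}(F)$ is a cut point: $F\setminus\{p\}$ is the increasing union of these sets, and the two sides present at level $n$ may be rejoined through the region $\vp_{\bi|_n}(F)\setminus\vp_{\bi|_{n+1}}(F)$ added at the next level, so the union can be connected even though every term is not. To exclude this one must exhibit two \emph{fixed} cells lying in different components of $E_{n+p}(x)$ for \emph{all} $p$ --- the paper's notion of well-separated words (Definition~\ref{def:5-1}) --- and this forces the pigeonholing to be performed not only on the prefixes of the cut vertices $\i_k$ but simultaneously on the pair of separated first digits $(\omega_k,\tau_k)$ extracted via Lemma~\ref{lem:suff1}; only then does the $A\sqcup B$ decomposition of Theorem~\ref{thm:onedire} close up with $\overline A\cap\overline B=\{x\}$. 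Finally, note that the paper does not run the sufficiency argument under the bare hypothesis $\chi(\Gamma_k)\geq|\D|^{k-1}-1$: it first invokes Proposition~\ref{prop:frag-suff} to show that $\chi(\Gamma_k)\in\{|\D|^{k-1}-1,|\D|^{k-1}\}$ for some $k\geq 3$ already forces fragility (hence a cut point immediately), and only then applies the non-fragile criterion with the stronger bound. Some such dichotomy is unavoidable, and your proposal does not contain it.
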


\begin{theorem}\label{thm:local}
    Let $F=F(N,\D)$ be a connected GSC with no cut points. Then $F$ contains local cut points if and only if there are disjoint subsets $I,J$ of $\D^n$ for $n=1$ or $n=2$ such that the following conditions hold:
    \begin{enumerate}
        \item $ \big( \bigcup_{\i\in I}\vp_{\i}(F) \big)\cap \big( \bigcup_{\i\in J}\vp_{\i}(F) \big)$ is a singleton, denoted by $\{x\}$;
        \item $I\cup J=\{\i\in\D^n: x\in\vp_{\i}(F)\}$.
    \end{enumerate}
\end{theorem}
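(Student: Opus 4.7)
The sufficiency is direct; essentially all the work is in proving necessity. For sufficiency, given disjoint $I,J\subseteq\D^n$ satisfying (1) and (2) for some $n\in\{1,2\}$, I set $A=\bigcup_{\i\in I}\vp_\i(F)$ and $B=\bigcup_{\i\in J}\vp_\i(F)$: both are connected as unions of connected cells through the common point $x$, and $A\cap B=\{x\}$ by~(1). By~(2), every level-$n$ cell outside $I\cup J$ avoids $x$ and is thus at positive distance from it, so I can fit a small connected open neighborhood $V$ of $x$ in $F$ inside $A\cup B$; such $V$ exist because $F$ is locally connected by Hata~\cite{Hat85}. Then $V\setminus\{x\}=\bigl((V\cap A)\setminus\{x\}\bigr)\sqcup\bigl((V\cap B)\setminus\{x\}\bigr)$ is a separation into two nonempty relatively closed pieces, so $x$ is a local cut point.

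For necessity, I first encode the problem graph-theoretically: set $T_n(x):=\{\i\in\D^n:x\in\vp_\i(F)\}$ and let $H_n(x)$ be the graph on $T_n(x)$ with an edge $\{\i,\j\}$ iff $\vp_\i(F)\cap\vp_\j(F)\supsetneq\{x\}$; the existence of $(I,J)$ at level $n$ as in the statement is then equivalent to $H_n(x)$ being disconnected. The first step is to show $H_n(x)$ is disconnected at \emph{some} level $n$. Pick a small connected open neighborhood $U$ of $x$ with $U\setminus\{x\}$ disconnected, and for large $n$ consider $U_n:=\bigcup_{\i\in T_n(x)}\vp_\i(F)\subseteq U$. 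Here the ``no cut points'' hypothesis is used crucially: were some $\vp_\i(F)\setminus\{x\}$ disconnected, then $\vp_\i^{-1}(x)\in F$ would be a cut point of $F$ via the homeomorphism $\vp_\i^{-1}\colon\vp_\i(F)\to F$, contradicting the hypothesis. Hence each $\vp_\i(F)\setminus\{x\}$ is connected, and the components of $U_n\setminus\{x\}$ induce a genuine partition of $T_n(x)$ witnessing the disconnectedness of $H_n(x)$.

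The main obstacle is then the descent from this (possibly large) level $n$ down to $n\in\{1,2\}$. My plan is: given a good partition $(I,J)$ at level $n$ for $x$, attempt to collapse it to level $n-1$ by declaring $\i\in I'$ whenever every descendant of $\i$ in $T_n(x)$ lies in $I$ (and likewise for $J'$). When the collapse succeeds I iterate downward; when it fails at some $\i\in T_{n-1}(x)$ with descendants in both $I$ and $J$, the identity $\vp_{\i\k}(F)\cap\vp_{\i\k'}(F)=\vp_\i(\vp_\k(F)\cap\vp_{\k'}(F))$ shows that the induced partition of $T_1(\vp_\i^{-1}(x))$ is itself a good partition at level~$1$ for $y:=\vp_\i^{-1}(x)$, which by sufficiency is then a local cut point of $F$. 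The chief difficulty is verifying this descent really lands at $n\in\{1,2\}$ for some local cut point in all cases; since at most four level-$1$ cells of a planar GSC meet at any point, only finitely many combinatorial patterns can arise at $x$ at levels $1$ and $2$, and a case analysis over them should close the argument. A subtle auxiliary point is the degenerate scenario where $|T_1(x_m)|=1$ persists along the shift orbit $x_{m+1}:=\vp_{i_{m+1}}^{-1}(x_m)$ of a would-be local cut point; here a compactness argument extracting an accumulation-point local cut point with $|T_1|\geq 2$ should resolve it.
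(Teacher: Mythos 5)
Your sufficiency argument and the first step of your necessity argument track the paper's own proof (Proposition~\ref{prop:local1}, Theorem~\ref{thm:local1} and Corollary~\ref{cor:local11} there): since $F$ has no cut points, each $\vp_{\i}(F)\setminus\{x\}$ with $\i\in\Omega_m(x)$ is connected, and one reads off a two-block partition of $\Omega_m(x)$ with singleton cross-intersection. One omission even here: you assert that the components of $U_n\setminus\{x\}$ witness disconnectedness, but $U_n\subseteq U$ and a subset of a disconnected set need not be disconnected. The paper closes this by writing $U\setminus\{x\}=V\cup W$ with $V,W$ disjoint, open and nonempty, noting that if $E'_m(x)\setminus\{x\}$ were connected it would lie in $V$ (say), whence $W\subset E_m(x)$, which is closed, forcing $x\in E_m(x)$ --- a contradiction. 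This is fixable but must be said.

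The genuine gap is in the descent. Your dichotomy is: either every $\i\in\Omega_{n-1}(x)$ has all of its $\Omega_n(x)$-descendants on one side, in which case you ``collapse'' and iterate downward, or some $\i$ has descendants on both sides, in which case you pass to $y=\vp_{\i}^{-1}(x)$ and win at level $1$. The second branch is sound, but the first branch fails: a level-$(n-1)$ cell is strictly larger than the union of its level-$n$ subcells containing $x$, so the collapsed pair $(I',J')$ need not satisfy condition (1). Concretely, if $\Omega_n(x)=\{\i,\j\}$ with $\vp_{\i}([0,1]^2)$ and $\vp_{\j}([0,1]^2)$ meeting only at the corner $x$ but with $\vp_{\i|_{n-1}}([0,1]^2)$ and $\vp_{\j|_{n-1}}([0,1]^2)$ edge-adjacent, then $\vp_{\i|_{n-1}}(F)\cap\vp_{\j|_{n-1}}(F)$ is a scaled copy of $F\cap(F+e)$ for a unit vector $e$ and is typically not a singleton, so the level-$(n-1)$ partition your collapse produces violates (1). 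This is exactly the case Proposition~\ref{prop:locallast} works hardest on, and it is resolved not by collapsing but by \emph{relocating}: since $\vp_{i_1}([0,1]^2)$ and $\vp_{j_1}([0,1]^2)$ are edge-adjacent in the same direction, self-similarity yields a pair of level-$2$ cells, one in each, in the same relative position as $\vp_{\i}([0,1]^2),\vp_{\j}([0,1]^2)$; their intersection is a congruent copy of $\vp_{\i}(F)\cap\vp_{\j}(F)$, hence a singleton $\{z\}$ at a possibly \emph{different} point $z$, and $\D^2$ is locally fragile at $z$ (with an analogous treatment when $|\Omega_n(x)|=3$, plus the shift-down step when all words of $\Omega_n(x)$ share their first letter, which replaces your compactness digression). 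Without this relocation idea your induction does not close; you flag the descent as ``the chief difficulty'', and it is indeed a missing idea rather than a routine case check.
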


We remark that Theorem~\ref{thm:main} can be improved as follows: A connected GSC $F=F(N,\D)$ has cut points if and only if $\chi(\Gamma_k)\geq|\D|^{k-1}-1$ for all $2\leq k\leq 3^8+4$. However, the proof is very technical so we decide to present it in another paper~\cite{RWXpre}.

\begin{example}\label{ex:nocutpt-1}
    Let $F=F(6,\D)$ be the GSC as in Figure~\ref{fig:nocutpt-1}. By using the algorithms in Section~2 or Section~6, one can easily draw the associated Hata graphs and see that $\chi(\Gamma_1)=10$ and $\chi(\Gamma_2)=\chi(\Gamma_3)=35$. So $\chi(\Gamma_3)<25^2-1=|\D|^2-1$ and hence it has no cut points.
 \end{example}

\begin{figure}[htbp]
    \centering
    \includegraphics[width=4.5cm]{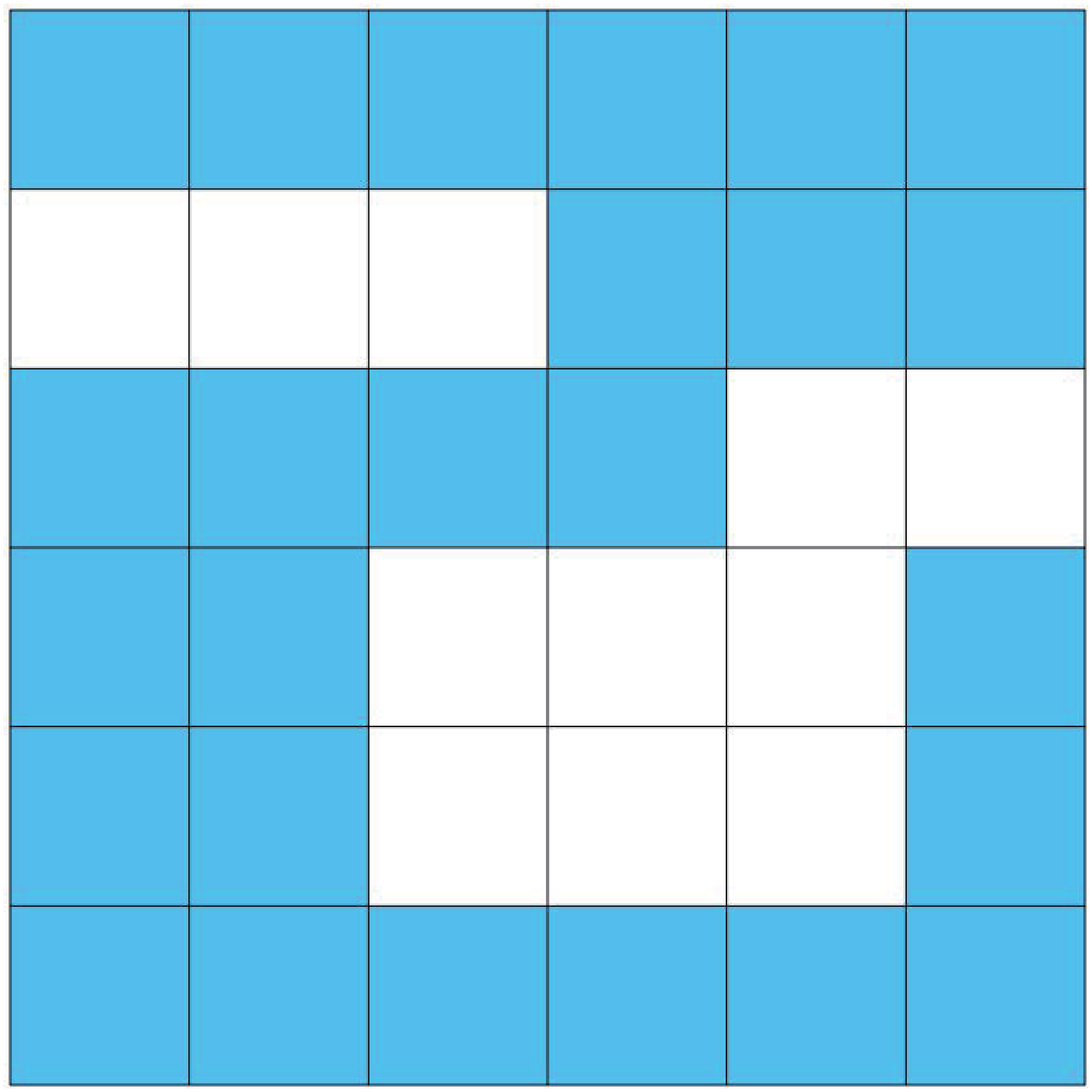} \quad
    \includegraphics[width=4.5cm]{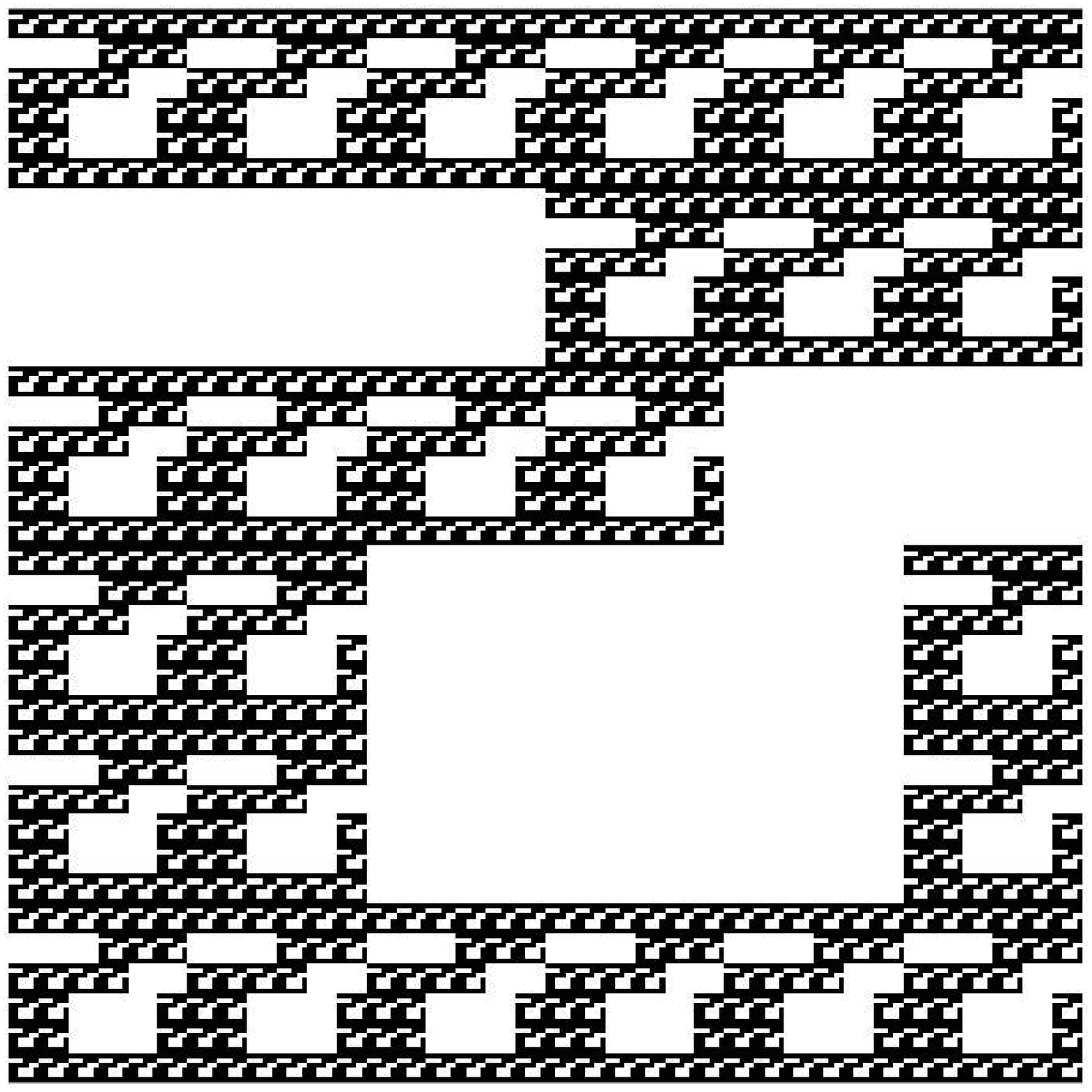} \quad
    \includegraphics[width=4.5cm]{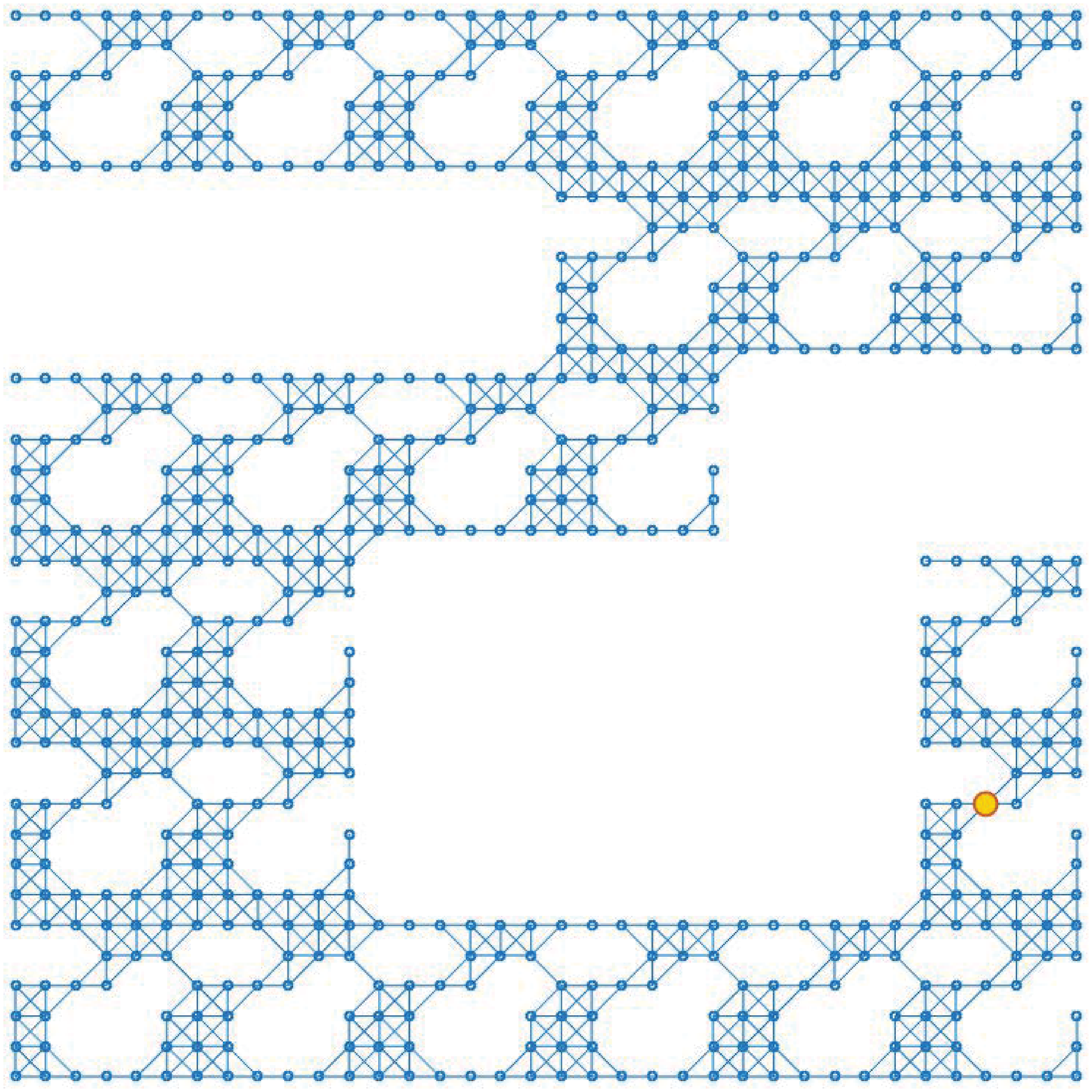}
    \caption{A GSC with no cut point. From left to right: the initial pattern, the GSC and the 2-nd Hata graph.}
    \label{fig:nocutpt-1}
\end{figure}

\begin{example}\label{ex:fragile}
    Let $F=F(4,\D)$ be the GSC as in Figure~\ref{fig:gamma1+2}. Write $i=(3,0)$ and $\omega_{k}=(3,0)\underbrace{(0,3)\cdots(0,3)}_{(k-1) \text{ terms}}$. It is not hard to see that
    \begin{equation*}
        \chi(\Gamma_k) = |i\D^{k-1}\setminus\{\omega_{k}\}| = |\D|^{k-1}-1, \quad k\geq 2,
    \end{equation*}
    and $(\frac{3}{4},\frac{1}{4})$, which is the unique element in the singleton $\bigcap_{k=1}^\infty \vp_{\omega_k}([0,1]^2)$, is a cut point of $F$.  For $N\geq 5$, such examples can be constructed similarly. In particular, the lower bound in Theorem~\ref{thm:main} is sharp.
 \end{example}

\begin{figure}[htbp]
    \centering
    \subfloat[The GSC]
    {
        \begin{minipage}[t]{120pt}
        \centering
        \includegraphics[height=3.8cm]{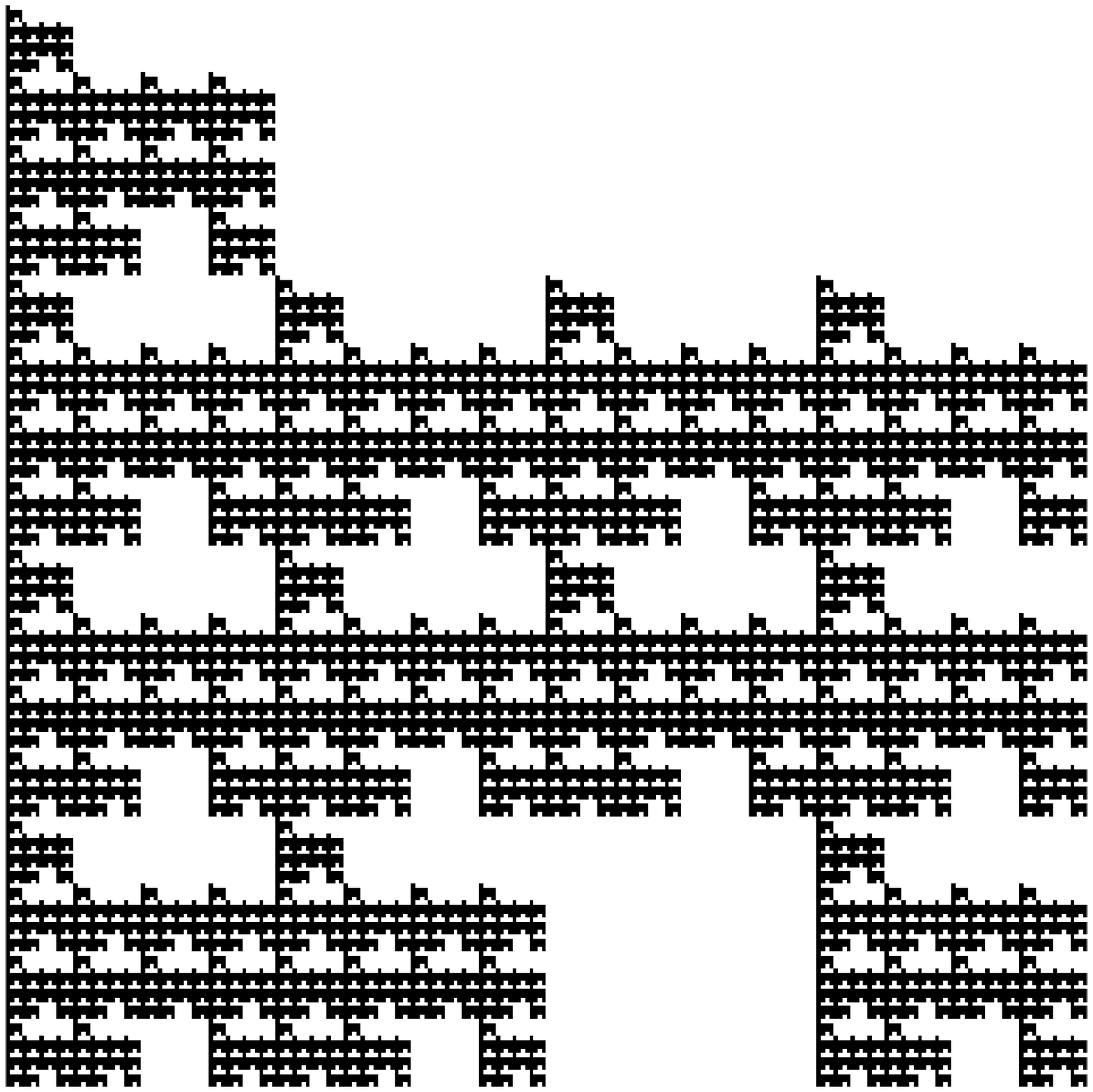}
      \end{minipage}
    }
    \subfloat[$1$-st Hata graph]
    {
        \begin{minipage}[t]{120pt}
        \centering
        \includegraphics[height=3.8cm]{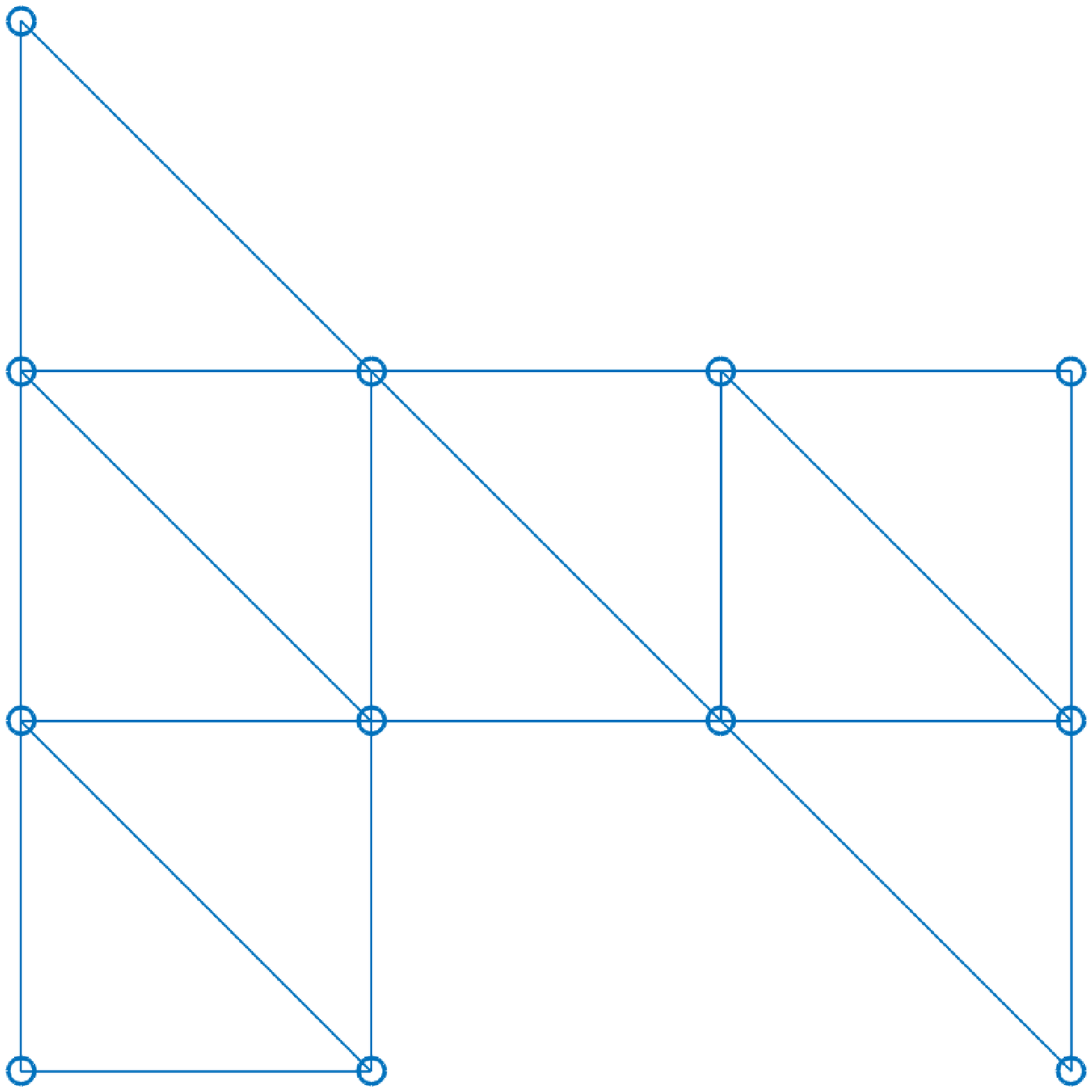}
        \end{minipage}
    }
    \subfloat[$2$-nd Hata graph]
    {
        \begin{minipage}[t]{120pt}
        \centering
        \includegraphics[height=3.8cm]{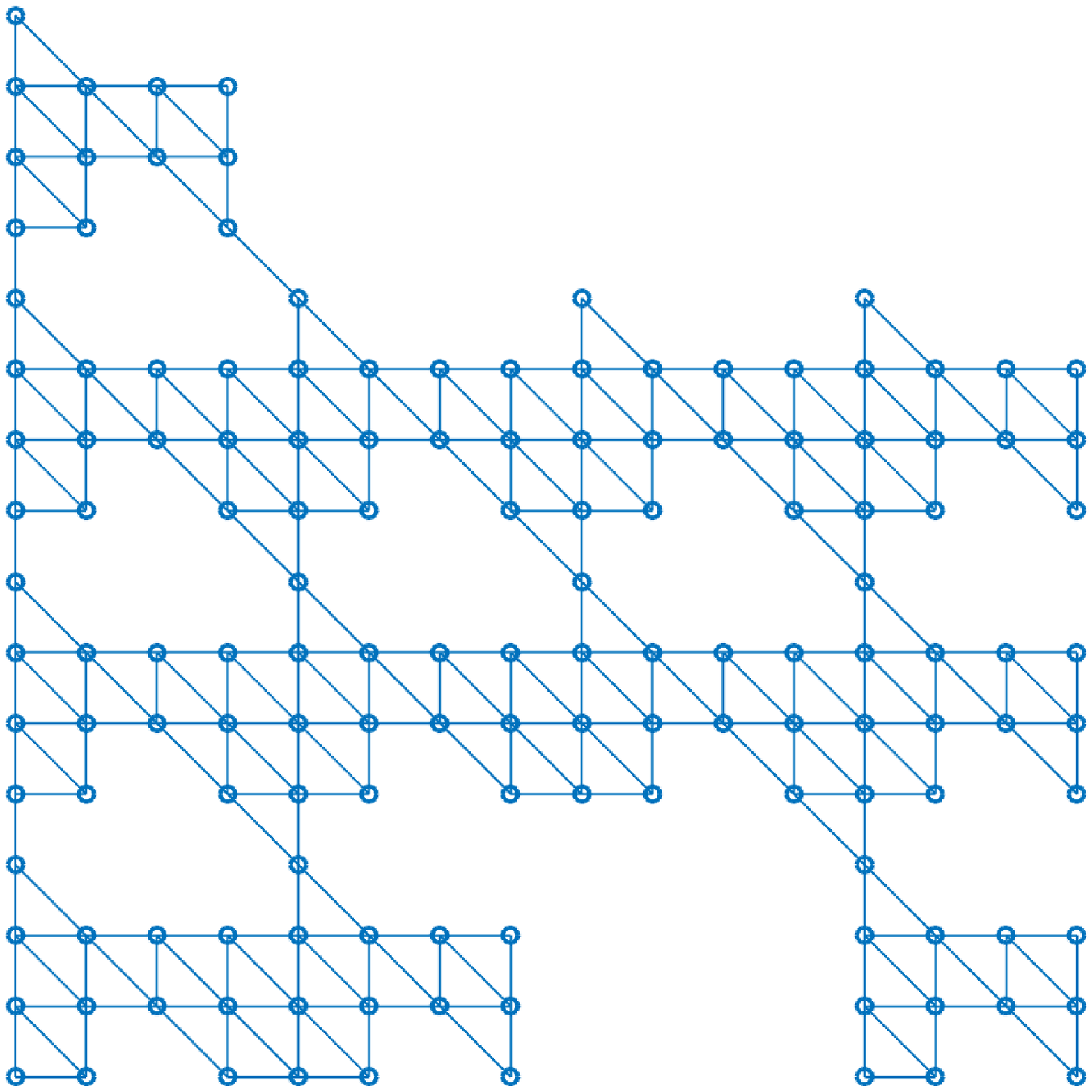}
        \end{minipage}
    }
    \caption{A GSC with $\chi(\Gamma_k)= |\D|^{k-1}-1$ for all $k\geq 2$}
    \label{fig:gamma1+2}
\end{figure}

As a direct consequence of Theorem~\ref{thm:main}, we have the following easily checked sufficient condition on connected GSCs with no cut points.


\begin{corollary}\label{cor:cutpt}
    A connected GSC $F(N,\D)$ has no cut points if $\chi(\Gamma_2)< |\D|-1$.
\end{corollary}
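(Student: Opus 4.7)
The plan is to deduce this corollary immediately from Theorem~\ref{thm:main} by taking the contrapositive and specialising to $k=2$. Theorem~\ref{thm:main} gives a biconditional: a connected GSC $F=F(N,\D)$ has cut points if and only if $\chi(\Gamma_k)\geq |\D|^{k-1}-1$ for every integer $k\geq 2$. In particular, if $F$ has cut points, then taking $k=2$ forces $\chi(\Gamma_2)\geq |\D|^{2-1}-1 = |\D|-1$.

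So the plan reduces to a one-line argument: assume $\chi(\Gamma_2)<|\D|-1$. Then the necessary condition $\chi(\Gamma_2)\geq |\D|-1$ supplied by Theorem~\ref{thm:main} fails at $k=2$, hence the big ``for all $k\geq 2$'' conjunction fails, hence the ``only if'' direction of Theorem~\ref{thm:main} rules out the existence of cut points. Since $F$ is assumed to be a connected GSC, Theorem~\ref{thm:main} applies directly and no auxiliary hypotheses need to be verified, so there is no real obstacle and nothing further to prove. I would simply write one or two sentences citing Theorem~\ref{thm:main} with $k=2$.
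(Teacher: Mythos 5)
Your proposal is correct and is exactly the paper's intended argument: the paper states this corollary as a direct consequence of Theorem~\ref{thm:main}, obtained by applying the ``only if'' direction at $k=2$ and taking the contrapositive, just as you do. Nothing further is needed.
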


For finitely ramified fractals, Bandt and Retta showed that if $\Gamma_2$ has no cut vertices, then the corresponding fractal has no cut points (see \cite[Proposition~2.1]{BR92}). Our corollary is a more general result in the GSC setting. It is also noteworthy that there does exist some connected GSC (e.g., the one in Figure~\ref{fig:gamma1+2}) such that $\Gamma_1$ has no cut vertices while $F(N,\D)$ has cut points.



Now let us explain the rough idea to prove Theorem~\ref{thm:main}. To determine the existence of cut points, we divide the collection of all connected GSCs into the following two types and treat them in different ways.

\begin{definition}
    A connected GSC $F=F(N,\D)$ is called \emph{fragile} if there is a decomposition of $\D$, say $\D=\D_1\cup\D_2$ with $\D_1\cap\D_2=\varnothing$, such that the intersection
    \begin{equation}\label{eq:frag-decom}
        \Big( \bigcup_{i\in\D_1}\vp_i(F) \Big) \cap \Big( \bigcup_{i\in\D_2}\vp_i(F) \Big)
    \end{equation}
    is a singleton. The GSC $F$ is called \emph{non-fragile} if it is not fragile.
\end{definition}

For example, let $F=F(4,\D)$ be the GSC as in Figure~\ref{fig:gamma1+2}. Writing $\D_1=\{(3,0)\}$ and $\D_2=\D\setminus\D_1$,
\[
    \Big( \bigcup_{i\in\D_1}\vp_i(F) \Big) \cap \Big( \bigcup_{i\in\D_2}\vp_i(F) \Big) = \{(3/4,1/4)\}
\]
and hence $F$ is fragile. It is easy to see that a fragile GSC always has cut points. More precisely, the unique point in the singleton in \eqref{eq:frag-decom} should be a cut point (see Section 3). On the other hand, the lower bound in Theorem~\ref{thm:main} is not hard to show for fragile GSCs (see Proposition~\ref{prop:fragilelower}).


Things become much more challenging in the non-fragile case. Fortunately, by a delicate analysis, we have the following theorem.

\begin{theorem}\label{thm:nonfragile}
    A non-fragile connected GSC $F=F(N,\D)$ has cut points if and only if $\chi(\Gamma_k)\geq|\D|^{k-1}$ for all $k\geq 2$. Moreover, if there exists $k_0\geq 2$ such that $\chi(\Gamma_k)\geq|\D|^{k-1}$ for all $k\geq k_0$, then $F$ has cut points.
\end{theorem}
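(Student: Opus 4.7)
The proof handles both directions of the equivalence together with the moreover clause. Broadly, a cut point of $F$ forces a rich separation of the level-$k$ pieces by the pieces that contain it, while a sufficiently large cut vertex configuration of $\Gamma_k$ for arbitrarily large $k$ forces, via compactness, the existence of a cut point of $F$. The strengthening of the bound from $|\D|^{k-1}-1$ (the fragile threshold, cf. Example~\ref{ex:fragile}) to $|\D|^{k-1}$ is exactly the unit one gains from non-fragility.

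For the necessity direction, assume $F$ is non-fragile with a cut point $x$ and fix $k\geq 2$. Write $F\setminus\{x\}=A\sqcup B$ for a non-trivial separation and partition $\D^k=V_x^k\sqcup \D_A^k\sqcup \D_B^k$, where $V_x^k=\{\i:\, x\in\vp_{\i}(F)\}$ and $\D_A^k,\D_B^k$ index the pieces entirely contained in $A$ and $B$ respectively. Since $A$ and $B$ are separated in $F\setminus\{x\}$, no edge of $\Gamma_k$ connects $\D_A^k$ to $\D_B^k$. The plan is to produce a single vertex $v$ in $\Gamma_k$ (lying in or adjacent to $V_x^k$) whose removal leaves $\D_A^k$ and $\D_B^k$ in distinct components, together with the key estimate $\min\{|\D_A^k|,|\D_B^k|\}\geq |\D|^{k-1}$. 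The latter is the crux: if instead $|\D_B^k|\leq |\D|^{k-1}-1$, a careful combinatorial argument shows that the minority side together with $V_x^k$ clusters inside a single level-1 block $i\D^{k-1}$ for some $i\in\D$; this forces $\vp_i(F)\cap\bigcup_{j\neq i}\vp_j(F)$ to be a singleton, contradicting non-fragility (compare Example~\ref{ex:fragile}).

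For sufficiency (and simultaneously for the moreover clause), assume $\chi(\Gamma_k)\geq |\D|^{k-1}$ for all $k\geq k_0$. For each such $k$, choose a witness cut vertex $v_k\in\D^k$ and two components $H_k^1,H_k^2$ of $\Gamma_k-\{v_k\}$ of size at least $|\D|^{k-1}$, and set $X_k^j=\bigcup_{\i\in H_k^j}\vp_{\i}(F)$ for $j=1,2$; these are disjoint compact subsets of $F$ by the definition of $\Gamma_k$. By applying K\"onig's lemma to the rooted tree $\D^*$, extract an infinite word $\sigma\in\D^\infty$ and a subsequence $(k_\ell)$ along which $v_{k_\ell}$ has $\sigma|_\ell$ as a prefix. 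The nested pieces $\vp_{\sigma|_\ell}(Q_0)$ have diameters tending to zero, so they contract to a single point $x\in F$. To show $x$ is a cut point, pass to a further Hausdorff-convergent sub-subsequence of the pair $(X_{k_\ell}^1,X_{k_\ell}^2)$ with limits $A^\ast,B^\ast\subset F$, then verify that $A^\ast\cap B^\ast\subseteq\{x\}$ and that both $A^\ast\setminus\{x\}$ and $B^\ast\setminus\{x\}$ are non-empty and together with $\{x\}$ cover $F$. This last point uses the lower bound $|\D|^{k-1}$ crucially: it guarantees that each $H_{k_\ell}^j$ contains pieces at positive distance from $\vp_{v_{k_\ell}}(F)$, preventing the limits from collapsing into $\{x\}$.

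The main obstacle is the combinatorial step in the necessity direction, namely ruling out the exact equality $\chi(\Gamma_k)=|\D|^{k-1}-1$ in the non-fragile case. One must analyze cut vertex configurations of $\Gamma_k$ in the presence of the complication $|V_x^k|\geq 2$, where the pieces containing $x$ form a clique in $\Gamma_k$ so that no single vertex of $V_x^k$ is by itself a cut vertex; the cut vertex giving the witness may need to be found outside $V_x^k$. Tracing back a minority component of size exactly $|\D|^{k-1}-1$ to a level-1 singleton intersection is the heart of the matter. The sufficiency direction, while cleaner conceptually through the K\"onig argument, also requires care in the Hausdorff limit step to ensure the two sides $A^\ast$ and $B^\ast$ do not both degenerate to $\{x\}$; here the precise value $|\D|^{k-1}$ (as opposed to the weaker $|\D|^{k-1}-1$) is what prevents degeneracy.
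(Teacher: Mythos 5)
Your proposal is a sketch whose two central steps are precisely the ones it leaves unproved, so there is a genuine gap on each side of the equivalence. For the necessity direction, the plan to ``produce a single vertex $v$ whose removal leaves $\D_A^k$ and $\D_B^k$ in distinct components'' fails as stated when the cut point $x$ has more than one representation: $V_x^k$ is then a clique of up to four vertices, each of which may be joined to both $\D_A^k$ and $\D_B^k$, so no single vertex (inside or adjacent to $V_x^k$) can separate the two sides. The paper's resolution is to first manufacture a cut point with a \emph{unique} representation (Proposition~\ref{thm:unirep}); this is exactly where non-fragility first enters, and it is absent from your outline. Second, the implication ``$|\D_B^k|\leq|\D|^{k-1}-1$ forces the minority side to cluster in a single level-$1$ block and hence forces fragility'' is not a routine combinatorial step: since $|\Omega_1(x)|$ can be as large as $4$, the cardinality bound only confines $\D_B^k$ to at most four level-$1$ blocks, and passing from a small minority side to a level-$1$ singleton intersection is the entire content of the paper's Subsection~4.3 (the minimality of $n_0$, the sets $K$ and $K'$, Lemma~\ref{lem:interKKprim}, and the geometric case analysis behind Proposition~\ref{prop:1and3}). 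Finally, $\chi(\Gamma_k)$ in Definition~\ref{def:chi} is measured by the second-largest \emph{component} of $\Gamma_k-\{v\}$, so the estimate $\min\{|\D_A^k|,|\D_B^k|\}\geq|\D|^{k-1}$ does not by itself give $\chi(\Gamma_k)\geq|\D|^{k-1}$; one needs an entire connected block $\tau\D^{k-1}$ on each side, which is how the paper actually argues (Proposition~\ref{prop:nonfragile1}).

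For the sufficiency direction your Hausdorff-limit route is genuinely different from the paper's and can in principle be made to work, but the decisive claim $A^\ast\cap B^\ast\subseteq\{x\}$ is asserted rather than proved: disjoint compact sets can have intersecting Hausdorff limits, and the separation between $X_{k_\ell}^1$ and $X_{k_\ell}^2$ away from $x$ is only of order $N^{-k_\ell}$. The claim is rescuable by invoking local connectedness of $F$ (a small connected neighborhood of a putative common limit point $y\neq x$ avoids $\vp_{v_{k_\ell}}(F)$ for large $\ell$ and would join the two components of $\bigcup_{\j\in\D^{k_\ell}\setminus\{v_{k_\ell}\}}\vp_{\j}(F)$, a contradiction), and you must also absorb the remaining components of $\Gamma_{k_\ell}-\{v_{k_\ell}\}$ into one of the two sides so that $A^\ast\cup B^\ast\cup\{x\}=F$ before Lemma~\ref{lem:local1} applies. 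The paper avoids both issues by pigeonholing a fixed pair $\omega,\tau\in\D$ that is separated at every level and running the well-separated-words machinery of Theorem~\ref{thm:onedire}, which builds the two closed sides directly from connectivity in $E_{n+p}(x)$ for all $p$.
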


Combining the results in fragile and non-fragile cases, we can prove Theorem~\ref{thm:main}.

\begin{proof}[Proof of Theorem~\ref{thm:main}]
     The ``only if\,'' part follows from Proposition~\ref{prop:fragilelower} and Theorem~\ref{thm:nonfragile} directly. For the ``if\,'' part, since a fragile connected GSC has cut points, we can assume by Proposition~\ref{prop:frag-suff} that $F$ is non-fragile and $\chi(\Gamma_k)\geq|\D|^{k-1}$ for all $k\geq 3$. Then we can see by Theorem~\ref{thm:nonfragile} that $F$ has cut points.
\end{proof}

For the connectedness problem, we prove the following interesting result.  Recall that Bara\'nski sponges serve as a self-affine and higher dimensional generalization of GSCs. For a rigorous definition, please see Section 6.

\begin{theorem}\label{thm:main2}
    A Bara\'nski sponge $K$ in $\R^d$ is connected if and only if $Q_{d+1}$ is connected.
\end{theorem}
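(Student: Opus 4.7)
My plan is to show that $Q_{d+1}$ connected forces $Q_n$ connected for every $n \ge 1$, and then deduce that $K = \bigcap_n Q_n$ is connected as a nested intersection of compact connected sets. The easy direction is immediate: if $K$ is connected and $Q_{d+1}=A\sqcup B$ were a clopen disconnection, each level-$(d+1)$ cube $\vp_\i(Q_0)$ (being connected) would lie entirely in $A$ or in $B$; since both parts contain cubes and each carries the non-empty $\vp_\i(K)\subseteq K$, both $K\cap A$ and $K\cap B$ would be non-empty, contradicting the connectedness of $K$.

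For the forward implication, I would first handle $1\le n\le d+1$: the decomposition $Q_{d+1}=\bigcup_{\i\in\D^n}\vp_\i(Q_{d+1-n})$ shows that $Q_{d+1}$ meets every level-$n$ cube, so any clopen separation of $Q_n$ would lift to one of $Q_{d+1}$, forcing $Q_n$ connected. I would treat $n>d+1$ by induction: assuming $Q_{n-1}$ and $Q_n$ are connected, the connectedness of $Q_{n+1}=\bigcup_{i\in\D}\vp_i(Q_n)$ reduces to showing that the level-$1$ adjacency witnessed through $Q_{n-1}$ survives into $Q_n$, i.e., $\vp_i(Q_{n-1})\cap\vp_j(Q_{n-1})\neq\varnothing$ implies $\vp_i(Q_n)\cap\vp_j(Q_n)\neq\varnothing$ whenever $\vp_i(Q_0)\cap\vp_j(Q_0)\neq\varnothing$.

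The heart of the argument is this stabilization of adjacencies, which I would prove by an outer induction on the ambient dimension $d$ via face reduction. Two adjacent level-$1$ cubes $\vp_i(Q_0),\vp_j(Q_0)$ share a face $f$ of some dimension $k\in\{0,1,\ldots,d-1\}$; the traces $Q_n\cap\vp_i^{-1}(f)$ and $Q_n\cap\vp_j^{-1}(f)$ on the corresponding faces of $Q_0$ are each $n$-fold iterates of $k$-dimensional Barański sponges, determined by the face-digit subsets of $\D$. After identifying the two faces via the gluing on $f$, $\vp_i(Q_n)\cap\vp_j(Q_n)$ becomes the intersection of these two lower-dimensional iterates. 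The corner case $k=0$ is immediate, since a corner of $Q_0$ lying in $Q_1$ is the fixed point of the corresponding corner-digit map and hence already lies in $K$. For $k\ge 1$, the inductive hypothesis, suitably strengthened to address the joint non-emptiness of two $k$-dim iterates sharing a face, yields stabilization once the intersection is non-empty at level $d\ge k+1$.

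The main obstacle lies in this strengthening. The induction naturally produces only the single-sponge statement (``$Q_{k+1}$ connected iff $K$ connected'' in dimension $k$), but here one must track the intersection of iterates of two different lower-dimensional sponges attached to opposite sides of a common face and carrying possibly distinct digit sets. I would set up an auxiliary lemma that handles exactly this joint non-emptiness of pairs of $k$-dim iterates matched along a common face; the buffer $d-k\ge 1$ between the ambient threshold $d+1$ and the face threshold $k+1$ is precisely what makes the outer induction on $d$ go through.
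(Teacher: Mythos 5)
Your outline of the easy direction and the reduction of the hard direction to a ``stabilization of adjacencies'' statement is sound, and the final step (a nested intersection of compact connected sets is connected) is a valid way to finish. However, the proof has a genuine gap exactly where you flag ``the main obstacle'': the entire technical content of the theorem is the claim that $\vp_i(Q_m)\cap\vp_j(Q_m)\neq\varnothing$ for $m\geq d$ forces $\vp_i(Q_{m+1})\cap\vp_j(Q_{m+1})\neq\varnothing$, and this is never proved. Your proposed outer induction on the ambient dimension does not close as set up: the inductive hypothesis it supplies is the single-sponge statement ``$Q_{k+1}$ connected iff the attractor is connected'' in dimension $k$, whereas what the face reduction requires is a \emph{pair} statement --- that two $n$-fold iterates of two \emph{different} $k$-dimensional digit systems, glued along a common face, have non-empty intersection for all $n$ once they intersect at level $k+1$. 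Connectedness of a single sponge says nothing about non-emptiness of the intersection of two distinct systems, so the ``suitably strengthened'' hypothesis is not a cosmetic adjustment but a different (and stronger) induction loop that must be formulated and proved from scratch; naming it as ``an auxiliary lemma'' leaves the theorem unproven. There are also unaddressed details in the face reduction itself (in the Bara\'nski setting the shared face is a product of degenerate and non-degenerate intervals with coordinate-dependent ratios, and one must verify that the trace of $Q_n$ on it really is the $n$-th iterate of a well-defined $k$-dimensional sub-system), but these are secondary to the missing pair lemma.

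For comparison, the paper proves the needed statement in one step and without any induction on dimension: if $\bw\neq\bw'$ and $\psi_{\bw}(Q_d)\cap\psi_{\bw'}(Q_d)\neq\varnothing$, then along a witnessing chain of refinements the integer $\dim\big(\psi_{\bw\bu|_j}(Q_0)\cap\psi_{\bw'\bu'|_j}(Q_0)\big)=d-|\pi(\bw\bu|_j)-\pi(\bw'\bu'|_j)|$ is non-increasing, starts at most $d-1$, and stays at least $0$ over $d$ steps, so by pigeonhole some step preserves it; at that step one can repeat the corresponding digit pair forever to exhibit a common point of $\psi_{\bw}(K)$ and $\psi_{\bw'}(K)$ (Lemmas~\ref{lem:SS2}--\ref{lem:spon2}), after which Hata's criterion gives connectedness of $K$. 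Note that this argument, applied verbatim to two words drawn from two different digit sets with the same subdivision data, is precisely the auxiliary pair lemma your plan needs --- which is further evidence that the direct route is both necessary and simpler than the face-reduction induction. To repair your proof you should either prove the pair lemma outright (essentially reproducing the paper's dimension-pigeonhole argument) or abandon the induction on $d$ and argue directly in $\R^d$ as the paper does.
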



Notice that both of the criteria on the connectedness and the existence of cut points are based on the associated Hata graph sequence. In this paper, we also provide an effective method to draw the Hata graph sequence of GSCs and Bara\'nski sponges (please see Section 2 and Theorem~\ref{thm:khataofbaranski}).

Let us remark here that in an earlier paper~\cite{CS10}, Cristea and Steinsky studied the connectedness problem of GSCs under a more general setting: The pattern of selected squares is allowed to change during the iteration process. They constructed a sequence of graphs and showed that the limit set is connected if and only if every graph in this sequence is connected. However, they did not tell us how many iterations we need to determine the connectedness of the carpet when there is only one pattern.


Finally, we investigate the cardinality of the digit set of a given connected GSC. A better lower bound of $\chi(\Gamma_k)$ is also presented for non-fragile connected GSCs with cut points.

\begin{theorem}\label{thm:gap}
    Let $N\geq 2$ and let $F=F(N,\D)$ be a connected GSC with $|\D|>N$. Then $|\D|\geq 2N-1$. Conversely, for every integer $k\geq 2N-1$ or $k=N$, there is a digit set $\D$ with $|\D|=k$ such that $F=F(N,\D)$ is connected.
\end{theorem}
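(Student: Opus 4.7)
\textbf{Plan for Theorem~\ref{thm:gap}.} The plan splits into a gap direction and a realisation direction.

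Paragraph 1 (Lower bound and equality case $|\D|=N$). First I would show $|\D|\geq N$. Since $F$ is connected, $\pi_x(F)$ is a connected subset of $\R$, hence an interval; being simultaneously the attractor of the $1$-dimensional IFS with digits $\D_x=\{j:(j,i)\in\D \text{ for some }i\}$, this forces $\D_x=\{0,\ldots,N-1\}$, so $|\D_x|=N$. The symmetric argument for $\pi_y$ gives $|\D_y|=N$, yielding $|\D|\geq N$. When $|\D|=N$ each row and column hosts exactly one cell; $\D$ encodes a permutation $\sigma$, and a Hata edge between $(i,\sigma(i))$ and $(j,\sigma(j))$ requires pixel adjacency $|i-j|=|\sigma(i)-\sigma(j)|=1$. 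Connectivity of $\Gamma_1$ then forces $\sigma$ to be the identity or the reversal, both of which yield connected $F$ (the endpoints $(0,0),(N-1,N-1)$ or $(0,N-1),(N-1,0)$ being in $\D$ provide the required corner-type meetings along the staircase).

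Paragraph 2 (Ruling out $N<|\D|<2N-1$). Suppose for contradiction that $F$ is connected with $N<|\D|\leq 2N-2$. Letting $V_i=\{(j,i)\in\D\}$, the connectedness of $\Gamma_1$ combined with the fact that Hata edges occur only within a row or between two adjacent rows shows that for each $0\leq i\leq N-2$ some Hata edge joins $V_i$ and $V_{i+1}$; call this a row-bridge. A column-version of the same argument yields $N-1$ column-bridges. I would then analyse each bridge. A vertical row-bridge requires a common column $j$ with $(j,i),(j,i+1)\in\D$ and the universal meeting condition $\mathrm{Top}_\D\cap\mathrm{Bot}_\D\neq\varnothing$, where $\mathrm{Top}_\D,\mathrm{Bot}_\D\subset[0,1]$ denote the self-similar slices $F\cap\{y=1\}$ and $F\cap\{y=0\}$; this latter condition forces extra cells beyond the minimum row-covering in the top or bottom rows. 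A diagonal row-bridge requires either $(0,0),(N-1,N-1)\in\D$ or $(0,N-1),(N-1,0)\in\D$, which pins down specific corner cells. A parallel dichotomy applies to the column-bridges. The plan is to combine these constraints and count: the row- and column-coverings account for $N$ cells, while the $N-1$ row-bridges plus the $N-1$ column-bridges impose at least $N-1$ further cells no matter how cleverly one shares cells between bridges, producing $|\D|\geq 2N-1$. The main obstacle will be the bookkeeping, in particular eliminating the scenario in which a single cell (say a corner cell) is reused as endpoint of many bridges simultaneously; this is where the boundary-attractor conditions are decisive, as satisfying $\mathrm{Top}_\D\cap\mathrm{Bot}_\D\neq\varnothing$ or $\mathrm{Left}_\D\cap\mathrm{Right}_\D\neq\varnothing$ by itself already demands more than one cell in the relevant extreme rows or columns.

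Paragraph 3 (Realisations). For each admissible $k$ I would exhibit a concrete $\D$. When $k=N$, take $\D=\{(i,i):0\leq i\leq N-1\}$. When $k=2N-1$, take the L-shape $\D_0=\{(i,0):0\leq i\leq N-1\}\cup\{(0,j):1\leq j\leq N-1\}$; here the four boundary attractors compute to $\mathrm{Bot}_{\D_0}=\mathrm{Left}_{\D_0}=[0,1]$ and $\mathrm{Top}_{\D_0}=\mathrm{Right}_{\D_0}=\{0\}$, so both edge-meeting intersections equal $\{0\}\neq\varnothing$, every edge-adjacent pair in $\D_0$ meets in $F$, and $\Gamma_1$ becomes a tree rooted at $(0,0)$. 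For $2N-1<k\leq N^2-1$, I grow the L-shape by appending cells in the order $(1,1),(2,1),\ldots,(N-1,1),(1,2),(2,2),\ldots,(N-1,N-1)$ until $|\D|=k$. Since the bottom row and left column stay intact throughout, $\mathrm{Bot}_\D=\mathrm{Left}_\D=[0,1]$ and $\{0\}\subseteq\mathrm{Top}_\D\cap\mathrm{Bot}_\D\cap\mathrm{Left}_\D\cap\mathrm{Right}_\D$ at every stage, so the edge-meeting conditions persist and each newly inserted cell is edge-adjacent in $\D$ to a previously inserted cell (its left neighbour in the same row or its lower neighbour in the same column). This keeps $\Gamma_1$ connected at every intermediate step, yielding a connected $F(N,\D)$ for every required $k$.
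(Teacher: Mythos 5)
Your Paragraph 3 is fine: the L-shape $\{(i,0):0\leq i\leq N-1\}\cup\{(0,j):1\leq j\leq N-1\}$ and its row-by-row completion is essentially the same construction the paper uses for the realisation half, and your verification of the edge-meeting conditions is correct. The problem is Paragraph 2, where the actual content of the lower bound lives. Your headline count --- ``the row- and column-coverings account for $N$ cells, while the $N-1$ row-bridges plus the $N-1$ column-bridges impose at least $N-1$ further cells no matter how cleverly one shares cells between bridges'' --- is precisely the statement that needs proof, and as stated it is false: the diagonal $\D=\{(i,i):0\leq i\leq N-1\}$ covers every row and every column with $N$ cells and realises all $2(N-1)$ bridges using those same $N$ cells at zero extra cost. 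So the hypothesis $|\D|>N$ must enter the counting in an essential way, and your sketch never says how; the ``bookkeeping obstacle'' you flag (a corner cell serving as endpoint of many bridges) is not a technicality but the entire difficulty. The paper resolves it differently: it splits into four cases according to which of the diagonal corner pairs $\{(0,0),(N-1,N-1)\}$, $\{(0,N-1),(N-1,0)\}$ lie in $\D$, and in each case fixes two \emph{specific} paths in $\Gamma_1$ (corner-to-corner, or from $(x_0,0)$ to $(x_0,N-1)$ and from $(0,y_0)$ to $(N-1,y_0)$), lower-bounds the five sub-paths created by their first and last common vertices via Chebyshev or taxicab distances, and telescopes; the careful choice of endpoints is exactly what makes the shared portion of the two paths affordable. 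Your bridge formulation does not obviously reduce to this, so the lower bound remains unproved.

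Two smaller points. First, in Paragraph 1 the claim that connectedness of $\pi_x(F)$ forces $\D_x=\{0,\ldots,N-1\}$ fails when the projection is a single point, and consequently your classification of the $|\D|=N$ case is wrong: it omits the full-row and full-column configurations $\{(i,j):0\leq j\leq N-1\}$ and $\{(i,j):0\leq i\leq N-1\}$, which are connected but are not permutation patterns. This does not damage the theorem (for $k=N$ one example suffices, and the diagonal works), but the projection argument \emph{is} needed correctly in Paragraph 2 to guarantee every row and column of $\D$ is nonempty, so you should note there that $|\D|>N$ rules out $|\D_x|=1$ before invoking it. Second, a vertical row-bridge does not ``force extra cells in the top or bottom rows'' per se; the condition $\mathrm{Top}_\D\cap\mathrm{Bot}_\D\neq\varnothing$ can also be satisfied through the corner configurations $\{(-1,N-1),(N-1,N-1)\}\subset\D-\D$ or $\{(1,N-1),(N-1,-(N-1))\}\subset\D-\D$, so the case analysis you would need is the same one the paper performs, and you cannot avoid it.
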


\begin{proposition}\label{prop:chi-betterBd}
  Let $F=F(N,\D)$ be a non-fragile connected GSC with cut points. Then $\chi(\Gamma_k)\geq |\D|^{k-1}+|\D|^{k-3}$ for $k\geq 3$.
\end{proposition}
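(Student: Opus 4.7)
The starting point is Theorem~\ref{thm:nonfragile}, which already provides $\chi(\Gamma_k)\geq|\D|^{k-1}$ for all $k\geq 2$ in the present non-fragile setting with cut points. The plan is to refine that proof so as to squeeze out an additional $|\D|^{k-3}$ cells inside the second-largest component of $\Gamma_k-\{\i^*\}$ for a suitably chosen cut vertex $\i^*$. The geometric engine for this refinement will be a \emph{straddling chain} of cells running arbitrarily deep into $F$ near a fixed cut point.

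First I would fix a cut point $x_0$ of $F$, write $F\setminus\{x_0\}=A\sqcup B$, and use non-fragility to produce an address $\bi=i_1i_2\cdots\in\D^\infty$ such that, for every $j\geq 1$, the point $x_j:=\varphi_{\bi|_j}^{-1}(x_0)$ is a cut point of $F$ and the cell $\varphi_{\bi|_j}(F)$ meets both $A$ and $B$. The existence of $i_1$ is exactly the failure of fragility at $x_0$: were no level-$1$ cell to straddle $x_0$, splitting $\D$ by the side its cell lies on would exhibit a singleton intersection and contradict non-fragility. The iteration then continues because $x_j$ inherits the role of $x_0$ inside the scaled copy $\varphi_{\bi|_j}(F)$ while $F$ remains non-fragile throughout.

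Next, for $k\geq 3$, I would take $\i^*\in\D^k$ as the cut vertex supplied by the proof of Theorem~\ref{thm:nonfragile}, so that $\Gamma_k-\{\i^*\}$ has a second-largest component $C_2$ with $|C_2|\geq|\D|^{k-1}$ encoding the level-$k$ cells on one designated side of $x_0$. To enlarge $C_2$, I would zoom into the sub-cell $\varphi_{\bi|_2}(F)$: its level-$k$ sub-cells $\{\varphi_{\bi|_2\j}(F):\j\in\D^{k-2}\}$ form a sub-graph of $\Gamma_k$ naturally isomorphic to $\Gamma_{k-2}$ of $F$, so Theorem~\ref{thm:nonfragile} applied at level $k-2$ (the case $k=3$ being a direct one-cell observation) pinpoints an internal cut vertex with an internal second component of size at least $|\D|^{k-3}$. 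Via the deeper straddling cells $\varphi_{\bi|_3}(F),\varphi_{\bi|_4}(F),\ldots$, these internal cells link, without passing through $\i^*$, to the designated side of $x_0$ in $\Gamma_k$ and hence join $C_2$.

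The main obstacle, and the bulk of the technical work, will be verifying that the $|\D|^{k-3}$ extra cells are genuinely new (not already counted toward the $|\D|^{k-1}$ baseline) and are not severed from $C_2$ by the removal of $\i^*$. Non-overlap is handled by tracking that the new internal cells all sit inside $\varphi_{\bi|_2}(F)$ and arranging the baseline count from Theorem~\ref{thm:nonfragile} so as to avoid double counting there; non-severance requires a case split on whether $\i^*$ itself lies on the straddling chain $\{\bi|_n\}$, with a symmetric argument (swapping the roles of $A$ and $B$) used when necessary. Once these two points are cleared, $|C_2|\geq|\D|^{k-1}+|\D|^{k-3}$ follows and completes the proof.
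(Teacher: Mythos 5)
There is a genuine gap, and it is structural rather than a matter of missing detail. Two problems stand out. First, the linking step is asserted, not proved: you give no mechanism by which the $|\D|^{k-3}$ cells of an \emph{internal} second component of $\vp_{\bi|_2}(F)$ would connect, inside $\Gamma_k-\{\i^*\}$, to the designated side of $x_0$. By construction these are exactly the cells that an internal cut vertex severs from the rest of $\vp_{\bi|_2}(F)$, so their outward connections are precisely what is in doubt; and the straddling cells $\vp_{\bi|_3}(F),\vp_{\bi|_4}(F),\ldots$ cannot supply a path in $\Gamma_k-\{\i^*\}$, because with $\i^*=\bi|_k$ every cell on that nested chain either contains or is contained in the deleted cell $\vp_{\i^*}(F)$ (the only level-$k$ cell on the chain \emph{is} $\i^*$), so any route it suggests passes through the removed vertex. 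Second, even granting the linking, enlarging one component does not enlarge $\chi(\Gamma_k)$: the quantity being bounded is the size of the \emph{second-largest} component, and the only certificate available from the well-separation of $\omega$ and $\tau$ is $\min\{|C_\omega|,|C_\tau|\}$ for the two components containing $\omega\D^{k-1}$ and $\tau\D^{k-1}$. Your single batch of extra cells lands in exactly one component; if that happens to be the larger one, the second-largest may still have exactly $|\D|^{k-1}$ vertices, and swapping the roles of $A$ and $B$ merely relabels the sides without producing extra cells on both. To exclude this you would have to show that no component can consist of exactly one full block $u\D^{k-1}$ plus fewer than $|\D|^{k-3}$ further cells — which is the actual content of the proposition.

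The paper proves exactly that, by contradiction and in a few lines. If $\chi(\Gamma_k)\leq|\D|^{k-1}+|\D|^{k-3}-1$ and $i_1\cdots i_k$ is the optimal cut vertex, then (using $\chi(\Gamma_k)\geq|\D|^{k-1}$ from Theorem~\ref{thm:nonfragile} together with the fact that each block $j\D^{k-1}$ with $j\neq i_1$, and each block $\j\D^{k-3}$ with $\j\in\D^3\setminus\{i_1i_2i_3\}$, lies entirely in a single component) the second-largest component must be one full block $u\D^{k-1}$ together with at most $|\D|^{k-3}-1$ cells, all from $i_1i_2i_3\D^{k-3}$. Hence $\vp_{i_1i_2i_3}(F)$ is the unique level-$3$ cell outside $\vp_u(F)$ meeting $\vp_u(F)$, and Lemma~\ref{lem:simplefact} (which rests on the first part of the proof of Proposition~\ref{prop:1and3}) forces $F$ to be fragile — a contradiction. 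If you wish to keep a direct argument, you would need to run this counting on both flanking components simultaneously; the contradiction route is considerably shorter and is where the fragility criterion, which your outline invokes only to start the straddling chain, actually does its work.
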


This paper is organized as follows. In Section 2, we give a characterization of connected GSCs. In Section 3, we prove the sharp lower bound estimate of $\chi(\Gamma_k)$ for the fragile case. Furthermore, we obtain several sufficient conditions for a connected GSC to be fragile. Sections 4 is devoted to treating non-fragile case and proving Theorem~\ref{thm:nonfragile}. In Section~5, we turn to local cut points and prove Theorem~\ref{thm:local}. In Section~6, we explain that our results hold for Bara\'nski carpets. In particular, we discuss the connectedness of Bara\'nski sponges and prove Theorem~\ref{thm:main2}. We also present an effective method to draw the associated Hata graph sequence. The proofs of Theorem~\ref{thm:gap} and Proposition~\ref{prop:chi-betterBd} are given in Section 7.

\section{Connectedness of GSCs}

In this section, we deal with the connectedness problem of GSCs. The following result is a special case of the aforementioned Hata's criterion.

\begin{lemma}[\cite{Hat85}]\label{thm:hata}
    A GSC $F=F(N,\D)$ is connected if and only if for every pair of $i,i'\in\D$, there exists $\{i_k\}_{k=1}^{n}\subset\D$ such that $i_1=i$, $i_n=i'$ and $\vp_{i_k}(F)\cap \vp_{i_{k+1}}(F)\neq\varnothing$ for all $1\leq k\leq n-1$.
\end{lemma}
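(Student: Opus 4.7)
The statement is Hata's connectedness criterion specialised to the IFS $\{\vp_i:i\in\D\}$, so my plan is to show that $F$ is connected if and only if the graph $\Gamma_1$ is connected; the combinatorial chain condition in the lemma is just a rephrasing of connectedness of $\Gamma_1$. The forward direction is essentially free: if $\Gamma_1$ is disconnected, one partitions $\D$ as $\D_1\sqcup\D_2$ with no edge between the two sides, so that $A:=\bigcup_{i\in\D_1}\vp_i(F)$ and $B:=\bigcup_{i\in\D_2}\vp_i(F)$ are non-empty, compact, disjoint, and together cover $F$, contradicting connectedness.

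For the converse, I would first establish a propagation lemma by induction on $k$: if $\Gamma_1$ is connected, then for any $\i,\j\in\D^k$ one can find a chain $\i=\i_0,\ldots,\i_m=\j$ in $\D^k$ with $\vp_{\i_l}(F)\cap\vp_{\i_{l+1}}(F)\neq\varnothing$. The base case $k=1$ is the hypothesis. For the inductive step, given $\bm u=\bm u'u_{k+1}$ and $\bm v=\bm v'v_{k+1}$ in $\D^{k+1}$, apply the inductive hypothesis to $\bm u',\bm v'\in\D^k$ to obtain a chain $\bm u'=\bm w_0,\ldots,\bm w_m=\bm v'$. At each junction $l$, pick $p_l\in\vp_{\bm w_l}(F)\cap\vp_{\bm w_{l+1}}(F)$ and use $F=\bigcup_{\gamma\in\D}\vp_\gamma(F)$ to find $\gamma_l,\delta_l\in\D$ with $p_l\in\vp_{\bm w_l\gamma_l}(F)\cap\vp_{\bm w_{l+1}\delta_l}(F)$, so that $\bm w_l\gamma_l$ and $\bm w_{l+1}\delta_l$ are adjacent in $\Gamma_{k+1}$. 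Inside each cell $\bm w_l$ the sub-cells $\{\vp_{\bm w_l}\vp_\gamma(F)\}_{\gamma\in\D}$ form an affine copy of the level-$1$ configuration, so the base hypothesis supplies an in-cell chain connecting $\delta_{l-1}$ and $\gamma_l$, with analogous in-cell chains at the two endpoints linking $u_{k+1}$ to $\gamma_0$ inside $\bm w_0$ and $\delta_{m-1}$ to $v_{k+1}$ inside $\bm w_m$. Splicing these ``vertical'' in-cell chains with the ``horizontal'' junctions yields a chain from $\bm u$ to $\bm v$ in $\D^{k+1}$.

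To finish, I would deduce connectedness of $F$ as follows. Since $\vp_\i(F)\subseteq\vp_\i(Q_0)$ and each $\vp_\i(Q_0)$ is a closed square, the level-$n$ chain condition presents $Q_n=\bigcup_{\i\in\D^n}\vp_\i(Q_0)$ as a chain-connected union of connected compacta, hence as a connected set. As $\{Q_n\}_{n\geq 0}$ is a nested sequence of non-empty connected compacta in $\R^2$ with $F=\bigcap_{n\geq 0}Q_n$, the standard fact that the intersection of such a sequence remains connected gives connectedness of $F$.

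The main technical obstacle is the inductive step of the propagation lemma, where ``horizontal'' chains coming from the inductive hypothesis have to be spliced with ``vertical'' chains supplied by the base hypothesis inside a single cell at level $k+1$; everything else is either immediate from the self-similar relation $F=\bigcup_{i\in\D}\vp_i(F)$ or a routine application of compactness.
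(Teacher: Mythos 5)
The paper does not prove this lemma at all: it is quoted from Hata's 1985 paper as a known special case of his general connectedness criterion, so there is no in-paper argument to compare yours against. Judged on its own, your proof is correct and complete. The forward direction (a disconnection of $\Gamma_1$ splits $F$ into two non-empty disjoint compacta) is fine, and the two key ingredients of the converse both check out: the propagation lemma works because $\vp_{\bm w_l}$ is injective, so the induced subgraph of $\Gamma_{k+1}$ on $\bm w_l\D$ is isomorphic to $\Gamma_1$ and supplies the in-cell chains you splice with the junction edges $\bm w_l\gamma_l \sim \bm w_{l+1}\delta_l$; and the passage from chain-connectedness of the cells $\vp_{\i}(F)$ to that of the squares $\vp_{\i}(Q_0)$ is legitimate since $F\subset Q_0$, giving connectedness of each $Q_n$ and hence of $F=\bigcap_n Q_n$ by the standard fact on nested intersections of continua. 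For what it is worth, the more common route (and essentially Hata's, which applies to arbitrary IFSs where no analogue of the nested squares $Q_n$ is available) avoids the sets $Q_n$ entirely: one assumes $F=A\sqcup B$ with $\dist(A,B)=\varepsilon>0$, takes $n$ large enough that every level-$n$ cell has diameter less than $\varepsilon$ and so lies wholly in $A$ or wholly in $B$, and then derives a contradiction from the level-$n$ chain condition. Your version trades that compactness argument for the nested-intersection lemma; both are standard, and yours is arguably cleaner in the GSC setting where the $Q_n$ are already part of the paper's setup.
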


In the following, we shall take a step further by characterizing when $\vp_i(F)\cap \vp_{i'}(F)\neq\varnothing$ and provide a simpler criterion with the aid of that. Write $i=(a,b)$ and $i'=(a',b')$. Since $F\subset [0,1]^2$, it is easy to see that
\begin{align*}
    \vp_i(F)\cap \vp_{i'}(F)\neq\varnothing &\Longrightarrow \vp_i([0,1]^2)\cap \vp_{i'}([0,1]^2)\neq\varnothing \\
    &\Longleftrightarrow ([a,a+1]\times[b,b+1])\cap ([a',a'+1]\times[b',b'+1])\neq\varnothing \\
    &\Longleftrightarrow |a-a'|\leq 1 \text{ and } |b-b'|\leq 1.
\end{align*}
Hence it suffices to consider the following four cases (we may of course assume that $i\neq i'$). Recall the notation $Q_n$ from \eqref{eq:qn}.

\textbf{Case 1}. $i-i'=\pm(1,1)$. In this case (please see Figure~\ref{fig:case1+2}(A) for an illustration), we claim that the following four statements are mutually equivalent:
\begin{align*}
        &\text{(1) } \vp_i(F)\cap \vp_{i'}(F)\neq\varnothing; &\text{(2) } &\vp_i(Q_1)\cap \vp_{i'}(Q_1)\neq\varnothing; \\
        &\text{(3) } (0,0),(N-1,N-1)\in\D; &\text{(4) } &(N-1,N-1)\in\D-\D.
\end{align*}
Without loss of generality, assume that $i-i'=(1,1)$. Firstly, it follows immediately from $F\subset Q_1$ that (1) $\Longrightarrow$ (2). For (2) $\Longrightarrow$ (3), note that
\[
    \vp_i([0,1]^2)\cap \vp_{i'}([0,1]^2) = \{\vp_i((0,0))\}=\{\vp_{i'}((1,1))\}.
\]
Since $Q_1\subset [0,1]^2$, (2) implies that $(0,0),(1,1)\in Q_1$ and hence $(0,0),(N-1,N-1)\in\D$. For (3) $\Longrightarrow$ (1), note that $(0,0),(N-1,N-1)\in\D$ implies for all $k\geq 1$,
\[
    \{(0,0)\}=\bigcap_{k=1}^\infty \vp_{(0,0)}^k([0,1]^2) \subset F, \quad \{(1,1)\}=\bigcap_{k=1}^\infty \vp_{(N-1,N-1)}^k([0,1]^2) \subset F.
\]
Letting $x=\vp_i((0,0))=\vp_{i'}((1,1))$, we see that $x\in \vp_i(F)\cap \vp_{i'}(F)$ and hence (1) holds. Finally, it is clear that (3) $\Longleftrightarrow$ (4).

\begin{figure}[htbp]
    \centering
    \subfloat[Case 1]
    {
        \begin{minipage}[t]{120pt}
        \centering
        \begin{tikzpicture}[scale=0.85]
            \draw[thick] (-1.2,-1.2) rectangle (0,0);
            \draw[thick] (0,0) rectangle (1.2,1.2);
            \draw[thick] (-0.3,-0.3) rectangle (0,0);
            \draw[thick] (0,0) rectangle (0.3,0.3);
        \end{tikzpicture}
      \end{minipage}
    }
    \subfloat[Case 2]
    {
        \begin{minipage}[t]{120pt}
        \centering
        \begin{tikzpicture}[scale=0.9]
            \draw[thick] (-1.2,0) rectangle (0,1.2);
            \draw[thick] (0,-1.2) rectangle (1.2,0);
            \draw[thick] (-0.3,0) rectangle (0,0.3);
            \draw[thick] (0,-0.3) rectangle (0.3,0);
        \end{tikzpicture}
        \end{minipage}
    }
    \caption{Case 1 and Case 2}
    \label{fig:case1+2}
\end{figure}
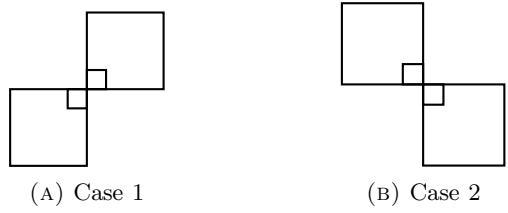

\textbf{Case 2}. $i-i'=\pm(1,-1)$. Similarly as above, we have the following mutually equivalent statements:
\begin{align*}
    &\text{(1) } \vp_i(F)\cap \vp_{i'}(F)\neq\varnothing; &\text{(2) } &\vp_i(Q_1)\cap \vp_{i'}(Q_1)\neq\varnothing; \\
    &\text{(3) } (0,N-1),(N-1,0)\in\D; &\text{(4) } &(N-1,-(N-1))\in\D-\D.
\end{align*}
Please see Figure~\ref{fig:case1+2}(B) for an illustration.

\begin{figure}[htbp]
    \centering
    \subfloat[Subcase 3.1]
    {
        \begin{minipage}[t]{135pt}
        \centering
        \begin{tikzpicture}[scale=0.8]
            \draw[thick] (-2.4,0) rectangle (0,2.4);
            \draw[thick] (0,0) rectangle (2.4,2.4);
            \draw[thick] (-0.6,1.2) rectangle (0,1.8);
            \draw[thick] (0,1.2) rectangle (0.6,1.8);
        \end{tikzpicture}
        \end{minipage}
    }
    \subfloat[Subcase 3.2.1]
    {
        \begin{minipage}[t]{135pt}
        \centering
        \begin{tikzpicture}[scale=0.8]
            \draw[thick] (-2.4,0) rectangle (0,2.4);
            \draw[thick] (0,0) rectangle (2.4,2.4);
            \draw[thick] (-0.6,0.6) rectangle (0,1.2);
            \draw[thick] (0,1.2) rectangle (0.6,1.8);
            \draw[thick] (-0.15,1.05) rectangle (0,1.2);
            \draw[thick] (0,1.2) rectangle (0.15,1.35);
        \end{tikzpicture}
        \end{minipage}
    }
    \subfloat[Subcase 3.2.2]
    {
        \begin{minipage}[t]{135pt}
        \centering
        \begin{tikzpicture}[scale=0.8]
            \draw[thick] (-2.4,0) rectangle (0,2.4);
            \draw[thick] (0,0) rectangle (2.4,2.4);
            \draw[thick] (-0.6,1.2) rectangle (0,1.8);
            \draw[thick] (0,0.6) rectangle (0.6,1.2);
            \draw[thick] (-0.15,1.2) rectangle (0,1.35);
            \draw[thick] (0,1.05) rectangle (0.15,1.2);
        \end{tikzpicture}
        \end{minipage}
    }
    \caption{Case 3}
    \label{fig:case3}
\end{figure}

\textbf{Case 3}. $i-i'=\pm(1,0)$. In this case, $\vp_i(Q_n)\cap \vp_{i'}(Q_n)$ is a scaled copy of some translation of $Q_n\cap (Q_n+(1,0))$ for all $n$. In particular, $\vp_i([0,1]^2)\cap \vp_{i'}([0,1]^2)$ is a line segment (which is the common side of the two squares). Moreover, $\vp_i(Q_1)\cap \vp_{i'}(Q_1)\neq\varnothing$ if and only if at least one of the following happens (Please see Figure~\ref{fig:case3} for  illustration):
\begin{enumerate}
    \item There is some $0\leq k\leq N-1$ such that $(N-1,k),(0,k)\in\D$.
    \item There is some $0\leq k\leq N-1$ such that $(N-1,k),(0,k+1)\in\D$.
    \item There is some $0\leq k\leq N-1$ such that $(N-1,k+1),(0,k)\in\D$.
\end{enumerate}

\textbf{Subcase 3.1}. (1) holds. In this subcase, note that
\begin{align*}
    \bigcap_{n=1}^\infty \vp_{(N-1,k)}^n([0,1]^2) &= \Big\{ \Big( 1,\sum_{n=1}^\infty  \frac{k}{N^n} \Big) \Big\} \\
    &= (1,0)+\Big\{ \Big( 0,\sum_{n=1}^\infty  \frac{k}{N^n} \Big) \Big\} = (1,0)+ \bigcap_{n=1}^\infty \vp_{(0,k)}^n([0,1]^2),
\end{align*}
implying that $F \cap (F+(1,0))$ contains $(1,\sum_{n=1}^\infty  \frac{k}{N^n})$. So $\vp_i(F)\cap \vp_{i'}(F)\neq\varnothing$ since it is a scaled copy of $F \cap (F+(1,0))$. Note that (1)  holds if and only if  $(N-1,0)\in\D-\D$. Thus
\[
    \text{(1)} \Longleftrightarrow (N-1,0)\in\D-\D \Longrightarrow \vp_i(F)\cap\vp_{i'}(F)\neq\varnothing.
\]

\textbf{Subcase 3.2}. (1) fails but one of (2) and (3) holds. In this subcase, if $Q_2\cap (Q_2+(1,0))\neq\varnothing$ (equivalently, $\vp_i(Q_2)\cap \vp_{i'}(Q_2)\neq\varnothing$), then either (2) holds and $\{(0,0),(N-1,N-1)\}\subset\D$  (see Figure~\ref{fig:case3}(B)), or (3) holds and $\{(0,N-1),(N-1,0)\}\subset\D$ (see Figure~\ref{fig:case3}(C)).

\textbf{Subcase 3.2.1} (2) holds and $\{(0,0),(N-1,N-1)\}\subset\D$. Equivalently, $\{(N-1,-1),(N-1,N-1)\}\subset\D-\D$. Then
\begin{align*}
    &\bigcap_{n=1}^\infty \vp_{(N-1,k)}\circ \vp_{(N-1,N-1)}^n([0,1]^2) \\=& \Big\{ \Big( 1,\frac{k+1}{N} \Big) \Big\}
    = (1,0)+\Big\{ \Big( 0,\frac{k+1}{N} \Big) \Big\}
    = (1,0)+ \bigcap_{n=1}^\infty \vp_{(0,k+1)}\circ \vp_{(0,0)}^n([0,1]^2),
\end{align*}
implying that $F\cap (F+(1,0))\neq\varnothing$ (equivalently, $\vp_i(F)\cap \vp_{i'}(F)\neq\varnothing$).


\textbf{Subcase 3.2.2}. (3) holds and $\{(0,N-1),(N-1,0)\}\subset\D$. Equivalently, $\{(N-1,1),(N-1,-(N-1))\}\subset\D-\D$. We can show as in Subcase 3.2.1 that $\vp_i(F)\cap \vp_{i'}(F)\neq\varnothing$.


Combining above discussions, we see that in Case 3, $\vp_i(F)\cap \vp_{i'}(F)\neq\varnothing\Longleftrightarrow \vp_i(Q_2)\cap \vp_{i'}(Q_2)\neq\varnothing\Longleftrightarrow$ at least one of the following sets is contained in $\D-\D$:
\[
    \{(N-1,0)\}, \{(N-1,-1),(N-1,N-1)\}, \{(N-1,1),(N-1,-(N-1))\}.
\]

\textbf{Case 4}. $i-i'=\pm(0,1)$. Similarly as in Case 3, we have in this case that $\vp_i(F)\cap \vp_{i'}(F)\neq\varnothing\Longleftrightarrow \vp_i(Q_2)\cap \vp_{i'}(Q_2)\neq\varnothing\Longleftrightarrow$ at least one of the following sets is contained in $\D-\D$:
\[
    \{(0,N-1)\}, \{(-1,N-1),(N-1,N-1)\}, \{(1,N-1),(N-1,-(N-1))\}.
\]

The above analysis enables us to draw the associated Hata graph (by computer) as follows. Recall that the vertex set consists of elements of $\D$. Moreover, we add an edge joining $i,i'\in\D$  if and only if one of the followings holds:
\begin{enumerate}
    \item $i-i'=\pm(1,1)$ and $(N-1,N-1)\in\D-\D$;
    \item $i-i'=\pm(1,-1)$ and $(N-1,-(N-1))\in\D-\D$;
    \item $i-i'=\pm(1,0)$ and at least one of the following sets is contained in $\D-\D$:
    \[
        \{(N-1,0)\}, \{(N-1,-1),(N-1,N-1)\}, \{(N-1,1),(N-1,-(N-1))\};
    \]
    \item $i-i'=\pm(0,1)$ and at least one of the following sets is contained in $\D-\D$:
    \[
        \{(0,N-1)\}, \{(-1,N-1),(N-1,N-1)\}, \{(1,N-1),(N-1,-(N-1))\}.
    \]
\end{enumerate}

Moreover, it suffices to iterate the initial pattern twice to check the connectedness.

\begin{theorem}\label{Thm:2-2}
    Let $F=F(N,\D)$ be a GSC. Then $F$ is connected $\Longleftrightarrow$ $Q_3$ is connected.
\end{theorem}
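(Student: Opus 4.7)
The plan is to combine Hata's criterion (Lemma~\ref{thm:hata}) with the case-by-case equivalence
\[
\vp_i(F)\cap\vp_{i'}(F)\neq\varnothing \iff \vp_i(Q_2)\cap\vp_{i'}(Q_2)\neq\varnothing
\]
for $i,i'\in \D$ with $i\neq i'$, which has already been established in Cases~1--4 above.

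For the forward direction, suppose $F$ is connected. I would use the elementary fact that a union $A\cup\bigcup_\alpha B_\alpha$ is connected whenever $A$ is connected and each $B_\alpha$ is a connected set meeting $A$. Taking $A=F$ and $B_\i=\vp_\i(Q_0)$ for $\i\in\D^3$, each $B_\i$ is a closed square containing the nonempty set $\vp_\i(F)\subset F$, so it meets the connected set $F$. Hence $Q_3=\bigcup_{\i\in\D^3}\vp_\i(Q_0)=F\cup\bigcup_{\i\in\D^3}\vp_\i(Q_0)$ is connected. (In fact the same argument shows that $Q_n$ is connected for every $n\geq 0$ whenever $F$ is connected.)

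For the reverse direction I would argue by contrapositive. If $F$ is disconnected, Lemma~\ref{thm:hata} yields a nontrivial partition $\D=\D_1\sqcup\D_2$ with $\vp_i(F)\cap\vp_{i'}(F)=\varnothing$ for every $i\in\D_1$ and $i'\in\D_2$. The Cases~1--4 equivalence then gives $\vp_i(Q_2)\cap\vp_{i'}(Q_2)=\varnothing$ for all such cross pairs. Writing $Q_3=\bigcup_{i\in\D}\vp_i(Q_2)$ and splitting the index set along $\D_1$ and $\D_2$ then expresses $Q_3$ as the disjoint union of two nonempty closed sets $\bigcup_{i\in\D_1}\vp_i(Q_2)$ and $\bigcup_{i'\in\D_2}\vp_{i'}(Q_2)$, so $Q_3$ is disconnected, as desired.

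The only real obstacle is the nontrivial direction $\vp_i(Q_2)\cap\vp_{i'}(Q_2)\neq\varnothing \Rightarrow \vp_i(F)\cap\vp_{i'}(F)\neq\varnothing$ (the converse being immediate from $F\subset Q_2$), but this is exactly what the case analysis of Section~2 is designed to establish. Subcases~3.2.1 and~3.2.2 also indicate why $Q_3$ cannot be replaced by $Q_2$ in the statement: two cells may touch at a corner already in $Q_1$ without the corresponding $F$-images touching, and whether they do so is detected only after performing one further level of iteration.
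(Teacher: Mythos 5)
Your proof is correct and follows essentially the same route as the paper: both directions hinge on the equivalence $\vp_i(Q_2)\cap\vp_{i'}(Q_2)\neq\varnothing \Longleftrightarrow \vp_i(F)\cap\vp_{i'}(F)\neq\varnothing$ established in Cases 1--4 together with Hata's criterion (Lemma~\ref{thm:hata}), with your reverse direction being just the contrapositive of the paper's direct chain argument. The forward direction in the paper is dispatched in one word (``monotonicity''), for which your hub argument via the connected set $F$ is a perfectly good elaboration.
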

\begin{proof}
    The ``$\Longrightarrow$'' part follows from monotonicity. For the ``$\Longleftarrow$'' part, suppose $Q_3$ is connected. Notice that $Q_3=\bigcup_{i\in\D}\vp_i(Q_2)$ is a finite union of compact sets. Thus for each pair of $i,i'\in\D$, we can find a sequence $\{i_k\}_{k=1}^{m}\subset\D$ such that $i_1=i$, $i_m=i'$ and $\vp_{i_k}(Q_2)\cap \vp_{i_{k+1}}(Q_2)\neq\varnothing$ for all $1\leq k\leq m-1$. Recall from the previous discussion that
    \[
        \vp_{i_k}(Q_2)\cap \vp_{i_{k+1}}(Q_2)\neq\varnothing \Longleftrightarrow \vp_{i_k}(F)\cap \vp_{i_{k+1}}(F)\neq\varnothing.
    \]
    It then follows from Lemma~\ref{thm:hata} that $F$ is connected.
\end{proof}

\begin{example}
    In Figure~\ref{fig:exa1}, we present a disconnected GSC $F=F(N,\D)$ where $N=3$ and $\D=\{(0,0),(1,0),(1,1),(1,2),(2,1)\}$. It is easy to see that $Q_2$ is connected. So one cannot hope to determine the connectedness of all GSCs by iterating the initial pattern only once.
    \begin{figure}[htbp]
        \centering
        \includegraphics[width=4cm]{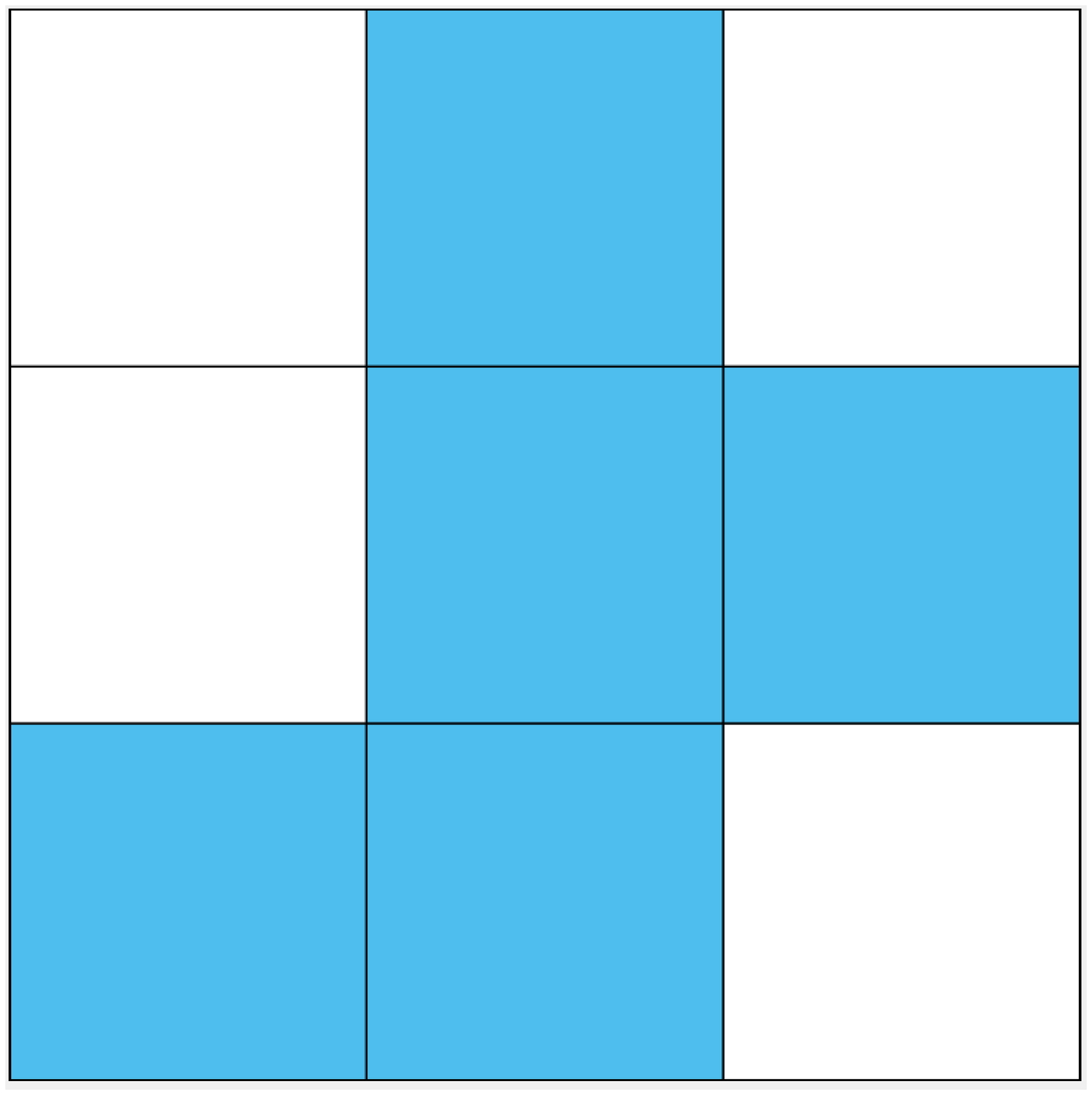} \quad
        \includegraphics[width=4cm]{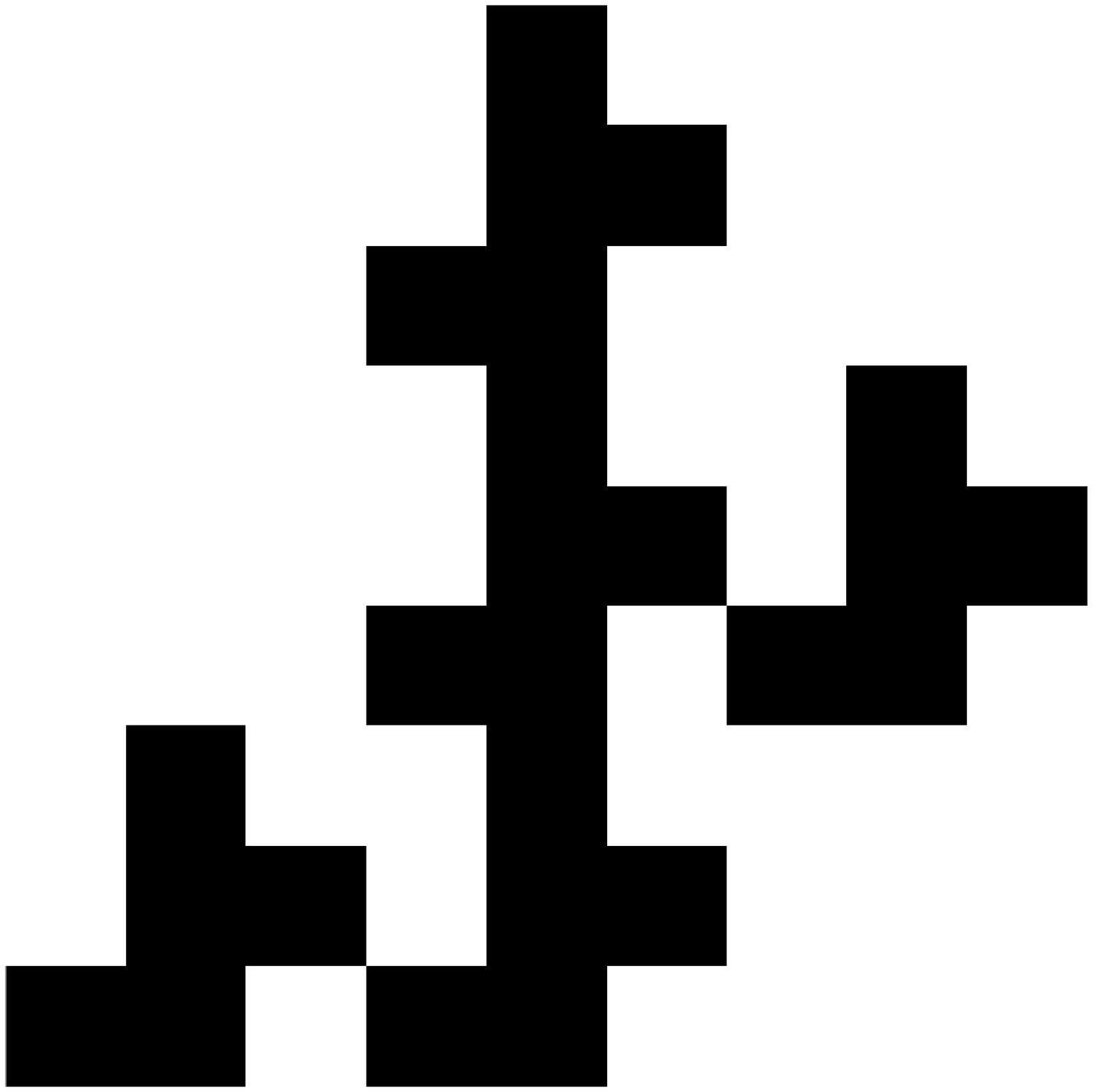} \quad
        \includegraphics[width=4cm]{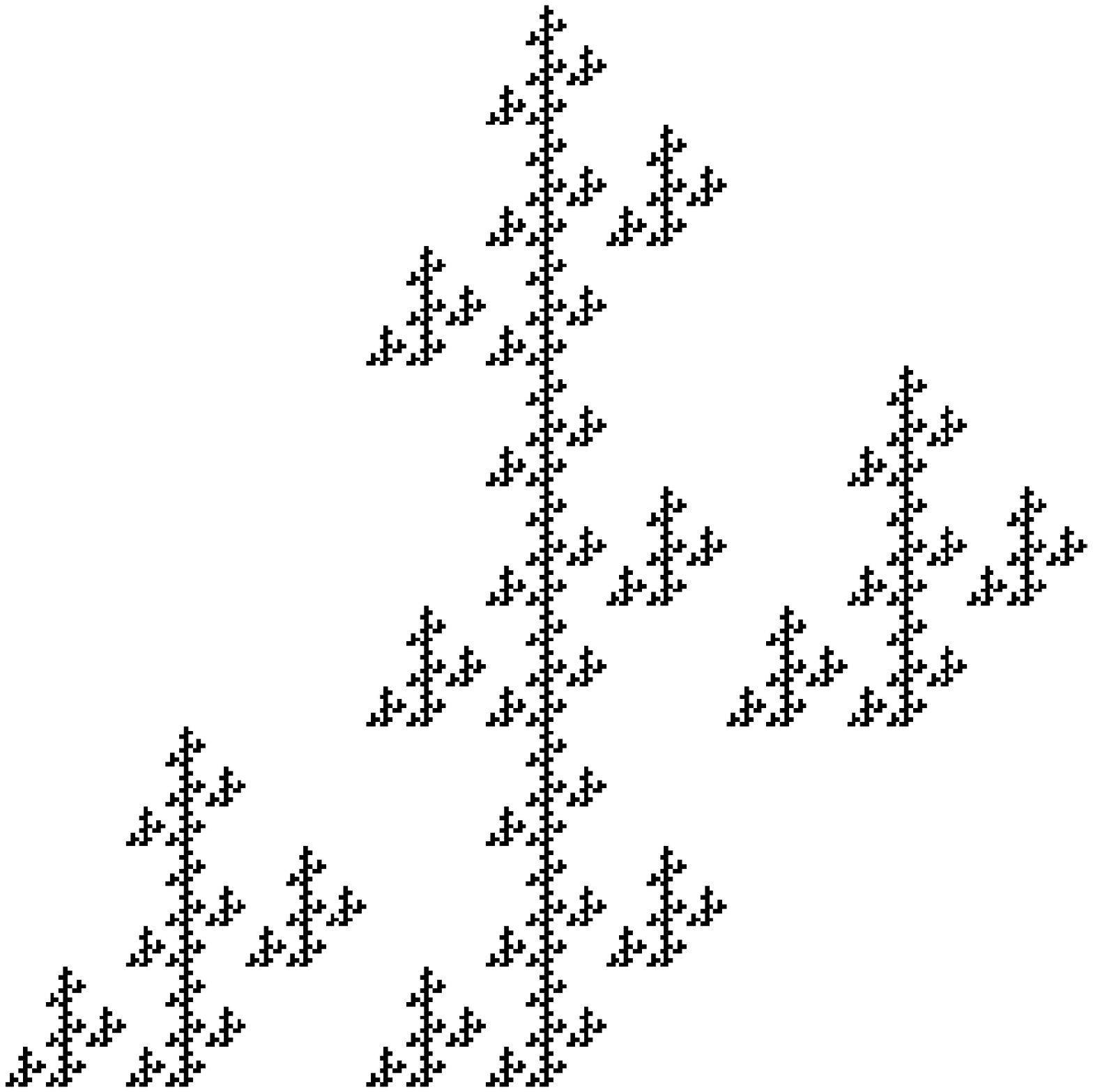}
        \caption{A disconnected GSC. From left to right: the initial pattern, $Q_2$ and the GSC}
        \label{fig:exa1}
    \end{figure}
\end{example}

\section{Cut points of fragile connected GSCs}

Let $F=F(N,\D)$ be a connected GSC and let $\{\Gamma_n\}_{n=1}^\infty$ be the associated Hata graph sequence. First, we show that fragile connected GSCs always have cut points.


\begin{lemma}\label{lem:local1}
    Let $X=A\cup B$ be a connected topological space, where $A,B$ are both closed sets containing at least two points. If $A\cap B=\{x\}$ then $x$ is a cut point of $X$.
\end{lemma}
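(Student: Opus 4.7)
The plan is to produce the standard partition argument. I would start by removing $x$ and writing
\[
    X \setminus \{x\} = (A \setminus \{x\}) \cup (B \setminus \{x\}).
\]
Since $A \cap B = \{x\}$, the two pieces on the right are disjoint, and since $|A|, |B| \geq 2$, they are both nonempty.

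Next, I would show that both pieces are closed in the subspace $X \setminus \{x\}$. This is immediate: $A$ is closed in $X$ by hypothesis, so $A \setminus \{x\} = A \cap (X \setminus \{x\})$ is closed in the subspace topology on $X \setminus \{x\}$; likewise for $B \setminus \{x\}$. Because the two pieces partition $X \setminus \{x\}$, each is also the complement of the other in $X \setminus \{x\}$, hence also open.

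Therefore $\{A \setminus \{x\}, B \setminus \{x\}\}$ is a separation of $X \setminus \{x\}$ into two nonempty clopen sets, so $X \setminus \{x\}$ is disconnected. By definition, $x$ is a cut point of $X$. There is no real obstacle here; the only subtlety worth noting explicitly is that the hypothesis $|A|, |B| \geq 2$ is used precisely to ensure both pieces remain nonempty after deletion of $x$. (Connectedness of $X$ itself is not actually needed for the conclusion that $X \setminus \{x\}$ is disconnected, but it is what makes ``cut point'' a meaningful notion in the statement.)
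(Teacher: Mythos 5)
Your proof is correct and follows essentially the same route as the paper's: delete $x$, observe that $A\setminus\{x\}$ and $B\setminus\{x\}$ are disjoint, nonempty, and closed in the subspace $X\setminus\{x\}$, hence form a separation. The extra remarks (that each piece is also open, and that connectedness of $X$ is not needed for disconnectedness of $X\setminus\{x\}$) are accurate but not needed.
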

\begin{proof}
    For simplicity, we write $Y=X\setminus\{x\}$, $U=A\setminus\{x\}$ and $V=B\setminus\{x\}$. Then $U,V$ are disjoint non-empty sets and $Y=U\cup V$. Since $A$ is closed, $U=A\cap Y$ is a closed subset of the topological space $Y$. Similarly, $V$ is also a closed subset of $Y$. Since both of them are non-empty and $U\cap V=\varnothing$, $Y=U\cup V$ is disconnected, i.e., $x$ is a cut point of $X$.
\end{proof}

Note that if $F$ is fragile then we can decompose $\D$ as $\D=\D_1\cup\D_2$ such that
\begin{equation}\label{eq:fragilelower1}
   \Big( \bigcup_{i\in\D_1}\vp_i(F) \Big) \cap \Big( \bigcup_{i\in\D_2}\vp_i(F) \Big) = \{x\}
\end{equation}
for some $x\in F$. It follows immediately from the above lemma that $x$ is a cut point of $F$. So a fragile GSC has at least one cut point.

The following observation is useful to determine the fragility of GSCs.

\begin{proposition}\label{prop:evenfragile}
    Suppose that there is some $m\geq 1$ such that $\D^m$ can be decomposed as $\D^m=I\cup J$ with $I\cap J=\varnothing$ and
    \[
        \Big( \bigcup_{\i\in I}\vp_{\i}(F) \Big) \cap \Big( \bigcup_{\i\in J}\vp_{\i}(F) \Big) = \{x\}
    \]
    for some $x\in F$. Then $F$ is fragile.
\end{proposition}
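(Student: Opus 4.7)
The plan is to proceed by induction on $m$. The base case $m=1$ is immediate, since the hypothesis then reads $\D=I\cup J$ with $I\cap J=\varnothing$ and $(\bigcup_{i\in I}\vp_i(F))\cap(\bigcup_{i\in J}\vp_i(F))=\{x\}$, which is precisely the definition of fragility. Suppose the statement holds at level $m-1$, and that we are given a level-$m$ decomposition $\D^m=I\cup J$ with intersection $\{x\}$. Write $\widetilde{U}=\bigcup_{\i\in I}\vp_\i(F)$ and $\widetilde{V}=\bigcup_{\i\in J}\vp_\i(F)$, so $F=\widetilde{U}\cup\widetilde{V}$ and $\widetilde{U}\cap\widetilde{V}=\{x\}$. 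For each $i\in\D$, split $i\D^{m-1}=(i\D^{m-1}\cap I)\cup(i\D^{m-1}\cap J)$, and call $i$ \emph{pure} if one piece is empty and \emph{straddling} otherwise.

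If every $i\in\D$ is pure, each $\vp_i(F)$ is contained in $\widetilde{U}$ or in $\widetilde{V}$; set $\D_1=\{i:\vp_i(F)\subset\widetilde{U}\}$ and $\D_2=\{i:\vp_i(F)\subset\widetilde{V}\}$. Since $|F|>1$ forbids $\vp_i(F)$ from being a singleton, $\D_1\cap\D_2=\varnothing$; choosing any $\j\in I$ and taking $i$ to be its first letter shows $\D_1\neq\varnothing$, and symmetrically for $\D_2$. The intersection $(\bigcup_{i\in\D_1}\vp_i(F))\cap(\bigcup_{i\in\D_2}\vp_i(F))$ is contained in $\widetilde{U}\cap\widetilde{V}=\{x\}$ and is non-empty by connectedness of $F$, so it equals $\{x\}$ and $F$ is fragile.

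Otherwise, fix a straddling index $i^*\in\D$. The connected set $\vp_{i^*}(F)$ is the union of the two non-empty closed sets $A=\bigcup_{\j\in i^*\D^{m-1}\cap I}\vp_\j(F)$ and $B=\bigcup_{\j\in i^*\D^{m-1}\cap J}\vp_\j(F)$; since $A\cap B\subseteq\widetilde{U}\cap\widetilde{V}=\{x\}$ and connectedness forbids $A\cap B=\varnothing$, we obtain $A\cap B=\{x\}$. Pulling back by $\vp_{i^*}^{-1}$ and using the cancellation $\vp_{i^*}^{-1}\circ\vp_{i^*\k}=\vp_\k$, the sets $\widetilde{I}=\{\k\in\D^{m-1}:i^*\k\in I\}$ and $\widetilde{J}=\{\k\in\D^{m-1}:i^*\k\in J\}$ form a level-$(m-1)$ singleton decomposition of $F$ with cut point $\vp_{i^*}^{-1}(x)$, and the inductive hypothesis completes the proof. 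The one delicate point is exactly this descent step: one must verify that the cancellation is exact and that the singleton property transfers from $\vp_{i^*}(F)$ back to all of $F$, which works because conjugation by $\vp_{i^*}^{-1}$ is a homeomorphism and because $\vp_{i^*}(F)$, being a scaled copy of the connected set $F$, is itself connected.
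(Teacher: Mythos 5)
Your proof is correct and is essentially the paper's own argument: both rest on the observation that a connected cell written as a union of two non-empty closed subfamilies whose intersection lies inside $\{x\}$ must meet exactly in $\{x\}$, after which conjugating by the cell map transfers the singleton decomposition down a level. The only cosmetic difference is that you strip the first letter and induct downward one level at a time (jumping straight to level $1$ when no first letter straddles $I$ and $J$), whereas the paper takes a minimal $m_*$ and strips the last letter; the content is the same.
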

\begin{proof}
    Let $m_*$ be the smallest positive integer such that $\D^{m_*}$ has a decomposition as above. It suffices to show that $m_*=1$. Suppose $m_*>1$. We first claim that there is some $\omega\in\D^{m_*-1}$ such that $\omega\D\cap I\neq\varnothing$ and $\omega\D\cap J\neq\varnothing$. Otherwise, for every $\omega\in\D^{m_*-1}$, we have either $\omega\D\subset I$ or $\omega\D\subset J$. Then letting $I'=\{\omega\in\D^{m_*-1}:\omega\D\subset I\}$ and $J'=\{\omega\in\D^{m_*-1}:\omega\D\subset J\}$, we see that $I'\cap J'= \varnothing$ and
    \[
        \Big( \bigcup_{\i\in I'}\vp_{\i}(F) \Big) \cap \Big( \bigcup_{\i\in J'}\vp_{\i}(F) \Big) = \Big( \bigcup_{\i\in I}\vp_{\i}(F) \Big) \cap \Big( \bigcup_{\i\in J}\vp_{\i}(F) \Big)=\{x\}.
    \]
    So $\D^{m_*-1}$ has such a decomposition and this contradicts the minimality of $m_*$.

    So there is a word $\omega\in\D^{m_*-1}$ as claimed above. Letting $\D_1=\{i\in\D: \omega i\in I\}$ and $\D_2=\{i\in\D: \omega i\in J\}$, we see that both of $\D_1,\D_2$ are non-empty and $\D_1\cup\D_2=\D$. Since $\vp_\omega(F)$ is connected,
    \[
        \varnothing\neq \Big( \bigcup_{i\in \D_1}\vp_{\omega i}(F) \Big) \cap \Big( \bigcup_{i\in \D_2}\vp_{\omega i}(F) \Big) \subset \Big( \bigcup_{\i\in I}\vp_{\i}(F) \Big) \cap \Big( \bigcup_{\i\in J}\vp_{\i}(F) \Big) = \{x\}.
    \]
    Hence $\big( \bigcup_{i\in\D_1}\vp_{\omega i}(F) \big) \cap \big( \bigcup_{i\in\D_2}\vp_{\omega i}(F) \big) = \{x\}$, which implies that
    \[
        \Big( \bigcup_{i\in\D_1}\vp_{i}(F) \Big) \cap \Big( \bigcup_{i\in\D_2}\vp_{i}(F) \Big) = \{\vp_{\omega}^{-1}(x)\}.
    \]
    So $F$ is fragile.
\end{proof}

Next we prove the lower estimate of $\chi(\Gamma_k)$  for fragile GSCs.
\begin{proposition}\label{prop:fragilelower}
    Let $F=F(N,\D)$ be a fragile connected GSC. Then $\chi(\Gamma_k)\geq |\D|^{k-1}-1$ for all $k\geq 2$.
\end{proposition}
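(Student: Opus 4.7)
The plan is to construct, for each $k \geq 2$, a cut vertex $v \in \D^k$ of $\Gamma_k$ such that at least one connected component of $\Gamma_k - \{v\}$ has size at least $|\D|^{k-1} - 1$. First, I would refine the fragile decomposition to level $k$: writing $I_k := \D_1\D^{k-1}$ and $J_k := \D_2\D^{k-1}$, the self-similarity $F = \bigcup_{\j \in \D^{k-1}} \vp_\j(F)$ yields $\D^k = I_k \sqcup J_k$ and
\[
\Bigl(\bigcup_{\i \in I_k} \vp_\i(F)\Bigr) \cap \Bigl(\bigcup_{\i \in J_k} \vp_\i(F)\Bigr) = U \cap V = \{x\},
\]
where $U = \bigcup_{i \in \D_1}\vp_i(F)$ and $V = \bigcup_{i \in \D_2}\vp_i(F)$. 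A short topological argument in the style of Lemma~\ref{lem:local1}, using the connectedness of $F$ and the single-point intersection, shows $U$ and $V$ are themselves connected; consequently, both induced subgraphs $\Gamma_k[I_k]$ and $\Gamma_k[J_k]$ are connected, since a subset $S \subset \D^k$ induces a connected subgraph of $\Gamma_k$ iff $\bigcup_{\i \in S}\vp_\i(F)$ is connected.

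The crucial observation is that every edge of $\Gamma_k$ between $I_k$ and $J_k$ corresponds to a pair $(\i, \j)$ with $\vp_\i(F) \cap \vp_\j(F) \subset U \cap V = \{x\}$, so both endpoints must lie in $V_x^k := \{\i \in \D^k : x \in \vp_\i(F)\}$. To produce a candidate cut vertex, I will build an address of $x$ rooted on the $\D_1$ side: pick $i_1 \in \D_1 \cap V_x^1$, let $y_0 := x$, and recursively choose $i_{\ell+1} \in V_{y_\ell}^1$ with $y_\ell := \vp_{i_\ell}^{-1}(y_{\ell - 1})$. Setting $v := i_1 i_2 \cdots i_k$, an induction shows $v \in V_x^k \cap I_k$.

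To conclude, I need to verify (i) $v$ is a cut vertex of $\Gamma_k$ and (ii) $|G_2(v)| \geq |\D|^{k-1} - 1$. In the \emph{clean} case $V_x^k \cap I_k = \{v\}$, removing $v$ kills every cross-edge, so $J_k$ remains one connected component of size $|\D_2|\cdot|\D|^{k-1}$ while $I_k \setminus \{v\}$ is separated from it; viewing $I_k = \bigsqcup_{i \in \D_1} i\D^{k-1}$ as $|\D_1|$ copies of $\Gamma_{k-1}$ glued along inter-cell edges, a component analysis shows that the largest component within $I_k \setminus \{v\}$ has size at least $|\D|^{k-1} - 1$. When $|V_x^k \cap I_k| \geq 2$, I would descend into the scaled sub-copy $\vp_{i_1}(F)$ and exploit the fragility inherited there (together with the minimality trick used in the proof of Proposition~\ref{prop:evenfragile}) to iterate, reducing to the clean case at a deeper level. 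The main obstacle, and the technically delicate step, is precisely this uniform control of the components of $I_k \setminus \{v\}$ across all configurations of $V_x^k$, together with ensuring that the recursive descent terminates with a valid cut vertex at the prescribed level $k$.
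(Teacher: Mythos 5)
Your setup — the level-$k$ refinement $\D^k=\D_1\D^{k-1}\cup\D_2\D^{k-1}$, the observation that every cross-edge of $\Gamma_k$ between the two sides has both endpoints in $\Omega_k(x):=\{\i\in\D^k: x\in\vp_{\i}(F)\}$, and the idea of taking the cut-vertex candidate inside $\Omega_k(x)$ — is exactly the paper's. The genuine gap is your handling of the case $|\Omega_k(x)\cap I_k|\geq 2$. The paper's key step, which your proposal is missing, is the claim that at least one of $|\Omega_k(x)\cap\D_1\D^{k-1}|$ and $|\Omega_k(x)\cap\D_2\D^{k-1}|$ equals $1$: if both were at least $2$, then $x$ would be the common corner of four pairwise adjacent level-$k$ squares, which forces $(0,0),(1,0),(0,1),(1,1)\in F$ and hence makes the intersection of two of these cells lying on opposite sides of the decomposition contain more than one point, contradicting \eqref{eq:fragilelower1}. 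With this claim, the ``clean'' case always occurs on one of the two sides, and one simply takes the cut vertex to be the unique element of $\Omega_k(x)$ on that side; no recursion is needed, and the side is allowed to depend on $k$ (you fix the $\D_1$ side in advance, which is why you get stuck). Your proposed recursive descent cannot substitute for this: descending into $\vp_{i_1}(F)$ changes the level of the cells involved, whereas the proposition demands a cut vertex of $\Gamma_k$ at the prescribed level $k$; you give no argument that the descent terminates; and the fragility of the scaled copy $\vp_{i_1}(F)$ (which is just an affine copy of $F$) has no evident relation to the original point $x$ and the original decomposition $\D_1\cup\D_2$. As written, the case $|\Omega_k(x)\cap I_k|\geq 2$ is not handled.

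A secondary remark: in your clean case the assertion that the largest component of $I_k\setminus\{v\}$ has size at least $|\D|^{k-1}-1$ is stated rather than proved; some justification that $\Gamma_k-\{v\}$ does not shatter the $I_k$ side into small fragments (or that the $J_k$ side forms a single component, so that the second largest component is at least $\min\{|\D_1\D^{k-1}|-1,\,|\D_2\D^{k-1}|\}$) is needed to close the estimate. The paper passes over this point quickly as well, so it is not the decisive issue; the decisive missing ingredient is the four-corner argument above.
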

\begin{proof}
    Since $F$ is fragile, we can find some $x\in F$ and a decomposition of $\D$, namely $\D=\D_1\cup\D_2$, such that \eqref{eq:fragilelower1} holds. Fix any $k\geq 2$.
    Letting $\D_\ell\D^{k-1}=\{i\j: i\in\D_\ell,\j\in\D^{k-1}\}$, $\ell=1,2$, we have
    \begin{equation}\label{eq:fragile-levelk}
        \Big( \bigcup_{\j\in\D_1\D^{k-1}}\vp_{\j}(F) \Big) \cap \Big( \bigcup_{\j\in\D_2\D^{k-1}}\vp_{\j}(F) \Big) = \{x\}.
    \end{equation}
    Write $\Omega_k(x)=\{\i\in\D^k: x\in \vp_{\i}(F)\}$.  By the above identity, it is clear that $|\D_\ell\D^{k-1}\cap\Omega_{k}(x)|\geq 1$ for $\ell=1,2$ and for every $k\geq 2$.

    We claim that one of $|\D_\ell\D^{k-1}\cap\Omega_{k}(x)|$, $\ell=1,2$, equals $1$. Otherwise, there are $\i_*,\j_*\in\D_1\D^{k-1}$ and $\i^*,\j^*\in\D_2\D^{k-1}$ with $x\in\bigcap_{t\in\{\i_*,\j_*,\i^*,\j^*\}}\vp_t(F)$. Without loss of generality, assume that these cells locate as in Figure~\ref{fig:4common}, where $x$ is the common vertex of the four corresponding level-$k$ cells. Thus $\{(0,0),(1,0),(0,1),(1,1)\}\subset F$. Then it is easy to see that $\vp_{\i_*}(F)\cap\vp_{\i^*}(F)$ contains more than one point, which contradicts~\eqref{eq:fragile-levelk} and hence proves the claim.


    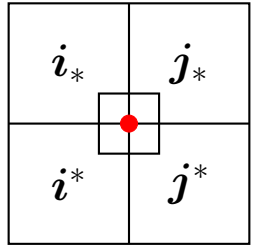
\begin{figure}[htbp]
        \centering
        \begin{tikzpicture}[scale=0.8]
            \draw[thick] (-2,-2) rectangle (2,2);
            \draw[thick] (-2,0) to (2,0);
            \draw[thick] (0,-2) to (0,2);
            \draw[thick] (-0.5,-0.5) rectangle (0.5,0.5);
            \node[font=\fontsize{18}{1}\selectfont] at (-1,1) {$\i_*$};
            \node[font=\fontsize{18}{1}\selectfont] at (1,1) {$\j_*$};
            \node[font=\fontsize{18}{1}\selectfont] at (-1,-1) {$\i^*$};
            \node[font=\fontsize{18}{1}\selectfont] at (1,-1) {$\j^*$};
            \filldraw[draw=red,fill=red] (0,0) circle (4pt);
        \end{tikzpicture}
        \caption{The center point is the common vertex of the four level-$k$ cells.  Here and afterwards, we will typically use words like $i,\i$, etc. to illustrate the corresponding squares or cells.}
        \label{fig:4common}
    \end{figure}

    By our claim, we may assume that $|\D_1\D^{k-1}\cap\Omega_{k}(x)|=1$, say $\D_1\D^{k-1}\cap\Omega_{k}(x)=\{\omega_{k}\}$. Noting that
    \[
        x\notin \bigcup_{\j\in\D_1\D^{k-1}\setminus\Omega_{k}(x)}\vp_{\j}(F) = \bigcup_{\j\in\D_1\D^{k-1}\setminus\{\omega_{k}\}}\vp_{\j}(F),
    \]
    we have by~\eqref{eq:fragilelower1} that
    \[
        \Big( \bigcup_{\j\in\D_1\D^{k-1}\setminus\{\omega_{k}\}}\vp_{\j}(F) \Big)\cap \Big( \bigcup_{\j\in\D_2\D^{k-1}}\vp_{\j}(F) \Big) \subset \Big( \bigcup_{i\in\D_1}\vp_{i}(F)\setminus\{x\} \Big) \cap \Big( \bigcup_{i\in\D_2}\vp_i(F) \Big) = \varnothing.
    \]
    Since $\bigcup_{\j\in\D^{k}\setminus\{\omega_{k}\}}\vp_{\j}(F)$ is the union of the above two sets (which are both non-empty and compact), it is disconnected. So $\omega_{k}$ is a cut vertex of $\Gamma_{k}$ and
    \[
        \chi(\Gamma_k) \geq \min\{|\D_1\D^{k-1}\setminus\{\omega_{k}\}|, |\D_2\D^{k-1}|\} \geq |\D|^{k-1}-1.
    \]
\end{proof}

In order to find an effective way to determine whether a given GSC is fragile, we build a labelling system on the edge set of $\Gamma_1$.


\begin{definition}[Labelled $1$-st Hata graph]\label{def:label}
    Given any edge in $\Gamma_1$, it is labelled ``$x$'' (some point in $F$) if and only if its two endvertices, say $i$ and $j$, satisfy that $\vp_i(F)\cap \vp_j(F)=\{x\}$.
\end{definition}

\begin{example}
    Figure~\ref{fig:fra} depicts a GSC with its initial pattern and the labelled $1$-st Hata graph (in which $a=(\frac{1}{2},\frac{2}{3}), b=(\frac{1}{6},\frac{1}{3}), c=(\frac{1}{2},\frac{1}{3})$ and $d=(\frac{5}{6},\frac{1}{3})$). Note that $F$ is fragile since
    \[
        \vp_{(1,2)}(F) \cap \Big( \bigcup_{i\in\D\setminus\{(1,2)\}}\vp_i(F) \Big) = \{a\}.
    \]

    \begin{figure}[htbp]
        \centering
        \includegraphics[width=4cm]{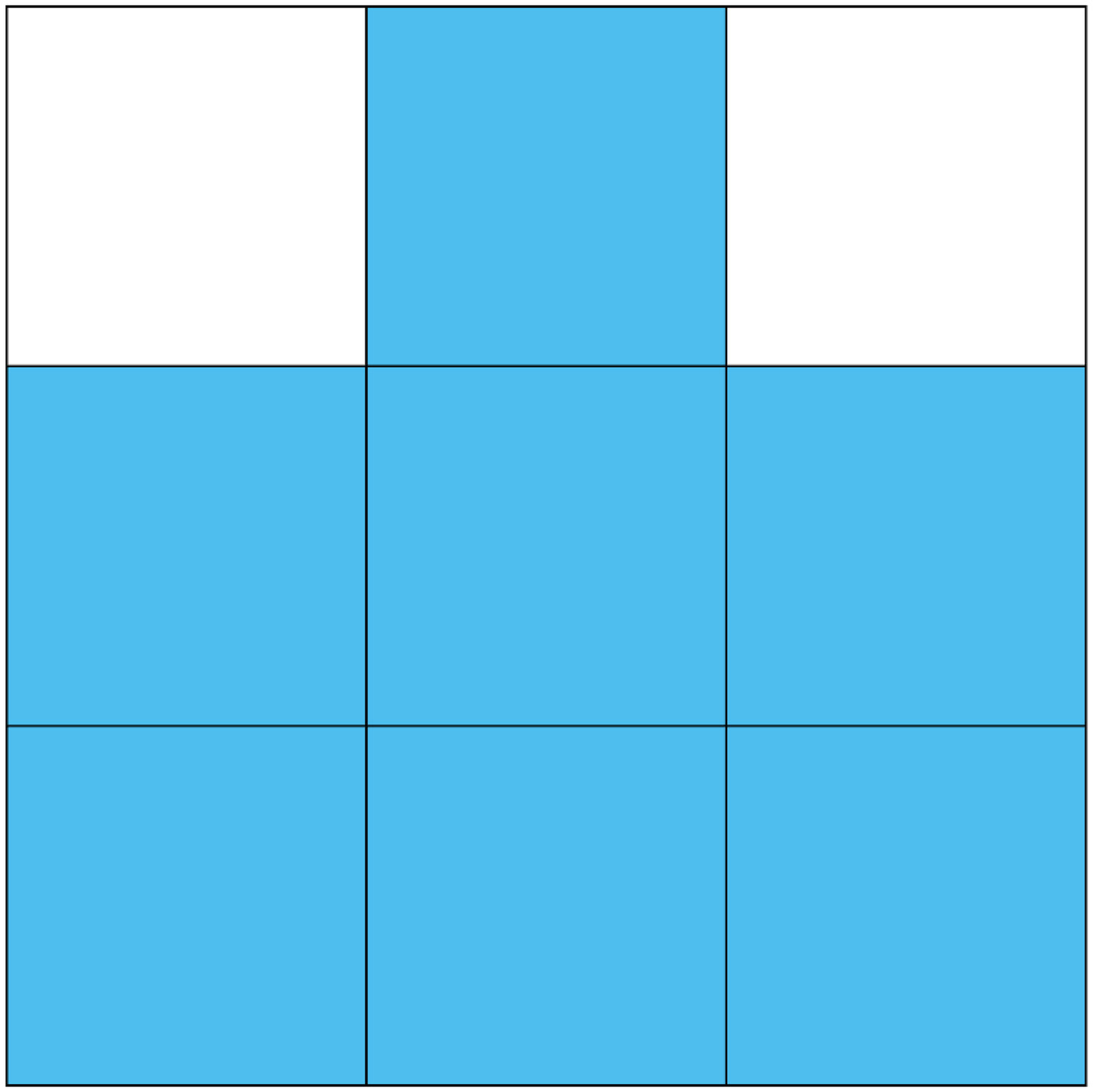} \quad
        \includegraphics[width=4cm]{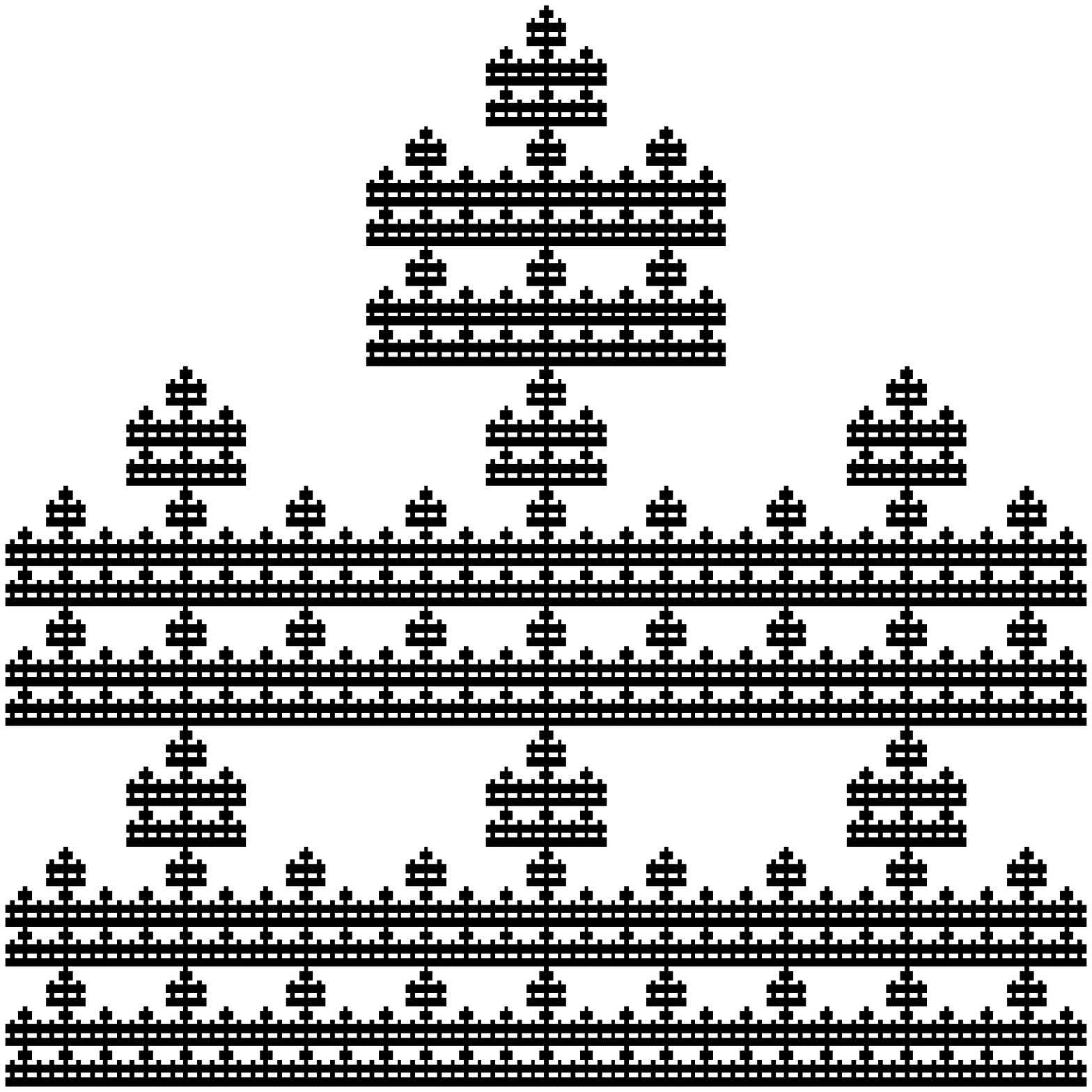} \quad
        \includegraphics[width=4cm]{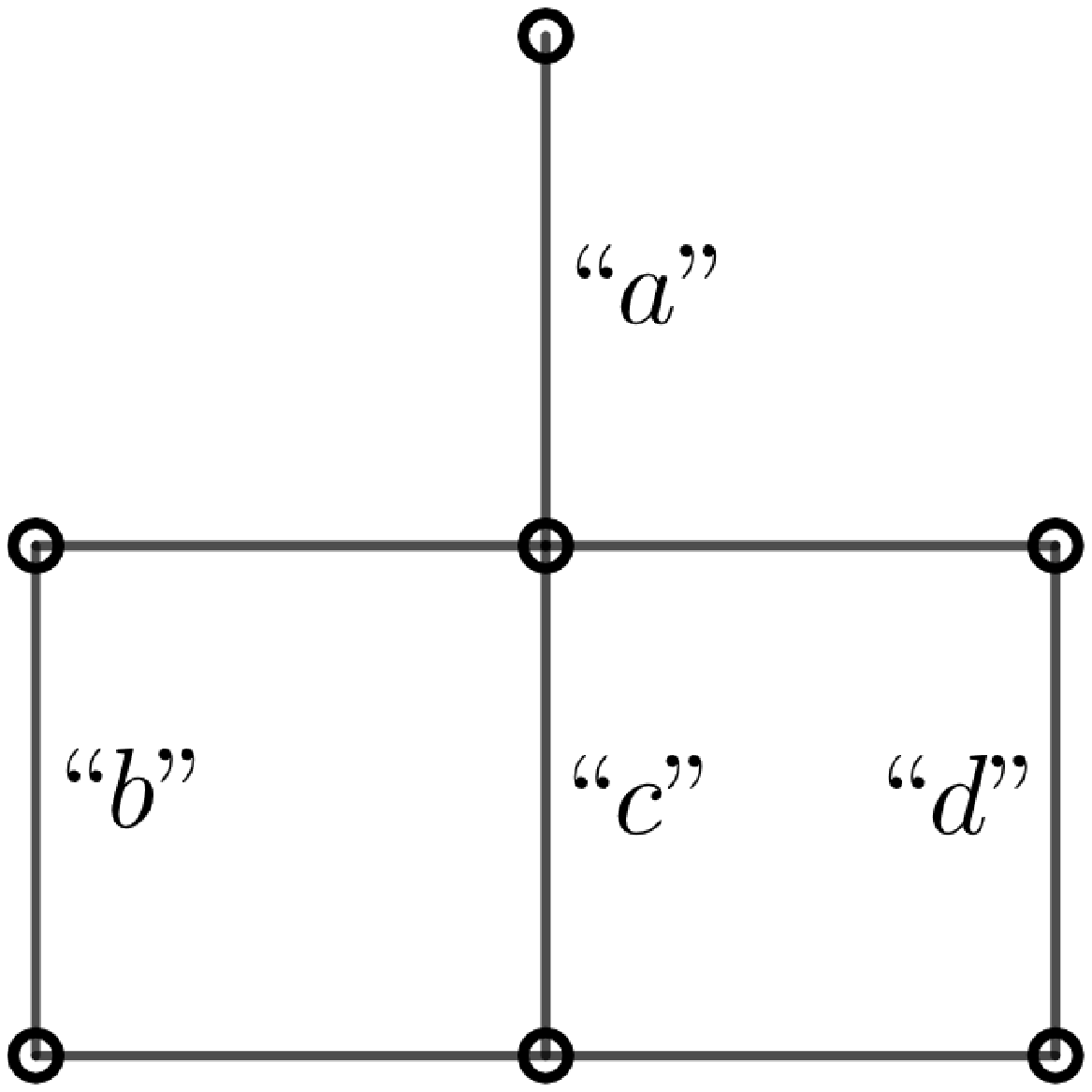}
        \caption{A GSC and the associated labelled $1$-st Hata graph}
        \label{fig:fra}
    \end{figure}
\end{example}

\begin{theorem}\label{thm:fragile}
    A connected GSC $F$ is fragile if and only if there is some $x\in F$ such that if we delete all the edges labelled by ``$x$'' in the associated labelled $1$-st Hata graph then the remaining subgraph is no longer connected.
\end{theorem}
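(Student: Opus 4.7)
The plan is to prove both implications as a direct translation of the defining condition of fragility into properties of labelled edges between complementary index sets. In both directions the same pair $\D_1,\D_2$ does the work: once it is handed to us by the fragility hypothesis, and once it is read off from the components of the pruned graph.

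For the forward direction, assume $F$ is fragile and fix a decomposition $\D=\D_1\cup\D_2$ with $\D_1\cap\D_2=\varnothing$ and a point $x\in F$ such that
\[
    \Big(\bigcup_{i\in\D_1}\vp_i(F)\Big)\cap\Big(\bigcup_{j\in\D_2}\vp_j(F)\Big)=\{x\}.
\]
Take any edge $ij$ of $\Gamma_1$ with $i\in\D_1$ and $j\in\D_2$. The intersection $\vp_i(F)\cap\vp_j(F)$ is non-empty (that is what it means for $ij$ to be an edge) and sits inside the singleton $\{x\}$, so it equals $\{x\}$ and the edge carries the label $x$ by Definition~\ref{def:label}. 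Consequently every edge of $\Gamma_1$ between $\D_1$ and $\D_2$ is deleted in the pruning process, and since both sets are non-empty, the pruned graph is disconnected.

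For the reverse direction, suppose $x\in F$ has the property that deleting the $x$-labelled edges disconnects $\Gamma_1$. Choose any non-empty proper subset $\D_1$ of $\D$ that is a union of connected components of the pruned graph, and set $\D_2=\D\setminus\D_1$. For any $i\in\D_1$ and $j\in\D_2$, there is no edge between $i$ and $j$ in the pruned graph, which admits exactly two possibilities: either $\vp_i(F)\cap\vp_j(F)=\varnothing$ (there was no edge of $\Gamma_1$ to begin with), or the edge existed and was deleted, which by Definition~\ref{def:label} forces $\vp_i(F)\cap\vp_j(F)=\{x\}$. Crucially, edges whose cell intersection is a non-singleton (and therefore carry no label) and edges whose label is some $y\neq x$ would both survive the pruning, so neither occurs between $\D_1$ and $\D_2$. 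Hence
\[
    \Big(\bigcup_{i\in\D_1}\vp_i(F)\Big)\cap\Big(\bigcup_{j\in\D_2}\vp_j(F)\Big)\subseteq\{x\}.
\]
Since $F$ is connected, Lemma~\ref{thm:hata} gives that $\Gamma_1$ is connected, so at least one edge of $\Gamma_1$ must join $\D_1$ to $\D_2$; by the above this edge is labelled $x$, producing equality. Therefore $F$ is fragile.

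The pivot of the whole argument, used in both directions, is the observation that edges with non-singleton cell intersection carry no label and so must be entirely absent between $\D_1$ and $\D_2$ whenever the pruned graph disconnects them. I do not anticipate any serious technical obstacle: once Definition~\ref{def:label} is unpacked, the theorem reduces to accounting for which kinds of edges survive the removal of the $x$-labelled ones.
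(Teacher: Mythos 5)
Your proof is correct and follows essentially the same route as the paper: in the forward direction you observe that any $\Gamma_1$-edge between $\D_1$ and $\D_2$ has cell intersection contained in (hence equal to) $\{x\}$ and so is pruned, and in the reverse direction you read off the decomposition from the components of the pruned graph and use connectedness of $\Gamma_1$ to upgrade the inclusion $\subseteq\{x\}$ to equality. The paper states both steps tersely ("it is easy to see"); you have simply supplied the details it omits.
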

\begin{proof}
    If $F$ is fragile then we have~\eqref{eq:fragilelower1} for some $x\in F$. In particular, the deletion of all edges labelled by ``$x$'' in the associated labelled $1$-st Hata graph destroies its connectedness.

    Conversely, suppose that there is such an $x\in F$. As the assumption suggests, we can decompose $\D$ into two parts, again denoted by $\D_1$ and $\D_2$, such that every pair of vertices $i\in\D_1,j\in\D_2$ belong to different connected components of the labelled $\Gamma_1$ after deleting all edges labelled ``$x$''. Then it is easy to see that \eqref{eq:fragilelower1} holds.    So $F$ is fragile.
\end{proof}

It might be helpful to provide a characterization on when $\vp_i(F)\cap \vp_j(F)$ is a singleton.
Let $A=\{(0,0),(N-1,N-1)\}$ and $B=\{(0,N-1),(N-1,0)\}$. Clearly, if $\vp_i(F)\cap \vp_j(F)\neq\varnothing$ then the two corresponding level-$1$ squares $\vp_i([0,1]^2)$ and $\vp_j([0,1]^2)$ are necessarily adjacent, which leaves us the following four cases to consider.

\textbf{Case 1}: $i-j=(\pm 1,0)$. Without loss of generality, assume that $i-j=(1,0)$. Let
\begin{equation*}
    \begin{gathered}
        I=\{0\leq a\leq N-1: (0,a),(N-1,a)\in\D\}, \\
        J=\{0\leq a\leq N-1: (0,a),(N-1,a-1)\in\D\}, \\
        J'=\{0\leq a\leq N-1: (0,a),(N-1,a+1)\in\D\}.
    \end{gathered}
\end{equation*}
The intersection $\vp_i(F)\cap \vp_j(F)$ is a singleton if and only if one of the following happens:
\begin{enumerate}
    \item We have: (i) $|I|=1$; (ii) if $J\neq\varnothing$ then $A\nsubseteq\D$; (iii) if $J'\neq\varnothing$ then $B\nsubseteq\D$. In this case, the singleton is $\bigcap_{n=1}^\infty\vp_i\circ \vp_{(0,a)}^n([0,1]^2)$ (assuming that $I=\{a\}$).
    \item We have: (i) $|I|=\varnothing$; (ii) $|J|=1$ and $A\subset\D$; (iii) if $J'\neq\varnothing$ then $B\nsubseteq\D$. In this case, the singleton is $\bigcap_{n=1}^\infty\vp_i\circ \vp_{(0,a)}\circ \vp_{(0,0)}^n([0,1]^2)$ (assuming that $J=\{a\}$).
    \item We have: (i) $|I|=\varnothing$; (ii) if $J\neq\varnothing$ then $A\nsubseteq\D$; (iii) $|J'|=1$ and $B\subset\D$. In this case, the singleton is $\bigcap_{n=1}^\infty\vp_i\circ \vp_{(0,a)}\circ \vp_{(0,N-1)}^n([0,1]^2)$ (assuming that $J'=\{a\}$).
\end{enumerate}

\textbf{Case 2}: $i-j=(0,\pm 1)$. The discussion is similar to Case 1 so we omit the details.


\textbf{Case 3}: $i-j=\pm(1,1)$. Without loss of generality, assume that $i-j=(1,1)$. In this case, $\vp_i(F)\cap \vp_j(F)$ is a singleton if and only if $(0,0),(N-1,N-1)\in\D$ and the singleton is $\bigcap_{n=1}^\infty\vp_i\circ \vp_{(0,0)}^n([0,1]^2)$.

\textbf{Case 4}: $i-j=\pm(1,-1)$. The discussion is similar to Case 3 so we omit the details.


\medskip

In the sequel of this paper, $i\in \D$ is called a \emph{corner digit} if $i\in \{(0,0),(0,N-1),(N-1,0),(N-1,N-1)\}$.
From the above arguments, we can obtain the following proposition.

\begin{proposition}\label{prop:4-3}
    Let $i,j\in\D$ be two distinct words. Then $\vp_i(F)\cap\vp_j(F)$ is a singleton if and only if there is exactly one pair of $i',j'\in\D$ such that $\vp_{ii'}(F)\cap\vp_{jj'}(F)\neq\varnothing$. Moreover, if it happens and $i'$ is a corner digit, then so is $j'$.
\end{proposition}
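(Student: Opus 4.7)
The plan is to build directly on the case analysis just carried out characterizing when $\vp_i(F)\cap\vp_j(F)$ is a singleton (Cases 1--4 with Subcases 1a/1b/1c and the symmetric subcases for Case 2). Since $\vp_i(F)=\bigcup_{i'\in\D}\vp_{ii'}(F)$ and likewise for $\vp_j(F)$, one always has
\[
    \vp_i(F)\cap\vp_j(F) \;=\; \bigcup_{i',j'\in\D}\vp_{ii'}(F)\cap\vp_{jj'}(F),
\]
so the proposition reduces to showing that this finite union has exactly one non-empty term if and only if it is a singleton.

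For the forward direction I would proceed case by case. In Cases 3 and 4 the singleton is $\vp_i((0,0))=\vp_j((1,1))$ (or its diagonal analogue), and pulling back by $\vp_{i'}^{-1}$ and $\vp_{j'}^{-1}$ forces $i'$ and $j'$ to be the two opposite corner digits in $\D$; in particular both are corners. In Case 1 (and symmetrically Case 2), I would write the singleton as $x=\vp_i(0,t)=\vp_j(1,t)$ with $t$ read off from the formula in the relevant subcase; then $x\in\vp_{ii'}(F)$ forces $i'_1=0$ and confines $i'_2$ to at most two consecutive integers, while $j'_1=N-1$ similarly constrains $j'_2$. The defining conditions of the subcase ($|I|=1$ in 1a, $|I|=0$ together with $|J|=1$ and $A\subset\D$ in 1b, and their analogues in 1c) are exactly what is needed to eliminate every candidate pair except one.

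For the converse I would argue by contrapositive. If $\vp_i(F)\cap\vp_j(F)$ is empty there is nothing to do, so suppose it has at least two points. Cases 3 and 4 are automatically singletons, so we must be in Case 1 or 2. In Case 1 with $i-j=(1,0)$ one has $\vp_i(F)\cap\vp_j(F)=\vp_i(\{0\}\times(F^L\cap F^R))$, where $F^L=\{t\in[0,1]:(0,t)\in F\}$ and $F^R=\{t\in[0,1]:(1,t)\in F\}$ are self-similar sets generated by the digits $\{k:(0,k)\in\D\}$ and $\{k:(N-1,k)\in\D\}$. A direct decomposition shows that $F^L\cap F^R$ receives independent contributions from the three types of level-$2$ adjacency (edge-adjacent sub-cells indexed by $I$, and corner-adjacent sub-cells indexed by $J$ and $J'$, the latter two conditional on $A\subset\D$ and $B\subset\D$ respectively). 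If $|F^L\cap F^R|\geq 2$, then either some one of $|I|,|J|,|J'|$ is at least $2$, or two of these types contribute simultaneously, and in either case at least two distinct pairs $(i',j')$ satisfy $\vp_{ii'}(F)\cap\vp_{jj'}(F)\neq\varnothing$.

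The corner statement is then immediate once the unique pair has been identified: in Cases 3--4 both $i'$ and $j'$ are corner digits; in Subcase 1a one has $(i',j')=((0,a),(N-1,a))$, so $i'$ is a corner iff $a\in\{0,N-1\}$ iff $j'$ is a corner; in Subcases 1b and 1c the hypothesis $|I|=0$ combined with $A\subset\D$ (resp.\ $B\subset\D$) forces $a\notin\{0,N-1\}$, so $i'$ is never a corner there and the implication is vacuously true. I expect the main obstacle to be the bookkeeping in Case 1: one must enumerate three types of level-$2$ adjacency, translate ``non-empty intersection'' for each candidate pair into an explicit membership condition on $\D$, and then recognize the subcase hypotheses as precisely the conditions that retain exactly one pair.
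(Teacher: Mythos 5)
Your proposal is correct, and the forward direction and the ``Moreover'' claim follow essentially the same route as the paper: both read the unique pair $(i',j')$ off the case analysis (Cases 1--4) that precedes the proposition, and your observation that in the $J$- and $J'$-subcases the hypotheses $I=\varnothing$ together with $A\subset\D$ (resp.\ $B\subset\D$) force $a\notin\{0,N-1\}$, so the corner implication is vacuous there, is exactly the right way to dispose of those subcases. Where you genuinely diverge is the converse. The paper argues directly: if the unique intersecting pair of level-$2$ squares is corner-adjacent the conclusion is immediate, and if it is side-adjacent then by self-similarity the same unique pair recurs at every level, so $\vp_i(F)\cap\vp_j(F)=\lim_n\vp_i\circ\vp_{i'}^n([0,1]^2)$ is a nested intersection of shrinking squares, hence a singleton. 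You instead argue by contrapositive through the explicit decomposition $\vp_i(F)\cap\vp_j(F)=\vp_i(\{0\}\times(F^L\cap F^R))$ and a count of the contributions to $F^L\cap F^R$ from the $I$-, $J$- and $J'$-type adjacencies. Both work; the paper's argument is shorter and avoids the bookkeeping, while yours gives an exact accounting of which pairs are non-empty (and, as a byproduct, a cleaner verification of the corner claim than the paper's own, which misidentifies the unique pair in the $J$-subcase as $((0,0),(N-1,N-1))$ rather than $((0,a),(N-1,a-1))$). One small point to tighten in your write-up: in the dichotomy ``some one of $|I|,|J|,|J'|$ is at least $2$, or two types contribute simultaneously,'' the first alternative yields two pairs only when the type in question actually contributes (e.g.\ $|J|\geq 2$ produces two pairs only if $A\subset\D$; otherwise it produces none), so the case split should be phrased in terms of the contributing counts $|I|\cdot\mathbf{1}[F^L\cap F^R\neq\varnothing]$, $|J|\cdot\mathbf{1}[A\subset\D]$, $|J'|\cdot\mathbf{1}[B\subset\D]$ --- after which the argument closes as you intend, since a total count of $1$ forces a singleton in each of the three cases and a count of $0$ forces the intersection to be empty.
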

\begin{proof}
    Although the ``only if\,'' part can be directly obtained from the above arguments, we will explain this so that the proof is more readable. Write $\vp_i(F)\cap\vp_j(F)=\{x\}$. Since $\vp_i([0,1]^2)\cap\vp_j([0,1]^2)\neq\varnothing$, it is easy to see that
    \[
        i-j\in\{\pm(1,0),\pm(0,1),\pm(1,1),\pm(1,-1)\}.
    \]

    In the case that $i-j\in\{\pm(1,1),\pm(1,-1)\}$, we may assume  without loss of generality that $i-j=(1,1)$. Since $\vp_i(F)\cap\vp_j(F)\neq\varnothing$, $i'=(0,0)$ and $j'=(N-1,N-1)$ constitute the only one pair such that $\vp_{ii'}(F)\cap\vp_{jj'}(F)\neq\varnothing$.

    In the case that $i-j\in\{\pm(1,0),\pm(0,1)\}$, we may assume without loss of generality that $i-j=(1,0)$. Let $I, J, J'$ be same as defined in the above Case 1. Since $\vp_i(F)\cap \vp_j(F)$ is a singleton and $\vp_{ii'}\cap \vp_{jj'}(F)\not=\varnothing$, from the arguments in the Case 1, if $|I|=1$, then $i'=(0,a)$ and $j'=(N-1,a)$ where we let $I=\{a\}$;  if $|J|=1$ and $\{(0,0),(N-1,N-1)\}\subset \D$, then $i'=(0,0)$ and $j'=(N-1,N-1)$; if $|J'|=1$ and $\{(N-1,0),(0,N-1)\}\subset \D$, then $i'=(0,N-1)$ and $j'=(N-1,0)$.

    The ``Moreover'' part follows directly from the above discussion.

    Now we prove the ``if\,'' part.  Note that if $\vp_{ii'}([0,1]^2)\cap\vp_{jj'}([0,1]^2)$ is a singleton then there is nothing to prove. If not, these two squares must be either left-right or up-down adjacent and without loss of generality, we may assume the former. Then $\vp_{i}([0,1]^2)$ and $\vp_{j}([0,1]^2)$ are also left-right adjacent.
    Thus we may assume without loss of generality that $i-j=(1,0)$. Since $\vp_{ii'}([0,1]^2)$ and $\vp_{jj'}([0,1]^2)$  are left-right adjacent, there exists $a\in \{0,1,\ldots,N-1\}$ such that $i'=(a,0)$ and $j'=(a,N-1)$. Then it is not hard to see by the self-similarity that
    \[
        \vp_{i}(F)\cap\vp_j(F) = \lim_{n\to\infty}\vp_i\circ\vp_{(a,0)}^n([0,1]^2),
    \]
    which is a singleton.
\end{proof}

Using similar arguments in the proof of the above proposition, we can obtain the following two results that will be used later.

\begin{lemma}\label{lem:3-7}
  Let $i,j\in \D$ be two distinct words and $i'$ be a corner digit. If $\vp_{jj'}(F)$ is the only one level-$2$ cell in $\vp_j(F)$ which intersects $\vp_{ii'}(F)$, then $j'$ is a corner digit.
\end{lemma}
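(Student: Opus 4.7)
The plan is to mimic the case analysis in the proof of Proposition~\ref{prop:4-3}, splitting on the relative position of the level-$1$ squares $\vp_i([0,1]^2)$ and $\vp_j([0,1]^2)$. Since $\vp_{ii'}(F)\subseteq\vp_i(F)$ meets $\vp_{jj'}(F)\subseteq\vp_j(F)$, the two squares must be adjacent, so $i-j\in\{\pm(1,0),\pm(0,1),\pm(1,1),\pm(1,-1)\}$. I will dispose of the diagonal case first and then attack the edge-adjacent case by contradiction.

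In the diagonal case $i-j\in\{\pm(1,1),\pm(1,-1)\}$, the two level-$1$ squares share only a single corner $p$. Hence $\vp_{ii'}(F)\cap\vp_{jj'}(F)\subseteq\{p\}$, and non-emptiness forces both level-$2$ closed squares to contain $p$. But the only sub-square of $\vp_j([0,1]^2)$ that reaches its corner $p$ is the one labelled by the matching corner digit, so $j'$ is a corner digit automatically.

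For the edge-adjacent case I will assume without loss of generality that $i-j=(1,0)$ and $i'=(0,0)$, the other subcases being symmetric. Then $\vp_{ii'}([0,1]^2)$ is the bottom-left level-$2$ square of $\vp_i([0,1]^2)$, and the only digits $k\in\D$ for which $\vp_{jk}([0,1]^2)$ can touch it are $k=(N-1,0)$ (a corner digit) and $k=(N-1,1)$ (not a corner digit); the would-be candidate $(N-1,-1)$ is not a valid index. It therefore suffices to rule out $j'=(N-1,1)$. Assuming $j'=(N-1,1)$, the two level-$2$ squares meet only at the single point $p$ that is simultaneously the top-left corner of $\vp_{i(0,0)}([0,1]^2)$ and the bottom-right corner of $\vp_{j(N-1,1)}([0,1]^2)$. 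Hence $\vp_{i(0,0)}^{-1}(p)=(0,1)$ and $\vp_{j(N-1,1)}^{-1}(p)=(1,0)$, and membership of $p$ in both $F$-cells forces $(0,1),(1,0)\in F$, which (being corners of $[0,1]^2$) forces $(0,N-1),(N-1,0)\in\D$. In particular $\vp_{j(N-1,0)}(F)$ is a bona fide level-$2$ cell of $\vp_j(F)$. A routine scaling identity now yields
\[
  \vp_{j(N-1,0)}(F)\cap\vp_{i(0,0)}(F)\neq\varnothing \iff F\cap(F+(1,0))\neq\varnothing \iff \vp_i(F)\cap\vp_j(F)\neq\varnothing,
\]
and the right-most set is non-empty by hypothesis. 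Thus $\vp_{j(N-1,0)}(F)$ is a second level-$2$ cell of $\vp_j$ meeting $\vp_{ii'}(F)$, contradicting the uniqueness of $j'$.

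The main obstacle will be the bookkeeping in the edge-adjacent case: identifying the at-most-two candidate digits $k$ correctly, locating the shared point $p$ explicitly, and setting up the scaling identity so that the translation vector comes out to $(1,0)$ exactly. The remaining edge directions and the subcase $i'=(0,N-1)$ then follow by the same argument with indices relabelled.
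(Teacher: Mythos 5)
Your proof is correct and follows essentially the same route as the paper's: reduce to the edge-adjacent case, identify $(N-1,0)$ and $(N-1,1)$ as the only candidates for $j'$, and rule out $(N-1,1)$ by deducing $(N-1,0)\in\D$ from the corner-touching condition and exhibiting $\vp_{j(N-1,0)}(F)$ as a second level-$2$ cell meeting $\vp_{ii'}(F)$. The only cosmetic difference is in the last step: the paper exhibits the explicit common point $\vp_j((1,0))=\vp_i((0,0))$ (which lies in $\vp_{ii'}(F)$ because $i'=(0,0)\in\D$), whereas you reach the same conclusion via a scaling identity reducing the level-$2$ intersection to the level-$1$ intersection $\vp_i(F)\cap\vp_j(F)\neq\varnothing$; both are valid.
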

\begin{proof}
    Applying similar arguments as in the proof of the ``only if\,'' part of the above proposition, it suffices to consider the case that $i-j\in \{\pm(1,0),\pm(0,1)\}$. Without loss of generality, assume that $i-j=(1,0)$ and $i'=(0,0)$.

    Since $\vp_{ii'}(F)\cap \vp_{jj'}(F)\not=\varnothing$, we have either $j' =(N-1,0)$ or $j'=(N-1,1)$. If $j'=(N-1,1)$ then $(N-1,0)\in \D$. However, this implies that $\vp_j((1,0))=\vp_i((0,0))\in \vp_{j}(\vp_{(N-1,0)}(F))\cap \vp_{ii'}(F)$ so that $\vp_{j}(\vp_{(N-1,0)}(F))$ is another level-$2$ cell in $\vp_j(F)$ meeting $\vp_{ii'}(F)$. This is a contradiction. Thus, we must have $j'=(N-1,0)$ so it is a corner digit. This completes the proof.
\end{proof}


\begin{lemma}\label{lem:fourvertex}
    Let $\alpha$ be a corner digit  and let $\i,\j\in\D^*$ with $|\i|=|\j|$. If $\vp_{\j}(F)\cap\vp_{\i\alpha}(F)\neq\varnothing$, then $\vp_{\i}(\frac{\alpha}{N-1})$, which is a vertex of the square $\vp_{\i}([0,1]^2)$, is an element of $\vp_{\i}(F)\cap\vp_{\j}(F)$.
\end{lemma}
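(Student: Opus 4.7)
The plan is to prove the two claims $\vp_{\i}(\alpha/(N-1)) \in \vp_{\i}(F)$ and $\vp_{\i}(\alpha/(N-1)) \in \vp_{\j}(F)$ separately. The first is immediate: since $\alpha \in \D$ is a corner digit, the point $\alpha/(N-1)$ is the unique fixed point of the contraction $\vp_{\alpha}$, and for any $x_0 \in F$ we have $\vp_{\alpha}^n(x_0) \to \alpha/(N-1)$, so $\alpha/(N-1) \in F$ and thus $p := \vp_{\i}(\alpha/(N-1)) \in \vp_{\i}(F)$.

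For the second claim, the case $\i = \j$ is trivial, so assume $\i \neq \j$. By the obvious reflection symmetry I may take $\alpha = (N-1, N-1)$, so that $p$ is the top-right vertex of $\vp_{\i}([0,1]^2)$ and $\vp_{\i\alpha}([0,1]^2)$ is the side-$N^{-|\i|-1}$ subsquare sitting in the top-right corner of $\vp_{\i}([0,1]^2)$. Since $|\i| = |\j|$, the squares $\vp_{\i}([0,1]^2)$ and $\vp_{\j}([0,1]^2)$ have equal size and can meet only along their boundaries, so the hypothesis $\vp_{\j}(F) \cap \vp_{\i\alpha}(F) \neq \varnothing$ forces $\vp_{\j}([0,1]^2)$ to be one of the three neighbors of $\vp_{\i}([0,1]^2)$ meeting at $p$: diagonally across from $p$, immediately to the right, or immediately above. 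In each configuration $p$ is itself a vertex of $\vp_{\j}([0,1]^2)$, say $p = \vp_{\j}(\beta/(N-1))$ for some $\beta \in \{(0,0),(0,N-1),(N-1,0)\}$, so it suffices to show $\beta \in \D$.

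In the diagonal case $\vp_{\j}([0,1]^2) \cap \vp_{\i\alpha}([0,1]^2) = \{p\}$, so the hypothesis immediately gives $p \in \vp_{\j}(F)$. In each edge case the geometric overlap is a boundary segment of length $N^{-|\i|-1}$ with $p$ as an endpoint; applying $\vp_{\j}^{-1}$ to any $y \in \vp_{\j}(F) \cap \vp_{\i\alpha}(F)$ yields a point of $F$ on the left (respectively bottom) side of $[0,1]^2$ at parameter $t \in [1-1/N, 1]$. The level-$1$ decomposition $F = \bigcup_{d \in \D} \vp_d(F)$ restricts such a point to cells indexed by $(0, N-2)$ or $(0, N-1)$ (respectively $(N-2, 0)$ or $(N-1, 0)$), and a brief inspection---recursing once at the shared endpoint $t = (N-1)/N$ to push the argument into a level-$1$ cell covering the full corner $(0,1)$ or $(1,0)$---forces the relevant corner digit $\beta$ into $\D$. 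I expect the main obstacle to be precisely this last bookkeeping step in the edge cases, in particular ruling out the possibility that only the non-corner digit $(0, N-2)$ or $(N-2, 0)$ lies in $\D$.
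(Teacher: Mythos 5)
Your proposal is correct and takes essentially the same route as the paper: reduce by symmetry to a single corner digit, enumerate the three possible adjacency configurations of $\vp_{\j}([0,1]^2)$, dispose of the diagonal case because the squares meet in a single point, and handle the edge cases by forcing the relevant corner digit into $\D$. The ``bookkeeping step'' you flag as the main obstacle does go through exactly as you sketch and is precisely the paper's argument: if only the non-corner digit $(0,N-2)$ (resp.\ $(N-2,0)$) were available, the intersection point would be pinned to the shared endpoint, which is the image of the corner $(0,1)$ (resp.\ $(1,0)$) under $\vp_{(0,N-2)}$ (resp.\ $\vp_{(N-2,0)}$), so that corner lies in $F$ and hence the corner digit lies in $\D$ after all, a contradiction.
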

\begin{proof}
    Without loss of generality, assume that $\alpha=(0,0)$. Since $F\subset[0,1]^2$, $\vp_{\j}([0,1]^2)\cap\vp_{\i\alpha}([0,1]^2)\neq\varnothing$. Please see Figure~\ref{fig:relativeposi} for all possible relative locations of these two squares. We will discuss them from left to right as follows.

    For the first case, note that if $(N-1,0)\notin\D$, then  $(N-1,1)\in \D$, and
    \begin{align*}
        \varnothing &\neq \vp_{\j}([0,1]^2)\cap\vp_{\i\alpha}([0,1]^2) \\
        &= (\vp_{\j}\circ\vp_{(N-1,1)}([0,1]^2)) \cap \vp_{\i\alpha}([0,1]^2) = \{\vp_{\j}\circ\vp_{(N-1,1)}((1,0))\},
    \end{align*}
    which implies that $(1,0)\in F$. But this is equivalent to $(N-1,0)\in\D$ and we arrive at a contradiction. So we always have $(N-1,0)\in\D$. Since $\vp_{\i}([0,1]^2)$ and $\vp_{\j}([0,1]^2)$ are left-right adjacent, it is clear that
    \[
        \lim_{n\to\infty} \vp_{\i}\circ\vp_{(0,0)}^n([0,1]^2) = \{\vp_{\i}((0,0))\} = \{\vp_{\j}((1,0))\} = \lim_{n\to\infty} \vp_{\j}\circ\vp_{(N-1,0)}^n([0,1]^2).
    \]
    Thus $\vp_{\i}((0,0))$ is a common element of $\vp_{\i}(F)$ and $\vp_{\j}(F)$.

    The second case can be analogously discussed. For the last one, just note that
    \[
        \vp_{\j}(F) \cap \vp_{\i\alpha}(F) \subset \vp_{\j}([0,1]^2)\cap\vp_{\i}([0,1]^2) = \{\vp_{\i}((0,0))\}.
    \]
    So if the intersection on the left hand side is non-empty, then $\vp_{\i}((0,0))$ must be an element of it. This completes the proof.
\end{proof}

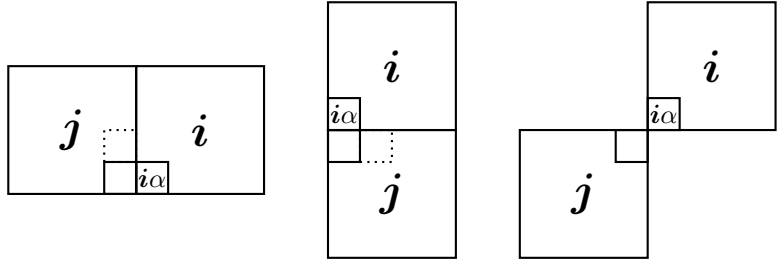
\begin{figure}[htbp]
    \centering
    \begin{tikzpicture}[scale=0.85]
        \draw[thick] (-5,-1) rectangle (-3,1);
        \draw[thick] (-3,-1) rectangle (-1,1);
        \draw[thick] (-3.5,-1) rectangle (-3,-0.5);
        \draw[thick,dotted] (-3.5,-0.5) rectangle (-3,0);
        \draw[thick] (-3,-1) rectangle (-2.5,-0.5);
        \node[font=\fontsize{18}{1}\selectfont] at(-2,0) {$\i$};
        \node[font=\fontsize{18}{1}\selectfont] at(-4,0) {$\j$};
        \node[font=\fontsize{10}{1}\selectfont] at(-2.75,-0.75) {$\i\alpha$};
        \draw[thick] (0,0) rectangle (2,2);
        \draw[thick] (0,-2) rectangle (2,0);
        \draw[thick] (0,0) rectangle (0.5,0.5);
        \draw[thick] (0,-0.5) rectangle (0.5,0);
        \draw[thick,dotted] (0.5,-0.5) rectangle (1,0);
        \node[font=\fontsize{18}{1}\selectfont] at(1,1) {$\i$};
        \node[font=\fontsize{18}{1}\selectfont] at(1,-1) {$\j$};
        \node[font=\fontsize{10}{1}\selectfont] at(0.25,0.25) {$\i\alpha$};
        \draw[thick] (3,-2) rectangle (5,0);
        \draw[thick] (5,0) rectangle (7,2);
        \draw[thick] (4.5,-0.5) rectangle (5,0);
        \draw[thick] (5,0) rectangle (5.5,0.5);
        \node[font=\fontsize{18}{1}\selectfont] at(6,1) {$\i$};
        \node[font=\fontsize{18}{1}\selectfont] at(4,-1) {$\j$};
        \node[font=\fontsize{10}{1}\selectfont] at(5.25,0.25) {$\i\alpha$};
    \end{tikzpicture}
    \caption{Relative positions of $\vp_{\i}([0,1]^2)$ and $\vp_{\j}([0,1]^2)$}
    \label{fig:relativeposi}
\end{figure}

From Proposition~\ref{prop:4-3} and Lemma~\ref{lem:fourvertex}, we can obtain other two sufficient conditions for GSCs to be fragile.

\begin{proposition}\label{prop:1and3}
    Let $i\in\D$. If there is only one level-$3$ cell in $\vp_i(F)$ which intersects other level-$1$ cells, then $F$ is fragile.
\end{proposition}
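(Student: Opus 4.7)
My plan is to establish fragility directly with the decomposition $\D_1=\{i\}$ and $\D_2=\D\setminus\{i\}$, so it suffices to prove that $\vp_i(F)\cap\bigcup_{j\in\D\setminus\{i\}}\vp_j(F)$ is a singleton. Let $\bi=i_1i_2\in\D^2$ denote the unique level-$2$ word such that $\vp_{i\bi}(F)$ meets $\bigcup_{j\neq i}\vp_j(F)$; since all other level-$3$ cells of $\vp_i(F)$ are disjoint from this union, the target intersection collapses to
\[
S \;:=\; \vp_{i\bi}(F)\cap\bigcup_{j\in\D\setminus\{i\}}\vp_j(F),
\]
so the whole task reduces to showing $|S|=1$.

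To analyse $S$, I would run a geometric case study on the position of the side-$1/N^3$ square $\vp_{i\bi}([0,1]^2)$ inside $\vp_i([0,1]^2)$. Because $\vp_{i\bi}(F)$ meets some $\vp_j([0,1]^2)$ with $j\neq i$, this small square must touch $\partial\vp_i([0,1]^2)$, which leaves two geometric scenarios. In the \emph{corner scenario}, $\vp_{i\bi}([0,1]^2)$ occupies an actual corner of $\vp_i([0,1]^2)$; then both $i_1$ and $i_2$ must equal the corresponding corner digit, the associated vertex $v$ of $\vp_i([0,1]^2)$ belongs to $F$, and Lemma~\ref{lem:fourvertex} applied to each neighbouring cell $\vp_j(F)$ (viewed at the appropriate level via self-similarity) yields $\vp_j(F)\cap\vp_{i\bi}(F)=\{v\}$, so $S=\{v\}$. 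In the \emph{edge scenario}, $\vp_{i\bi}([0,1]^2)$ sits on the interior of a single edge of $\vp_i([0,1]^2)$, so only one neighbour $\vp_{j_0}(F)$ is possible, and $S\subset\vp_{i\bi}(F)\cap\vp_{j_0}(F)$ lies along a shared segment of length $1/N^3$. Here I plan to combine the uniqueness hypothesis with Lemma~\ref{lem:3-7} to force $i_2$ to be a corner digit of $\D$ aligned with the shared edge, and then Proposition~\ref{prop:4-3} (applied at the level-$2$ scale via the self-similarity of $F$) pins $S$ down to the unique vertex of $\vp_{i\bi}([0,1]^2)$ lying on $\partial\vp_i([0,1]^2)$.

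The main obstacle is the edge scenario, where \emph{a priori} $F$ could accumulate at many points along the shared boundary segment. Ruling this out cleanly — i.e., showing that the uniqueness of the level-$3$ cell really does force digit alignment at a corner rather than merely on the boundary — is the technical heart of the argument and is precisely where Lemma~\ref{lem:3-7}, Lemma~\ref{lem:fourvertex} and Proposition~\ref{prop:4-3} have to be invoked with care. Once $|S|=1$ is verified in both scenarios, the decomposition $\D=\{i\}\cup(\D\setminus\{i\})$ exhibits $F$ as fragile by definition.
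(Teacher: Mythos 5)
Your overall strategy coincides with the paper's: take $\D_1=\{i\}$, $\D_2=\D\setminus\{i\}$, and reduce to showing that $S=\vp_{i\bi}(F)\cap\bigcup_{j\neq i}\vp_j(F)$ is a singleton. However, the execution plan has two genuine gaps. First, in your corner scenario you claim that Lemma~\ref{lem:fourvertex} ``yields $\vp_j(F)\cap\vp_{i\bi}(F)=\{v\}$''. That lemma only produces \emph{one} point of the intersection; it says nothing about the intersection being a singleton. For a neighbour $j$ that is left-right or up-down adjacent to $i$, the set $\vp_{i\bi}(F)\cap\vp_j(F)$ lies in a segment of length $N^{-3}$ and could a priori be a Cantor set. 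The missing step — and the actual heart of the paper's proof — is to show that for each such $j$ there is only \emph{one} level-$2$ cell of $\vp_j(F)$ meeting $\vp_i(F)$ (if there were two, self-similarity would transport a second level-$3$ cell of $\vp_i(F)$ onto the common edge and contradict the uniqueness hypothesis), after which Proposition~\ref{prop:4-3} gives that $\vp_i(F)\cap\vp_j(F)$ is a singleton. Only then does Lemma~\ref{lem:fourvertex} serve its purpose, namely to identify all these singletons with the common corner $\vp_i(\frac{w_1}{N-1})$ when there is more than one neighbour.

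Second, your edge scenario aims at the wrong target. Lemma~\ref{lem:3-7} has the corner-digit hypothesis on the cell of $\vp_i$, which is precisely what fails in that scenario, and the conclusion you want is false in general: one can have $i_1=i_2=(0,a)$ with $0<a<N-1$, in which case $i_2$ is not a corner digit and the singleton $\vp_i(F)\cap\vp_j(F)=\bigcap_n\vp_i\circ\vp_{(0,a)}^n([0,1]^2)$ lies in the \emph{interior} of the side of $\vp_{i\bi}([0,1]^2)$ on $\partial\vp_i([0,1]^2)$, not at a vertex of it. No corner-digit or vertex claim is needed here: with only one neighbour $j_0$, the same ``only one level-$2$ cell of $\vp_{j_0}(F)$ meets $\vp_i(F)$'' argument plus Proposition~\ref{prop:4-3} already gives $|S|=1$, regardless of where the singleton sits on the shared edge. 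So the route you sketch would not close as stated; replacing both scenario-specific arguments by the single Proposition~\ref{prop:4-3} step (and reserving Lemma~\ref{lem:fourvertex} for matching up the singletons across several neighbours) recovers the paper's proof.
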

\begin{proof}
    Let $w=w_1w_2$ be such that $\vp_{iw}(F)$ is the level-$3$ cell as in the statement and let $\mathcal{I}=\{j\in\D\setminus\{i\}:\vp_j(F)\cap\vp_i(F)\neq\varnothing\}$. We first show that $\vp_j(F)\cap\vp_i(F)$ is a singleton for every $j\in\mathcal{I}$ and then show that these singletons are identical.
    As a consequence, $\vp_i(F) \cap \Big( \bigcup_{j\in\D\setminus\{i\}}\vp_j(F) \Big)$ is a singleton,    implying that $F$ is fragile.

    Fix any $j\in\mathcal{I}$. To show that $\vp_i(F)\cap\vp_j(F)$ is a singleton, it suffices to show by Proposition~\ref{prop:4-3} that there is only one level-$2$ cell in $\vp_j(F)$ which intersects $\vp_i(F)$. Suppose on the contrary that one can find distinct $j',j''\in\D$ such that both of $\vp_{jj'}(F)$ and $\vp_{jj''}(F)$ intersect $\vp_i(F)$ (and hence $\vp_{iw}(F)$). Rotating or reflecting if necessary, Figure~\ref{fig:1and3} illustrates the only possibility. In this case, it is easy to see that $w_1\neq w_2$. Note that $\vp_{iw_1}([0,1]^2),\vp_{jj'}([0,1]^2)$ are left-right adjacent as are $\vp_i([0,1]^2),\vp_j([0,1]^2)$. By the self-similarity, $\vp_{iw_1}(\vp_{w_1}(F))$ intersects $\vp_{jj'}(F)$. So there are at least two level-$3$ cells in $\vp_{i}(F)$ meeting $\vp_j(F)$. This is a contradiction and our first goal is achieved.

    If $|\mathcal{I}|=1$ then our second goal is automatically achieved. Suppose $|\mathcal{I}|\geq 2$. Then $\vp_{iw}([0,1]^2)$ locates at one of the corners of $\vp_{i}([0,1]^2)$, i.e., $w_1$ is a corner digit. Fix any $j\in \mathcal{I}$. From Lemma~\ref{lem:fourvertex},
    \[
       \vp_i\big(\frac{w_1}{N-1}\big)\in \vp_{iw_1}(F)\cap \vp_j(F)\subset \vp_{i}(F)\cap \vp_j(F).
    \]
    Since $\vp_i(F)\cap \vp_j(F)$ is a singleton, we have $\vp_i(F)\cap \vp_j(F)=\{\vp_i(\frac{w_1}{N-1})\}$. Since $j$ is arbitrarily chosen in $\mathcal{I}$, this completes the proof.
\end{proof}

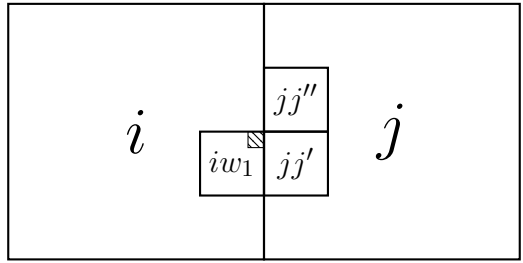
\begin{figure}[htbp]
    \centering
    \begin{tikzpicture}[scale=0.85]
        \draw[thick] (-4,0) rectangle (0,4);
        \draw[thick] (0,0) rectangle (4,4);
        \draw[thick] (-1,1) rectangle (0,2);
        \draw[thick] (0,1) rectangle (1,2);
        \draw[thick] (0,2) rectangle (1,3);
        \draw[pattern=north west lines] (-0.25,1.75) rectangle (0,2);
        \node[font=\fontsize{25}{1}\selectfont] at(-2,2) {$i$};
        \node[font=\fontsize{25}{1}\selectfont] at(2,2) {$j$};
        \node[font=\fontsize{13}{1}\selectfont] at(-0.5,1.5) {$iw_1$};
        \node[font=\fontsize{13}{1}\selectfont] at(0.5,1.5) {$jj'$};
        \node[font=\fontsize{13}{1}\selectfont] at(0.5,2.5) {$jj''$};
    \end{tikzpicture}
    \caption{The shaded square is $\vp_{iw}([0,1]^2)$.}
    \label{fig:1and3}
\end{figure}

\begin{proposition}\label{prop:frag-suff}
    If there exists $k\geq 3$ such that $\chi(\Gamma_k)=|\D|^{k-1}-1$ or $|\D|^{k-1}$, then $F$ is fragile.
\end{proposition}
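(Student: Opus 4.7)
The plan is to pick a cut vertex $\omega\in\D^k$ of $\Gamma_k$ realizing $\chi(\Gamma_k)$, write $\omega=\omega_1\omega'$ with $\omega_1\in\D$ and $\omega'\in\D^{k-1}$, and label the components of $\Gamma_k-\{\omega\}$ as $C_1,C_2,\ldots$ with $|C_1|\ge|C_2|\ge\cdots$, so that $|C_2|=\chi(\Gamma_k)$. The starting observation is that for each $j\in\D$ the cell $\vp_j(F)$ is connected and equals $\bigcup_{\i\in\D^{k-1}}\vp_{j\i}(F)$, so Lemma~\ref{thm:hata} applied inside $\vp_j(F)$ forces the subgraph of $\Gamma_k$ induced on $j\D^{k-1}$ to be connected. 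This yields a dichotomy for $C_2$: either (Case~A) $C_2=\omega_1\D^{k-1}\setminus\{\omega\}$ with $|C_2|=|\D|^{k-1}-1$, or (Case~B) $C_2=j\D^{k-1}$ for some $j\ne\omega_1$ with $|C_2|=|\D|^{k-1}$. In both cases, since $C_2$ is a component, there are no edges between $C_2$ and $\D^k\setminus(C_2\cup\{\omega\})$, which will be the main leverage.

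In Case~A, the no-edge condition says that $\vp_\omega(F)$ is the unique level-$k$ subcell of $\vp_{\omega_1}(F)$ meeting any $\vp_{i'}(F)$ with $i'\ne\omega_1$. Because $k\ge 3$ one has $\vp_\omega(F)\subset\vp_{\omega_1\omega'|_2}(F)$, so $\vp_{\omega_1\omega'|_2}(F)$ is the only level-$3$ cell of $\vp_{\omega_1}(F)$ meeting other level-$1$ cells, and Proposition~\ref{prop:1and3} applied to $\omega_1$ gives that $F$ is fragile. In Case~B one runs the same argument inside $\vp_j(F)$: the no-edge condition gives $\vp_j(F)\cap\vp_{i'}(F)=\varnothing$ for $i'\ne j,\omega_1$ and $\vp_j(F)\cap\vp_{\omega_1}(F)=\vp_j(F)\cap\vp_\omega(F)$, so the level-$k$ subcells of $\vp_j(F)$ meeting other level-$1$ cells are precisely those square-adjacent (side- or corner-adjacent) to the small cell $\vp_\omega$, at most three of them. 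A grid-position count shows that whenever $\vp_\omega$ sits at a generic position on the boundary of $\vp_{\omega_1}$, these adjacent subcells of $\vp_j(F)$ lie inside a single level-$3$ subcell of $\vp_j(F)$, so Proposition~\ref{prop:1and3} again delivers fragility.

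The main obstacle is the boundary subcase of Case~B, where $\vp_\omega$ sits at a position of $\vp_{\omega_1}$ making the adjacent level-$k$ cells of $\vp_j(F)$ split between two level-$3$ cells via a corner-adjacency. The plan there is to combine Lemma~\ref{lem:fourvertex} and Proposition~\ref{prop:4-3} with the no-edge condition: a would-be corner intersection of some $\vp_{j\i'}(F)$ with $\vp_\omega(F)$ at a common vertex $x$ would place $x$ also in the level-$k$ cell of $\vp_{\omega_1}(F)$ neighbouring $\vp_\omega$ across that corner, and the forbidden edges between $\omega_1\D^{k-1}\setminus\{\omega\}$ and $j\D^{k-1}$ then force specific corner digits to be absent from $\D$. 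This either eliminates the offending corner-adjacency (whence Proposition~\ref{prop:1and3} applies) or, in the remaining configurations, directly exhibits a partition $\D^k=I\cup J$ for which $\bigl(\bigcup_{\i\in I}\vp_{\i}(F)\bigr)\cap\bigl(\bigcup_{\i\in J}\vp_{\i}(F)\bigr)=\{x\}$, and Proposition~\ref{prop:evenfragile} then yields fragility of $F$.
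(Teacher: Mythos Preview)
Your Case~A matches the paper's argument exactly. The problem is in Case~B, where you try to apply Proposition~\ref{prop:1and3} to the digit $j$ by counting which level-$k$ subcells of $\vp_j(F)$ meet $\vp_\omega(F)$. This forces you into the ``boundary subcase'' whenever those (up to three) neighbouring level-$k$ cells straddle two level-$3$ cells of $\vp_j(F)$, and for $k=3$ they \emph{always} do: the three square-adjacent level-$3$ cells of $\vp_j$ are three distinct level-$3$ cells, so your ``generic position'' case is vacuous there. Your handling of the boundary subcase is only a sketch; the claim that the forbidden edges between $j\D^{k-1}$ and $\omega_1\D^{k-1}\setminus\{\omega\}$ ``force specific corner digits to be absent'' and that the remaining configurations yield a singleton partition via Proposition~\ref{prop:evenfragile} is not verified, and carrying it out would require a non-trivial case analysis of corner configurations.

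The paper sidesteps all of this by reversing the viewpoint in Case~B: instead of looking inside $\vp_j(F)$, look inside $\vp_{\omega_1}(F)$. Since $\vp_\omega(F)$ is the \emph{only} level-$k$ cell in $\bigcup_{i\neq j}\vp_i(F)$ meeting $\vp_j(F)$, in particular $\vp_{\omega|_3}(F)$ is the only level-$3$ cell in $\vp_{\omega_1}(F)$ meeting $\vp_j(F)$. Now the first part of the proof of Proposition~\ref{prop:1and3} (the Figure~\ref{fig:1and3} argument combined with Proposition~\ref{prop:4-3}) shows directly that $\vp_{\omega_1}(F)\cap\vp_j(F)$ is a singleton. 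Since $\vp_j(F)$ meets no other level-$1$ cell, the partition $\D_1=\{j\}$, $\D_2=\D\setminus\{j\}$ exhibits $F$ as fragile. This avoids your boundary subcase entirely and works uniformly for all $k\geq 3$.
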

\begin{proof}
    By the definition of $\chi(\Gamma_k)$, there exists a cut vertex $\i=i_1\cdots i_{k}$ of $\Gamma_{k}$ such that $\Gamma_{k}-\{\i\}$ contains a connected component of size $|\D|^{k-1}-1$ or $|\D|^{k-1}$. From the connectedness of $F$, for every digit $j\in\D\setminus\{i_1\}$ and every connected component $C$ of $\Gamma_{k}-\{\i\}$, either $j\D^{k-1}\subset C$ or none of elements in $j\D^{k-1}$ belongs to $C$.

    In the case that $\chi(\Gamma_k)=|\D|^{k-1}-1$, we know from $|\D^{k-1}|=|\D|^{k-1}$ and the above argument that  $i_1\D^{k-1}\setminus\{\i\}$ is the vertex set of some connected component of  $\Gamma_{k}-\{\i\}$. So
    $\varphi_{\i}(F)$ is the only level-$k$ cell in $\vp_{i_1}(F)$ which intersects  $\bigcup_{j\in\D\setminus\{i_1\}}\varphi_j(F)$. In particular, $\varphi_{\i|_3}(F)$ is the only level-$3$ cell in $\varphi_{i_1}(F)$ which intersects $\bigcup_{j\in\D\setminus\{i_1\}}\varphi_j(F)$. By Proposition~\ref{prop:1and3}, $F$ is fragile.

    In the case that $\chi(\Gamma_k)=|\D|^{k-1}$, there exists some $j_*\in\D\setminus\{i_1\}$ such that $j_*\D^{k}$ is the vertex set of some connected component of $\Gamma_{k}-\{\i\}$. So
    $\varphi_{\i}(F)$ is the only level-$k$ cell in $\bigcup_{j\in\D\setminus\{j_*\}}\varphi_j(F)$ which intersects $\varphi_{j_*}(F)$. In particular, $\varphi_{\i|_3}(F)$ is the only level-$3$ cell in $\varphi_{i_1}(F)$ which intersects $\varphi_{j_*}(F)$. From the first part of the proof of Proposition~\ref{prop:1and3},  $\varphi_{i_1}(F)\cap\varphi_{j_*}(F)$ is a singleton. Then $F$ is fragile since
    \[
        \varphi_{j_*}(F) \cap \Big( \bigcup_{j\in\D\setminus\{j_*\}}\varphi_j(F) \Big) = \varphi_{j_*}(F) \cap \varphi_{i_1}(F).
    \]
\end{proof}

\begin{example}
    The above proposition does not hold for $k=2$. Please see Figure~\ref{fig:equalD} for a GSC with $\chi_{2}(N,\D)=|\D|$. It is easy to see that the GSC is non-fragile. Moreover, we can check that $\chi_3(N,\D)=|\D|<|\D|^2-1$. Thus, by Theorem~\ref{thm:main}, the GSC has no cut point. Similarly, in Figure~\ref{fig:lessD}, we construct a non-fragile GSC with $\chi_{2}(N,\D)=|\D|-1$.

    \begin{figure}[htbp]
    \centering
    \subfloat[Initial pattern]
    {
        \begin{minipage}[t]{120pt}
            \centering
            \includegraphics[width=4cm]{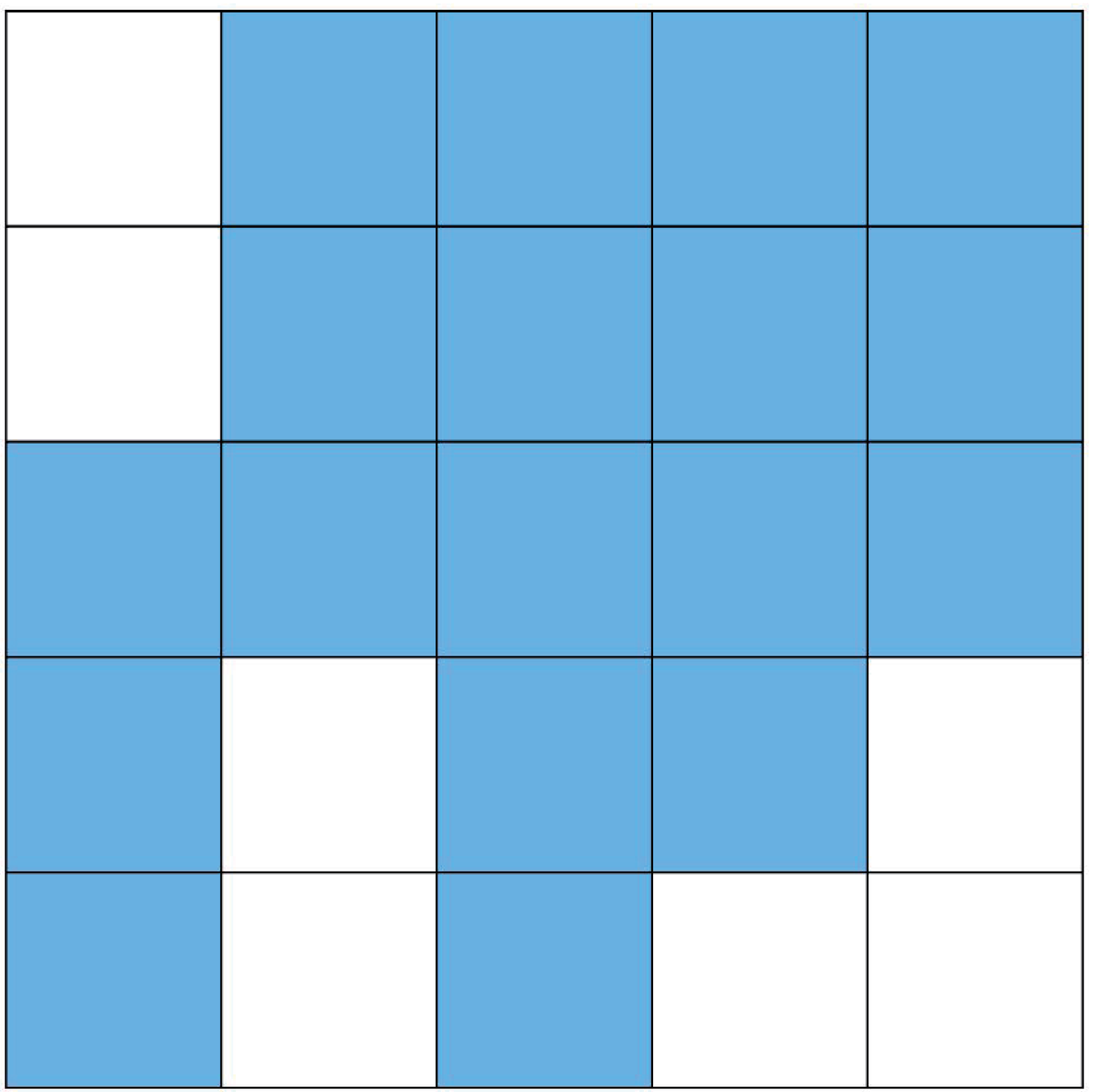}
        \end{minipage}
    }
    \subfloat[The GSC]
    {
        \begin{minipage}[t]{120pt}
            \centering
            \includegraphics[width=4cm]{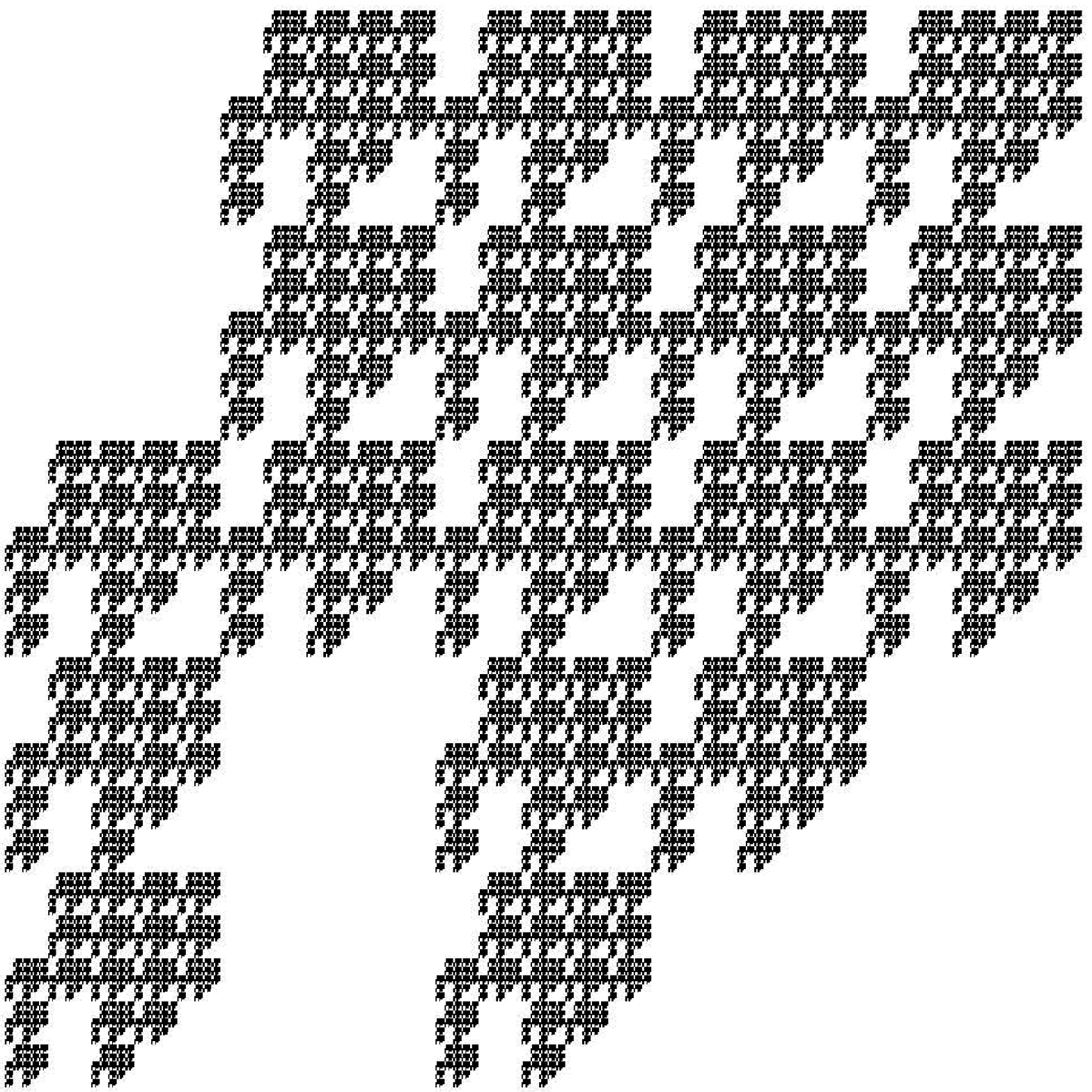}
        \end{minipage}
    }
    \subfloat[$2$-nd Hata graph]
    {
        \begin{minipage}[t]{120pt}
            \centering
            \includegraphics[width=4cm]{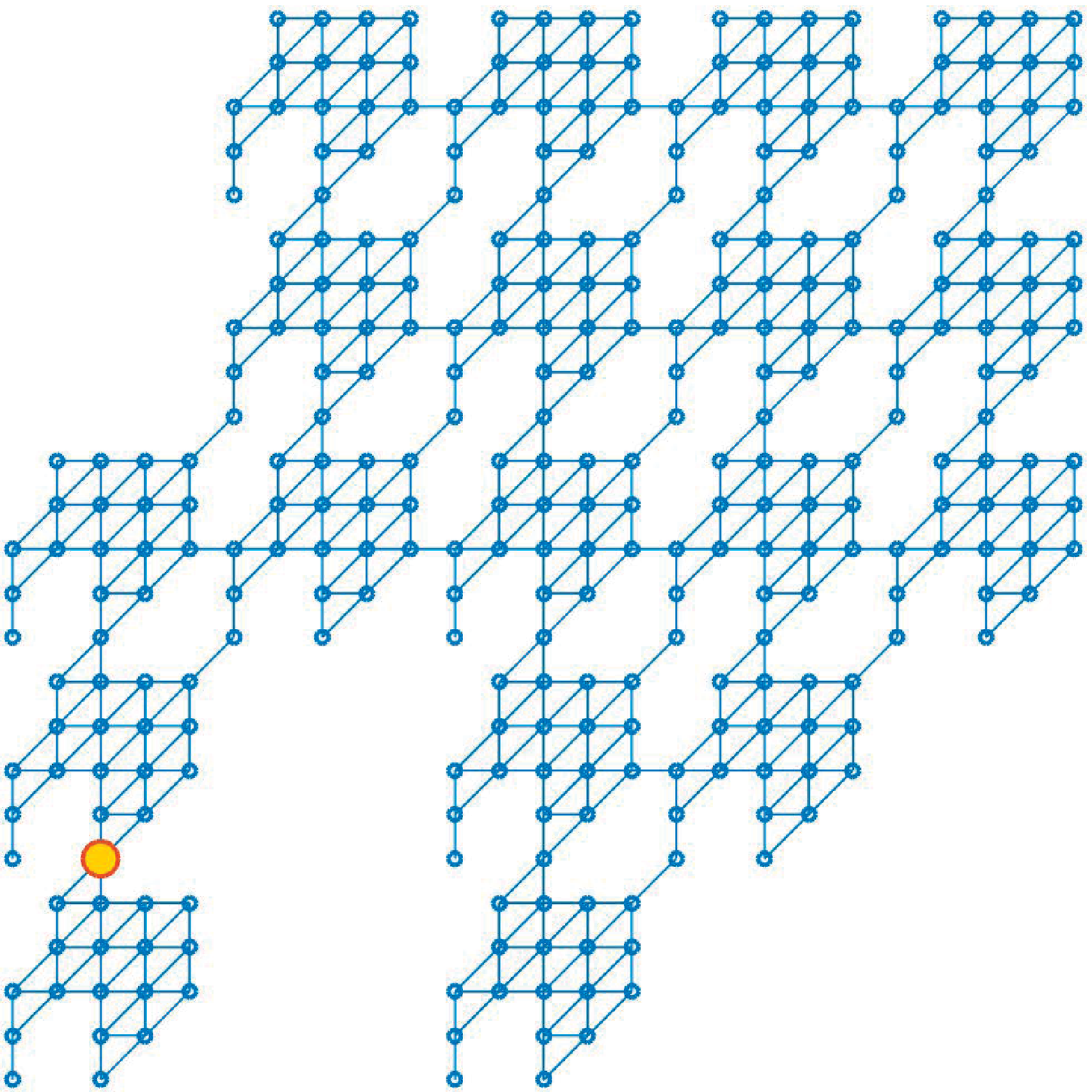}
        \end{minipage}
    }
    \caption{A non-fragile GSC with $\chi_2(N,\D)=|\D|$}
    \label{fig:equalD}
\end{figure}

    \begin{figure}[htbp]
    \centering
    \subfloat[Initial pattern]
    {
        \begin{minipage}[t]{120pt}
            \centering
            \includegraphics[width=4cm]{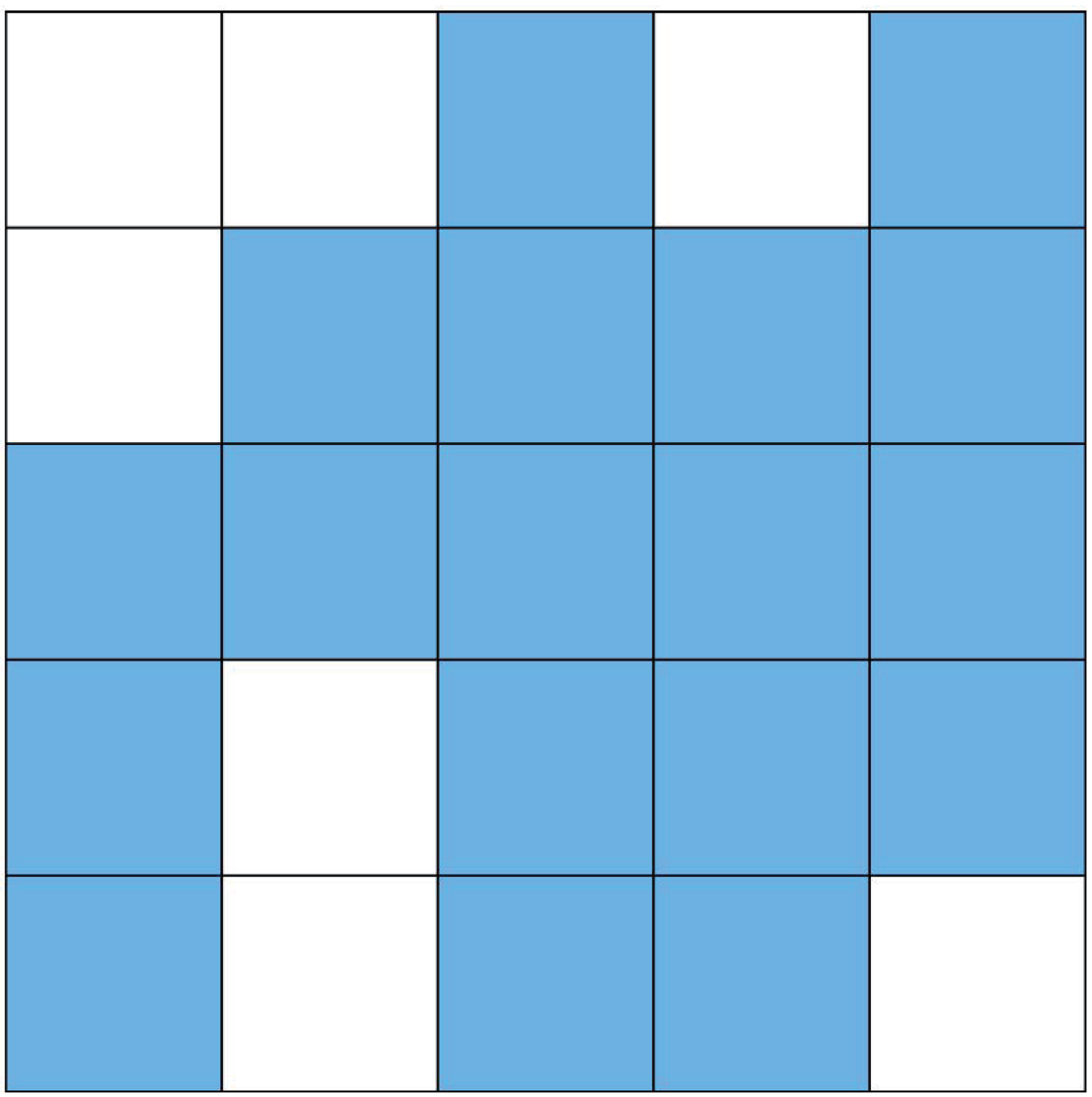}
        \end{minipage}
    }
    \subfloat[The GSC]
    {
        \begin{minipage}[t]{120pt}
            \centering
            \includegraphics[width=4cm]{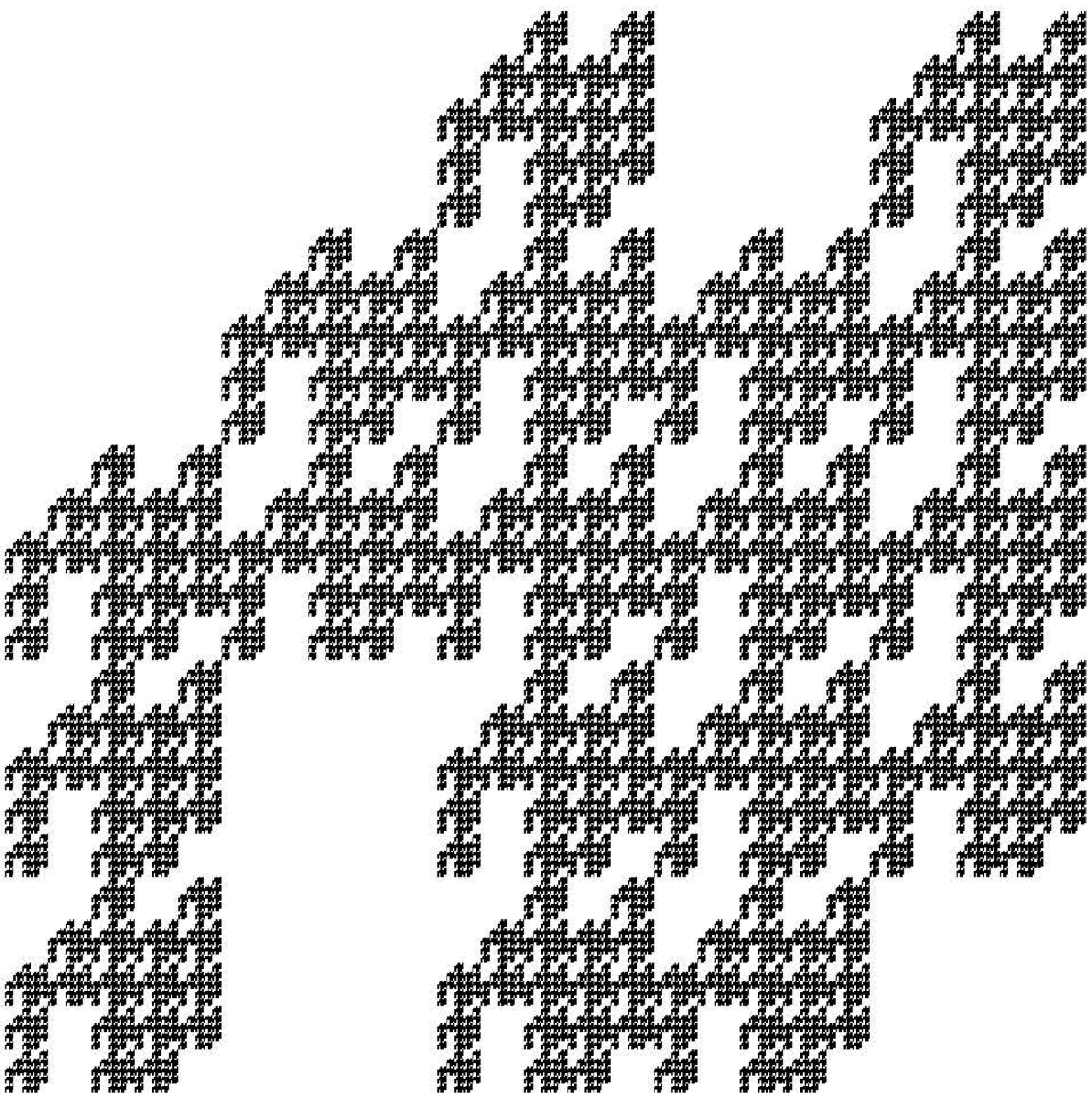}
        \end{minipage}
    }
    \subfloat[$2$-nd Hata graph]
    {
        \begin{minipage}[t]{120pt}
            \centering
            \includegraphics[width=4cm]{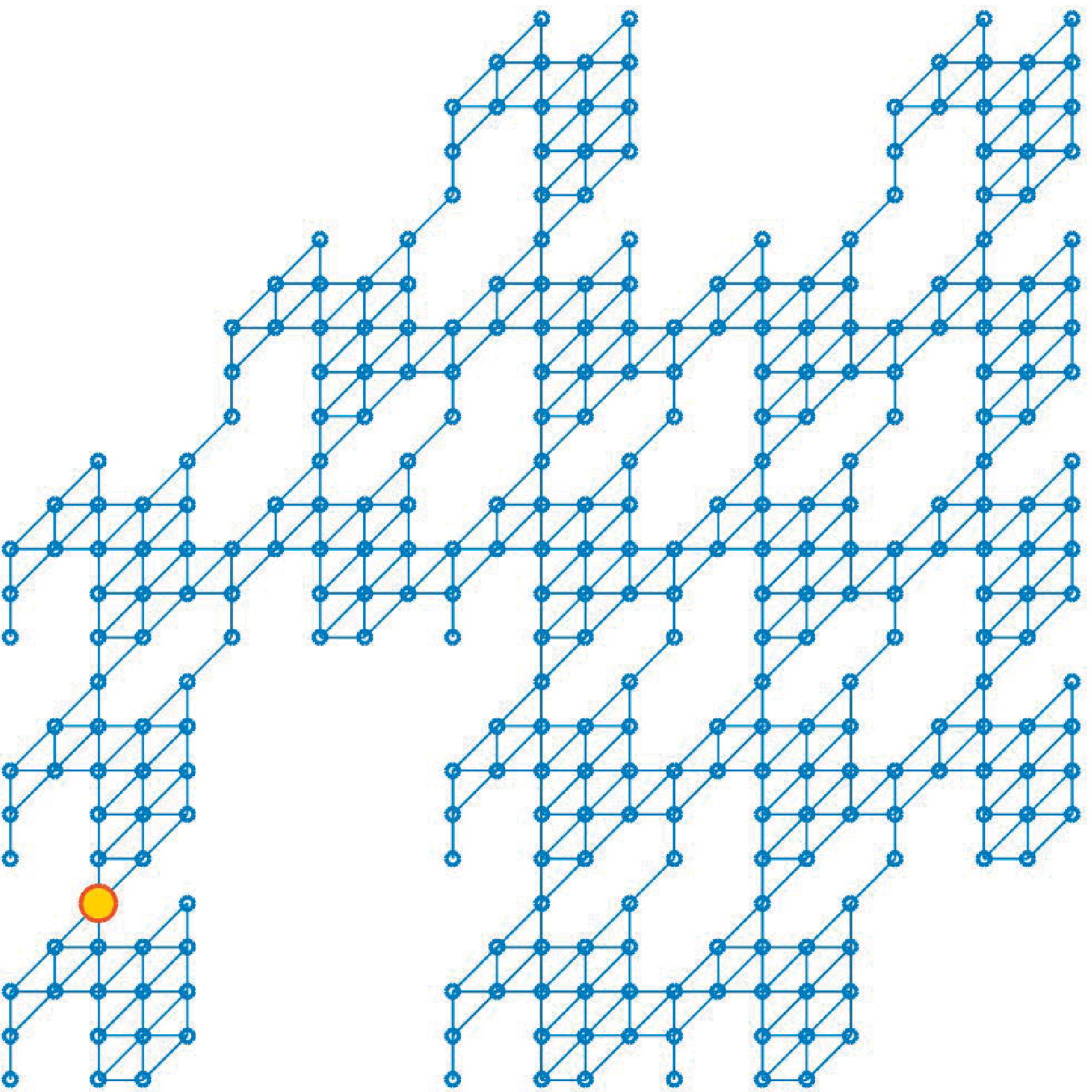}
        \end{minipage}
    }
    \caption{A non-fragile GSC with $\chi_2(N,\D)=|\D|-1$}
    \label{fig:lessD}
\end{figure}

\end{example}

\section{Cut points of non-fragile connected GSCs}

In this section, we will first introduce the notion of well-separated words, which is closely related to the existence of cut points. Then we present a proof of the sufficiency of Theorem~\ref{thm:nonfragile} in Subsection 4.1. In Subsection 4.2, we prove that if a non-fragile connected GSC has cut points, then there exist a cut point $x$ with unique expression and two words which are well separated by $x$. In Subsection 4.3, we prove the necessity of Theorem~\ref{thm:nonfragile}.

\subsection{Well-separated words and proof of the sufficiency of Theorem~\ref{thm:nonfragile}}

From now on, $F=F(N,\D)$ is always presumed to be a connected GSC unless otherwise specified. For every $x\in F$ and $k\geq 1$, write
\begin{equation}\label{eq:omegaande}
    \Omega_k(x) = \{\i\in\D^k: x\in \vp_{\i}(F)\} \quad\text{and}\quad E_k(x) = \bigcup_{\j\in\D^k\setminus\Omega_k(x)} \vp_{\j}(F).
\end{equation}
That is to say, $E_k(x)$ is the union of all level-$k$ cells which do not contain $x$. So $\{E_k(x)\}_{k=1}^\infty$ forms an increasing sequence and
\begin{equation}\label{eq:ekx}
    F\setminus\{x\}=\bigcup_{k=1}^\infty E_k(x).
\end{equation}


\begin{definition}\label{def:5-1}
    Let $x\in F$, $n\in\Z^+$ and let $\omega,\tau$ be two words in $\D^n\setminus\Omega_n(x)$. We say that $\omega$ and $\tau$ are \emph{well separated} by $x$ if $\vp_{\omega}(F)$ and $\vp_{\tau}(F)$ belong to different connected components of $E_{n+p}(x)$ for all $p\geq 1$.
\end{definition}

By definition and noticing that $E_{n+p}(x)$ is increasing with respect to $p$, $\omega$ and $\tau$ are well separated by $x$ if $\vp_{\omega}(F)$ and $\vp_{\tau}(F)$ belong to different connected components of $E_{n+p}(x)$ for all large  $p$.

\begin{theorem}\label{thm:onedire}
    Let $x\in F$, $n\in\Z^+$ and let $\omega,\tau$ be two words in $\D^n\setminus\Omega_n(x)$ which are well separated by $x$. Then $\vp_{\omega}(F)$ and $\vp_{\tau}(F)$ belong to different connected components of $F\setminus\{x\}$. In particular, $x$ is a cut point of $F$.
\end{theorem}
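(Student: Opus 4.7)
The plan is to argue by contradiction: suppose $\vp_\omega(F)$ and $\vp_\tau(F)$ lie in the same connected component $C$ of $F\setminus\{x\}$, and then exhibit a single $E_{n+p}(x)$ in which these two cells are linked, directly contradicting the well-separatedness hypothesis.

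The first step is to promote connectedness of $C$ to path-connectedness. Since $F$ is a connected GSC, the introduction already notes (via Hata's theorem, or Kigami's textbook treatment) that $F$ is locally connected, so $F$ is a Peano continuum and hence locally path-connected. Local path-connectedness passes to the open subset $F\setminus\{x\}$, so its connected component $C$ is in fact path-connected. Choosing $y_1\in\vp_\omega(F)$ and $y_2\in\vp_\tau(F)$, both of which lie in $C$ by assumption, I take a continuous path $\gamma:[0,1]\to C$ joining them. Its image is compact and disjoint from $x$, so $\delta:=\dist(\gamma([0,1]),x)>0$.

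Now I pick $p$ large enough that every level-$(n+p)$ cell has diameter $\sqrt{2}/N^{n+p}<\delta$; then any such cell meeting $\gamma([0,1])$ cannot contain $x$, so $\gamma([0,1])\subset E_{n+p}(x)$. On the other hand, $\omega,\tau\in\D^n\setminus\Omega_n(x)$ means $x\notin\vp_\omega(F)\cup\vp_\tau(F)$, so every level-$(n+p)$ sub-cell of $\vp_\omega(F)$ or $\vp_\tau(F)$ avoids $x$ as well, giving $\vp_\omega(F),\vp_\tau(F)\subset E_{n+p}(x)$. The union $\vp_\omega(F)\cup\gamma([0,1])\cup\vp_\tau(F)$ is then a connected subset of $E_{n+p}(x)$ containing both $\vp_\omega(F)$ and $\vp_\tau(F)$, violating well-separatedness. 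The ``in particular'' clause is immediate: the two non-empty continua $\vp_\omega(F),\vp_\tau(F)$ end up in different components of $F\setminus\{x\}$, which is exactly the statement that $x$ is a cut point of $F$.

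The only delicate point I anticipate is the appeal to path-connectedness of $C$; the remainder is a soft diameter/compactness estimate that uses nothing beyond $\diam\vp_\sigma(F)\to 0$. An alternative would be a combinatorial chain argument based on the fact that each $E_{n+p}(x)$ is a finite union of compacta and hence has only finitely many components which are clopen in $E_{n+p}(x)$, but turning those finite-level separations into a clopen separation of $F\setminus\{x\}$ is considerably more awkward than the one-line path argument, so I would proceed via the Peano-continuum route.
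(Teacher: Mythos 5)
Your proof is correct, but it follows a genuinely different route from the paper's. You argue the contrapositive: assuming $\vp_\omega(F)$ and $\vp_\tau(F)$ lie in a common component $C$ of $F\setminus\{x\}$, you use the fact that a connected GSC is a Peano continuum (local connectedness is cited in the introduction from Hata and Kigami), so that the open set $F\setminus\{x\}$ is locally path-connected and $C$ is path-connected; a path between the two cells stays a positive distance $\delta$ from $x$, hence lies in $E_{n+p}(x)$ once level-$(n+p)$ cells have diameter below $\delta$, contradicting well-separation at that level. All the individual steps check out: each of $\vp_\omega(F),\vp_\tau(F)$ is connected and misses $x$, so "different components" negates cleanly to "same component"; the inclusion $\vp_\omega(F)\cup\gamma([0,1])\cup\vp_\tau(F)\subset E_{n+p}(x)$ is justified by the diameter bound $\sqrt{2}N^{-(n+p)}<\delta$; and one bad $p$ already violates Definition~\ref{def:5-1}. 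The paper instead gives a direct, elementary construction: it partitions $E_{n+p}(x)$ into the cells $A_p$ eventually joined to $\vp_\omega(F)$ and the cells $B_p$ never joined to it, shows $A=\bigcup_p A_p$ and $B=\bigcup_p B_p$ are increasing unions with $\overline{A}=A\cup\{x\}$ and $\overline{B}=B\cup\{x\}$ (using $\bigcap_p\bigcup_{\j\in\Omega_{n+p}(x)}\vp_{\j}(F)=\{x\}$), and then applies Lemma~\ref{lem:local1}. Your version is shorter and conceptually transparent but imports the Hahn--Mazurkiewicz-type fact that open connected subsets of Peano continua are path-connected; the paper's version is longer but entirely self-contained, using nothing beyond compactness and Lemma~\ref{lem:local1}, and it additionally exhibits the two sides of the separation explicitly, which is in the spirit of the combinatorial alternative you dismissed as awkward.
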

\begin{proof}
    For $p\geq 1$, let
    \begin{align*}
        A_p = \{y\in E_{n+p}(x): y &\text{ and } \vp_{\omega}(F) \text{ belong to } \\ &\text{ the same connected component of $E_k(x)$ for some $k\geq n+p$}\}
    \end{align*}
    and
    \begin{align*}
        B_p = \{y\in E_{n+p}(x): y &\text{ and } \vp_{\omega}(F) \text{ belong to } \\ &\text{ different connected components of $E_k(x)$ for all $k\geq n+p$}\}.
    \end{align*}
    Note that $\vp_\omega(F)\subset A_p$ and $\vp_\tau(F)\subset B_p$ for all $p\geq 1$. It is also clear that $E_{n+p}(x)=A_{p}\cup B_{p}$ is a disjoint union. Since $\{E_k(x)\}_{k=1}^\infty$ is increasing,
    \[
        F\setminus\{x\}=\bigcup_{k=1}^\infty E_k(x)=\bigcup_{k=n+1}^\infty E_k(x)=\bigcup_{p=1}^\infty (A_{p}\cup B_{p})
    \]
    and $\{B_{p}\}_{p=1}^\infty$ is also increasing. Moreover, note that if $y$ and $\vp_\omega(F)$ belong to the same connected component of $E_k(x)$ for some $k\geq n+p$, then they belong to the same connected component of $E_\ell(x)$ for all $\ell\geq k$. Thus $\{A_p\}_{p=1}^\infty$ is also increasing. As a corollary, we have $A_{p}\cap B_q=\varnothing$ for all $p,q$.
    Note that for every $\i\in\D^{n+p}\setminus\Omega_{n+p}(x)$, if $\vp_{\i}(F)\cap A_p\neq\varnothing$ then $\vp_{\i}(F)\subset A_p$. This is because $\vp_{\i}(F)$ is connected. The same statement holds with $A_p$ replaced by $B_p$, so both of $A_{p}$ and $B_{p}$ are finite unions of level-$(n+p)$ cells and hence compact.

    Letting $A=\bigcup_{p=1}^\infty A_{p}$ and $B=\bigcup_{p=1}^\infty B_{p}$, we claim that $\overline{A}=A\cup\{x\}$ and $\overline{B}=B\cup\{x\}$. Since $A\cap B=\varnothing$, $\overline{A}$ and $\overline{B}$ are two closed sets intersecting at exactly one point $x$. By Lemma~\ref{lem:local1}, $F\setminus\{x\}=A\cup B=(\overline{A}\cup\overline{B})\setminus\{x\}$ is disconnected and $\vp_{\omega}(F)\subset A$, $\vp_{\tau}(F)\subset B$ belong to different connected components of $F\setminus\{x\}$.

    It remains to verify the claim. Recall that $\{A_{p}\}_{p=1}^\infty$ is an increasing sequence of compact sets. Fix any $p\geq 1$. Note that for every $q\geq p$,
    \[
        A_q \subset F = \bigcup_{\j\in\D^{n+p}}\vp_{\j}(F) = A_{p} \cup B_{p} \cup \bigcup_{\j\in\Omega_{n+p}(x)} \vp_{\j}(F).
    \]
    Since $A_q\cap B_{p}=\varnothing$, $A_{q}\setminus A_{p}\subset \bigcup_{\j\in \Omega_{n+p}(x)}\vp_{\j}(F).$    Hence
    \begin{equation}\label{eq:aminusap}
        A\setminus A_{p} = \bigcup_{q\geq p} (A_q\setminus A_p) \subset \bigcup_{\j\in \Omega_{n+p}(x)}\vp_{\j}(F).
    \end{equation}
    For every $y\in \overline{A}\setminus A$, there is a sequence of $\{y_m\}_{m=1}^\infty\subset A$ such that $y_m\to y$ as $m\to\infty$. It is easy to see that $y_m\notin A_{p}$ for all large $m$ since otherwise $y\in A_{p}\subset A$, which is a contradiction. Therefore, $y_m\in A\setminus A_{p}$ for all large $m$ and it follows from~\eqref{eq:aminusap} that
    \[
        y=\lim_{m\to\infty} y_m \in \bigcup_{\j\in\Omega_{n+p}(x)}\vp_{\j}(F).
    \]
    Since this holds for every $p\geq 1$, we have
    \[
        y\in \bigcap_{p=1}^\infty \Big( \bigcup_{\j\in\Omega_{n+p}(x)}\vp_{\j}(F) \Big) = \{x\},
    \]
    i.e., $y=x$. This completes the proof that $\overline{A}=A\cup\{x\}$. Similarly, $\overline{B}=B\cup\{x\}$.
\end{proof}

\begin{lemma}\label{lem:wellsep1}
    Let $x\in F$ be a cut point and let $n\in\Z^+$. Then the following statements are equivalent.
    \begin{enumerate}
        \item Every pair of distinct $\omega,\tau\in\D^n\setminus\Omega_n(x)$ is not well separated by $x$;
        \item There is some $p\in\Z^+$ (depending on $n$) such that $E_n(x)$ is contained in exactly one connected component of $E_{n+p}(x)$.
    \end{enumerate}
\end{lemma}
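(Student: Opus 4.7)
The plan rests on a simple monotonicity observation. Since $\{E_{n+p}(x)\}_{p\geq 1}$ is an increasing sequence of compact sets, any connected subset of $E_{n+p_0}(x)$ remains a connected subset of $E_{n+p}(x)$ whenever $p\geq p_0$, and hence lies in a single connected component of the larger set. Applying this to the component of $E_{n+p_0}(x)$ containing $\vp_\omega(F)\cup\vp_\tau(F)$, I would deduce that if $\vp_\omega(F)$ and $\vp_\tau(F)$ belong to the same component of $E_{n+p_0}(x)$, they belong to the same component of $E_{n+p}(x)$ for every $p\geq p_0$. This yields the useful reformulation: two words $\omega,\tau\in\D^n\setminus\Omega_n(x)$ are \emph{not} well separated by $x$ if and only if there exists some $p\geq 1$ with $\vp_\omega(F)$ and $\vp_\tau(F)$ in the same connected component of $E_{n+p}(x)$.

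Given this reformulation, the implication (2) $\Longrightarrow$ (1) is immediate: take the $p$ supplied by (2), fix any distinct $\omega,\tau\in\D^n\setminus\Omega_n(x)$, observe that $\vp_\omega(F)\cup\vp_\tau(F)\subset E_n(x)$ is contained in one single component of $E_{n+p}(x)$, and conclude that $\omega$ and $\tau$ are not well separated. The converse (1) $\Longrightarrow$ (2) exploits the finiteness of $\D^n\setminus\Omega_n(x)$: for each of the finitely many unordered pairs $\{\omega,\tau\}$ of distinct elements, hypothesis (1) together with the reformulation furnishes an integer $p(\omega,\tau)\geq 1$ such that $\vp_\omega(F)$ and $\vp_\tau(F)$ lie in the same component of $E_{n+p(\omega,\tau)}(x)$; setting $p$ to be the maximum of these finitely many integers, monotonicity ensures that for every such pair both cells lie in a common component of $E_{n+p}(x)$. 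Hence the cells $\{\vp_\omega(F):\omega\in\D^n\setminus\Omega_n(x)\}$ are pairwise in the same component of $E_{n+p}(x)$, so their union $E_n(x)$ lies in exactly one connected component, giving (2).

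The only point requiring any real work is the monotonicity reformulation, whose purpose is precisely to convert the universal quantifier ``for all $p\geq 1$'' appearing in Definition~\ref{def:5-1} into an equivalent statement about \emph{some} (hence eventually all) $p$, so that a single $p$ can then be chosen uniformly across the finitely many pairs in $\D^n\setminus\Omega_n(x)$. The potentially degenerate case $|\D^n\setminus\Omega_n(x)|\leq 1$ causes no difficulty: (1) then holds vacuously, and $E_n(x)$ is either empty or a single connected cell $\vp_\omega(F)$, which is automatically contained in one component of every $E_{n+p}(x)$.
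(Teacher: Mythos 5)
Your proposal is correct and follows essentially the same route as the paper: both directions hinge on the monotonicity of $\{E_{n+p}(x)\}_{p\geq 1}$ (which the paper records in the remark following Definition~\ref{def:5-1}) and on taking the maximum of finitely many $p_{\omega,\tau}$ for (1) $\Longrightarrow$ (2), while (2) $\Longrightarrow$ (1) is immediate since $\vp_\omega(F),\vp_\tau(F)\subset E_n(x)$. Your explicit treatment of the reformulation and of the degenerate case $|\D^n\setminus\Omega_n(x)|\leq 1$ is just a more careful writing of the same argument.
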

\begin{proof}
    For (1) $\Longrightarrow$ (2), fix any pair of distinct $\omega,\tau\in\D^n\setminus\Omega_n(x)$. Since they are not well separated by $x$, we can find some $p_{\omega,\tau}\in\Z^+$ such that $\vp_\omega(F)$ and $\vp_{\tau}(F)$ belong to the same connected component of $E_{n+p_{\omega,\tau}}(x)$. Letting
    \[
        p=\max\{p_{\omega,\tau}: \omega,\tau\in\D^n\setminus\Omega_n(x), \omega\neq\tau\},
    \]
    we see by the monotonicity of $\{E_{n}(x)\}_{n=1}^\infty$ that $\bigcup_{\omega\in\D^n\setminus\Omega_n(x)}\vp_\omega(F)=E_n(x)$ is contained in exactly one connected component of $E_{n+p}(x)$.

    For (2) $\Longrightarrow$ (1), just note that $\vp_\omega(F)$ and $\vp_\tau(F)$ are both subsets of $E_n(x)$.
\end{proof}

The following result is a converse of Theorem~\ref{thm:onedire}.

\begin{theorem}\label{thm:foreversep}
    Let $x$ be a cut point of $F$. Then there is some large $n_0\in\Z^+$ and two distinct words $\omega,\tau\in\D^{n_0}\setminus\Omega_{n_0}(x)$ which are well separated by $x$.
\end{theorem}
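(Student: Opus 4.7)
The plan is to prove the contrapositive via Lemma~\ref{lem:wellsep1}. Suppose that for every $n\in\Z^+$, no two distinct words in $\D^n\setminus\Omega_n(x)$ are well separated by $x$. I will show $F\setminus\{x\}$ is connected, contradicting that $x$ is a cut point of $F$.

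By the assumption and Lemma~\ref{lem:wellsep1}, for every $n\in\Z^+$ there is an integer $p(n)\in\Z^+$ such that $E_n(x)$ is contained in exactly one connected component of $E_{n+p(n)}(x)$. I will build an increasing sequence of integers and a nested chain of connected sets as follows. Set $n_1=1$, and recursively define $n_{k+1}=n_k+p(n_k)$ for $k\geq 1$. By the choice of $p(n_k)$, there exists a connected component $C_k$ of $E_{n_{k+1}}(x)$ such that
\[
    E_{n_k}(x) \subset C_k \subset E_{n_{k+1}}(x).
\]
Since $C_k\subset E_{n_{k+1}}(x)$ and $C_{k+1}$ is the connected component of $E_{n_{k+2}}(x)$ containing $E_{n_{k+1}}(x)$, and since $C_k$ itself is connected, we obtain $C_k\subset C_{k+1}$. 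Thus $\{C_k\}_{k=1}^\infty$ is an increasing chain of connected sets.

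It then follows that $C:=\bigcup_{k=1}^\infty C_k$ is connected. Moreover, sandwiching gives
\[
    \bigcup_{k=1}^\infty E_{n_k}(x) \subset \bigcup_{k=1}^\infty C_k \subset \bigcup_{k=1}^\infty E_{n_{k+1}}(x),
\]
and both outer unions equal $\bigcup_{n=1}^\infty E_n(x) = F\setminus\{x\}$ by~\eqref{eq:ekx} and the monotonicity of $\{E_n(x)\}$ (because $n_k\to\infty$). Hence $C=F\setminus\{x\}$ is connected, contradicting the hypothesis that $x$ is a cut point. So the assumption must fail, meaning there exists $n_0$ and two distinct words in $\D^{n_0}\setminus\Omega_{n_0}(x)$ that are well separated by $x$.

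I do not foresee a serious obstacle: the argument is essentially a diagonal/telescoping use of Lemma~\ref{lem:wellsep1}. The one point that requires care is verifying the inclusion $C_k\subset C_{k+1}$, which hinges on the elementary fact that a connected subset of a space lies in a unique connected component. A minor subtlety is that we implicitly need $\D^{n_0}\setminus\Omega_{n_0}(x)$ to contain at least two elements, but this is automatic for large $n_0$ since $|\Omega_{n}(x)|\leq 4$ (a point lies in at most four grid cells) while $|\D|^n\to\infty$.
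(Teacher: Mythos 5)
Your proof is correct and follows essentially the same route as the paper: argue by contradiction, invoke Lemma~\ref{lem:wellsep1} to place each $E_n(x)$ inside a single component of some larger $E_{n+p}(x)$, and exhaust $F\setminus\{x\}$ by a nested chain of connected components. The only cosmetic difference is that you telescope along a subsequence $n_{k+1}=n_k+p(n_k)$ where the paper instead takes each $p_n$ minimal to get $B_n\subset B_{n+1}$ directly; both devices serve the same purpose.
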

\begin{proof}
    If the statement is not true, then for every fixed $n$, every pair of distinct $\omega,\tau\in\D^n\setminus\Omega_n(x)$ are not well separated by $x$. By Lemma~\ref{lem:wellsep1}, we can find some $p_n\in\Z^+$ such that $E_n(x)$ is contained in exactly one connected component, say $B_n$, of $E_{n+p_n}(x)$. Recall that $\{E_k\}_{k=1}^\infty$ is increasing. Choosing $p_n$ as small as possible, we have $n+p_n\leq (n+1)+p_{n+1}$. So $E_{n+p_n}\subset E_{(n+1)+p_{n+1}}$ and hence $B_n\subset B_{n+1}$ for all $n$. Clearly, $\bigcup_{n=1}^\infty B_n$ is connected. Furthermore,
    \[
        F\setminus\{x\}=\bigcup_{n=1}^\infty E_n(x)\subset\bigcup_{n=1}^\infty B_n\subset \bigcup_{n=1}^\infty E_{n+p_n}(x) = F\setminus\{x\},
    \]
    implying that $F\setminus\{x\}$ is connected. This is a contradiction.
\end{proof}

In the rest of this subsection, we will use Theorem~\ref{thm:onedire} to prove the sufficiency of Theorem~\ref{thm:nonfragile}.

\begin{lemma}\label{lem:suff1}
    Let $k\geq 2$ and let $\j=j_1\cdots j_{k}\in\D^{k}$ be the cut vertex of $\Gamma_{k}$ achieving $\chi(\Gamma_k)$. If $\chi(\Gamma_k)\geq|\D|^{k-1}$, then there are $\omega,\tau\in\D\setminus\{j_1\}$ such that $\omega\D^{k-1}$ and $\tau\D^{k-1}$ belong to different connected components of $\Gamma_k-\{\j\}$.
\end{lemma}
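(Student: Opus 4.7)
The plan is to exploit the block structure of $\Gamma_k$ induced by the disjoint decomposition $\D^k=\bigcup_{i\in\D}i\D^{k-1}$, together with the fact that each block $i\D^{k-1}$ carries a copy of $\Gamma_{k-1}$ and is therefore connected. Once this is in hand, a vertex-counting argument will force the desired pair $\omega,\tau$.

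First I would observe that the map $\j\mapsto i\j$ from $\D^{k-1}$ to $i\D^{k-1}$ is a graph isomorphism between $\Gamma_{k-1}$ and the subgraph of $\Gamma_k$ induced on $i\D^{k-1}$: edges are preserved because $\vp_{i\j}(F)\cap\vp_{i\j'}(F)=\vp_i(\vp_{\j}(F)\cap\vp_{\j'}(F))$ and $\vp_i$ is injective. Since $F$ is connected, every $\Gamma_m$ is connected (as noted in the paper right after Definition~1.4), so each induced subgraph on $i\D^{k-1}$ is connected as well. For every $i\in\D\setminus\{j_1\}$ the cut vertex $\j=j_1\cdots j_k$ does not lie in $i\D^{k-1}$, hence removing $\j$ does not alter the induced subgraph on $i\D^{k-1}$, and all $|\D|^{k-1}$ vertices of $i\D^{k-1}$ lie in one and the same connected component of $\Gamma_k-\{\j\}$.

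To finish, I would argue by contradiction. If no pair $\omega,\tau$ as asserted exists, then every block $i\D^{k-1}$ with $i\neq j_1$ lies in one common component $C$ of $\Gamma_k-\{\j\}$. This gives $|C|\geq(|\D|-1)|\D|^{k-1}$, whereas every other component of $\Gamma_k-\{\j\}$ is contained in $j_1\D^{k-1}\setminus\{\j\}$ and thus has at most $|\D|^{k-1}-1$ vertices. Since $|\D|\geq 2$, one has $(|\D|-1)|\D|^{k-1}\geq|\D|^{k-1}>|\D|^{k-1}-1$, so $C$ is the unique largest component of $\Gamma_k-\{\j\}$ and the second largest one satisfies $|G_2(\j)|\leq|\D|^{k-1}-1$. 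This yields $\chi(\Gamma_k)=|G_2(\j)|\leq|\D|^{k-1}-1$, contradicting the hypothesis $\chi(\Gamma_k)\geq|\D|^{k-1}$. There is no real obstacle here beyond making the block-isomorphism explicit; once each $i\D^{k-1}$ is known to induce a connected subgraph avoiding $\j$ (for $i\neq j_1$), the cardinality count is automatic.
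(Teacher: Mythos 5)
Your proof is correct and follows essentially the same route as the paper: decompose $\D^k$ into blocks $i\D^{k-1}$, use the connectedness of $\Gamma_{k-1}$ (via the block isomorphism) to place each block with $i\neq j_1$ entirely inside one component of $\Gamma_k-\{\j\}$, and then derive the contradiction $\chi(\Gamma_k)=|G_2(\j)|\leq|\D|^{k-1}-1$ when all such blocks share a component. Your write-up is merely a more explicit version of the paper's argument, so nothing further is needed.
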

\begin{proof}
    By the connectedness of $\Gamma_{k-1}$, it is clear that for every $i\in\D\setminus\{j_1\}$ and every connected component $C$ of $\Gamma_k-\{\j\}$, either the whole of $i\D^{k-1}$ belongs to $C$ or none of vertices in $i\D^{k-1}$ belongs to $C$.

    If the lemma is false, i.e., there is some connected component $C'$ of $\Gamma_k-\{\j\}$ such that $\bigcup_{i\in\D\setminus\{j_1\}}i\D^{k-1}\subset C'$. Then clearly
    \[
        \chi(\Gamma_k) \leq |\{\eta\in\D^k\setminus\{\j\}: j_1\prec\eta\}| \leq |\D|^{k-1}-1 < |\D|^{k-1},
    \]
    which is a contradiction.
\end{proof}

\begin{lemma}\label{lem:equivalent}
    Let $n\geq 1$ and let $\j=j_1\cdots j_n\in\D^n$. For $\omega,\tau\in\D\setminus\{j_1\}$, the following two statements are equivalent:
    \begin{enumerate}
        \item $\omega\D^{n-1}$ and $\tau\D^{n-1}$ belong to different connected components of $\Gamma_{n}-\{\j\}$;
        \item $\vp_{\omega}(F)$ and $\vp_\tau(F)$ belong to different connected components of $\bigcup_{\eta\in\D^n\setminus\{\j\}} \vp_{\eta}(F)$.
    \end{enumerate}
\end{lemma}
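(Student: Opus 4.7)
The plan is to reduce both statements in Lemma~\ref{lem:equivalent} to the same ``intersection chain'' condition on level-$n$ cells. The underlying fact is standard: for any finite family $\{C_\eta\}_{\eta\in\Lambda}$ of compact connected sets, two pieces $C_\alpha,C_\beta$ lie in the same connected component of the union $\bigcup_{\eta\in\Lambda}C_\eta$ if and only if there exists a sequence $\alpha=\alpha_0,\alpha_1,\ldots,\alpha_m=\beta$ in $\Lambda$ with $C_{\alpha_i}\cap C_{\alpha_{i+1}}\neq\varnothing$ for each $i$. So I plan to prove the equivalent contrapositive statement (``same component $\Leftrightarrow$ same component'').

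First I would apply this fact with $\Lambda=\D^n\setminus\{\j\}$ and $C_\eta=\vp_\eta(F)$, noting that each $\vp_\eta(F)$ is compact and connected (being a continuous image of $F$). By the definition of $\Gamma_n$, adjacency of distinct $\alpha,\beta$ in $\Gamma_n$ is \emph{equivalent} to $\vp_\alpha(F)\cap\vp_\beta(F)\neq\varnothing$, and since neither endpoint equals $\j$, such an edge persists in $\Gamma_n-\{\j\}$. Consequently, two cells $\vp_\alpha(F),\vp_\beta(F)$ (with $\alpha,\beta\neq\j$) lie in the same connected component of $\bigcup_{\eta\neq\j}\vp_\eta(F)$ if and only if $\alpha,\beta$ lie in the same connected component of $\Gamma_n-\{\j\}$.

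Next I would lift this cell-level equivalence to the block-level equivalence in the lemma. Since $\omega,\tau\in\D\setminus\{j_1\}$, we have $\j\notin \omega\D^{n-1}\cup\tau\D^{n-1}$. Write $\vp_\omega(F)=\bigcup_{\alpha\in\omega\D^{n-1}}\vp_\alpha(F)$, which is itself connected, so it is contained in a single component of the big union; similarly for $\vp_\tau(F)$. Moreover, the subgraph of $\Gamma_n-\{\j\}$ induced on $\omega\D^{n-1}$ is graph-isomorphic to $\Gamma_{n-1}$ via $\omega\i\mapsto\i$, and $\Gamma_{n-1}$ is connected by Hata's criterion (Lemma~\ref{thm:hata}) since $F$ is connected; the same holds for $\tau\D^{n-1}$. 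Therefore ``$\vp_\omega(F)$ and $\vp_\tau(F)$ lie in the same component of $\bigcup_{\eta\neq\j}\vp_\eta(F)$'' is equivalent to ``some (equivalently, every) element of $\omega\D^{n-1}$ is path-connected in $\Gamma_n-\{\j\}$ to some element of $\tau\D^{n-1}$'', which is precisely ``$\omega\D^{n-1}$ and $\tau\D^{n-1}$ lie in the same component of $\Gamma_n-\{\j\}$''. Taking contrapositives yields the claimed equivalence.

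I do not anticipate any serious obstacle. The only point that requires care is to record explicitly why the removed vertex $\j$ never intrudes into the induced subgraphs on $\omega\D^{n-1}$ and $\tau\D^{n-1}$ --- this is immediate from the hypothesis $\omega\neq j_1\neq \tau$ --- and to invoke the classical ``chain of intersections'' characterization of connected components of a finite union of compact connected sets, which is elementary.
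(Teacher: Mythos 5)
Your proof is correct and follows essentially the same route as the paper's: the paper likewise proves (1)$\Rightarrow$(2) by separating $Y$ into two disjoint compact unions indexed by the vertex sets of the components, and (2)$\Rightarrow$(1) by converting a graph path into a chain of intersecting cells and using the connectedness of $\vp_\omega(F)$ and $\vp_\tau(F)$; your general ``chain of intersections'' lemma for finite unions of compact connected sets is just a packaged form of these two steps. The one point you make explicit that the paper leaves implicit here (it appears instead in the proof of Lemma~\ref{lem:suff1}) is that $\omega\D^{n-1}$ lies entirely in a single component of $\Gamma_n-\{\j\}$ because its induced subgraph is isomorphic to the connected graph $\Gamma_{n-1}$.
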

Here we interpret $\omega\D^0=\{\omega\}$ and $\tau\D^0=\{\tau\}$.
\begin{proof}
    Let us write $Y=\bigcup_{\eta\in\D^n\setminus\{\j\}} \vp_{\eta}(F)$ for simplicity. We first prove ``$(1)\Longrightarrow (2)$''. Let $V_1$ be the vertex set of the connected component of $\Gamma_n-\{\j\}$ containing $\omega\D^{n-1}$ and let $V_2=\D^n\setminus(V_1\cup\{\j\})$ (so $\tau\D^{n-1}\subset V_2$). Then there are no edges joinning one vertex in $V_1$ and another vertex in $V_2$, so
    \[
        \Big( \bigcup_{\eta\in V_1}\vp_\eta(F) \Big) \cap \Big( \bigcup_{\eta\in V_2}\vp_\eta(F) \Big) = \varnothing.
    \]
    Note that the above two (non-empty) sets in the brackets are both compact and their union is just $Y$. So they cannot lie in the same connected component of $Y$. In particular, $\vp_{\omega}(F)$ and $\vp_\tau(F)$ belong to different connected components of $Y$.

    For ``$(2)\Longrightarrow (1)$'', suppose on the contrary that $\omega\D^{n-1}$ and $\tau\D^{n-1}$ belong to the same connected component of $\Gamma_{n}-\{\j\}$. So there is a path in $\Gamma_{n}-\{\j\}$ joinning one vertex $\omega\i$ in $\omega\D^{n-1}$ and another vertex $\tau\i'$ in $\tau\D^{n-1}$. By the definition of edges in $\Gamma_{n}-\{\j\}$, we see that $\vp_{\omega\i}(F)$ and $\vp_{\tau\i'}(F)$ lie in one common connected component of $Y$. Since $\vp_\omega(F)$ and $\vp_\tau(F)$ are both connected, they belong to the same connected component of $Y$. This is a contradiction.
\end{proof}

\begin{corollary}\label{cor:nonfragilelast}
    Let $k\geq 1$ and let $\j\in\D^k$ be a cut vertex of $\Gamma_k$. Suppose there are $\omega,\tau\in\D$ such that $\omega\D^{k-1}, \tau\D^{k-1}$ belong to different connected components of $\Gamma_k-\{\j\}$. Then $\omega\D^{q-1}, \tau\D^{q-1}$ belong to different connected components of $\Gamma_{q}-\{\j|_{q}\}$ for all $1\leq q\leq k$.
\end{corollary}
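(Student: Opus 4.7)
The plan is to translate the graph-theoretic statement into a topological one via Lemma~\ref{lem:equivalent}, exploit a simple set-containment between two auxiliary closed sets, and then translate back. For the application via Lemma~\ref{lem:equivalent} to make sense (and for the hypothesis even to be meaningful, since $\j\notin\Gamma_k-\{\j\}$), we must have $\omega,\tau\in\D\setminus\{j_1\}$, which I will note at the outset.

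First I would apply Lemma~\ref{lem:equivalent} at level $n=k$: the hypothesis that $\omega\D^{k-1}$ and $\tau\D^{k-1}$ lie in different connected components of $\Gamma_k-\{\j\}$ is equivalent to $\vp_\omega(F)$ and $\vp_\tau(F)$ lying in different connected components of
\[
    Y_k := \bigcup_{\eta\in\D^k\setminus\{\j\}}\vp_\eta(F).
\]
Writing also $Y_q := \bigcup_{\eta\in\D^q\setminus\{\j|_q\}}\vp_\eta(F)$ for $1\leq q\leq k$, the key observation I want to establish is the inclusion $Y_q\subset Y_k$. This is immediate from self-similarity: both $Y_q$ and $Y_k$ are obtained from $F=\bigcup_{\eta\in\D^q}\vp_\eta(F)=\bigcup_{\eta\in\D^k}\vp_\eta(F)$ by deletion, but $Y_q$ is obtained by excising the entire level-$q$ cell $\vp_{\j|_q}(F)=\bigcup_{\xi\in\D^{k-q}}\vp_{\j|_q\xi}(F)$, which contains the single level-$k$ cell $\vp_\j(F)$ that is excised to form $Y_k$.

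Next, since $\omega,\tau\in\D\setminus\{j_1\}$, the cells $\vp_\omega(F)$ and $\vp_\tau(F)$ are both contained in $Y_q$. The containment $Y_q\subset Y_k$ implies that any two points lying in different connected components of $Y_k$ also lie in different connected components of $Y_q$ (indeed, a connected subset of $Y_q$ is a connected subset of $Y_k$ and therefore entirely contained in a single component of $Y_k$). Consequently $\vp_\omega(F)$ and $\vp_\tau(F)$ lie in different connected components of $Y_q$.

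Finally, I would invoke Lemma~\ref{lem:equivalent} once more, this time at level $n=q$ with $\j$ replaced by $\j|_q$ (whose first letter is still $j_1$, so the hypothesis $\omega,\tau\in\D\setminus\{j_1\}$ is preserved), to conclude that $\omega\D^{q-1}$ and $\tau\D^{q-1}$ belong to different connected components of $\Gamma_q-\{\j|_q\}$. The proof is essentially a one-line topological inclusion sandwiched between two applications of Lemma~\ref{lem:equivalent}, so the only potential pitfall—and the ``main obstacle'' such as it is—is verifying the set inclusion $Y_q\subset Y_k$ cleanly; everything else is formal.
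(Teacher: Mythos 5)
Your proposal is correct and follows essentially the same route as the paper: two applications of Lemma~\ref{lem:equivalent} sandwiching the inclusion $\bigcup_{\eta\in\D^{q}\setminus\{\j|_{q}\}}\vp_{\eta}(F)\subset \bigcup_{\eta\in\D^{k}\setminus\{\j\}}\vp_{\eta}(F)$; the paper merely phrases the middle step contrapositively (same component in $Y_q$ implies same component in $Y_k$), which is logically identical to your direct version. Your remark that $\omega,\tau\in\D\setminus\{j_1\}$ is needed for Lemma~\ref{lem:equivalent} to apply is a fair observation that the paper leaves implicit.
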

\begin{proof}
    We shall prove this by contradiction. Suppose there is some $1\leq q\leq k$ such that $\omega\D^{q-1}$ and $\tau\D^{q-1}$ belong to one common connected component of $\Gamma_{q}-\{\j|_{q}\}$. Then it follows from the above lemma that $\vp_{\omega}(F)$ and $\vp_\tau(F)$ belong to the same connected component of $\bigcup_{\eta\in\D^{q}\setminus\{\j|_{q}\}}\vp_{\eta}(F)$.
    Since $\bigcup_{\eta\in\D^{q}\setminus\{\j|_{q}\}}\vp_{\eta}(F) \subset \bigcup_{\eta\in\D^{k}\setminus\{\j\}}\vp_{\eta}(F)$, $\vp_{\omega}(F)$ and $\vp_\tau(F)$ must belong to the same connected component of $\bigcup_{\eta\in\D^{k}\setminus\{\j\}}\vp_{\eta}(F)$. Again by the above lemma, we see that $\omega\D^{k-1}$ and $\tau\D^{k-1}$ belong to the same connected component of $\Gamma_k-\{\j\}$, which leads to a contradiction.
\end{proof}

\begin{proof}[Proof of the sufficiency of Theorem~\ref{thm:nonfragile}]
    It suffices to prove the ``moreover" part of the theorem.
    For every $k\geq k_0+1$, let $\i_k$ be a cut vertex of $\Gamma_k$ achieving $\chi(\Gamma_k)$. Since $\chi(\Gamma_k)\geq |\D|^{k-1}$, we can find (by Lemma~\ref{lem:suff1}) $\omega_k,\tau_k\in\D$ such that $\omega_k\D^{k-1}$ and $\tau_k\D^{k-1}$ belong to different connected components of $\Gamma_{k}-\{\i_k\}$. Since there are only a finite number of such pairs $(\omega_k,\tau_k)\in\D\times\D$, we can find $\omega,\tau\in\D$ such that $(\omega,\tau)$ appears infinitely many times in the sequence $\{(\omega_k,\tau_k)\}_{k=k_0+1}^\infty$, say $(\omega_{k_t},\tau_{k_t})\equiv(\omega,\tau)$, $t=1,2,\ldots$. From $\{\i_k\}_{k=k_0+1}^\infty$, we can generate another sequence $\{\j_n\}_{n=k_0+1}^\infty$ by substitution as follows:  Set $\j_n=\i_{k_{t}}|_n$ for $n>k_0$, where $t$ is the unique integer such that $k_{t-1}<n\leq k_t$.
    Since $\omega\D^{k-1},\tau\D^{k-1}$ belong to different connected components of $\Gamma_{k_t}-\{\i_{k_t}\}$ for all $t$, we see by Corollary~\ref{cor:nonfragilelast} that $\omega\D^{n-1},\tau\D^{n-1}$ belong to different components of $\Gamma_n-\{\j_n\}$ for all $n\geq k_0+1$.

    Select a subsequence of $\{\j_n\}$ such that every word in this subsequence shares a common prefix $j_1$. In this subsequence, select another subsequence in which every word shares a common prefix $j_1j_2$.  Continuing in this manner, we can find an infinite word $\bj=j_1j_2j_3\cdots$ such that for each $k\geq 1$, there is a subsequence of $\{\j_n\}$ such that every word in this subsequence shares the common prefix $j_1j_2\cdots j_k$. Using Corollary~\ref{cor:nonfragilelast} again, $\omega\D^{k-1}$ and $\tau\D^{k-1}$ belong to different connected components of $\Gamma_{k}-\{\bj|_{k}\}$ for all $k\geq 1$.  Thus, from Lemma~\ref{lem:equivalent}, $\vp_{\omega}(F)$ and $\vp_{\tau}(F)$ belong to different connected components of $\bigcup_{\eta\in\D^{k}\setminus\{\bj|_{k}\}}\vp_{\eta}(F)$ for all $k\geq 1$.


    Let $x$ be the unique element of the singleton $\vp_{\bj}(F)=\bigcap_{k\geq 1}\vp_{\bj|_k}(F)$. Note that $\bj|_k\in\Omega_k(x)$ and $|\Omega_k(x)|\leq 4$ for all $k$. Fix any large $n\in\Z^+$ such that $|\omega\D^{n-1}|=|\tau\D^{n-1}|=|\D|^{n-1}>4$. So there are $\i_*,\j_*\in\D^{n-1}$ such that $\omega\i_*,\tau\j_*\notin\Omega_n(x)$. Therefore, for all $p\geq 1$ we have
    \begin{equation}\label{eq:5-4}
        \omega\i_*\D^p\cap \Omega_{n+p}(x) = \varnothing, \quad \tau\j_*\D^p\cap \Omega_{n+p}(x) =\varnothing.
    \end{equation}
    Since $\vp_{\omega}(F)$ and $\vp_{\tau}(F)$ belong to different connected components of $\bigcup_{\eta\in\D^{n+p}\setminus\{\bj|_{n+p}\}}\vp_{\eta}(F)$ for all $p$, we know from $\bj|_{n+p}\in \Omega_{n+p}(x)$ and \eqref{eq:5-4} that $\vp_{\omega\i_*}(F)$ and $\vp_{\tau\j_*}(F)$ belong to different connected components of $\bigcup_{\eta\in\D^{n+p}\setminus\Omega_{n+p}(x)}\vp_{\eta}(F)$ for all $p$.  Thus $\omega\i_*$ and $\tau\j_*$ are well separated by $x$. It then follows from Theorem~\ref{thm:onedire} that $x$ is a cut point of $F$.
\end{proof}

\subsection{Cut points with a unique representation}


For $\bi=i_1i_2\cdots\in\D^\infty$ (or a finite word) and $k\in\Z^+$, we simply call $i_k$ the \emph{$k$-th coordinate} of $\bi$. It is easy to see that the limit
\[
    \vp_{\bi}(F) = \lim_{k\to\infty}\vp_{\bi|_k}(F) = \lim_{k\to\infty} \vp_{i_1}\circ\cdots\circ \vp_{i_k}(F)
\]
exists and is a singleton. Denote this singleton by $\{x_{\bi}\}$. This allows us to define a coding map $\pi:\D^\infty\to F$ sending $\bi$ to $x_{\bi}$ and we call $\bi$ a \emph{representation} of $x_{\bi}$. Note that if $\bi$ is a representation of some $x\in F$ then $x\in \vp_{\bi|_n}(F)$ for all $n\geq 1$. Conversely, if $x\in\vp_{\i}(F)$ for some $\i\in\D^*$ then there is a representation of $x$ with $\i$ as its prefix.

\begin{lemma}
    Let $F$ be a GSC. Then every point in $F$ has at most $4$ representations.
\end{lemma}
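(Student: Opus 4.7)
The plan is to reduce the lemma to a uniform cell-counting bound. The key claim is that for every $x \in F$ and every $n \geq 1$, $|\Omega_n(x)| \leq 4$, where $\Omega_n(x) = \{\i \in \D^n : x \in \vp_{\i}(F)\}$ is the set defined in \eqref{eq:omegaande}. Once this is granted, the lemma follows from a short argument on prefixes of the representations.

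To establish the cell-counting bound, I would observe that $\vp_{\i}(F) \subset \vp_{\i}([0,1]^2)$ and that each square $\vp_{\i}([0,1]^2)$ with $\i \in \D^n$ has side length $N^{-n}$ and is aligned with the $N^n \times N^n$ grid partitioning $[0,1]^2$. Distinct choices of $\i$ yield distinct squares with pairwise disjoint interiors, and a direct planar observation shows that any point of $\R^2$ lies in at most $4$ closed squares of such a grid, the extreme case being a grid vertex shared by four cells. Hence $|\Omega_n(x)| \leq 4$ for all $n$.

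To deduce the lemma, suppose for contradiction that $x \in F$ admits five distinct representations $\bi^{(1)}, \ldots, \bi^{(5)} \in \D^\infty$. For every pair $j \neq k$, let $n_{jk}$ be the first index at which $\bi^{(j)}$ and $\bi^{(k)}$ differ, and set $n = \max_{j \neq k} n_{jk}$. Then $\bi^{(1)}|_n, \ldots, \bi^{(5)}|_n$ are pairwise distinct elements of $\D^n$, and each lies in $\Omega_n(x)$ since $x \in \vp_{\bi^{(j)}|_n}(F)$ for every $j$. This forces $|\Omega_n(x)| \geq 5$, contradicting the cell-counting bound, so $x$ has at most $4$ representations.

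The main obstacle is conceptual rather than technical: one has to recognize that the multiplicity of the coding map $\pi$ is controlled entirely by the combinatorial overlap of the enclosing $N$-adic squares, so that the purely planar bound of $4$ transfers faithfully to infinite codings. Once this is recognized, no delicate analysis is required and the bound is automatically sharp, since at a point which is an interior grid vertex four level-$n$ cells in $\D^n$ can all contain it.
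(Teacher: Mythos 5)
Your argument is correct and follows essentially the same route as the paper: both reduce the statement to the observation that a point of $[0,1]^2$ lies in at most four level-$n$ squares, and both derive the contradiction by passing to prefixes of length one beyond the longest common prefix among five hypothetical representations (your $n=\max_{j\neq k}n_{jk}$ is exactly the paper's $m+1$). The only difference is that you spell out the planar grid bound that the paper takes as evident.
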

\begin{proof}
    Note that for every point $x\in F$ and every $n\in\Z^+$, there are at most four level-$n$ squares containing $x$. Suppose there is some $x_0\in F$ with at least five distinct representations, say $\bi,\bj,\omega,\tau,\eta\in\D^\infty$. Let
    \begin{align*}
        m = \max\{n\geq 0: \exists &\text{ a pair of words in } \{\bi,\bj,\omega,\tau,\eta\} \\ &\quad\text{ sharing a common prefix of length $n$}\}.
    \end{align*}
    Then $\bi|_{m+1},\bj|_{m+1},\omega|_{m+1},\tau|_{m+1},\eta|_{m+1}$ are mutually distinct. As a result, there are five level-$(m+1)$ squares containing $x_0$. This is impossible.
\end{proof}

\begin{lemma}\label{lem:connected}
    Let $m\geq 2$ and let $B_1,\ldots,B_m$ be connected compact sets in $\R^2$ such that $\bigcup_{i=1}^m B_i$ is also connected. If $A\subset B_1$ satisfies that $B_1\setminus A$ remains connected and $A\cap B_i=\varnothing$ for all $i\neq 1$, then $X=(B_1\setminus A)\cup B_2\cup\cdots\cup B_m$ is also connected.
\end{lemma}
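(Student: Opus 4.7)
The plan is to argue via the intersection graph of the $B_i$'s, showing that passing from $B_1$ to $B_1\setminus A$ preserves the graph structure, so that connectedness of the union is preserved.

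First, I would define the \emph{intersection graph} $G$ on vertex set $\{1,\ldots,m\}$ with an edge $\{i,j\}$ whenever $B_i\cap B_j\neq\varnothing$, and verify that $G$ is connected. This is the standard fact that for finitely many compact (hence closed) connected sets whose union is connected, the intersection graph must be connected: if on the contrary one could partition $\{1,\ldots,m\}$ into non-empty $S_1\sqcup S_2$ with no cross-edges, then $\bigcup_{i\in S_1}B_i$ and $\bigcup_{i\in S_2}B_i$ would be two disjoint non-empty closed subsets of $\bigcup_{i=1}^m B_i$ whose union is the whole, contradicting connectedness of $\bigcup_{i=1}^m B_i$.

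Next, I would use the hypothesis $A\cap B_i=\varnothing$ for all $i\neq 1$ to observe the crucial identity
\[
    (B_1\setminus A)\cap B_i = B_1\cap B_i \qquad\text{for all } i\neq 1.
\]
Together with the fact that intersections $B_i\cap B_j$ for $i,j\neq 1$ are unaffected, this shows that the intersection graph of the new family $\{B_1\setminus A, B_2,\ldots,B_m\}$ is identical to $G$, and hence is still connected.

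Finally, I would invoke the elementary lemma that any finite family of connected sets whose intersection graph is connected has connected union. This is proved by induction on $m$: pick a leaf vertex of a spanning tree of the intersection graph, say $v$; by induction the union of the remaining $m-1$ sets is connected, and $B_v$ (which is connected) meets this union via the unique edge of the spanning tree incident to $v$, so the full union is the union of two connected sets with non-empty intersection and is therefore connected. Applying this lemma to $\{B_1\setminus A,B_2,\ldots,B_m\}$, which are all connected by hypothesis and whose intersection graph is connected by the previous step, yields that $X$ is connected.

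There is no real obstacle here; the only point requiring care is that $A$ need not be closed, so $B_1\setminus A$ need not be closed either. This is why I rely on the general topological version of the ``union of two connected sets with non-empty intersection is connected'' lemma (which requires no closedness assumption) rather than a compactness-based argument when merging $B_1\setminus A$ with the rest.
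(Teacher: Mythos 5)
Your proof is correct, but it is packaged differently from the paper's. The paper argues directly by contradiction inside $X$: it splits $\{2,\ldots,m\}$ into the indices $\mathcal{I}$ whose $B_i$ lie in the same connected component of $X$ as $B_1\setminus A$ and the rest $\mathcal{I}'$, shows $\mathcal{I}\neq\varnothing$, and then observes that a separation of $X$ would lift to the separation $\bigl(B_1\cup\bigcup_{i\in\mathcal{I}}B_i\bigr)\cap\bigl(\bigcup_{i\in\mathcal{I}'}B_i\bigr)=A\cap\bigl(\bigcup_{i\in\mathcal{I}'}B_i\bigr)=\varnothing$, contradicting the connectedness of $\bigcup_{i=1}^m B_i$. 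You instead factor the argument through the intersection graph: the identity $(B_1\setminus A)\cap B_i=B_1\cap B_i$ (valid since $A\cap B_i=\varnothing$) shows the intersection graph is unchanged when $B_1$ is replaced by $B_1\setminus A$, and you then invoke the two standard facts that a connected finite union of compact connected sets has connected intersection graph, and that connected sets with connected intersection graph have connected union. Both routes hinge on the same key observation that deleting $A$ destroys no intersections with the other pieces; yours makes that observation explicit and modular at the cost of importing the spanning-tree induction, while the paper's is shorter and self-contained. Your closing remark about closedness is well taken and correctly handled: the merging step for $B_1\setminus A$ uses only the general ``union of two connected sets with a common point'' fact, and compactness is used only where it is actually available, namely for the original family $B_1,\ldots,B_m$.
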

\begin{proof}
    Write
    \[
       \mathcal{I} = \{2\leq i\leq m: B_i \text{ and } B_1\setminus A \text{ belong to the same connected component of $X$}\}
    \]
    and $\mathcal{I}'=\{2,\ldots,m\}\setminus \mathcal{I}$.  Note that there is some $i\geq 2$ such that $B_i\cap (B_1\setminus A)\neq\varnothing$ (so $\mathcal{I}\neq\varnothing$) since otherwise
    \[
        B_1\cap \Big( \bigcup_{i=2}^m B_i \Big) = \Big( (B_1\setminus A)\cap \Big( \bigcup_{i=2}^m B_i \Big) \Big) \cup \Big( A\cap \Big( \bigcup_{i=2}^m B_i \Big) \Big)=\varnothing,
    \]
    implying that $\bigcup_{i=1}^m B_i$ is disconnected, a contradiction.

    Suppose $X$ is not connected. Then $\mathcal{I}'\neq\varnothing$. So $\mathcal{I},\mathcal{I}'$ are both non-empty. Clearly, $B_i\cap B_j=\varnothing$ for all $i\in \mathcal{I}$ and $j\in \mathcal{I}'$, so
    \[
        \Big( (B_1\setminus A) \cup \bigcup_{i\in \mathcal{I}}B_i \Big) \cap \Big( \bigcup_{i\in \mathcal{I}'}B_i \Big)=\varnothing.
    \]
    It follows that
    \[
        \Big( B_1\cup \bigcup_{i\in \mathcal{I}}B_i \Big) \cap \Big( \bigcup_{i\in \mathcal{I}'}B_i \Big)=A\cap \Big( \bigcup_{i\in \mathcal{I}'}B_i \Big)=\varnothing,
    \]
    which again contradicts the connectedness of $\bigcup_{i=1}^m B_i$. So $X$ is connected.
\end{proof}

\begin{corollary}\label{cor:cp1}
    Let $x$ be a cut point of a connected GSC $F$. If all representations of $x$ share the same first coordinate, say $i_0$, then $x$ is a cut point of $\vp_{i_0}(F)$.
\end{corollary}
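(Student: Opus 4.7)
The plan is to argue by contradiction using Lemma~\ref{lem:connected}. Suppose that $\vp_{i_0}(F)\setminus\{x\}$ is connected; we shall derive a contradiction by showing that $F\setminus\{x\}$ must then also be connected, contradicting the hypothesis that $x$ is a cut point of $F$.

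The first key observation is that the representation hypothesis forces $x$ to lie in only one level-$1$ cell. Indeed, if $x\in\vp_j(F)$ for some $j\in\D$, then there exists a representation of $x$ with $j$ as its first coordinate; since every representation of $x$ starts with $i_0$, we must have $j=i_0$. Hence $x\notin\vp_i(F)$ for every $i\in\D\setminus\{i_0\}$.

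Next I would apply Lemma~\ref{lem:connected} with the choice
\[
    B_1 = \vp_{i_0}(F), \qquad \{B_2,\ldots,B_m\} = \{\vp_i(F): i\in\D\setminus\{i_0\}\}, \qquad A = \{x\}.
\]
Each $B_i$ is a connected compact set (being the image of the connected compact set $F$ under a similarity), $\bigcup_{i=1}^m B_i = F$ is connected by hypothesis, $A\subset B_1$, and by the observation above $A\cap B_i=\varnothing$ for all $i\geq 2$. Our standing assumption is exactly that $B_1\setminus A = \vp_{i_0}(F)\setminus\{x\}$ is connected. Since $|\D|\geq 2$, the lemma applies and yields that
\[
    (B_1\setminus A)\cup B_2\cup\cdots\cup B_m \;=\; F\setminus\{x\}
\]
is connected, contradicting the fact that $x$ is a cut point of $F$.

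There is no substantive obstacle here: once the one-line observation that $x\notin\vp_i(F)$ for $i\neq i_0$ is in place, the result is a direct application of Lemma~\ref{lem:connected}. The only subtlety worth double-checking is the equality $F\setminus\{x\} = (\vp_{i_0}(F)\setminus\{x\})\cup\bigcup_{i\neq i_0}\vp_i(F)$, which follows immediately from $F=\bigcup_{i\in\D}\vp_i(F)$ together with $x\notin\vp_i(F)$ for $i\neq i_0$.
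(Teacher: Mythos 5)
Your proposal is correct and follows exactly the paper's own argument: observe that $x\notin\vp_i(F)$ for $i\neq i_0$, then apply Lemma~\ref{lem:connected} with $A=\{x\}$ and $B_1=\vp_{i_0}(F)$ to conclude that $F\setminus\{x\}$ would be connected, a contradiction. You have merely spelled out the details (the justification of $x\notin\vp_i(F)$ via representations and the verification of the lemma's hypotheses) that the paper leaves implicit.
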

\begin{proof}
    It is easy to see that $x\notin \vp_i(F)$ for all $i\in\D\setminus\{i_0\}$. If $\vp_{i_0}(F)\setminus\{x\}$ is connected then it follows from Lemma~\ref{lem:connected} that
    \[
        (\vp_{i_0}(F)\setminus\{x\})\cup\Big( \bigcup_{i\in \D\setminus\{i_0\}}\vp_i(F) \Big) = F\setminus\{x\}
    \]
    is connected, which is a contradiction.
\end{proof}

\begin{corollary}\label{cor:unique}
    Let $x$ be a cut point of a connected GSC $F$ with a unique representation $\bi=i_1i_2\cdots$. Then for every $k\geq 1$, $x_k:=\vp_{\bi|_k}^{-1}(x)$ is a cut point of $F$ with a unique representation $i_{k+1}i_{k+2}\cdots$.
\end{corollary}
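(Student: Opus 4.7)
The plan is to reduce to the case $k=1$ and then iterate. Since $x$ has a unique representation $\bi$, every representation of $x$ trivially has the same first coordinate $i_1$. Thus Corollary~\ref{cor:cp1} applies: $x$ is a cut point of the compact set $\vp_{i_1}(F)$.

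Because $\vp_{i_1}$ is a homeomorphism from $F$ onto $\vp_{i_1}(F)$, its inverse carries cut points of $\vp_{i_1}(F)$ to cut points of $F$. Consequently $x_1:=\vp_{i_1}^{-1}(x)$ is a cut point of $F$. Next, I verify that $i_2i_3\cdots$ is a representation of $x_1$: for every $n\geq 1$,
\[
    \vp_{i_2\cdots i_{n+1}}(F) = \vp_{i_1}^{-1}\bigl(\vp_{i_1i_2\cdots i_{n+1}}(F)\bigr),
\]
and taking the intersection over $n$ gives $\{\vp_{i_1}^{-1}(x)\}=\{x_1\}$. To see this is the only representation of $x_1$, suppose $\bj=j_1j_2\cdots\in\D^\infty$ also represents $x_1$. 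Then $i_1\bj$ is a representation of $\vp_{i_1}(x_1)=x$; by the uniqueness assumption on $x$, we must have $i_1\bj=\bi$, forcing $\bj=i_2i_3\cdots$.

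This settles the case $k=1$. For general $k$, I would iterate: set $y^{(0)}:=x$ with unique representation $\bi^{(0)}:=\bi$, and inductively define $y^{(\ell)}:=\vp_{i_\ell}^{-1}(y^{(\ell-1)})$, whose unique representation is $\bi^{(\ell)}:=i_{\ell+1}i_{\ell+2}\cdots$ by the $k=1$ argument applied to $y^{(\ell-1)}$. After $k$ steps we obtain a cut point of $F$ with unique representation $i_{k+1}i_{k+2}\cdots$, and a direct telescoping of the relation $y^{(\ell)}=\vp_{i_\ell}^{-1}(y^{(\ell-1)})$ identifies this point with $\vp_{\bi|_k}^{-1}(x)=x_k$.

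There is no serious obstacle here; the only thing to watch is the bookkeeping that (i) uniqueness of representation is preserved under the map $\vp_{i_1}^{-1}$, which uses the fact that prepending $i_1$ is a bijection between representations of $x_1$ and representations of $x$ whose first coordinate is $i_1$, and (ii) the hypothesis of Corollary~\ref{cor:cp1} (``all representations share the same first coordinate'') is vacuously supplied by the uniqueness hypothesis at each stage of the induction.
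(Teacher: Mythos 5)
Your proposal is correct and follows essentially the same route as the paper: apply Corollary~\ref{cor:cp1} (whose hypothesis is vacuously satisfied when the representation is unique) to conclude $x$ is a cut point of $\vp_{i_1}(F)$, pull back by the homeomorphism $\vp_{i_1}$, derive uniqueness of the shifted representation by prepending $i_1$, and iterate. The paper's proof is just a more terse version of the same induction.
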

\begin{proof}
    Fix any $k\geq 1$. By the above corollary, $x$ is a cut point of $\vp_{i_1}(F)$. So $x_1=\vp_{i_1}^{-1}(x)$ is a cut point of $F$ with $i_2i_3\cdots$ as one of its representations. If $x_1$ has another representation $\bj$, then $i_1\bj$ is clearly a representation of $x$ distinct from $\bi$, which leads to a contradiction. Continuing in this manner, we see that $x_k$ is a cut point of $F$ with a unique representation $i_{k+1}i_{k+2}\cdots$ for all $k\geq 1$.
\end{proof}

The lack of knowledge on the number of representations of cut points often increases the complexity of the problem considerably. Fortunately, the non-fragile requirement will provide us at least one cut point with a unique representation.

\begin{proposition}\label{thm:unirep}
    Let $F$ be a non-fragile connected GSC with cut points. Then there is some cut point of $F$ that has a unique representation.
\end{proposition}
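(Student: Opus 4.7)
The plan is to argue by contradiction: assume that every cut point of $F$ has at least two representations, and derive a contradiction from non-fragility. Fix any cut point $x$ of $F$. First I reduce to the case where the representations of $x$ have pairwise distinct first coordinates. Let $\j = j_1 \cdots j_{k-1} \in \D^*$ (possibly the empty word) denote the longest common prefix of all representations of $x$. Iterated application of Corollary~\ref{cor:cp1} (stripping one coordinate at a time and noting that each $\vp_{j_s}$ is a homeomorphism onto its image) shows that $y := \vp_{\j}^{-1}(x)$ is still a cut point of $F$; its representations are obtained from those of $x$ by removing $\j$, so they have pairwise distinct first coordinates. Replacing $x$ by $y$, I assume from now on that this holds; in particular $|\Omega_1(x)| \geq 2$.

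The key observation is that for each $i \in \Omega_1(x)$, exactly one representation of $x$ starts with $i$, so $\vp_i^{-1}(x)$ has a \emph{unique} representation in $\D^\infty$. By the contradiction hypothesis $\vp_i^{-1}(x)$ cannot be a cut point of $F$, so $F \setminus \{\vp_i^{-1}(x)\}$ is connected; since $\vp_i$ restricts to a homeomorphism from $F$ onto $\vp_i(F)$, the set $\vp_i(F) \setminus \{x\}$ is connected. For each $i \in \D \setminus \Omega_1(x)$, the set $\vp_i(F) \setminus \{x\} = \vp_i(F)$ is trivially connected.

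Next build an auxiliary graph $H$ on vertex set $\D$ with an edge between distinct $i, j$ precisely when $\vp_i(F) \cap \vp_j(F)$ contains a point other than $x$. I claim $H$ is connected. Otherwise $\D$ would admit a non-trivial partition $\D = \D_1 \cup \D_2$ across which no $H$-edge runs, so that $\vp_i(F) \cap \vp_j(F) \subset \{x\}$ for every $i \in \D_1$ and $j \in \D_2$; consequently $\bigl(\bigcup_{i \in \D_1} \vp_i(F)\bigr) \cap \bigl(\bigcup_{j \in \D_2} \vp_j(F)\bigr) \subset \{x\}$, and by the connectedness of $F$ this intersection actually equals $\{x\}$, contradicting the non-fragility of $F$.

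Finally, $F \setminus \{x\} = \bigcup_{i \in \D} (\vp_i(F) \setminus \{x\})$ is a union of connected sets whose pairwise intersection pattern contains the edge set of the connected graph $H$, so $F \setminus \{x\}$ is itself connected by the standard lemma that a union of connected sets indexed by a connected graph (with edges witnessing non-empty pairwise intersections) is connected. This contradicts the assumption that $x$ is a cut point. The main technical hurdle is the second paragraph: translating the contradiction hypothesis into the connectedness of each $\vp_i(F) \setminus \{x\}$ via the unique-representation property of $\vp_i^{-1}(x)$. Once that is secured, non-fragility forces the intersection graph $H$ to be connected, and the gluing step is routine.
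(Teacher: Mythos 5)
Your overall strategy is sound and is, at its core, the same dichotomy the paper exploits (either every $\vp_i(F)\setminus\{y\}$ is connected, in which case a gluing argument plus non-fragility forces $F\setminus\{y\}$ to be connected, or some $\vp_i^{-1}(y)$ is a new cut point), but there is a genuine gap in your reduction step. Knowing that $\j$ is the \emph{longest common prefix of all} representations of $x$ only tells you that the coordinates immediately following $\j$ are not \emph{all} equal; it does not make them \emph{pairwise distinct}. For instance, $x$ may have three representations, two of which still begin with the same digit $a$ after $\j$ is stripped while the third begins with $b\neq a$ (geometrically: $x$ lies on the common boundary of two level-$1$ squares and is simultaneously a corner of two level-$2$ cells inside one of them). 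For such a digit $a\in\Omega_1(y)$ the point $\vp_a^{-1}(y)$ has two representations, so your contradiction hypothesis (``every cut point has at least two representations'') gives no information about whether it is a cut point, and you cannot conclude that $\vp_a(F)\setminus\{y\}$ is connected. This breaks the ``key observation'' of your second paragraph, on which everything else rests; the paper itself only concludes that the first coordinates after stripping are ``not identical,'' not that they are pairwise distinct.

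The gap is repairable with one extra idea, which is in effect what the paper does. Either (a) run your contradiction on a cut point $x$ whose number of representations is \emph{minimal} among all cut points: then for each $i\in\Omega_1(y)$ the point $\vp_i^{-1}(y)$ has strictly fewer representations than $y$ (because not all representations of $y$ start with $i$), hence by minimality it cannot be a cut point, and your gluing argument goes through; or (b) argue by descent as the paper does: if some $\vp_{j_0}(F)\setminus\{y\}$ is disconnected then $\vp_{j_0}^{-1}(y)$ is a cut point with at most $n-1$ representations, while if all are connected the component decomposition yields fragility, so after finitely many steps one reaches a cut point with a unique representation. Your auxiliary graph $H$ and the derivation of fragility from its disconnectedness are correct and amount to a clean repackaging of the paper's use of Lemma~\ref{lem:connected}.
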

\begin{proof}
    Let $x$ be a cut point of $F$ with $2\leq n\leq 4$ representations $\bi^1,\bi^2,\ldots,\bi^n$, where $\bi^t=i^t_1i^t_2\cdots\in\D^\infty$, $1\leq t\leq n$. Let $\omega=\omega_1\cdots \omega_m$ be the longest common prefix of these infinite words ($\omega$ can be the empty word). In the case that $m\geq 1$, from Corollary~\ref{cor:cp1}, $x$ is a cut point of $\vp_{\omega_1}(F)$ so that $x_1=\vp_{\omega_1}^{-1}(x)$ is a cut point of $F$. Repeating this argument, $x_k:=\vp^{-1}_{\omega_1\cdots \omega_k}(x)$ is a cut point of $F$ for all $1\leq k\leq m$. In particular, $y:=\vp^{-1}_\omega(x)$ is a cut point of $F$  with representations
    \[
        i^1_{|\omega|+1}i^1_{|\omega|+2}\cdots,\, \ldots,\, i^n_{|\omega|+1}i^n_{|\omega|+2}\cdots,
    \]
    and their first coordinates $i^1_{|\omega|+1},\ldots,i^n_{|\omega|+1}$ are not identical. It is clear that this statement also holds when $m=0$. Let $\mathcal{I}=\{i^1_{|\omega|+1},\ldots,i^n_{|\omega|+1}\}$. Then $|\mathcal{I}|>1$. It is noteworthy that $y\notin \vp_i(F)$ for every $i\notin \mathcal{I}$, since otherwise $x\in \vp_\omega\vp_i(F)$ and hence there is a representation of $x$ with $\omega i$ as its prefix (but $x$ has no such representation). Thus $\mathcal{I}=\Omega_1(y)$.

    We claim that there is some $j_0\in \mathcal{I}$ such that $y$ is a cut point of $\vp_{j_0}(F)$. If the claim is not true, then $\vp_j(F)\setminus\{y\}$ is connected for every $j\in \mathcal{I}$. Arbitrarily pick $j_*\in \mathcal{I}$ and write
    \begin{align*}
        \D_1=\{j\in\D: \vp_j(F)\setminus\{y\} & \text{ and } \vp_{j_*}(F)\setminus\{y\} \text{ belong to }\\ &\quad\text{ the same connected component of $F\setminus\{y\}$}\}
    \end{align*}
    and $\D_2=\D\setminus \D_1$. It is easy to see that $\mathcal{I}\cap \D_2\not=\varnothing$ since otherwise
    \[
        F\setminus\{y\}= \Big( \Big( \bigcup_{j\in \D_1}\vp_j(F)\Big)\setminus\{y\} \Big) \cup \Big( \bigcup_{j\in\D_2}\vp_j(F) \Big)
    \]
    is connected (by Lemma~\ref{lem:connected}), which is a contradiction. Note that $j_*\in \D_1$. Thus $\mathcal{I}\cap \D_i\not=\varnothing$ for $i=1,2$. Hence, by the definition of $\D_i$,
    \[
        \Big( \bigcup_{i\in\D_1}\vp_i(F) \Big) \cap \Big( \bigcup_{i\in\D_2}\vp_i(F) \Big) = \{y\}.
    \]
    So $F$ is fragile and we arrive at a contradiction. This completes the proof of our claim.

    Now we find that $y$ is a cut point of $\vp_{j_0}(F)$ for some $j_0\in \mathcal{I}$. So $\vp_{j_0}^{-1}(y)$ is a cut point of $F$ with representations
    \[
        \{i^t_{|\omega|+2}i^t_{|\omega|+3}\cdots: 1\leq t\leq n \text{ with } i^t_{|\omega|+1}=j_0\}.
    \]
    Since $\mathcal{I}$ is not a singleton, there is at least one $1\leq t\leq n$ satisfying that $i^t_{|\omega|+1}\neq j_0$. As a result, $\vp_{j_0}^{-1}(y)$ has $\leq n-1$ distinct representations. In conclusion, given a cut point of $F$ with $n\geq 2$ distinct representations, we can find another cut point of $F$ with $\leq n-1$ distinct representations. So after finitely many steps we should obtain a cut point of $F$ with a unique representation.
\end{proof}


\begin{corollary}\label{cor:foreversep}
    Let $F=F(N,\D)$ be a non-fragile connected GSC with cut points. Then we can find a cut point $x$ of $F$ with a unique representation $\bi$, some $n_0$ large and two distinct words $\omega,\tau\in\D^{n_0}\setminus\{\bi|_{n_0}\}$ which are well separated by $x$.
\end{corollary}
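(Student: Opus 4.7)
The plan is to derive Corollary~\ref{cor:foreversep} by combining the two preceding main results, Proposition~\ref{thm:unirep} and Theorem~\ref{thm:foreversep}, and to notice a small but decisive observation about $\Omega_n(x)$ for a point $x$ with a unique representation. First I would invoke Proposition~\ref{thm:unirep} to produce a cut point $x\in F$ possessing a single representation $\bi=i_1i_2\cdots\in\D^\infty$. This is where the non-fragile hypothesis is consumed; once $x$ is fixed, the rest of the argument is purely about $x$.

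Next I would apply Theorem~\ref{thm:foreversep} to this particular cut point $x$. That theorem yields an integer $n_0$ (as large as we please, by the monotonicity of $\{E_k(x)\}_{k\geq 1}$) and two distinct words $\omega,\tau\in\D^{n_0}\setminus\Omega_{n_0}(x)$ that are well separated by $x$. The only thing left to verify is that in our current setting $\Omega_{n_0}(x)$ reduces to the single word $\{\bi|_{n_0}\}$, so that $\D^{n_0}\setminus\Omega_{n_0}(x)=\D^{n_0}\setminus\{\bi|_{n_0}\}$, giving exactly the conclusion claimed.

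The identification $\Omega_{n_0}(x)=\{\bi|_{n_0}\}$ comes from the general fact recalled just before the statement of the coding map: if $x\in\vp_{\j}(F)$ for some $\j\in\D^{n_0}$ then there exists a representation of $x$ having $\j$ as a prefix. Since $\bi$ is the \emph{only} representation of $x$, any such $\j$ must coincide with $\bi|_{n_0}$, and conversely $\bi|_{n_0}\in\Omega_{n_0}(x)$ trivially. Hence every element of $\D^{n_0}\setminus\Omega_{n_0}(x)$ is automatically an element of $\D^{n_0}\setminus\{\bi|_{n_0}\}$, and the words $\omega,\tau$ supplied by Theorem~\ref{thm:foreversep} lie in the desired set.

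There is no real obstacle here; the corollary is essentially a bookkeeping synthesis of the two previous results. The only step that requires a brief justification, and hence the one I would state explicitly in the write-up, is the equality $\Omega_{n_0}(x)=\{\bi|_{n_0}\}$ under unique representation. All other content, namely the existence of a cut point with unique representation (non-fragility) and the existence of a well separated pair (general cut point argument via Lemma~\ref{lem:wellsep1} and Theorem~\ref{thm:foreversep}), has already been established in the section.
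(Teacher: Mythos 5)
Your proposal is correct and follows exactly the paper's route: invoke Proposition~\ref{thm:unirep} to obtain a cut point with a unique representation, then apply Theorem~\ref{thm:foreversep} to that point. The extra remark that unique representation forces $\Omega_{n_0}(x)=\{\bi|_{n_0}\}$ is the same bookkeeping the paper leaves implicit, so there is no substantive difference.
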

\begin{proof}
    By Proposition~\ref{thm:unirep}, there is a cut point $x$ of $F$ with a unique representation $\bi$. Applying Theorem~\ref{thm:foreversep} to this point, we obtain the desired result.
\end{proof}

Let $\sigma$ be the usual left shift map on $\D^\infty$, i.e., $\sigma(i_1i_2i_3\cdots)=i_2i_3i_4\cdots$ for $i_1i_2i_3\cdots\in \mathcal{D}^\infty$.  For later use, we record the following simple fact.

\begin{lemma}\label{lem:minimal}
    Let $F,x,\bi,n_0,\omega,\tau$ be as in Corollary~\ref{cor:foreversep} and let $\eta$ denote the longest common prefix of $\omega,\tau$ and $\bi$.  If $\eta\neq\vartheta$ then $\sigma^{|\eta|}(\omega)$ and $\sigma^{|\eta|}(\tau)$ are well separated by $x_*:=\vp_{\bi|_\eta}^{-1}(x)$ and $x_*$ is a cut point of $F$  with a unique representation $\bi_*:=\sigma^{|\eta|}(\bi)$.
\end{lemma}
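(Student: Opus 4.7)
My plan is to split the conclusion into two independent parts: (i) $x_*$ is a cut point of $F$ with $\bi_*$ as its unique representation, and (ii) $\sigma^{|\eta|}(\omega)$ and $\sigma^{|\eta|}(\tau)$ are well separated by $x_*$. Writing $k=|\eta|\ge 1$, part (i) follows by iterating Corollary~\ref{cor:unique} along $\eta=\bi|_k$: each application strips off one leading coordinate of $\bi$ and produces a cut point of $F$ whose unique representation is the corresponding shift of $\bi$, so after $k$ applications one obtains exactly $x_*=\vp_\eta^{-1}(x)$ and $\bi_*=\sigma^k(\bi)$.

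For (ii), set $\omega'=\sigma^{k}(\omega)$ and $\tau'=\sigma^{k}(\tau)$, so that $\omega=\eta\omega'$, $\tau=\eta\tau'$ with $\omega',\tau'\in\D^{n_0-k}$. The uniqueness of the representation of $x$ gives $\Omega_{n_0}(x)=\{\bi|_{n_0}\}$, so $\omega,\tau\ne\bi|_{n_0}$ forces $\omega',\tau'\ne\bi_*|_{n_0-k}=\sigma^k(\bi)|_{n_0-k}$; by (i), this says $\omega',\tau'\in\D^{n_0-k}\setminus\Omega_{n_0-k}(x_*)$, and the definition of well-separation is applicable. The main step is the inclusion
\[
    \vp_\eta\bigl(E_{n_0-k+p}(x_*)\bigr)\subset E_{n_0+p}(x) \qquad \text{for every } p\ge 1.
\]
To verify it, take any $\j'\in\D^{n_0-k+p}$ with $\j'\ne\bi_*|_{n_0-k+p}$; then $\eta\j'\in\D^{n_0+p}$ differs from $\bi|_{n_0+p}=\eta\cdot\bi_*|_{n_0-k+p}$, and by the uniqueness of the representation of $x$ one has $x\notin\vp_{\eta\j'}(F)$, whence $\vp_{\eta\j'}(F)=\vp_\eta(\vp_{\j'}(F))$ is contained in $E_{n_0+p}(x)$.

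Given this inclusion and the fact that $\vp_\eta$ is a homeomorphism of $\R^2$, part (ii) follows by a contradiction argument: if $\vp_{\omega'}(F)$ and $\vp_{\tau'}(F)$ were to lie in a common connected component $C$ of $E_{n_0-k+p}(x_*)$ for some $p\ge 1$, then $\vp_\eta(C)$ would be a connected subset of $E_{n_0+p}(x)$ containing both $\vp_\omega(F)=\vp_\eta(\vp_{\omega'}(F))$ and $\vp_\tau(F)=\vp_\eta(\vp_{\tau'}(F))$, contradicting the well-separation of $\omega,\tau$ by $x$. The only delicate point is the inclusion above; it is precisely the uniqueness of the representation of $x$ that prevents any cell $\vp_\j(F)$ with $\j\notin\eta\D^{n_0-k+p}$ from containing $x$, and without this uniqueness the clean translation between the two levels $n_0+p$ and $n_0-k+p$ could fail.
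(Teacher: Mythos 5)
Your proposal is correct and follows essentially the same route as the paper: part (i) by iterating Corollary~\ref{cor:unique}, and part (ii) by exploiting the bijection $\j'\mapsto\eta\j'$ between level-$(n_0-k+p)$ cells avoiding $x_*$ and level-$(n_0+p)$ cells with prefix $\eta$ avoiding $x$, which rests on the unique representations giving $\Omega_m(x)=\{\bi|_m\}$ and $\Omega_m(x_*)=\{\bi_*|_m\}$. The paper phrases this as a restriction to the sub-union of cells with prefix $\eta$ followed by applying $\vp_\eta^{-1}$ (obtaining an equality with $E_{(n_0-k)+p}(x_*)$), whereas you push forward by $\vp_\eta$ and argue by contradiction; the two are logically equivalent and no gap remains.
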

\begin{proof}
    It follows directly from Corollary~\ref{cor:unique} that $x_*$ is a cut point of $F$ and $\bi_*$ is its unique representation.  In particular, $\Omega_{k}(x_*)=\{\bi_*|_{k}\}$ for all $k\geq 1$.

    Since $\omega, \tau$ are well separated by $x$, $\vp_\omega(F)$ and $\vp_{\tau}(F)$ belong to different connected components of $E_{n_0+p}(x)=\bigcup_{\j\in\D^{n_0+p}\setminus\{\bi|_{n_0+p}\}}\vp_{\j}(F)$ for all $p\geq 1$. So for all $p$, these two cells must belong to different connected components of the following subset of $E_{n_0+p}(x)$:
    \[
        \bigcup_{\j\in\D^{n_0+p}\setminus\{\bi|_{n_0+p}\}, \eta\prec\j} \vp_{\j}(F).
    \]
    As a result, $\vp_{\eta}^{-1}\vp_{\omega}(F)=\vp_{\sigma^{|\eta|}(\omega)}(F)$ and $\vp_{\eta}^{-1}\vp_{\tau}(F)=\vp_{\sigma^{|\eta|}(\tau)}(F)$ belong to different connected components of
    \begin{align*}
        \vp_{\eta}^{-1}\Big( \bigcup_{\j\in\D^{n_0+p}\setminus\{\bi|_{n_0+p}\}, \eta\prec\j} \vp_{\j}(F) \Big) &= \bigcup_{\j\in\D^{n_0+p-|\eta|}\setminus\{i_{|\eta|+1}\cdots i_{n_0+p}\}}\vp_{\j}(F) \\
        &= \bigcup_{\j\in\D^{n_0-|\eta|+p}\setminus\{\bi_*|_{n_0-|\eta|+p}\}} \vp_{\j}(F) = E_{(n_0-|\eta|)+p}(x_*)
    \end{align*}
    for all $p\geq 1$, i.e., $\sigma^{|\eta|}(\omega)$ and $\sigma^{|\eta|}(\tau)$ are well separated by $x_*$.
\end{proof}

\subsection{Proof of the necessity of Theorem~\ref{thm:nonfragile}}


In this subsection, we always assume the followings:
\begin{itemize}
    \item[(H1)] $F=F(N,\D)$ is a non-fragile connected GSC;
    \item[(H2)] $x\in F$ is a cut point with a unique representation $\bi=i_1i_2\cdots$;
    \item[(H3)] $\omega=\omega_1\cdots\omega_{n_0}$ and $\tau=\tau_1\cdots\tau_{n_0}$ are well separated by $x$ as in Corollary~\ref{cor:foreversep};
    \item[(H4)] (minimality of $n_0$) $n_0$ is the smallest positive integer allowing well-separated words.  More precisely, for every cut point $y\in F$  with a unique representation and $1\leq k<n_0$, every pair of words in $\D^k$ are not well separated by $y$.
\end{itemize}

We remark that by (H2), $\Omega_n(x)=\{\bi|_n\}$ for all $n\geq 1$.

\begin{proposition}\label{prop:nonfragile1}
    If $n_0=1$, then the necessity of Theorem~\ref{thm:nonfragile} holds.
\end{proposition}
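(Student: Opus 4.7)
The plan is to exhibit the prefix $\bi|_k$ of the unique representation of $x$ as an explicit cut vertex of $\Gamma_k$ whose second-largest component has size at least $|\D|^{k-1}$ for every $k\geq 2$. The two key inputs will be the unique representation from (H2), which pins down $\Omega_n(x)=\{\bi|_n\}$ for every $n\geq 1$, together with the well-separated pair of single digits $\omega,\tau\in\D\setminus\{i_1\}$ supplied by (H3) in the $n_0=1$ case.

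First I would unfold the well-separated condition. Since $\Omega_{1+p}(x)=\{\bi|_{1+p}\}$ for each $p\geq 1$, the set $E_{1+p}(x)$ from \eqref{eq:omegaande} coincides with $\bigcup_{\j\in\D^{1+p}\setminus\{\bi|_{1+p}\}}\vp_\j(F)$. The hypothesis that $\omega,\tau$ are well separated by $x$ then says exactly that $\vp_\omega(F)$ and $\vp_\tau(F)$ lie in different connected components of this union for every $p\geq 1$. Note also that $\omega,\tau\in\D\setminus\Omega_1(x)=\D\setminus\{i_1\}$, so they satisfy the hypothesis of Lemma~\ref{lem:equivalent}.

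Next, fixing $k\geq 2$ and setting $p=k-1$, I would apply Lemma~\ref{lem:equivalent} with $n=k$ and $\j=\bi|_k$. Its conclusion gives that $\omega\D^{k-1}$ and $\tau\D^{k-1}$ lie in different connected components of $\Gamma_k-\{\bi|_k\}$. Since $|\omega\D^{k-1}|=|\tau\D^{k-1}|=|\D|^{k-1}$, the graph $\Gamma_k-\{\bi|_k\}$ has at least two distinct connected components each of size $\geq|\D|^{k-1}$. In particular $\bi|_k$ is a cut vertex of $\Gamma_k$ and $|G_2(\bi|_k)|\geq|\D|^{k-1}$, so $\chi(\Gamma_k)\geq|\D|^{k-1}$, which is exactly the conclusion required by the necessity of Theorem~\ref{thm:nonfragile}.

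There is essentially no obstacle in the $n_0=1$ case: the heavy lifting has already been done in Proposition~\ref{thm:unirep} (producing a cut point with a unique representation) and Theorem~\ref{thm:foreversep} (producing a well-separated pair), and the length-one pair is already short enough to be read off as a cut-vertex structure in $\Gamma_k$ directly via Lemma~\ref{lem:equivalent}. The genuine difficulties will appear in the case $n_0\geq 2$, where a length-$n_0$ well-separated pair does not translate immediately into the desired cut-vertex structure at every level, and the minimality clause (H4) together with the non-fragility hypothesis (H1) will have to be exploited more carefully to descend from length $n_0$ to length one inside the appropriate level-$n_0$ cells.
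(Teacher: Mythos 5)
Your argument is correct and is essentially identical to the paper's own proof: both unfold the well-separated condition using $\Omega_{1+p}(x)=\{\bi|_{1+p}\}$, invoke Lemma~\ref{lem:equivalent} in the direction (2)$\Rightarrow$(1) with the cut vertex $\bi|_k$, and read off $\chi(\Gamma_k)\geq\min\{|\omega\D^{k-1}|,|\tau\D^{k-1}|\}=|\D|^{k-1}$. No discrepancies to report.
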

\begin{proof}
    If $n_0=1$, then $\omega$ and $\tau$ are words of length one in $\D\setminus\{i_1\}$. Thus, by (H3), $\vp_\omega(F)$ and $\vp_{\tau}(F)$ belong to different connected components of $E_{p+1}(x)$ for all $p\geq 1$. Notice that $E_{p+1}(x)=\bigcup_{\j\in \D^{p+1}\setminus \{\bi|_{p+1}\}}\vp_{\j}(F)$. Hence, from Lemma~\ref{lem:equivalent}, $\omega\D^p$ and $\tau\D^p$ belong to different connected components of $\Gamma_{p+1}-\{\bi|_{p+1}\}$ for all $p\geq 1$. Thus,
    \begin{equation}\label{eq:non1}
        \chi_{p+1}(N,\D)\geq \min\{|\omega\D^p|,|\tau\D^p|\}=|\D|^{p}, \quad p\geq 1.
    \end{equation}
    As a result, the necessity of Theorem~\ref{thm:nonfragile} holds.
\end{proof}

In the rest of this section, we will show by contradiction that $n_0=1$. From now on, we will suppose on the contrary that $n_0\geq 2$.

\begin{lemma}\label{lem:i1-prefix}
    One of $\omega$ and $\tau$ has $i_1$ as its prefix.
\end{lemma}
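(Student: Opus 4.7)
The plan is to argue by contradiction. Suppose that neither $\omega$ nor $\tau$ has $i_1$ as its prefix, so $\omega_1, \tau_1 \in \D \setminus \{i_1\} = \D \setminus \Omega_1(x)$, where the last equality uses (H2). I will split into the two subcases $\omega_1 = \tau_1$ and $\omega_1 \neq \tau_1$ and derive a contradiction in each, using only the monotonicity of $\{E_k(x)\}$ and the minimality hypothesis (H4).

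In the first subcase, write $\omega_1 = \tau_1 = j$. Then the level-one cell $\vp_j(F)$ is connected, does not contain $x$, and therefore satisfies $\vp_j(F) \subseteq E_1(x) \subseteq E_{n_0+p}(x)$ for every $p \geq 1$ by the monotonicity established just after \eqref{eq:omegaande}. Since $\vp_\omega(F)$ and $\vp_\tau(F)$ are both contained in this single connected subset of $E_{n_0+p}(x)$, they sit in a common connected component for every $p$, directly contradicting the well-separation asserted in (H3).

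In the second subcase, I plan to promote the well-separation from level $n_0$ down to level $1$, i.e.\ to show that the length-one words $\omega_1, \tau_1$ are themselves well separated by $x$; since $1 < n_0$ and $x$ has a unique representation by (H2), this contradicts the minimality hypothesis (H4). To verify the promoted well-separation, fix any $p \geq 1$, pick $p'$ large enough that $n_0 + p' \geq 1 + p$, and invoke monotonicity to get $E_{1+p}(x) \subseteq E_{n_0+p'}(x)$. Any common connected component of $\vp_{\omega_1}(F)$ and $\vp_{\tau_1}(F)$ inside $E_{1+p}(x)$ would remain a connected subset of $E_{n_0+p'}(x)$ meeting both cells, and the inclusions $\vp_\omega(F) \subseteq \vp_{\omega_1}(F)$, $\vp_\tau(F) \subseteq \vp_{\tau_1}(F)$ would then place $\vp_\omega(F)$ and $\vp_\tau(F)$ in the same component of $E_{n_0+p'}(x)$, contradicting (H3).

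Both subcases fail, so at least one of $\omega_1, \tau_1$ equals $i_1$, which is exactly the claim. The only delicate point is matching indices correctly in the monotonicity inclusion $E_{1+p}(x) \subseteq E_{n_0+p'}(x)$; the rest is a routine case split on first coordinates. As a sanity check, one can alternatively eliminate Case 1 via the shift machinery of Lemma~\ref{lem:minimal} applied to the longest common prefix of $\omega, \tau$ with $\bi$, but the direct connectedness argument above avoids having to rule out an empty common prefix and is cleaner.
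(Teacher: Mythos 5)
Your proposal is correct and follows essentially the same route as the paper: argue by contradiction and use the monotonicity of $\{E_k(x)\}$ to transfer (non-)well-separation between level $1$ and level $n_0$, then invoke the minimality hypothesis (H4). The paper handles your two subcases uniformly in one step (by (H4), $\omega_1$ and $\tau_1$ are not well separated, hence $\vp_{\omega}(F)$ and $\vp_{\tau}(F)$ lie in a common component of $E_p(x)$ for large $p$, contradicting (H3)), but the content is identical.
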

\begin{proof}
    We will prove this by contradiction. Suppose $\omega_1,\tau_1\in \D\setminus\{i_1\}$ . Then, by the minimality of $n_0\geq 2$, $\omega_1$ and $\tau_1$ are not well separated by $x$. Thus $\vp_{\omega_1}(F)$ and $\vp_{\tau_1}(F)$ are contained in the same connected component of  $E_{p}(x)$ for some $p$ (and hence for all large $p$). This implies that $\vp_{\omega}(F)$ and $\vp_{\tau}(F)$ are contained in the same connected component of $E_p(x)$ for all large $p$. Thus $\omega$ and $\tau$ are not well separated by $x$, which contradicts (H3).
\end{proof}

We remark that the minimality of $n_0$ in (H4) actually implies that $\omega_1,\tau_1$ and $i_1$ do not share a common value, since otherwise the longest common prefix of $\omega,\tau$ and $\bi$ has positive length and we can obtain by Lemma~\ref{lem:minimal} a contradiction with the minimality of $n_0$. Combining this with the above lemma, we can assume without loss of generality that
\begin{itemize}
    \item[(H5)] $\omega_1=i_1, \tau_1\not=i_1.$
\end{itemize}

\begin{lemma}\label{lem:case1new}
    For every $\j\in\D^{n_0}$ with $\j|_1\neq i_1$, $\j$ and $\omega$ are well separated by $x$.
\end{lemma}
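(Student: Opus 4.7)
The plan is to derive a contradiction by chaining together two failures of well-separation, exploiting the minimality hypothesis (H4) which (since $n_0\geq 2$) forces every pair of \emph{digits} to not be well separated by $x$.

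First I would check the domain condition: because $\Omega_{n_0}(x)=\{\bi|_{n_0}\}$ and $\j|_1\neq i_1=(\bi|_{n_0})|_1$, we have $\j\in\D^{n_0}\setminus\Omega_{n_0}(x)$, so the question of whether $\j$ and $\omega$ are well separated by $x$ is meaningful (note $\omega\in\D^{n_0}\setminus\Omega_{n_0}(x)$ is automatic from (H3)). I would then assume for contradiction that $\j$ and $\omega$ are \emph{not} well separated by $x$: by Definition~\ref{def:5-1}, $\vp_{\j}(F)$ and $\vp_{\omega}(F)$ belong to one common connected component of $E_{n_0+p_0}(x)$ for some $p_0\geq 1$, and then, by monotonicity of $\{E_k(x)\}$, for every $p\geq p_0$.

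Next I would apply the minimality of $n_0$ to the length-one words $j_1,\tau_1\in\D\setminus\{i_1\}=\D\setminus\Omega_1(x)$. Since $1<n_0$, they cannot be well separated by $x$, so there exists $q_0\geq 1$ with $\vp_{j_1}(F)$ and $\vp_{\tau_1}(F)$ lying in the same connected component of $E_{1+q_0}(x)$. Because $\vp_{\j}(F)\subset\vp_{j_1}(F)$ and $\vp_{\tau}(F)\subset\vp_{\tau_1}(F)$, those larger cells carry $\vp_{\j}(F)$ and $\vp_{\tau}(F)$ into the same component, and passing to any larger $E_{k}(x)$ preserves this.

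Finally, pick $p$ large enough that $n_0+p\geq\max\{n_0+p_0,\,1+q_0\}$. Inside the single set $E_{n_0+p}(x)$ we then have $\vp_{\omega}(F)$ in the same component as $\vp_{\j}(F)$, and $\vp_{\j}(F)$ in the same component as $\vp_{\tau}(F)$. Hence $\vp_{\omega}(F)$ and $\vp_{\tau}(F)$ lie in a common connected component of $E_{n_0+p}(x)$, contradicting (H3). The only mild subtlety is making sure the two ``same-component'' statements are compared at a large enough common level; this is exactly the role played by the monotonicity of $\{E_k(x)\}$, invoked twice.
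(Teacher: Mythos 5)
Your proof is correct and follows essentially the same route as the paper's: apply the minimality of $n_0$ to the first digits $\j|_1$ and $\tau_1$ to place $\vp_{\j}(F)$ and $\vp_{\tau}(F)$ in a common component of $E_k(x)$ for large $k$, then play this off against (H3) using monotonicity of $\{E_k(x)\}$. The paper phrases this directly (same component as $\tau$, different component from $\tau$, hence different component from $\omega$) rather than by contradiction, but the content is identical.
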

\begin{proof}
    Fix any $\j\in\D^{n_0}$ with $\j|_1\neq i_1$. By the minimality of $n_0\geq 2$, $\j|_1$ and $\tau_1$ are not well separated by $x$. Thus, using the similar argument as in the proof of Lemma~\ref{lem:i1-prefix},  $\vp_{\j}(F)$ and $\vp_{\tau}(F)$ is contained in the same connected component of $E_{p}(x)$ for all large $p$. On the other hand, $\vp_{\omega}(F)$ and $\vp_{\tau}(F)$ belong to different connected components of $E_{n_0+p}(x)$ for all $p\geq 1$. Thus $\vp_{\j}(F)$ and $\vp_{\omega}(F)$ belong to different connected components of $E_p(x)$ for all large $p$. It follows that $\j$ and $\omega$ are well  separated by $x$.
\end{proof}




\begin{lemma}\label{lem:case1}
    The set
    \begin{equation}\label{eq:a}
        K:=\bigcup_{\j\in\D^{n_0}\setminus\{\bi|_{n_0}\},i_1\prec\j} \vp_{\j}(F)
    \end{equation}
    lies in one connected component of $E_{n_0+p}(x)$ for some $p$ (and hence for all large $p$).
\end{lemma}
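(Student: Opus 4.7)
The plan is to pull the problem back to a smaller scale via $\vp_{i_1}^{-1}$, exploit the minimality of $n_0$ in (H4), and then transport the resulting connectedness back up. Here are the key steps.

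First, I would note that $x_1 := \vp_{i_1}^{-1}(x)$ is a cut point of $F$ admitting a unique representation, namely $\sigma(\bi) = i_2 i_3\cdots$. This follows directly from Corollary~\ref{cor:unique} applied to the unique representation $\bi$ of $x$. In particular, $\Omega_m(x_1) = \{i_2\cdots i_{m+1}\}$ for every $m \geq 1$. Observe also that $\vp_{i_1}^{-1}(K) = \bigcup_{\mathbf{k}\in\D^{n_0-1}\setminus\{i_2\cdots i_{n_0}\}}\vp_{\mathbf{k}}(F) = E_{n_0-1}(x_1)$, so $K$ corresponds, upon rescaling by $\vp_{i_1}$, exactly to $E_{n_0-1}(x_1)$.

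Next, I would invoke the minimality hypothesis (H4). Since $x_1$ is a cut point of $F$ with a unique representation and $n_0 - 1 < n_0$, no pair of distinct words in $\D^{n_0-1}$ is well separated by $x_1$. In particular, no pair in $\D^{n_0-1}\setminus\Omega_{n_0-1}(x_1)$ is well separated by $x_1$. Hence the implication (1)$\Rightarrow$(2) in Lemma~\ref{lem:wellsep1}, applied to $x_1$ with $n = n_0 - 1$, yields some $q \in \Z^+$ and a single connected component $C$ of $E_{n_0-1+q}(x_1)$ containing all of $E_{n_0-1}(x_1)$.

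Finally, I would transport this back via the homeomorphism $\vp_{i_1}$. Applying $\vp_{i_1}$ to the inclusion $E_{n_0-1}(x_1) \subset C$ gives $K \subset \vp_{i_1}(C)$, where $\vp_{i_1}(C)$ is connected. Moreover, a direct reindexing shows
\[
    \vp_{i_1}\bigl(E_{n_0-1+q}(x_1)\bigr) = \bigcup_{\mathbf{k}\in\D^{n_0-1+q}\setminus\{i_2\cdots i_{n_0+q}\}}\vp_{i_1\mathbf{k}}(F) \subset E_{n_0+q}(x),
\]
since each word $i_1\mathbf{k}$ lies in $\D^{n_0+q}\setminus\{\bi|_{n_0+q}\}$. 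Therefore $\vp_{i_1}(C)$ is a connected subset of $E_{n_0+q}(x)$, and it contains $K$; it follows that $K$ lies entirely in one connected component of $E_{n_0+q}(x)$. Taking $p = q$, and using the fact that $\{E_{n_0+p}(x)\}_p$ is increasing (so components can only merge as $p$ grows), the same conclusion holds for all larger $p$.

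The one place that demands some care is the bookkeeping between the index $n_0$ at the level of $x$ and the index $n_0-1$ at the level of $x_1$: the argument crucially uses $n_0 \geq 2$, which is exactly our standing contradiction hypothesis, so that we can descend one level and invoke (H4). The geometric and topological content is then routine, with Corollary~\ref{cor:unique} and Lemma~\ref{lem:wellsep1} doing the heavy lifting.
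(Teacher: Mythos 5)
Your proof is correct. It rests on the same core mechanism as the paper's — strip off the common prefix $i_1$, land at a scale $n_0-1<n_0$, and invoke the minimality hypothesis (H4) — but the execution is genuinely different. The paper fixes the reference word $\omega$ from (H3)/(H5) (which satisfies $\omega_1=i_1$) and argues pairwise: for each $\j\in\D^{n_0}\setminus\{\bi|_{n_0}\}$ with $i_1\prec\j$, if $\j$ and $\omega$ were well separated by $x$, then Lemma~\ref{lem:minimal} (applied to their nonempty common prefix with $\bi$) would produce well-separated words of length $<n_0$ for a cut point with a unique representation, contradicting (H4); it then takes the maximum of the finitely many resulting $p_{\j}$. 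You instead rescale globally by $\vp_{i_1}^{-1}$, identify $\vp_{i_1}^{-1}(K)$ with $E_{n_0-1}(x_1)$ for the cut point $x_1=\vp_{i_1}^{-1}(x)$ (Corollary~\ref{cor:unique}), and apply Lemma~\ref{lem:wellsep1}(1)$\Rightarrow$(2) to get a single component at once, pushing it forward via the inclusion $\vp_{i_1}(E_{n_0-1+q}(x_1))\subset E_{n_0+q}(x)$. Your route avoids both Lemma~\ref{lem:minimal} and any reference to $\omega$ or (H5), which makes it slightly cleaner and more self-contained; the paper's route keeps $\vp_{\omega}(F)$ explicitly inside the distinguished component, a fact it reuses immediately after the lemma (to derive~\eqref{eq:acapvpj} from Lemma~\ref{lem:case1new}). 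Since $K\supset\vp_{\omega}(F)$ anyway, nothing is lost, and all your individual steps (unique representation of $x_1$, the identification of $\Omega_{n_0-1}(x_1)$, the reindexing of the push-forward, and the monotonicity remark at the end) check out.
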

That is to say, all level-$n_0$ cells contained in $\vp_{i_1}(F)$ except $\vp_{\bi|_{n_0}}(F)$ belong to one common connected component of $E_{n_0+p}(x)$ for  large $p$.
\begin{proof}
    By $\omega_1=i_1$, the minimality of $n_0\geq 2$ and Lemma~\ref{lem:minimal}, for every $\j\in\D^{n_0}\setminus\{\bi|_{n_0}\}$ with $i_1\prec\j$, $\j$ and $\omega$ are not well separated by $x$. This means that there is some $p_{\j}\in\Z^+$ such that $\vp_{\j}(F)$ and $\vp_\omega(F)$ belong to the same connected component of $E_{n_0+p}(x)$. Letting $p=\max\{p_{\j}:\j\in\D^{n_0}\setminus\{\bi|_{n_0}\}, i_1\prec\j\}$ gives us the lemma.
\end{proof}

Let $K$ be as in \eqref{eq:a}. Noting that $\vp_\omega(F)\subset K$, we have
\begin{equation}\label{eq:acapvpj}
    K\cap\vp_{j}(F)=\varnothing, \quad \forall j\in\D\setminus\{i_1\}
\end{equation}
since otherwise $\vp_\omega(F)$ and $\vp_{j}(F)$ belong to the same connected component of $E_{n_0+p}(x)$ for all large $p$ and this contradicts Lemma~\ref{lem:case1new}. In other words, there is only one level-$n_0$ cell (i.e., $\vp_{\bi|_{n_0}}(F)$) in $\vp_{i_1}(F)$ which intersects other level-$1$ cells. Thus, from Proposition~\ref{prop:1and3}, $n_0<3$ since $F$ is non-fragile. Combining this with  the previous hypothesis that $n_0\geq 2$, we have $n_0=2$. As a result,
\begin{equation}\label{eq:K-def-precise}
   K=\bigcup_{i\in \D\setminus \{i_2\}}  \vp_{i_1i}(F).
\end{equation}



\begin{lemma}\label{lem:card-J}
    Let $\mathcal{J}=\{\j\in \D^2:\, i_1\not \prec \j \mbox{ and } \vp_{\bi|_2}(F) \cap \vp_{\j}(F)\not=\varnothing \}$.
    Then $|\mathcal{J}|\geq 2.$
\end{lemma}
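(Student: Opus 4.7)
The plan is to argue by contradiction, showing that if $|\mathcal{J}|\leq 1$ then $F$ would be fragile, contradicting hypothesis (H1). First I would verify that $\mathcal{J}\neq\varnothing$. Since $F$ is connected, $\Gamma_1$ is connected, so some $j\in\D\setminus\{i_1\}$ satisfies $\vp_{i_1}(F)\cap\vp_j(F)\neq\varnothing$. Combined with \eqref{eq:acapvpj} (which says only $\vp_{\bi|_2}(F)$ among level-2 cells of $\vp_{i_1}(F)$ can meet cells outside $\vp_{i_1}(F)$), any common point must lie in $\vp_{\bi|_2}(F)\cap\vp_{j j'}(F)$ for some $j'\in\D$, producing the element $jj'\in\mathcal{J}$.

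Next, suppose for contradiction that $\mathcal{J}=\{\j^*\}$ with $\j^*=j_1^*j_2^*$ and $j_1^*\neq i_1$. The key step is to show that $\vp_{i_1}(F)\cap\vp_{j_1^*}(F)$ is a singleton by applying Proposition~\ref{prop:4-3}. For this I need to verify that $(i_2,j_2^*)$ is the unique pair $(i',j')\in\D\times\D$ with $\vp_{i_1 i'}(F)\cap\vp_{j_1^* j'}(F)\neq\varnothing$: by \eqref{eq:acapvpj} any such intersection forces $i'=i_2$, and by $|\mathcal{J}|=1$ it then forces $j'=j_2^*$.

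Finally I would argue that $\vp_{i_1}(F)\cap\vp_j(F)=\varnothing$ for every $j\in\D\setminus\{i_1,j_1^*\}$. Indeed, any common point of $\vp_{i_1}(F)$ and $\vp_j(F)$ would lie in $\vp_{\bi|_2}(F)\cap\vp_j(F)$ by \eqref{eq:acapvpj}, and hence in some $\vp_{\bi|_2}(F)\cap\vp_{jk}(F)$ with $jk\in\mathcal{J}$; but then $jk\neq\j^*$ (since $j\neq j_1^*$), contradicting $|\mathcal{J}|=1$. Combining these two facts,
\[
    \vp_{i_1}(F)\cap\bigcup_{j\in\D\setminus\{i_1\}}\vp_j(F)=\vp_{i_1}(F)\cap\vp_{j_1^*}(F)
\]
is a singleton, so the partition $\D_1=\{i_1\}$, $\D_2=\D\setminus\{i_1\}$ witnesses that $F$ is fragile, contradicting (H1). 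Therefore $|\mathcal{J}|\geq 2$.

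I expect the main obstacle to be the careful unwinding of Proposition~\ref{prop:4-3}'s hypothesis, since one has to ensure that no other pair $(i',j')$ with $i'\neq i_2$ can give an intersection; but this is precisely what \eqref{eq:acapvpj} guarantees (cells $\vp_{i_1 i'}(F)$ with $i'\neq i_2$ do not meet anything outside $\vp_{i_1}(F)$), so the step is really just a bookkeeping check rather than a substantive difficulty.
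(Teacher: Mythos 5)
Your proposal is correct and follows essentially the same route as the paper: establish $\mathcal{J}\neq\varnothing$ via connectedness and \eqref{eq:acapvpj}, then show that $|\mathcal{J}|=1$ forces $\vp_{i_1}(F)\cap\vp_{j_1}(F)$ to be a singleton via Proposition~\ref{prop:4-3}, and conclude that $F$ would be fragile, contradicting (H1). Your version merely spells out in more detail the "bookkeeping check" that the paper compresses into the phrase "from Proposition~\ref{prop:4-3} and \eqref{eq:acapvpj}".
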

\begin{proof}
    Since $F$ is connected, $\vp_{i_1}(F) \cap \Big( \bigcup_{j\in \D\setminus \{i_1\}}\vp_{j}(F) \Big)\not=\varnothing$. Thus, from \eqref{eq:acapvpj},
    \begin{equation}\label{eq:4-9}
        \vp_{\bi|_2}(F) \cap \Big( \bigcup_{j\in \D\setminus \{i_1\}}\vp_{i}(F) \Big)=\vp_{i_1}(F) \cap \Big( \bigcup_{j\in \D\setminus \{i_1\}}\vp_{i}(F) \Big) \not=\varnothing.
    \end{equation}
    This implies that there exists $j_1j_2\in \D^2$ with $j_1\not=i_1$ and $\vp_{j_1j_2}(F)\cap \vp_{\bi|_2}(F)\not=\varnothing$. So $|\mathcal{J}|\geq 1$.

    Suppose $|\mathcal{J}|=1$, i.e., $\mathcal{J}=\{j_1j_2\}$. Then from Proposition~\ref{prop:4-3} and \eqref{eq:acapvpj}, $\vp_{i_1}(F)\cap\vp_{j_1}(F)$ is a singleton. Combining this with~\eqref{eq:4-9},
    \[
        \vp_{i_1}(F)\cap\Big(\bigcup_{j\in \D\setminus \{i_1\}} \vp_j(F) \Big) = \vp_{\bi|_2}(F) \cap \vp_{j_1j_2}(F) = \vp_{i_1}(F)\cap\vp_{j_1}(F)
    \]
    is a singleton and hence $F$ is fragile. This is a contradiction. Thus $|\mathcal{J}|\geq 2$.
\end{proof}


Let $x_{2}=\vp_{\bi|_{2}}^{-1}(x)$. By Corollary~\ref{cor:unique}, $x_{2}$ is a cut point of $F$ with a unique representation $i_{3}i_{4}\cdots$.
Write
\begin{equation}\label{eq:def-Kprime}
    K' = \vp_{\bi|_{2}}(E_1(x_{2})) = \vp_{\bi|_2}\Big(\bigcup_{j\in\D\setminus\{i_3\}}\vp_j(F)\Big).
\end{equation}
The following result plays a key role in the proof of the necessity of Theorem~\ref{thm:nonfragile}.

\begin{lemma}\label{lem:interKKprim}
   Let $K$ and $K'$ be as in \eqref{eq:K-def-precise} and  \eqref{eq:def-Kprime}. Then $K\cap K'\not=\varnothing.$
\end{lemma}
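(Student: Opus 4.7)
My plan is to argue by contradiction, supposing $K\cap K'=\varnothing$ and extracting from this a fragility decomposition of $F$, in contradiction with hypothesis (H1).

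The starting point is a connectedness observation. Since $\vp_{i_1}(F)$ is a scaled copy of the connected set $F$ and splits as the union of the two nonempty compact pieces $K$ and $\vp_{\bi|_2}(F)$, we must have $K\cap\vp_{\bi|_2}(F)\ne\varnothing$. Combining this with the decomposition $\vp_{\bi|_2}(F)=K'\cup\vp_{\bi|_3}(F)$ and the standing assumption $K\cap K'=\varnothing$, I obtain
\[
\varnothing\ne K\cap\vp_{\bi|_2}(F)\subset \vp_{\bi|_3}(F),
\]
so $\vp_{\bi|_3}(F)$ is the \emph{only} level-$3$ cell inside $\vp_{\bi|_2}(F)$ that meets $K$. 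Moreover, hypothesis (H2) forces $x\notin\vp_{i_1 i}(F)$ for $i\ne i_2$, so $x\notin K$ and the contact lies in $\vp_{\bi|_3}(F)\setminus\{x\}$.

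Next I rescale via $\vp_{i_1}^{-1}$. Using $K=\vp_{i_1}(E_1(x_1))$ and $K'=\vp_{i_1}\bigl(\vp_{i_2}(E_1(x_2))\bigr)$, where $x_k=\vp_{\bi|_k}^{-1}(x)$ has unique representation by Corollary~\ref{cor:unique}, the assumption $K\cap K'=\varnothing$ becomes
\[
\Bigl(\bigcup_{\ell\in\D\setminus\{i_2\}}\vp_\ell(F)\Bigr)\cap\Bigl(\bigcup_{j\in\D\setminus\{i_3\}}\vp_{i_2 j}(F)\Bigr)=\varnothing.
\]
In other words, every boundary contact of $\vp_{i_2}(F)$ with outside level-$1$ cells $\vp_\ell(F)$ ($\ell\ne i_2$) is realized through the single level-$2$ sub-cell $\vp_{i_2 i_3}(F)$, and by connectedness of $F$ the set $\vp_{i_2}(F)\cap\bigcup_{\ell\ne i_2}\vp_\ell(F)$ is nonempty and contained in $\vp_{i_2 i_3}(F)$.

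From here I aim to upgrade this inclusion to a singleton, yielding the fragility decomposition $\D=\{i_2\}\cup(\D\setminus\{i_2\})$ with singleton intersection, contradicting hypothesis (H1). The tools are Proposition~\ref{prop:4-3} (singleton characterization via the unique meeting level-$2$ pair), Lemma~\ref{lem:3-7} (corner-digit propagation across adjacent cells), and Lemma~\ref{lem:fourvertex} (identification of common corner vertices), together with the adjacency analysis in Section~2. The main obstacle is precisely this last collapse to a singleton: the conclusion ``only one level-$2$ sub-cell of $\vp_{i_2}(F)$ meets the outside'' is one level shallower than the hypothesis of Proposition~\ref{prop:1and3}, so that proposition cannot be applied verbatim. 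I anticipate a case analysis based on whether $i_3$ is a corner digit of $[0,1]^2$, pinning down the geometry of $\vp_{i_2 i_3}([0,1]^2)$ along $\partial\vp_{i_2}([0,1]^2)$ and ruling out scenarios in which $\vp_{i_2 i_3}(F)$ meets distinct outside cells at distinct points on different sides.
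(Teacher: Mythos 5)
Your opening reduction is exactly the paper's: from $K\cap K'=\varnothing$ and the connectedness of $\vp_{i_1}(F)=K\cup\vp_{\bi|_2}(F)$ you correctly conclude that $\vp_{\bi|_3}(F)$ is the only level-$3$ cell of $\vp_{\bi|_2}(F)$ meeting $K$. But the heart of the lemma --- actually producing a singleton intersection and hence a fragility contradiction --- is left as a plan, and the plan as stated would not go through. You aim exclusively at the decomposition $\D=\{i_2\}\cup(\D\setminus\{i_2\})$, i.e., at showing that $\vp_{i_2}(F)\cap\bigcup_{\ell\neq i_2}\vp_\ell(F)$ is a singleton. That set is indeed contained in $\vp_{i_2 i_3}(F)$, but ``only one level-$2$ sub-cell meets the siblings'' does not force a singleton: if, say, $i_2$ is a corner digit and $i_3$ is the diagonally opposite corner digit, then $\vp_{i_2 i_3}(F)$ can meet two different siblings of $\vp_{i_2}(F)$ through edge contacts containing infinitely many points, so this particular decomposition simply fails to witness fragility in such configurations. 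The paper escapes precisely by not committing to one decomposition: in its Case II (each outside cell $\vp_j(F)$, $j\neq i_1$, meets $\vp_{\bi|_2}(F)$ in at most one level-$2$ cell) it instead shows that $\vp_{i_1}(F)\cap\bigcup_{j\neq i_1}\vp_j(F)$ is the single corner point $\vp_{i_1}\big(\frac{i_2}{N-1}\big)$, using $|\mathcal{J}|\geq 2$ from Lemma~\ref{lem:card-J} together with Lemmas~\ref{lem:3-7} and~\ref{lem:fourvertex}, i.e., the fragility decomposition $\{i_1\}$ versus $\D\setminus\{i_1\}$.

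More generally, your sketch never brings in the external contacts of $\vp_{\bi|_2}(F)$ with the level-$1$ cells $\vp_j(F)$, $j\neq i_1$ --- the set $\mathcal{J}$ of Lemma~\ref{lem:card-J} and the fact~\eqref{eq:acapvpj} that $K$ meets no such $\vp_j(F)$ --- and this is the information that drives both of the paper's cases. In Case I (some $\vp_j(F)$ meets $\vp_{\bi|_2}(F)$ in two level-$2$ cells) the paper uses that adjacency to place $i_2=(a,0)$ on an edge and to exclude $(a,1)$ and $(a\pm 1,0)$ from $\D$, which is what finally collapses $\vp_{\bi|_2}(F)\cap K$ to a singleton and yields the decomposition you are after; in Case II it forces $i_2$ to be a corner digit and switches decompositions. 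Without this case split, your anticipated analysis ``based on whether $i_3$ is a corner digit'' has no mechanism to rule out the problematic configurations, so the gap is genuine.
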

\begin{proof}
    We will prove the lemma by contradiction. Suppose $K'\cap K=\varnothing$. Since $\vp_{i_1}(F)=K\cup \vp_{\bi|_2}(F)$ is connected and $\vp_{\bi|_2}(F)=K'\cup\vp_{\bi|_3}(F)$, $\vp_{\bi|_3}(F)$ is the only level-$3$ cell contained in $\vp_{\bi|_2}(F)$ intersecting $K$.

    \textbf{Case I}. There is some $j\in\D\setminus\{i_1\}$ such that we can find two level-$2$ cells in $\vp_{j}(F)$ which intersect $\vp_{\bi|_2}(F)$ (there might be more than two such cells, but it suffices to look at two of them). Rotating or reflecting if necessary, Figure~\ref{fig:case1} illustrates all possibilities. In both cases, we have the following observations: Writing $i_2=(a,0)$,
    \begin{enumerate}
        \item $(a,1)\notin\D$. Otherwise, since $\vp_{i_1}(\vp_{(a,1)}([0,1]^2)), \vp_{\bi|_2}([0,1]^2)$ are up-down adjacent as are $\vp_{i_1}([0,1]^2)$ and $\vp_j([0,1]^2)$, it follows from the self-similarity of $F$ that there are two level-$3$ cells in $\vp_{\bi|_2}(F)$ intersecting $\vp_{i_1}(\vp_{(a,1)}(F))\subset K$, which is a contradiction;
        \item $(a-1,0),(a+1,0)\notin\D$: Note that $\vp_{i_1}(\vp_{(a-1,0)}(F))$ and $\vp_{i_1}(\vp_{(a+1,0)}(F))$ both intersects $\vp_{j}(F)$. So if one of them belongs to $\D$ then $K\cap\vp_j(F)\neq\varnothing$, which contradicts~\eqref{eq:acapvpj};
        \item At least one of $\vp_{i_1}(\vp_{(a-1,1)}(F))$ and $\vp_{i_1}(\vp_{(a+1,1)}(F))$ does not intersect $\vp_{\bi|_2}(F)=\vp_{i_1i_2}(F)$. Otherwise, we have $\{(0,0),(N-1,0),(0,N-1),(N-1,N-1)\}\subset\D$. Then $\vp_{\bi|_2}(\vp_{(0,N-1)}(F))$ and $\vp_{\bi|_2}(\vp_{(N-1,N-1)}(F))$ are two level-$3$ cells in $\vp_{\bi|_2}(F)$ intersecting $K$, which is again a contradiction.
    \end{enumerate}
    Combining the above observations with the connectedness of $F$, either $\vp_{i_1}(\vp_{(a-1,1)}(F))$ or $\vp_{i_1}(\vp_{(a+1,1)}(F))$ intersects $\vp_{\bi|_2}(F)$, say the former one. But this implies that
    \[
        \vp_{\bi|_2}(F) \cap \Big( \bigcup_{i\in\D\setminus\{i_2\}}\vp_{i_1 i}(F) \Big) = \vp_{\bi|_2}(F) \cap \vp_{i_1}(\vp_{(a-1,1)}(F))
    \]
    which is a singleton (by their positions). Applying $\vp_{i_1}^{-1}$ on both sides, we see that
    \[
        \vp_{i_2}(F) \cap \Big( \bigcup_{i\in\D\setminus\{i_2\}}\vp_{i}(F) \Big)
    \]
    is a singleton and hence $F$ is fragile. This is a contradiction.

    \begin{figure}[htbp]
        \centering
        \begin{tikzpicture}
            \draw[thick] (-2.5,-2) rectangle (-0.5,0);
            \draw[thick] (-2.5,0) rectangle (-0.5,2);
            \draw[thick] (-2,-0.5) rectangle (-1.5,0);
            \draw[thick] (-1.5,-0.5) rectangle (-1,0);
            \draw[thick] (-1.5,0) rectangle (-1,0.5);
            \draw[thick] (0.5,-2) rectangle (2.5,0);
            \draw[thick] (0.5,0) rectangle (2.5,2);
            \draw[thick] (1,-0.5) rectangle (1.5,0);
            \draw[thick] (1.5,0) rectangle (2,0.5);
            \draw[thick] (2,-0.5) rectangle (2.5,0);
            \node[font=\fontsize{18}{1}\selectfont] at(-1.5,1) {$i_1$};
            \node[font=\fontsize{18}{1}\selectfont] at(-1.5,-1) {$j$};
            \node[font=\fontsize{18}{1}\selectfont] at(1.5,1) {$i_1$};
            \node[font=\fontsize{18}{1}\selectfont] at(1.5,-1) {$j$};
            \node[font=\fontsize{10}{1}\selectfont] at(-1.25,0.25) {$\bi|_2$};
            \node[font=\fontsize{10}{1}\selectfont] at(1.75,0.25) {$\bi|_2$};
        \end{tikzpicture}
        \caption{Case I}
        \label{fig:case1}
    \end{figure}

    \textbf{Case II}. For each $j\in\D\setminus\{i_1\}$, there is at most one level-$2$ cell in $\vp_j(F)$ which intersects $\vp_{\bi|_2}(F)$. Recall from Lemma~\ref{lem:card-J} that $|\mathcal{J}|\geq 2$. Thus $\vp_{\bi|_2}(F)$ intersects at least two level-$1$ cells other than $\vp_{i_1}(F)$. In particular, $\vp_{\bi|_2}([0,1]^2)$ must locate at one of the corners of $\vp_{i_1}([0,1]^2)$, i.e., $i_2$ is a corner digit.

    By Lemma~\ref{lem:3-7}, for each $\j\in \mathcal{J}$, $\vp_{\j}([0,1]^2)$ lies at one of the corners of $\vp_{\j|_1}([0,1]^2)$. Thus, from Proposition~\ref{prop:4-3}, $\vp_{\j}(F)\cap \vp_{\bi|_2}(F)$ is a singleton. Combining this with Lemma~\ref{lem:fourvertex}, $\vp_{\j}(F)\cap \vp_{\bi|_2}(F)=\vp_{i_1}(\frac{i_2}{N-1})$. This implies that
    \begin{align*}
        \vp_{i_1}(F) \cap \Big( \bigcup_{j\in\D\setminus\{i_1\}}\vp_j(F) \Big) &= \vp_{\bi|_2}(F) \cap \Big( \bigcup_{j\in\D\setminus\{i_1\}}\vp_j(F) \Big) \\
        &=  \vp_{\bi|_2}(F) \cap \Big( \bigcup_{\j\in \mathcal{J}}\vp_{\j}(F) \Big)= \Big\{ \vp_{i_1}\big(\frac{i_2}{N-1}\big) \Big\}
    \end{align*}
    and hence $F$ is fragile. This is a contradiction.
\end{proof}

\begin{proof}[Proof of the necessity of Theorem~\ref{thm:nonfragile}]

    Let $F=F(N,\D)$ be a non-fragile connected GSC with cut points. By Corollary~\ref{cor:foreversep}  and Lemma~\ref{lem:minimal}, we may assume that (H1)-(H4) hold. Then, by Proposition~\ref{prop:nonfragile1}, it suffices to prove that $n_0=1$. Suppose that $n_0\geq 2$. Then we have already shown that $n_0$ must be $2$, while we can assume that (H1)-(H5) hold.

    Recall that $x_{2}=\vp_{\bi|_{2}}^{-1}(x)$ and $K, K'$ are as in \eqref{eq:K-def-precise} and \eqref{eq:def-Kprime}, respectively. Then $\vp_\omega(F)\subset K$.
    It follows from the minimality of $n_0= 2$ that every pair of distinct $i,i'\in\D\setminus\{i_{3}\}$ is not well separated by $x_{2}$. Combining this with Lemma~\ref{lem:wellsep1} (just apply this to $x_{2}$ and $n=1$), we see that there is some large $p_*\in\Z^+$ such that $E_1(x_{2})$ is contained in exactly one connected component of $E_{p_*}(x_{2})$. Also note that
    \begin{align*}
        \vp_{\bi|_{2}}(E_{p_*}(x_{2})) &= \vp_{\bi|_{2}} \Big( \bigcup_{\j\in\D^{p_*}\setminus\{i_{3}i_4\cdots i_{2+p_*}\}} \vp_{\j}(F) \Big) \\
        &\subset \bigcup_{\j\in\D^{2+p_*}\setminus\{\bi|_{2+p_*}\}} \vp_{\j}(F) = E_{2+p_*}(x).
    \end{align*}
    Therefore, $K'=\vp_{\bi|_{2}}(E_1(x_{2}))$ lies in exactly one connected component of $\vp_{\bi|_{2}}(E_{p_*}(x_{2}))$ and hence of $E_{2+p_*}(x)$. For simplicity, we denote this connected component of $E_{2+p_*}(x)$ by $C$. Then $K'\subset C$.

    By Proposition~\ref{prop:1and3}, there are at least two level-$3$ cells in $\vp_{i_1}(F)$ which intersects other level-$1$ cells. Recalling~\eqref{eq:acapvpj} and~\eqref{eq:K-def-precise}, we can find $i'\in \D\setminus\{i_{3}\}$ and $\j_*\in \mathcal{J}$ such that
    \begin{equation*}
        \vp_{\i|_{2}}(\vp_{i'}(F))\cap \vp_{\j_*}(F)\not=\varnothing.
    \end{equation*}
    Since $\vp_{\i|_{2}}(\vp_{i'}(F))\subset K'\subset C$, we have $\vp_{\j_*}(F)\subset C$.

    Notice that from Lemma~\ref{lem:interKKprim}, $K\cap K'\neq\varnothing$. As a result, $K\subset C$ so that $\vp_{\omega}(F)\subset C$, which implies that $\j_*$ and $\omega$ is not well separated by $x$. This contradicts to Lemma~\ref{lem:case1new} since $\j_*|_1\not=i_1$. Thus the proof is completed.
\end{proof}

\section{Local cut points of connected GSCs}

Now we turn to the existence of local cut points. Let $F=F(N,\D)$ be a connected GSC. The following theorem and the consequent analysis are our main motivation for exploring the existence of cut points first.

\begin{theorem}\label{thm:local1}
    Suppose $x$ is a local cut point of $F$. Recall the notation $\Omega_n(x)$ from~\eqref{eq:omegaande} and let $E'_n(x)=\bigcup_{\i\in\Omega_n(x)}\vp_{\i}(F)$. Then there exists some $m\in\Z^+$ such that $x$ is a cut point of $E'_m(x)$.
\end{theorem}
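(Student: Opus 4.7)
The plan is to show that the sequence of closed sets $\{E'_n(x)\}_{n\geq 1}$ forms a neighborhood basis of $x$ in $F$ consisting of connected sets, and then transfer the local cut-point separation down to one of these sets.

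First I would record three elementary facts. (a) Each $E'_n(x)$ is connected, because it is a finite union of connected level-$n$ cells every one of which contains the common point $x$. (b) $E'_n(x)$ is a neighborhood of $x$ in $F$: by \eqref{eq:omegaande}, $F=E'_n(x)\cup E_n(x)$, and $E_n(x)$ is a closed set not containing $x$, so $F\setminus E'_n(x)\subset E_n(x)$ is closed and at positive distance from $x$, hence $x$ lies in the interior of $E'_n(x)$ relative to $F$. (c) $\operatorname{diam}E'_n(x)\to 0$ as $n\to\infty$, since each of the at most four level-$n$ cells in $\Omega_n(x)$ has diameter $\le\sqrt{2}/N^n$. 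Combining (b) and (c), $\{E'_n(x)\}_{n\geq 1}$ is a neighborhood basis of $x$ in $F$.

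Next, let $U$ be a connected neighborhood of $x$ in $F$ witnessing that $x$ is a local cut point, and let $U\setminus\{x\}=A\cup B$ be a separation, so that $A,B$ are non-empty, disjoint, and closed in $U\setminus\{x\}$. I would invoke the standard fact that $x\in\overline{A}\cap\overline{B}$: if, say, $x\notin\overline{A}$, then $A$ and $B\cup\{x\}$ would be a separation of the connected set $U$, a contradiction. Therefore every neighborhood of $x$ in $F$ meets both $A$ and $B$.

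By the neighborhood basis property, choose $m$ large enough that $E'_m(x)\subset U$. Then
\[
    E'_m(x)\setminus\{x\}=\big(E'_m(x)\cap A\big)\cup\big(E'_m(x)\cap B\big),
\]
the two pieces are disjoint, both are non-empty by the previous paragraph, and both are closed in $E'_m(x)\setminus\{x\}$ because $A,B$ are closed in $U\setminus\{x\}\supset E'_m(x)\setminus\{x\}$. Hence $E'_m(x)\setminus\{x\}$ is disconnected, and since $E'_m(x)$ itself is connected by (a), $x$ is a cut point of $E'_m(x)$.

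The only subtle point, and the one I would be most careful about, is justifying that $\{E'_n(x)\}$ really is a neighborhood basis at $x$: the inclusion $F\setminus E'_n(x)\subset E_n(x)$ may be strict because a point can simultaneously belong to a cell containing $x$ and a cell not containing $x$, but this is harmless since we only need $F\setminus E'_n(x)$ to be closed and to avoid $x$, which follows from its inclusion in the closed set $E_n(x)$. Everything else is routine point-set topology once this is in hand.
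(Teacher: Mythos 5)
Your proposal is correct and is essentially the paper's argument run forward rather than by contradiction: the paper assumes every $E'_n(x)\setminus\{x\}$ is connected, places $E'_m(x)\setminus\{x\}$ inside one piece of the separation of $U\setminus\{x\}$, and derives the contradiction $x\in E_m(x)$, while you use the same three ingredients (connectedness of $E'_n(x)$, $\diam E'_n(x)\to 0$, and the closedness of $E_m(x)$ away from $x$) to exhibit the separation of $E'_m(x)\setminus\{x\}$ directly. Your explicit observation that the $E'_n(x)$ form a neighborhood basis of $x$ is the only point the paper leaves implicit, and your handling of it is sound.
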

\begin{proof}
    Since $x$ is a local cut point of $F$, there exists $U\subset F$ such that $U$ is a connected neighborhood of $x$ but $U\setminus\{x\}$ is disconnected. Thus there are a pair of disjoint non-empty open sets $V,W$ such that $U\setminus\{x\}=V\cup W$.
    Notice that $x\in \overline{V}\cap \overline{W}$ since $U$ is connected.

    Suppose on the contrary that $E'_n(x)\setminus\{x\}$ is connected for all $n\geq 1$.
    Write $m=\min\{n\geq 1: E'_n(x)\subset U\}$. Such an $m$ exists since $\diam(E'_n(x))\leq 4\sqrt{2}N^{-n} \to 0$ as $n\to\infty$. Since $E'_m(x)\setminus\{x\}$ is connected, either $E'_m(x)\setminus\{x\}\subset V$ or $E'_m(x)\setminus\{x\}\subset W$. We may assume the former.

    Note that $F=E_m(x)\cup E'_m(x)$, where $E_m(x)$ is as in~\eqref{eq:omegaande}. Since $E'_m(x)\setminus\{x\}\subset V$,
    \begin{equation*}
        W\subset (F\setminus\{x\})\setminus V\subset (F\setminus\{x\})\setminus (E'_m(x)\setminus\{x\})\subset F\setminus E'_m(x) \subset E_m(x).
    \end{equation*}
    Combining this with the fact that $E_m(x)$ is closed, we have $\overline{W}\subset E_m(x)$ so that $x\in \overline{W}\subset E_m(x)$, which contradicts the definition of $E_m(x)$.
\end{proof}

Since a cut point is always a local cut point, it suffices to consider when $F$ has no cut points. Let $x$ be a local cut point of $F$  (if there is any). By Theorem~\ref{thm:local1}, there is some $m\in\Z^+$ such that $x$ is a cut point of $E'_m(x)$. Recall that $|\Omega_m(x)|\leq 4$.

\textbf{Case 1}. $|\Omega_m(x)|=1$. That is to say, there is only one $\i\in\D^m$ such that $x\in \vp_{\i}(F)$. Since $x$ is a cut point of $E'_m(x)=\vp_{\i}(F)$, $\vp_{\i}^{-1}(x)$ is a cut point of $F$, which is a contradiction.

\textbf{Case 2}. $|\Omega_m(x)|=2$, namely $\Omega_m(x)=\{\i,\j\}$. Since there are no cut points of $F$, we see by the same reason as in Case 1 that $\vp_{\i}(F)\setminus\{x\}$ and $\vp_{\j}(F)\setminus\{x\}$ are both connected. If there is some $y\neq x$ such that $y\in \vp_{\i}(F)\cap \vp_{\j}(F)$, then
\[
    E'_m(x)\setminus\{x\} = (\vp_{\i}(F)\setminus\{x\})\cup (\vp_{\i}(F)\setminus\{x\})
\]
is the union of two connected sets with non-empty intersection and hence is also connected. This is a contradiction. In conclusion, $\vp_{\i}(F)\cap\vp_{\j}(F)=\{x\}$.

\textbf{Case 3}. $|\Omega_m(x)|=3$, namely $\Omega_m(x)=\{\i,\j,\k\}$. Letting
\[
    A_1=\vp_{\i}(F)\setminus\{x\}, A_2=\vp_{\j}(F)\setminus\{x\}, A_3=\vp_{\k}(F)\setminus\{x\},
\]
we see again that they are all connected. Similarly as in Case 2, it is not hard to see that there is some $1\leq k\leq 3$ such that $A_k\cap \bigcup_{i\neq k}A_i =\varnothing$. In conclusion, there is some $\omega\in\Omega_m(x)$ such that $\vp_{\omega}(F)\cap \bigcup_{\eta\in\Omega_m(x)\setminus\{\omega\}}\vp_{\eta}(F)=\{x\}$.

\textbf{Case 4}. $|\Omega_m(x)|=4$. We may assume that $\Omega_m(x)=\{\i,\j,\k,\bm{h}\}$. In this case, $x$ is the common vertex of four adjacent level-$m$ squares and we must have
\[
    (0,0),(0,N-1),(N-1,0),(N-1,N-1) \in \D.
\]
Without loss of generality, we may also assume that $x$ is the bottom right, bottom left, top left and top right vertex of $\vp_{\i}([0,1]^2)$, $\vp_{\j}([0,1]^2)$, $\vp_{\k}([0,1]^2)$ and $\vp_{\bm{h}}([0,1]^2)$, respectively.
Then all of $\vp_{\i}(F)\cap \vp_{\j}(F)$, $\vp_{\j}(F)\cap \vp_{\k}(F)$ and $\vp_{\k}(F)\cap \vp_{\bm{h}}(F)$ contain infinitely many points. Similarly as in Case 2, we see that $E'_m(x)\setminus\{x\}$ is connected and obtain a contradiction.

We summarize the above discussion in the following result.

\begin{corollary}\label{cor:local11}
    Let $F$ be a connected GSC with no cut points and let $x$ be a local cut point of $F$. Then there is some $m\in\Z^+$ and a decomposition $\Omega_m(x)=\mathcal{I}\cup \mathcal{J}$ such that
    \begin{equation*}\label{eq:local-decom}
        \Big( \bigcup_{\i\in \mathcal{I}}\vp_{\i}(F) \Big)\cap \Big( \bigcup_{\i\in \mathcal{J}}\vp_{\i}(F) \Big)=\{x\}.
    \end{equation*}
\end{corollary}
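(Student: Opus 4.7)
The plan is to combine Theorem~\ref{thm:local1} with the hypothesis that $F$ has no cut points, and then perform a case analysis on $|\Omega_m(x)|\in\{1,2,3,4\}$. First I would invoke Theorem~\ref{thm:local1} to produce some $m\in\Z^+$ for which $x$ is a cut point of $E'_m(x)=\bigcup_{\i\in\Omega_m(x)}\vp_\i(F)$. The key preliminary observation, used in every case, is that for each $\i\in\Omega_m(x)$ the set $\vp_\i(F)\setminus\{x\}$ must be connected, because otherwise $\vp_\i^{-1}(x)$ would be a cut point of $F$, contradicting the assumption.

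Next I would eliminate the two extreme cases. When $|\Omega_m(x)|=1$, writing $\Omega_m(x)=\{\i\}$, one has $E'_m(x)=\vp_\i(F)$, so $x$ is a cut point of $\vp_\i(F)$, and pushing this back through $\vp_\i^{-1}$ yields a cut point of $F$, contradiction. When $|\Omega_m(x)|=4$, the point $x$ is the common corner of four adjacent level-$m$ squares, which forces $\{(0,0),(0,N-1),(N-1,0),(N-1,N-1)\}\subset\D$. By the Case~1 and Case~3 analysis in Section~2 together with the four corner fixed points $(0,0),(1,0),(0,1),(1,1)\in F$, each pair of cells among the four that share a full edge at $x$ intersects in infinitely many points. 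Hence $E'_m(x)\setminus\{x\}$ is a union of four connected sets linked in a cycle by nontrivial overlaps, and therefore connected, contradicting that $x$ is a cut point of $E'_m(x)$.

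For the remaining cases $|\Omega_m(x)|\in\{2,3\}$, the required decomposition falls out of a simple principle: if $A_1,\ldots,A_k$ are connected sets with disconnected union, then some sub-family $\mathcal I\subsetneq\{1,\ldots,k\}$ satisfies $\bigl(\bigcup_{i\in\mathcal I}A_i\bigr)\cap\bigl(\bigcup_{i\notin\mathcal I}A_i\bigr)=\varnothing$. Applied with $A_\i=\vp_\i(F)\setminus\{x\}$ to the disconnected space $E'_m(x)\setminus\{x\}$, this partitions $\Omega_m(x)$ into $\mathcal I\cup\mathcal J$ whose corresponding unions, together with the common point $x$, meet only at $x$. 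This yields the claimed identity. Concretely, for $|\Omega_m(x)|=\{\i,\j\}$ it forces $\vp_\i(F)\cap\vp_\j(F)=\{x\}$, while for $|\Omega_m(x)|=\{\i,\j,\k\}$ it produces some $\omega\in\Omega_m(x)$ with $\vp_\omega(F)\cap\bigcup_{\eta\in\Omega_m(x)\setminus\{\omega\}}\vp_\eta(F)=\{x\}$.

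The main obstacle I anticipate is the exclusion of the four-cell case: one must verify rigorously, from the presence of all four corner digits in $\D$ together with the self-similar structure of $F$, that every pair of the four cells meeting at $x$ shares more than just $\{x\}$, so that $E'_m(x)\setminus\{x\}$ remains connected. This relies on the explicit characterization of when $\vp_\i(F)\cap\vp_\j(F)$ is a singleton that was worked out in Proposition~\ref{prop:4-3} and in the four-case discussion of Section~2, rather than on any new ingredient.
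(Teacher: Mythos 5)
Your proposal is correct and follows essentially the same route as the paper: invoke Theorem~\ref{thm:local1}, observe that each $\vp_{\i}(F)\setminus\{x\}$ is connected since $F$ has no cut points, rule out $|\Omega_m(x)|=1$ by pulling back a cut point and $|\Omega_m(x)|=4$ by noting that the four corner digits force adjacent cells to overlap in more than one point, and extract the decomposition in the cases $|\Omega_m(x)|\in\{2,3\}$ from the disconnectedness of $E'_m(x)\setminus\{x\}$. The only cosmetic difference is that you package the two- and three-cell cases under one general separation principle, which the paper handles case by case.
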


The following definition serves just for convenience.

\begin{definition}\label{de:locallyfragile}
    Let $F$ be a connected GSC. For $n\in\Z^+$, we call $\D^n$ \emph{locally fragile} if there are disjoint subsets $\mathcal{I},\mathcal{J}$ of $\D^n$ such that:
        \begin{enumerate}
        \item $ \big( \bigcup_{\i\in \mathcal{I}}\vp_{\i}(F) \big)\cap \big( \bigcup_{\i\in \mathcal{J}}\vp_{\i}(F) \big)$ is a singleton, denoted by $\{x\}$;
        \item $\mathcal{I}\cup \mathcal{J}=\Omega_n(x)$.
    \end{enumerate}
\end{definition}

\begin{remark}\label{rem:omeganxis1or2}
    Corollary~\ref{cor:local11} just states that if $F$ has no cut points but some local cut point, then $\D^m$ is locally fragile for some $m\geq 1$. On the other hand, if $\D^n$ is locally fragile, then we can see from the proof of the following proposition that the point $x$ in Definition~\ref{de:locallyfragile} is actually a local cut point. Combining with our previous discussion (cases 1-4),  we have $|\Omega_n(x)|\in\{2,3\}$.
\end{remark}


\begin{proposition}\label{prop:local1}
     Let $F$ be a connected GSC with no cut points. If $\D^n$ is locally fragile for some $n\in\Z^+$, then $F$ has local cut points.
\end{proposition}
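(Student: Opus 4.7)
My plan is to show that the point $x$ singled out by the locally fragile decomposition of $\D^n$ is itself a local cut point of $F$, using $U := E'_n(x) = \bigcup_{\i \in \Omega_n(x)}\vp_{\i}(F)$ as the required connected neighborhood. Writing $A = \bigcup_{\i \in \mathcal{I}}\vp_{\i}(F)$ and $B = \bigcup_{\i \in \mathcal{J}}\vp_{\i}(F)$, the locally fragile hypothesis together with the requirement $\mathcal{I} \cup \mathcal{J} = \Omega_n(x)$ gives $U = A \cup B$ with $A \cap B = \{x\}$.

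First I would verify that $U$ is a neighborhood of $x$ in $F$. Since $F = E_n(x) \cup E'_n(x)$ and $E_n(x)$ is a finite union of compact level-$n$ cells none of which contains $x$, the set $E_n(x)$ is closed in $F$ and $x \notin E_n(x)$. Hence $F \setminus E_n(x)$ is an open subset of $F$ containing $x$, and it is contained in $E'_n(x) = U$, which is exactly what is needed. Next, $U$ is connected: each $\vp_{\i}(F)$ for $\i \in \Omega_n(x)$ is connected (being a scaled copy of the connected $F$) and contains $x$, so the union $U$ is connected.

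For the cut-point property, I would invoke Lemma~\ref{lem:local1}. Both $A$ and $B$ are closed in $F$ (hence in $U$) as finite unions of compact cells, and each contains at least two points since $\mathcal{I}, \mathcal{J}$ are non-empty (otherwise $A \cap B$ would be empty, not a singleton) and every cell $\vp_{\i}(F)$ is uncountable under our standing assumption $|\D| > 1$. Together with $A \cap B = \{x\}$ and the connectedness of $U$ already established, Lemma~\ref{lem:local1} immediately yields that $x$ is a cut point of $U$. Combining the three facts—$U$ is a connected neighborhood of $x$ in $F$, and $x$ is a cut point of $U$—we conclude that $x$ is a local cut point of $F$, as desired.

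I do not expect any serious obstacle: the decomposition in Definition~\ref{de:locallyfragile} is tailor-made so that Lemma~\ref{lem:local1} applies, and the neighborhood property follows from the trivial but crucial observation that $E_n(x)$ stays a positive distance away from $x$ (being compact and not containing $x$). The only subtlety worth emphasizing is that we must use $\mathcal{I} \cup \mathcal{J} = \Omega_n(x)$ (and not just $\mathcal{I} \cup \mathcal{J} \subset \D^n$) to ensure that $U = A \cup B$; otherwise $A \cup B$ might fail to be a neighborhood of $x$.
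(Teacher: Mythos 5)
Your proof is correct, and it takes a somewhat different route from the paper's. Both arguments ultimately reduce to Lemma~\ref{lem:local1} applied to a connected neighborhood of $x$ decomposed into the two closed pieces coming from $\mathcal{I}$ and $\mathcal{J}$; the difference is the choice of neighborhood. The paper invokes the local connectedness of $F$ (a nontrivial input, quoted from Hata) to produce a connected \emph{open} neighborhood $U$ of $x$ inside a small ball that misses every level-$n$ cell not containing $x$, and then applies Lemma~\ref{lem:local1} to $U=(U\cap F_{\mathcal{I}})\cup(U\cap F_{\mathcal{J}})$, where $F_{\mathcal{I}}$ and $F_{\mathcal{J}}$ denote the unions of cells indexed by $\mathcal{I}$ and $\mathcal{J}$. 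You instead take the canonical closed neighborhood $U=E'_n(x)=\bigcup_{\i\in\Omega_n(x)}\vp_{\i}(F)$, which is automatically connected because every cell indexed by $\Omega_n(x)$ contains $x$, and is a neighborhood because $E_n(x)$ is closed and omits $x$. This makes your argument more self-contained: it needs no local connectedness, only the elementary facts that cells are compact, connected, and have more than one point. The condition you rightly flag as essential --- that $\mathcal{I}\cup\mathcal{J}$ equals all of $\Omega_n(x)$, so that $A\cup B$ exhausts $E'_n(x)$ --- plays exactly the analogous role in the paper's version, where it guarantees the small ball meets no cell outside $F_{\mathcal{I}}\cup F_{\mathcal{J}}$.
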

\begin{proof}
    Let $\mathcal{I},\mathcal{J},x$ be as in Definition~\ref{de:locallyfragile}. For simplicity, write $F_{\mathcal{I}}=\bigcup_{\i\in \mathcal{I}}\vp_{\i}(F)$ and $F_{\mathcal{J}}=\bigcup_{\i\in J}\vp_{\i}(F)$. Since $\mathcal{I}\cup\mathcal{J}=\Omega_n(x)$, $\dist(x,\vp_{\j}(F))>0$ for every $\j\in\D^n\setminus( \mathcal{I}\cup \mathcal{J})$. So we can find a small $r>0$ such that
    \begin{equation}\label{eq:local2}
        \{y\in F: |y-x|<r\}\cap \vp_{\j}(F)=\varnothing, \quad \forall \j\in\D^n\setminus( \mathcal{I}\cup \mathcal{J}).
    \end{equation}
    Since $F$ is locally connected, there is a connected neighborhood $U$ of $x$ contained in $\{y\in F: |y-x|<r\}$. By~\eqref{eq:local2}, we have $U\subset F_{\mathcal{I}}\cup F_{\mathcal{J}}$. Therefore
    \[
        U=(U\cap F_{\mathcal{I}})\cup (U\cap F_{\mathcal{J}}).
    \]
    Since $F_{\mathcal{I}}\cap F_{\mathcal{J}}=\{x\}$ and both of them are closed subsets of $F$, $U\cap F_{\mathcal{I}}$ and $U\cap F_{\mathcal{J}}$ are both closed subsets of $U$ which intersects at exactly one point $\{x\}$, and it is easy to see that both of them are non-empty. Then by Lemma~\ref{lem:local1}, $U\setminus\{x\}$ is disconnected. Thus $x$ is a local cut point of $F$.
\end{proof}

\begin{proposition}\label{prop:locallast}
    Let $F$ be a connected GSC with no cut points. Let $n=\inf\{m\geq 1: \D^m \text{ is locally fragile}\}$. If $n<\infty$ then $n\leq 2$.
\end{proposition}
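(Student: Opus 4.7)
Plan. I argue by contradiction, assuming $n \geq 3$. Let the witnessing decomposition at level $n$ be $\mathcal{I} \cup \mathcal{J} = \Omega_n(x)$ with $\mathcal{I} \cap \mathcal{J} = \varnothing$ and $\vp_\mathcal{I}(F) \cap \vp_\mathcal{J}(F) = \{x\}$, where I abbreviate $\vp_\mathcal{A}(F) = \bigcup_{\alpha \in \mathcal{A}} \vp_\alpha(F)$. The goal is to manufacture a witness of local fragility for $\D^m$ with some $m \leq 2$, contradicting the minimality of $n$.

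The first step is a prefix-reduction in the spirit of Proposition~\ref{prop:evenfragile}. I claim that for every $1 \leq m \leq n-1$, there is no $\omega \in \D^m$ with $\omega \D^{n-m} \cap \mathcal{I} \neq \varnothing$ and $\omega \D^{n-m} \cap \mathcal{J} \neq \varnothing$. Otherwise, let $\mathcal{I}_\omega = \{\alpha \in \D^{n-m} : \omega\alpha \in \mathcal{I}\}$ and $\mathcal{J}_\omega = \{\alpha \in \D^{n-m} : \omega\alpha \in \mathcal{J}\}$. Using that $\vp_\omega(\vp_{\mathcal{I}_\omega}(F)) \cap \vp_\omega(\vp_{\mathcal{J}_\omega}(F)) \subseteq \vp_\mathcal{I}(F) \cap \vp_\mathcal{J}(F) = \{x\}$ (non-emptiness of the left-hand side is automatic because $x$ lies in both sides), applying $\vp_\omega^{-1}$ gives $\vp_{\mathcal{I}_\omega}(F) \cap \vp_{\mathcal{J}_\omega}(F) = \{\vp_\omega^{-1}(x)\}$. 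A routine check (identical in spirit to the proof of Proposition~\ref{prop:evenfragile}) gives $\mathcal{I}_\omega \cup \mathcal{J}_\omega = \Omega_{n-m}(\vp_\omega^{-1}(x))$, so $\D^{n-m}$ is locally fragile at $\vp_\omega^{-1}(x)$; since $n-m < n$, this contradicts the minimality of $n$.

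From this, the sets $\mathcal{I}^{(m)} := \{\i|_m : \i \in \mathcal{I}\}$ and $\mathcal{J}^{(m)} := \{\j|_m : \j \in \mathcal{J}\}$ are disjoint for each $1 \leq m \leq n-1$, and together they cover $\Omega_m(x)$ (the inclusion $\Omega_m(x) \subseteq \mathcal{I}^{(m)} \cup \mathcal{J}^{(m)}$ follows by extending any level-$m$ cell containing $x$ to a level-$n$ cell in $\Omega_n(x) = \mathcal{I} \cup \mathcal{J}$). Taking $m=2$, I obtain a disjoint partition $\mathcal{I}^{(2)} \cup \mathcal{J}^{(2)} = \Omega_2(x)$, and the task reduces to verifying $\vp_{\mathcal{I}^{(2)}}(F) \cap \vp_{\mathcal{J}^{(2)}}(F) = \{x\}$, which would make $\D^2$ locally fragile at $x$ and contradict $n \geq 3$.

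The main obstacle is this singleton verification. Suppose for contradiction that some $y \neq x$ lies in $\vp_{\mathcal{I}^{(2)}}(F) \cap \vp_{\mathcal{J}^{(2)}}(F)$; then $y \in \vp_{\omega_1}(F) \cap \vp_{\omega_2}(F)$ for distinct $\omega_1 \in \mathcal{I}^{(2)}$, $\omega_2 \in \mathcal{J}^{(2)}$. If $\vp_{\omega_1}([0,1]^2)$ and $\vp_{\omega_2}([0,1]^2)$ are corner-adjacent in the level-$2$ grid, then $\vp_{\omega_1}(F) \cap \vp_{\omega_2}(F)$ lies in the shared corner, forcing $y = x$, a contradiction; hence they are edge-adjacent. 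I plan to analyze this remaining case by a careful geometric argument using Proposition~\ref{prop:4-3} and Lemma~\ref{lem:fourvertex}: the coexistence of $x$ and $y$ on the shared edge produces multiple pairs of level-$3$ sub-cells that meet across this edge, which---combined with the level-$n$ singleton relation $\vp_\mathcal{I}(F) \cap \vp_\mathcal{J}(F) = \{x\}$ at the appropriate prefix---should force either a genuine fragile decomposition of $\D$, contradicting the non-fragility of $F$ that follows from the hypothesis of no cut points (via Lemma~\ref{lem:local1}), or produce an independent cut point of $F$, again a contradiction. Tracking the adjacency type through the successive levels and extracting the fragile decomposition at level $1$ from the two-point intersection is where I expect the bulk of the technical work.
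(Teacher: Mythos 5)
Your prefix-reduction step is correct and is in fact a clean, slightly more general packaging of the first part of the paper's argument: the claim that no $\omega\in\D^m$ ($1\le m\le n-1$) can have extensions in both $\mathcal{I}$ and $\mathcal{J}$ does follow exactly as you say (the sets $\mathcal{I}_\omega,\mathcal{J}_\omega$ give a locally fragile witness for $\D^{n-m}$ at $\vp_\omega^{-1}(x)$), and your disposal of the corner-adjacent case at level $2$ is also fine. The gap is in the remaining edge-adjacent case, which you leave as a plan — and the plan is aimed at a target that is generally unattainable. You reduce the proof to showing $\vp_{\mathcal{I}^{(2)}}(F)\cap\vp_{\mathcal{J}^{(2)}}(F)=\{x\}$, i.e.\ that the level-$2$ \emph{prefixes of the same words at the same point} $x$ already witness local fragility. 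But if two level-$2$ prefix squares are edge-adjacent (say up-down), their $F$-cells intersect in a scaled translate of $F\cap(F+(0,1))$, which can easily contain points other than $x$ even though the particular level-$n$ subcells $\vp_{\i}(F),\vp_{\j}(F)$ meet only at $x$; indeed, under your standing hypothesis that $n\ge 3$ is minimal, $\D^2$ is \emph{not} locally fragile, so the identity you are trying to verify is forced to fail. The existence of a second intersection point $y\ne x$ on the common edge is a perfectly consistent configuration and does not by itself yield a fragile decomposition of $\D$ or a cut point of $F$, so the contradiction you hope to extract there is not available.

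The idea you are missing is the one the paper uses to close this case: do not stay at $x$. Since the two distinct level-$(n-1)$ prefix squares must be edge-adjacent (corner-adjacency would make $\D^{n-1}$ locally fragile at $x$), the corresponding level-$1$ squares are edge-adjacent in the same direction, and by self-similarity one can place inside them a pair (or triple, in the $|\Omega_n(x)|=3$ case) of level-$2$ cells whose relative position replicates that of the cells of $\Omega_n(x)$ inside their level-$(n-1)$ parents. Because the intersection of two cells of equal level is determined by their relative displacement, this transplanted configuration meets in a single \emph{new} point $z$, and $\Omega_2(z)$ inherits the decomposition, so $\D^2$ is locally fragile at $z$ — contradicting $n\ge 3$. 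Without this relocation via self-similarity, your argument does not close.
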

The key observation is that if one can find a pair of level-$n$ squares which are up-down (resp. left-right) adjacent but not contained in one common level-$1$ square, then you can find such a pair of level-$1$ squares.
\begin{proof}
    Suppose $n>2$ and let $\mathcal{I},\mathcal{J}\subset\D^n$, $x\in F$ be as in Definition~\ref{de:locallyfragile}. We first claim that there is no $i\in\D$ such that $i\prec\i$ for all $\i\in \mathcal{I}\cup \mathcal{J}$. Otherwise, letting $\mathcal{I}'=\{\sigma(\i):\i\in \mathcal{I}\}$, $\mathcal{J}'=\{\sigma(\i):\i\in \mathcal{J}\}$ and $y=\vp_i^{-1}(x)$, where $\sigma$ is again the left shift map, we see that $\mathcal{I}'\cap \mathcal{J}'=\varnothing$, $\mathcal{I}'\cup \mathcal{J}'=\Omega_{n-1}(y)$ and
    \[
        \Big( \bigcup_{\j\in \mathcal{I}'}\vp_{\j}(F) \Big)\cap \Big( \bigcup_{\j\in \mathcal{J}'}\vp_{\j}(F) \Big) = \vp_i^{-1}\Big( \Big( \bigcup_{\i\in \mathcal{I}}\vp_{\i}(F) \Big)\cap \Big( \bigcup_{\i\in \mathcal{J}}\vp_{\i}(F) \Big) \Big) = \vp_i^{-1}(\{x\})=\{y\}.
    \]
    So $\D^{n-1}$ is locally fragile and this contradicts the minimality of $n$.

    By Remark~\ref{rem:omeganxis1or2}, it suffices to consider the following two cases.

    \textbf{Case 1}. $|\Omega_n(x)|=2$. Then $\mathcal{I}=\{\i\}$ and $\mathcal{J}=\{\j\}$ for some $\i=i_1\cdots i_n,\j=j_1\cdots j_n\in\D^n$. We have seen that $i_1\neq j_1$. Clearly, if $\vp_{\i}([0,1]^2)$ and $\vp_{\j}([0,1]^2)$ are up-down or left-right adjacent then so are $\vp_{i_1}([0,1]^2)$ and $\vp_{j_1}([0,1]^2)$. Since $x\notin \vp_{\eta}(F)$ for all $\eta\in\D^n\setminus\{\i,\j\}$, $x\notin \vp_{w}(F)$ for all $w\in\D\setminus\{i_1,j_1\}$, i.e., $\Omega_1(x)=\{i_1,j_1\}$. Moreover, $\vp_{i_1}(F)\cap \vp_{j_1}(F)$ is a scaled copy of $\vp_{\i}(F)\cap \vp_{\j}(F)$ so it is also a singleton (this singleton is just $\{x\}$ since it contains $x$). So $\D^1$ is locally fragile and this contradicts the fact that $n>2$.

    If $\vp_{\i}([0,1]^2)$ and $\vp_{\j}([0,1]^2)$ are adjacent but not up-down or left-right adjacent then
    \[
        \vp_{\i}([0,1]^2)\cap\vp_{\j}([0,1]^2) = \vp_{\i}(F)\cap\vp_{\j}(F) = \{x\}.
    \]
    If $\vp_{\i|_{n-1}}([0,1]^2)$ and $\vp_{\j|_{n-1}}([0,1]^2)$ are not up-down or left-right adjacent then they also intersect at exactly one point $x$. Since $x\notin \vp_{\eta}(F)$ for all $\eta\in\D^n\setminus\{\i,\j\}$, $x\notin \vp_{\tau}(F)$ for all $\tau\in\D^{n-1}\setminus\{\i|_{n-1},\j|_{n-1}\}$, i.e., $\Omega_{n-1}(x)=\{\i|_{n-1},\j|_{n-1}\}$. So $\D^{n-1}$ is locally fragile which contradicts the minimality of $n$. So the two squares $\vp_{\i|_{n-1}}([0,1]^2)$ and $\vp_{\j|_{n-1}}([0,1]^2)$ should be either left-right adjacent or up-down adjacent. We may assume the latter. In this case, $\vp_{i_1}([0,1]^2)$ and $\vp_{j_1}([0,1]^2)$ are also up-down adjacent. By the self-similarity, there is a pair of level-$2$ cells, one in $\vp_{i_1}([0,1]^2)$ and another in $\vp_{j_1}([0,1]^2)$, locating and behaving just in the same way as $\vp_{\i}(F)$ and $\vp_{\j}(F)$. Denoting the intersection of these two level-$2$ cells (which is a singleton) by $\{z\}$, it is easy to see that there is a decomposition of $\Omega_2(z)$ making $\D^2$ locally fragile. This contradicts the fact that $n>2$.

    \textbf{Case 2}. $|\Omega_n(x)|=3$. Without loss of generality, we may assume that $\Omega_n(x)=\{\i,\j,\k\}$ and $\mathcal{I}=\{\i,\k\}$, $\mathcal{J}=\{\j\}$, where $\i,\j,\k\in\D^n$. If $\i|_{n-1},\j|_{n-1}$ and $\k_{n-1}$ are pairwise distinct then $x$ is the common vertex of $\vp_{\i|_{n-1}}([0,1]^2), \vp_{\j|_{n-1}}([0,1])^2$ and $\vp_{\k|_{n-1}}([0,1]^2)$. By the self-similarity of $F$ and the position of these squares, the intersection
    \[
        (\vp_{\i|_{n-1}}(F)\cup\vp_{\k|_{n-1}}(F)) \cap \vp_{\j|_{n-1}}(F)
    \]
    coincides with
    \[
        (\vp_{\i}(F) \cup \vp_{\k}(F)) \cap \vp_{\j}(F) = \Big( \bigcup_{\omega\in \mathcal{I}}\vp_{\omega}(F) \Big) \cap \Big( \bigcup_{\omega\in \mathcal{J}}\vp_{\omega}(F) \Big)=\{x\}.
    \]
    Letting $\mathcal{I}'=\{\i|_{n-1},\k|_{n-1}\}$ and $\mathcal{J}'=\{\j|_{n-1}\}$, it is easy to check that $\D^{n-1}$ is locally fragile, which contradicts the minimality of $n$.
    Therefore, at least two of $\i,\j,\k$ share a common prefix of length $n-1$. Combining this with the arguments in the first paragraph of this proof, exactly two of $\i,\j,\k$ share a common prefix of length $n-1$.

    Write $\{\omega,\tau\}=\{\i|_{n-1},\j|_{n-1},\k|_{n-1}\}$. Then $\vp_{\omega}([0,1]^2)$ and $\vp_{\tau}([0,1]^2)$ are either left-right adjacent or up-down adjacent. We may assume that latter.
    Since $\omega|_1\neq \tau|_1$ (otherwise $i_1=j_1=k_1$), $\vp_{\omega|_1}([0,1]^2)$ and $\vp_{\tau|_1}([0,1]^2)$ are also up-down adjacent. By the self-similarity, there exist three level-$2$ cells in $\vp_{\omega|_1}([0,1]^2)$ and $\vp_{\tau|_1}([0,1]^2)$, locating and behaving just in the same way as $\vp_{\i}(F),\vp_{\j}(F)$ and $\vp_{\k}(F)$. So $\D^2$ is locally fragile and we again obtain a contradiction.
\end{proof}

These results establish Theorem~\ref{thm:local} as follows.

\begin{proof}[Proof of Theorem~\ref{thm:local}]
    The ``only if\,'' part follows from Corollary~\ref{cor:local11} and Proposition~\ref{prop:locallast}. The ``if\,'' part follows from Proposition~\ref{prop:local1}.
\end{proof}

\section{Generalization to Bara\'{n}ski carpets and Bara\'{n}ski sponges}

In this section, we will explain why our results on (local) cut points also hold for Bara\'{n}ski carpets. Furthermore, we can extend our results on the connectedness to  Bara\'nski sponges, which can be regarded as the high-dimensional self-affine generalization of GSCs. Thus, by Theorem~\ref{thm:Whyburn}, we characterize when a Bara\'nski carpet is homeomorphic to the standard GSC.

Since the geometrical structure of  Bara\'nski sponges is more complicated than that of GSCs, we will prove our results on connectedness of Bara\'nski sponges in details. On the other hand, we only explain our results on the existence of (local) cut points of Bara\'nski carpets, since the geometrical structure of  Bara\'nski carpets is similar to that of GSCs.



\subsection{Connectedness of Bara\'nski sponges}


First, let us recall the definition of Bara\'nski sponges. Let $d\geq 2$ be an integer. As usual, a vector $\bp=(p_1,\ldots,p_d)\in \R^d$ is called a \emph{probability vector} if $\sum_{j=1}^d p_j=1$ and $p_j>0$ for all $j$.

    Let $N_1,N_2,\ldots,N_d$ be integers such that $N_i\geq 2$ for all $i$.  For each $1\leq i\leq d$, let $\bp_i=(p_{i,1},\ldots,p_{i,N_i})$ be a probability vector, and define another vector $\bq_i=(q_{i,1},\ldots,q_{i,N_i})$ by setting $q_{i,1}=0$ and $q_{i,j}=\sum_{\ell=1}^{j-1} p_{i,\ell}$ for $2\leq j\leq N_i$.

    Let $\D\subset \{(j_1,\ldots,j_d):\, 1\leq j_i\leq N_i \textrm{ for all $1\leq i\leq d$}\}$. To avoid trivial cases, we assume that $1<\# \D< \prod_{i=1}^d N_i$. For each $w=(j_1,\ldots,j_d)\in \D$, define an affine map $\psi_w$ on $\R^d$ by
    \begin{equation}\label{eq:def-spon-psiw}
        \psi_w(x_1,\ldots,x_d)=(\psi_{w,1}(x_1),\ldots,\psi_{w,d}(x_d)),
    \end{equation}
    where $\psi_{w,i}(x_i)=p_{i,j_i}x_i+q_{i,j_i}$, $1\leq i\leq d$.
    Then $\Psi_{\D}:=\{\psi_w:\, w\in \D\}$ is a self-affine IFS on $\R^d$ of which the corresponding attractor $K=K(\Psi_{\D})$ is called a \emph{Bara\'{n}ski sponge}.

In the case when $p_{i,j}=\frac{1}{N_i}$ for all $1\leq j\leq N_i$ and $1\leq i\leq d$, $K$ is usually called a \emph{Bedford-McMullen sponge}. Furthermore, if all $N_i$ are equal, $K$ is called a \emph{Sierpi\'nski sponge}. We remark that in some papers, $K$ is called a sponge only if $d\geq 3$. Recently, Das and Simmons \cite{DS17} proved that for all $d\geq 3$, there exists a Bara\'nski sponge such that its Hausdorff dimension is strictly larger than its dynamical dimension.

In the planar case, the attractor $K=K(\Psi_{\D})$ is usually called a \emph{Bara\'{n}ski carpet} (\cite{Bar07}). Furthermore, in the case when $p_{i,j}=\frac{1}{N_i}$ for $1\leq j\leq N_i$, $K$ is usually called a \emph{Bedford-McMullen carpet} (\cite{Bed84, McMu84}). Recently, there are many discussions on these carpets. Please see \cite{Frs20,LMR20} and references therein.

By a little abuse of notation, we let $Q_0=[0,1]^d$ and recursively define
\[
    Q_n=\bigcup_{w\in \D} \psi_w(Q_{n-1}), \quad n\in \Z^+.
\]
Similarly as in the GSC cases, $\{Q_n\}_{n=0}^\infty$ forms a decreasing sequence of compact sets and $K=\bigcap_{n=0}^\infty Q_n$.
Moreover, we still write $\D^*=\bigcup_{k=1}^\infty \D^k$ and $\psi_{\bw}=\psi_{w_1}\circ\cdots\circ\psi_{w_k}$ for $\bw=w_1\cdots w_k\in\D^*$. Then by~\eqref{eq:def-spon-psiw},
\[
    \psi_{\bw}(x_1,\ldots,x_d)=(\psi_{\bw,1}(x_1),\ldots,\psi_{\bw,d}(x_d)),
\]
where $\psi_{\bw,i}(x_i):=\psi_{w_1,i}\circ \cdots \circ \psi_{w_k,i}(x_i)$, $1\leq i\leq d$. It follows that
\[
    \psi_\bw(Q_0)=\psi_{\bw,1}([0,1]) \times \cdots \times \psi_{\bw,d}([0,1]).
\]
For $1\leq i\leq d$, we define $\pi_i:\, \D^*\to \Z^+\cup\{0\}$ recursively by
\[
    \pi_i(w)=j_i, \quad \textrm{if $w=(j_1,\ldots,j_d)\in\D$,}
\]
and
\begin{equation}\label{eq:piequalsnipii}
    \pi_i(\bw w)=N_i\cdot \pi_i(\bw)+\pi_i(w), \quad \textrm{if $\bw\in \D^*, w\in \D$}.
\end{equation}

Let $\bw,\bw'\in \D^*$ with $|\bw|=|\bw'|$. It is easy to check the following simple facts.
\begin{enumerate}
    \item $\psi_\bw(Q_0)\cap \psi_{\bw'}(Q_0)\not=\varnothing$ if and only if $\psi_{\bw,i}([0,1])\cap \psi_{\bw',i}([0,1])\not=\varnothing$ for all $1\leq i\leq d$. Furthermore, if the above condition holds, then
    \[
        \dim(\psi_{\bw}(Q_0) \cap \psi_{\bw'}(Q_0))=\sum_{i=1}^d\dim\big(\psi_{\bw,i}([0,1]) \cap \psi_{\bw',i}([0,1])\big).
    \]
    \item For $1\leq i\leq d$, $\psi_{\bw,i}=\psi_{\bw',i}$  if and only if $\pi_i(\bw)=\pi_i(\bw')$.
    \item For $1\leq i\leq d$, $\psi_{\bw,i}([0,1]) \cap \psi_{\bw',i}([0,1])\not=\varnothing$ if and only if $|\pi_i(\bw)-\pi_i(\bw')|\leq 1$. Moreover, the intersection is a singleton if $|\pi_i(\bw)-\pi_i(\bw')|= 1$. As a result, if $\psi_{\bw,i}([0,1]) \cap \psi_{\bw',i}([0,1])\not=\varnothing$, then
    \[
        \dim\big(\psi_{\bw,i}([0,1]) \cap \psi_{\bw',i}([0,1])\big) = 1- | \pi_i(\bw)-\pi_i(\bw')|.
    \]
    \item For $1\leq i\leq d$ and $u,u'\in\D$, $|\pi_i(\bw u)-\pi_i(\bw' u')|\geq |\pi_i(\bw)-\pi_i(\bw')|,$
    and the equality holds if and only if $\pi_i(\bw u)-\pi_i(\bw' u')=\pi_i(\bw)-\pi_i(\bw')$, which is equivalent to
    \[
        \pi_i(\bw)-\pi_i(\bw')\in\{0,\pm 1\} \,\,\textrm{and}\,\, \pi_i(u)-\pi_i(u')=-(N_i-1)(\pi_i(\bw)-\pi_i(\bw')).
    \]
\end{enumerate}

Given $\alpha=(\alpha_1,\ldots,\alpha_d)\in\Z^d$, if $|\alpha_i|\leq 1$ for all $1\leq i\leq d$, then we call $\alpha$ a $0$-$\pm 1$ vector and write $|\alpha|=\sum_{i=1}^n|\alpha_i|$. We remark that the notation $|\cdot|$ sometimes refers to the cardinality of sets, but the meaning should be clear from the context.


For $\bw\in\D^*$, define $\pi(\bw)=(\pi_1(\bw),\ldots,\pi_d(\bw))$. From the above Facts (1)-(3), we can easily obtain the following lemma.
\begin{lemma}\label{lem:BarSpon-basic-fact}
  Let $\bw,\bw'\in\D^*$ with $|\bw|=|\bw'|$. Then
  \[
    \psi_{\bw}(Q_0) \cap \psi_{\bw'}(Q_0) \not=\emptyset  \Longleftrightarrow \pi(\bw)-\pi(\bw') \textrm{ is a $0$-$\pm 1$ vector}.
  \]
  Furthermore, if the above condition holds, then
  \[
     \dim(\psi_{\bw}(Q_0) \cap \psi_{\bw'}(Q_0))=d-|\pi(\bw)-\pi(\bw')|.
  \]
\end{lemma}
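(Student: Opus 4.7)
The plan is to deduce the lemma as a direct packaging of Facts (1) and (3) listed immediately above the statement, combined with the product structure $\psi_{\bw}(Q_0) = \prod_{i=1}^d \psi_{\bw,i}([0,1])$.

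For the equivalence, I would first invoke Fact (1) to reduce the non-emptiness of $\psi_{\bw}(Q_0) \cap \psi_{\bw'}(Q_0)$ to the coordinate-wise condition that $\psi_{\bw,i}([0,1]) \cap \psi_{\bw',i}([0,1]) \neq \varnothing$ for every $1\leq i\leq d$. Then Fact (3) translates each such one-dimensional condition into $|\pi_i(\bw)-\pi_i(\bw')|\leq 1$. Quantifying over $i$, this is exactly the requirement that every entry of $\pi(\bw)-\pi(\bw')$ lies in $\{-1,0,1\}$, i.e., that $\pi(\bw)-\pi(\bw')$ is a $0$-$\pm 1$ vector in the sense defined just before the lemma.

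For the dimension identity, assuming the intersection is non-empty, the additivity formula from Fact (1) together with the one-dimensional formula from Fact (3) yield
\[
    \dim\bigl(\psi_{\bw}(Q_0) \cap \psi_{\bw'}(Q_0)\bigr) = \sum_{i=1}^d \dim\bigl(\psi_{\bw,i}([0,1]) \cap \psi_{\bw',i}([0,1])\bigr) = \sum_{i=1}^d \bigl(1-|\pi_i(\bw)-\pi_i(\bw')|\bigr),
\]
which equals $d - |\pi(\bw)-\pi(\bw')|$ by the definition of $|\cdot|$ on $0$-$\pm 1$ vectors. No serious obstacle is expected: the lemma is essentially a bookkeeping translation of the coordinate-wise description of the axis-aligned box $\psi_{\bw}(Q_0)$ into the single vector $\pi(\bw)$. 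The only mild point to confirm is that in the nonempty-intersection case $\pi(\bw)-\pi(\bw')$ is automatically a $0$-$\pm 1$ vector, so that writing $|\pi(\bw)-\pi(\bw')|$ is legitimate in the second displayed identity.
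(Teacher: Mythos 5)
Your proposal is correct and matches the paper exactly: the paper derives this lemma directly from Facts (1)--(3), precisely by combining the coordinate-wise reduction of Fact (1) with the one-dimensional criterion and dimension formula of Fact (3), as you do. Nothing further is needed.
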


\begin{lemma}\label{lem:SS2}
    Let $\bw,\bw'\in\D^*$ with $|\bw|=|\bw'|$ and $\psi_{\bw}(Q_0) \cap \psi_{\bw'}(Q_0) \not=\varnothing$. For $u,u'\in \D$,
    \begin{align*}
      &\dim(\psi_{\bw u}(Q_0) \cap \psi_{\bw' u'}(Q_0)) = \dim(\psi_{\bw}(Q_0) \cap \psi_{\bw'}(Q_0)) \\
      \Longleftrightarrow \; & \pi(\bw u)-\pi(\bw' u')=\pi(\bw)-\pi(\bw').
    \end{align*}
\end{lemma}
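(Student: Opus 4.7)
The plan is to reduce both sides of the purported dimension equation to the $\ell^1$ norm of the $\pi$-vector difference via Lemma~\ref{lem:BarSpon-basic-fact}, and then to apply Fact~(4) coordinatewise to convert equality of norms into coordinate-by-coordinate equality. A useful first observation is that $\psi_{\bw u}(Q_0)\subset\psi_{\bw}(Q_0)$ and $\psi_{\bw' u'}(Q_0)\subset\psi_{\bw' }(Q_0)$, so the left hand intersection is always contained in the right hand one; in particular their dimensions compare by ``$\leq$'' whenever both are defined.

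For the ``$\Longleftarrow$'' direction I would assume $\pi(\bw u)-\pi(\bw' u')=\pi(\bw)-\pi(\bw')$. Since the right hand difference is a $0$-$\pm 1$ vector by the non-emptiness hypothesis, so is the left hand one, and Lemma~\ref{lem:BarSpon-basic-fact} evaluates both dimensions as $d-|\pi(\bw)-\pi(\bw')|$. For the ``$\Longrightarrow$'' direction, the equality of dimensions first forces $\psi_{\bw u}(Q_0)\cap\psi_{\bw' u'}(Q_0)\neq\varnothing$, since the right hand side already has non-negative dimension by Lemma~\ref{lem:BarSpon-basic-fact}; this in turn forces $\pi(\bw u)-\pi(\bw' u')$ to be a $0$-$\pm 1$ vector, so Lemma~\ref{lem:BarSpon-basic-fact} rewrites the dimensional equation as $|\pi(\bw u)-\pi(\bw' u')|=|\pi(\bw)-\pi(\bw')|$.

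Finally, Fact~(4) gives $|\pi_i(\bw u)-\pi_i(\bw' u')|\geq|\pi_i(\bw)-\pi_i(\bw')|$ for each coordinate $i$, with equality precisely when $\pi_i(\bw u)-\pi_i(\bw' u')=\pi_i(\bw)-\pi_i(\bw')$. Summing over $i$, the $\ell^1$ equality above forces equality in every coordinate, which reassembles to $\pi(\bw u)-\pi(\bw' u')=\pi(\bw)-\pi(\bw')$. The only subtlety I anticipate is the bookkeeping around possible emptiness of the left hand intersection in the ``$\Longrightarrow$'' direction (as Lemma~\ref{lem:BarSpon-basic-fact} otherwise does not yield a dimension formula), but this is resolved by the containment of cells together with the observation that any non-empty intersection of this form has dimension in $\{0,1,\ldots,d\}$, while the right hand side has dimension at least $0$.
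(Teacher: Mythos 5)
Your proof is correct and takes essentially the same route as the paper: reduce the dimension identity to the $\ell^1$ identity $|\pi(\bw u)-\pi(\bw' u')|=|\pi(\bw)-\pi(\bw')|$ via Lemma~\ref{lem:BarSpon-basic-fact}, then apply Fact~(4) coordinatewise to upgrade equality of norms to equality of vectors. The paper's proof leaves the translation between dimensions and norms (including the empty-intersection bookkeeping in the forward direction) implicit as ``easy to see,'' and you have simply spelled those details out.
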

\begin{proof}
    Notice that $|\pi(\bw)-\pi(\bw')|=\sum_{i=1}^d |\pi_i(\bw)-\pi_i(\bw')|$. Thus, from Fact (4), for any $u,u'\in \D$, we have $|\pi(\bw u)-\pi(\bw' u')|\geq |\pi(\bw)-\pi(\bw')|$, and the equality holds if and only if $\pi_i(\bw u)-\pi_i(\bw' u')= \pi_i(\bw)-\pi_i(\bw')$ for all $1\leq i\leq d$. That is,
    \[
      |\pi(\bw u)-\pi(\bw' u')|=|\pi(\bw)-\pi(\bw')|   \Longleftrightarrow \pi(\bw u)-\pi(\bw' u')=\pi(\bw)-\pi(\bw').
    \]
    Combining this with Lemma~\ref{lem:BarSpon-basic-fact}, it is easy to see that the lemma holds.
\end{proof}

\begin{lemma}\label{lem:BarSpon-conn}
    Let $\bw,\bw'\in \D^*$ with $|\bw|=|\bw'|$ and let $u,u'\in \D$. Assume that
    \[
        \pi(\bw u)-\pi(\bw' u')=\pi(\bw)-\pi(\bw')
    \]
    is a nonzero $0$-$\pm 1$ vector.  Then $\psi_{\bw}(K)\cap \psi_{\bw'}(K)\neq \varnothing$.
\end{lemma}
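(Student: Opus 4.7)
The strategy is to iterate the pair $(u,u')$ indefinitely after $(\bw,\bw')$ and extract the desired intersection point as a limit. Set $\alpha := \pi(\bw) - \pi(\bw')$, which by assumption is a nonzero $0$-$\pm 1$ vector. Fact~(4) rewrites the hypothesis $\pi(\bw u) - \pi(\bw' u') = \alpha$ as the coordinate-wise identity
\[
    \pi_i(u) - \pi_i(u') = -(N_i-1)\alpha_i, \qquad 1 \le i \le d.
\]
The first step of the plan is to define recursively $\bu_0 = \bw$, $\bu'_0 = \bw'$, $\bu_{n+1} = \bu_n u$, $\bu'_{n+1} = \bu'_n u'$, and to show by induction on $n$ that $\pi(\bu_n) - \pi(\bu'_n) = \alpha$ for every $n \ge 0$. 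The inductive step uses only the recursion $\pi_i(\bv w) = N_i \pi_i(\bv) + \pi_i(w)$ from~\eqref{eq:piequalsnipii} together with the displayed identity above.

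Once this is established, Lemma~\ref{lem:BarSpon-basic-fact} gives $\psi_{\bu_n}(Q_0) \cap \psi_{\bu'_n}(Q_0) \neq \varnothing$ for every $n \ge 0$. The key observation is that $\psi_{\bu_n}(Q_0) = \psi_\bw \circ \psi_u^n(Q_0)$ is a decreasing sequence of non-empty compact sets shrinking to $\{\psi_\bw(x_u)\}$, where $x_u \in [0,1]^d$ is the unique fixed point of the contraction $\psi_u$; analogously the primed sequence shrinks to $\{\psi_{\bw'}(x_{u'})\}$. Taking the intersection level by level produces a nested sequence of non-empty compact sets, hence $\bigcap_{n \ge 0}\bigl(\psi_{\bu_n}(Q_0) \cap \psi_{\bu'_n}(Q_0)\bigr)$ is non-empty, which is only possible if $\psi_\bw(x_u) = \psi_{\bw'}(x_{u'})$.

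To conclude, I would verify that $x_u \in K$: since $\psi_u(K) \subset K$, the iterates $\psi_u^n(y)$ of any $y \in K$ converge to $x_u$, and the closedness of $K$ forces $x_u \in K$; analogously $x_{u'} \in K$. Therefore the common value $\psi_\bw(x_u) = \psi_{\bw'}(x_{u'})$ lies in $\psi_\bw(K) \cap \psi_{\bw'}(K)$, proving the lemma. The only non-routine ingredient is the inductive preservation $\pi(\bu_n) - \pi(\bu'_n) = \alpha$, where the precise shape of the hypothesis is essential; the remainder is a standard cascade argument of the type already used for GSCs earlier in the paper.
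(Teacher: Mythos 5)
Your proposal is correct and follows essentially the same route as the paper: iterate the pair $(u,u')$ after $(\bw,\bw')$, use Fact~(4) to show the difference vector $\pi(\bw u^k)-\pi(\bw'(u')^k)$ is preserved, and conclude that the two nested sequences of parallelepipeds shrink to a common point lying in both $\psi_{\bw}(K)$ and $\psi_{\bw'}(K)$. Your write-up merely makes explicit two details the paper leaves implicit (the limit points are the images of the fixed points $x_u, x_{u'}$, and these fixed points lie in $K$), so no changes are needed.
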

\begin{proof}
  From the assumption of the lemma and Fact (4),
  \[
    \pi_i(u)-\pi_i(u')=-(N_i-1)(\pi_i(\bw)-\pi_i(\bw')), \quad 1\leq i\leq d.
  \]
  For $k\geq 1$, write $\bu_k=u\cdots u\in\D^k$ and $\bu'_k=u'\cdots u'\in \D^k$. Then, from Fact (4) and by induction, it is easy to see that for all $k\geq 1$ and all $1\leq i\leq d$,
  \[
    \pi_i(\bw \bu_k)-\pi_i(\bw' \bu'_k)=\pi_i(\bw \bu_{k-1})-\pi_i(\bw' \bu'_{k-1})\in \{0,\pm 1\}
  \]
  so that $\psi_{\bw\bu_k}(Q_0) \cap \psi_{\bw'\bu'_k}(Q_0)\not=\varnothing$ for all $k$. Thus $\bigcap_{k=1}^\infty\psi_{\bw\bu_k}(Q_0)=\bigcap_{k=1}^\infty\psi_{\bw'\bu'_k}(Q_0)$, which implies that $\psi_{\bw}(K)\cap \psi_{\bw'}(K)\not=\varnothing$.
\end{proof}

From Fact (3) and Lemma~\ref{lem:SS2}, the assumption in the above lemma is equivalent to
\[
    \psi_{\bw}(Q_0)\cap \psi_{\bw'}(Q_0)\neq \varnothing \,\,\textrm{and}\,\, \dim\big(\psi_{\bw u}(Q_0)\cap \psi_{\bw' u'}(Q_0)\big)=\dim\big(\psi_{\bw}(Q_0)\cap \psi_{\bw'}(Q_0)\big).
\]

\begin{corollary}\label{cor:spon1}
    Let $\alpha=(\alpha_1,\ldots,\alpha_d)$ be a nonzero $0$-$\pm 1$ vector. If $((N_1-1)\alpha_1,\ldots,(N_d-1)\alpha_d)\in\D-\D$, then $\psi_{\bw}(K)\cap\psi_{\bw'}(K)\neq\varnothing$ for all $\bw,\bw'\in \D$ with $|\bw|=|\bw'|$ and $\pi(\bw)-\pi(\bw')=\alpha$.
\end{corollary}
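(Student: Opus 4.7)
The plan is to reduce Corollary~\ref{cor:spon1} directly to Lemma~\ref{lem:BarSpon-conn} by producing suitable tail letters $u,u'\in\D$ that make the hypothesis of that lemma hold for the given words $\bw,\bw'$.

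First, I would observe that since $\D-\D$ is symmetric and contains $((N_1-1)\alpha_1,\ldots,(N_d-1)\alpha_d)$, it also contains its negative $(-(N_1-1)\alpha_1,\ldots,-(N_d-1)\alpha_d)$. Hence there exist $u,u'\in\D$ with
\[
\pi_i(u)-\pi_i(u')=-(N_i-1)\alpha_i, \qquad 1\leq i\leq d.
\]

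Next, since $\pi_i(\bw)-\pi_i(\bw')=\alpha_i\in\{0,\pm 1\}$ for each $i$, the equality condition in Fact~(4) applies componentwise: for every $i$,
\[
\pi_i(\bw u)-\pi_i(\bw' u')=\pi_i(\bw)-\pi_i(\bw')=\alpha_i,
\]
so that $\pi(\bw u)-\pi(\bw' u')=\pi(\bw)-\pi(\bw')=\alpha$, which is a nonzero $0$-$\pm 1$ vector by assumption. Lemma~\ref{lem:BarSpon-conn} applied to $\bw,\bw',u,u'$ then yields $\psi_{\bw}(K)\cap\psi_{\bw'}(K)\neq\varnothing$, completing the proof.

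Since the construction uses only the symmetry of $\D-\D$ and an application of Fact~(4) followed by the previously established Lemma~\ref{lem:BarSpon-conn}, there is no serious obstacle; the only point requiring a modicum of care is verifying that the sign on $-(N_i-1)\alpha_i$ (rather than $+(N_i-1)\alpha_i$) is precisely what Fact~(4) demands, which is resolved by the symmetry of $\D-\D$.
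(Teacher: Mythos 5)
Your proposal is correct and is essentially identical to the paper's own proof: the paper likewise picks $u,u'\in\D$ with $u'-u=((N_1-1)\alpha_1,\ldots,(N_d-1)\alpha_d)$ (the same choice you obtain via the symmetry of $\D-\D$), applies the equality case of Fact~(4) componentwise, and concludes with Lemma~\ref{lem:BarSpon-conn}. No issues.
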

\begin{proof}
    Let $u,u'\in \D$ be such that $u'-u=((N_1-1)\alpha_1,\ldots,(N_d-1)\alpha_d)$. Then by Fact (4),
    \[
      \pi_i(\bw u)-\pi_i(\bw' u')=\pi_i(\bw)-\pi_i(\bw'), \quad 1\leq i\leq d,
    \]
    so $\pi(\bw u)-\pi(\bw' u')=\pi(\bw)-\pi(\bw')=\alpha$. By Lemma~\ref{lem:BarSpon-conn},  $\psi_{\bw}(K)\cap \psi_{\bw'}(K)\neq \varnothing$.
\end{proof}

\begin{lemma}\label{lem:spon2}
    Let $\bw,\bw'\in\D^*$ with $|\bw|=|\bw'|$. If $\psi_{\bw}(Q_d) \cap \psi_{\bw'}(Q_d) \neq \varnothing$, then $\psi_{\bw}(K)\cap \psi_{\bw'}(K)\neq\varnothing$.
\end{lemma}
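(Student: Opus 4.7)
The plan is to unfold the hypothesis and then run a pigeonhole argument on the ``difference vectors''. Since $Q_d=\bigcup_{\bu\in\D^d}\psi_{\bu}(Q_0)$, the assumption $\psi_{\bw}(Q_d)\cap\psi_{\bw'}(Q_d)\neq\varnothing$ lets me pick $\bu,\bu'\in\D^d$ with $\psi_{\bw\bu}(Q_0)\cap\psi_{\bw'\bu'}(Q_0)\neq\varnothing$. I would then set, for $0\leq j\leq d$ (with $\bu|_0=\bu'|_0=\vartheta$),
\[
  \alpha_j := \pi(\bw\bu|_j)-\pi(\bw'\bu'|_j)\in\Z^d.
\]
Lemma~\ref{lem:BarSpon-basic-fact} gives that $\alpha_d$ is a $0$-$\pm 1$ vector, while the coordinate-wise inequality in Fact~(4) says that $|\alpha_{j,i}|$ is nondecreasing in $j$ for every $i$. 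Thus $|\alpha_{j,i}|\leq 1$ for every $j\leq d$ and every $i$, so every $\alpha_j$ is itself a $0$-$\pm 1$ vector and $|\alpha_j|\in\{0,1,\ldots,d\}$.

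The argument then splits into two cases. If $\alpha_{j_0}=0$ for some $j_0$, then Fact~(2) forces $\psi_{\bw\bu|_{j_0}}$ and $\psi_{\bw'\bu'|_{j_0}}$ to be the same affine map, so
\[
  \psi_{\bw}(K)\cap\psi_{\bw'}(K)\;\supset\;\psi_{\bw\bu|_{j_0}}(K)\neq\varnothing,
\]
which finishes this case. Otherwise all $|\alpha_j|$ take values in $\{1,\ldots,d\}$; the nondecreasing sequence $|\alpha_0|\leq\cdots\leq|\alpha_d|$ of $d+1$ values then cannot be injective into a set of size $d$, so there is some $j_0\in\{0,\ldots,d-1\}$ with $|\alpha_{j_0}|=|\alpha_{j_0+1}|$. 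Coordinate-wise monotonicity will propagate this sum equality to every coordinate, and then the equality clause of Fact~(4) upgrades it to $\alpha_{j_0+1}=\alpha_{j_0}$. I would then apply Lemma~\ref{lem:BarSpon-conn} to the quadruple $(\bw\bu|_{j_0},\bw'\bu'|_{j_0},u_{j_0+1},u'_{j_0+1})$, noting that the common value $\alpha_{j_0}$ is a nonzero $0$-$\pm 1$ vector, to obtain $\psi_{\bw\bu|_{j_0}}(K)\cap\psi_{\bw'\bu'|_{j_0}}(K)\neq\varnothing$ and hence $\psi_{\bw}(K)\cap\psi_{\bw'}(K)\neq\varnothing$.

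The most delicate step will be the upgrade from $|\alpha_{j_0}|=|\alpha_{j_0+1}|$ to $\alpha_{j_0}=\alpha_{j_0+1}$ via Fact~(4); this requires noticing that coordinate-wise monotonicity combined with equality of the $\ell^1$-norm forces equality in every coordinate, and then invoking the sign-preserving half of Fact~(4). Everything else is genuinely routine: extraction of $\bu,\bu'$, the two appeals to Lemma~\ref{lem:BarSpon-basic-fact} and Fact~(2), and the final invocation of Lemma~\ref{lem:BarSpon-conn}. It is also worth remarking that the constant $d$ in ``$Q_d$'' is precisely what the pigeonhole argument demands, since $|\alpha_j|$ can take up to $d+1$ distinct values $0,1,\ldots,d$ before being forced to repeat.
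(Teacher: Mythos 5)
Your proof is correct and follows essentially the same route as the paper: extract $\bu,\bu'\in\D^d$ from $Q_d=\bigcup_{\bu\in\D^d}\psi_{\bu}(Q_0)$, apply the pigeonhole principle to a monotone chain of $d+1$ integer quantities to find two consecutive equal ones, and invoke Lemma~\ref{lem:BarSpon-conn}. The only cosmetic difference is that the paper tracks $\dim(\psi_{\bw\bu|_j}(Q_0)\cap\psi_{\bw'\bu'|_j}(Q_0))$ while you track $|\alpha_j|$, and these are the same datum up to the affine relation of Lemma~\ref{lem:BarSpon-basic-fact}; your handling of the $\alpha_{j_0}=0$ case via Fact~(2) just absorbs the paper's preliminary reduction to $\bw\neq\bw'$.
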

\begin{proof}
   It is clear that $\psi_{\bw}(K)\cap \psi_{\bw'}(K)\neq\varnothing$ if $\bw=\bw'$. Thus we may assume that $\bw\not=\bw'$. In this case, $\dim(\psi_{\bw}(Q_0)\cap \psi_{\bw'}(Q_0))=d-|\pi(\bw)-\pi(\bw')|\leq d-1$.

   It follows from $Q_d=\bigcup_{\bu\in \D^d} \psi_{\bu}(Q_0)$ and $\psi_{\bw}(Q_d) \cap \psi_{\bw'}(Q_d) \neq \varnothing$ that there exist $\bu=u_1\cdots u_d,\bu'=u_1'\cdots u_d'\in \D^d$ such that $\psi_{\bw\bu}(Q_0)\cap \psi_{\bw'\bu'}(Q_0)\neq \varnothing$. Notice that
   \begin{align*}
       0 &\leq \dim(\psi_{\bw\bu}(Q_0)\cap\psi_{\bw'\bu'}(Q_0)) \\
       &\leq \dim (\psi_{\bw u_1\cdots u_{d-1}}(Q_0)\cap\psi_{\bw' u_1' \cdots u'_{d-1}}(Q_0)) \\
       &\leq \cdots \leq \dim(\psi_{\bw}(Q_0)\cap \psi_{\bw'}(Q_0)) \leq d-1.
   \end{align*}
   Thus there exists $1\leq j\leq d$ such that
   \begin{equation}\label{eq:spon-temp-dimeq}
       \dim(\psi_{\bw u_1\cdots u_j}(Q_0)\cap \psi_{\bw' u_1'\cdots u_j'}(Q_0)) = \dim(\psi_{\bw u_1\cdots u_{j-1}}(Q_0)\cap \psi_{\bw' u_1'\cdots u_{j-1}'}(Q_0)).
   \end{equation}
  Combining this with Lemmas~\ref{lem:SS2} and \ref{lem:BarSpon-conn}, we have $\psi_{\bw u_1\cdots u_{j-1}}(K)\cap \psi_{\bw' u_1'\cdots u_{j-1}'}(K)\neq \varnothing$. So $\psi_{\bw}(K)\cap \psi_{\bw'}(K)\neq \varnothing$.
\end{proof}

\begin{proof}[Proof of Theorem~\ref{thm:main2}]
    Since $K\subset Q_{d+1}$, it suffices to prove the ``$\Longleftarrow$'' part. Assume that $Q_{d+1}$ is connected. Fix any $u,v\in \D$. Since $Q_{d+1}=\bigcup_{w\in \D} \psi_w(Q_d)$ is a finite union of compact sets, we know from the connectedness of $Q_{d+1}$ that there exists $\{w_j\}_{j=1}^m\subset \D$, such that $w_1=u, w_m=v$ and $\psi_{w_j}(Q_d)\cap \psi_{w_{j+1}}(Q_d)\neq \varnothing$ for all $1\leq j\leq m-1$. From Lemma~\ref{lem:spon2}, $\psi_{w_j}(K)\cap \psi_{w_{j+1}}(K)\neq \varnothing$ for all $1\leq j\leq m-1$. Thus the Hata graph of the IFS $\{\psi_w:\, w\in \D\}$ is connected and hence $K$ is connected.
\end{proof}

Similar to GSC cases, it would be nice if we are able to present an effective algorithm to draw the Hata graph sequencee of Bara\'nski sponges by computer. The following result is a preparation for this purpose.

\begin{proposition}\label{prop:spon-intersection}
    Assume that $\bw,\bw'\in\D^*$, where $|\bw|=|\bw'|$ and $\alpha:=\pi(\bw)-\pi(\bw')$ is a nonzero $0$-$\pm 1$ vector. Then $\psi_{\bw}(K)\cap \psi_{\bw'}(K)\neq\varnothing$ if and only if there exist $\bu,\bu'\in \D^*\cup\{\vartheta\}$ with $|\bu|=|\bu'|\leq d-|\alpha|$ and $v,v'\in \D$ such that
    \begin{equation}\label{eq:new6-3}
        \pi(\bw\bu v)-\pi(\bw' \bu' v')=\pi(\bw\bu)-\pi(\bw'\bu').
    \end{equation}
\end{proposition}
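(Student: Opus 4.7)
The plan is to split the proof into the two directions, treating the ``only if'' direction as the substantive one.

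For the ``if'' direction, given the data $\bu, \bu', v, v'$, I would set $\beta := \pi(\bw\bu) - \pi(\bw'\bu')$. By Fact~(4) applied in each coordinate, the assumed equality $\pi(\bw\bu v) - \pi(\bw'\bu' v') = \beta$ forces each $|\beta_i|$ to lie in $\{0, 1\}$, so $\beta$ is itself a $0$-$\pm 1$ vector. If $\beta \neq 0$, Lemma~\ref{lem:BarSpon-conn} applied to $\bw\bu, \bw'\bu', v, v'$ yields $\psi_{\bw\bu}(K) \cap \psi_{\bw'\bu'}(K) \neq \varnothing$, and this intersection sits inside $\psi_{\bw}(K) \cap \psi_{\bw'}(K)$. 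If $\beta = 0$, Fact~(2) gives $\psi_{\bw\bu} = \psi_{\bw'\bu'}$ as affine maps, so $\psi_{\bw\bu}(K) = \psi_{\bw'\bu'}(K)$ is a nonempty set contained in the intersection.

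For the ``only if'' direction, pick $x \in \psi_{\bw}(K) \cap \psi_{\bw'}(K)$ and use the coding map on $K$ to select infinite words $\bi, \bi' \in \D^\infty$ so that $\bw\bi$ and $\bw'\bi'$ both encode $x$. For each $k \geq 0$, the level-$(|\bw|+k)$ cells $\psi_{\bw\bi|_k}(Q_0)$ and $\psi_{\bw'\bi'|_k}(Q_0)$ both contain $x$, so by Lemma~\ref{lem:BarSpon-basic-fact} the vector $\delta_k := \pi(\bw\bi|_k) - \pi(\bw'\bi'|_k)$ is a $0$-$\pm 1$ vector; in particular $|\delta_k| \leq d$. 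Applying Fact~(4) in each coordinate, the sequence $a_k := |\delta_k|$ is nondecreasing, with $a_0 = |\alpha|$.

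Now comes the pigeonhole step: the $d - |\alpha| + 2$ integers $a_0, a_1, \ldots, a_{d-|\alpha|+1}$ all lie in the set $\{|\alpha|, |\alpha|+1, \ldots, d\}$ of size $d - |\alpha| + 1$, so by monotonicity some consecutive pair must agree, i.e., there exists $k \in \{0, 1, \ldots, d - |\alpha|\}$ with $a_k = a_{k+1}$. Take $\bu := \bi|_k$, $\bu' := \bi'|_k$ (interpreting both as $\vartheta$ when $k = 0$), and let $v, v'$ denote the letters in $\D$ at position $k+1$ in $\bi, \bi'$ respectively. Then $|\bu| = k \leq d - |\alpha|$, and what remains is to justify the equality $\pi(\bw\bu v) - \pi(\bw'\bu' v') = \pi(\bw\bu) - \pi(\bw'\bu')$, i.e., $\delta_{k+1} = \delta_k$.

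The main obstacle I anticipate is precisely this last promotion from $a_k = a_{k+1}$ (equality of $\ell^1$ norms) to $\delta_k = \delta_{k+1}$ (equality as vectors). The argument: from $|\delta_{k+1, i}| \geq |\delta_{k, i}|$ (Fact~(4)) for each $i$ together with $\sum_i |\delta_{k+1,i}| = \sum_i |\delta_{k,i}|$, every coordinatewise inequality must be an equality, after which the equality characterization in Fact~(4) upgrades equality in absolute value to equality in signed value. This yields $\delta_{k+1, i} = \delta_{k, i}$ for each $i$, hence the desired vector identity, completing the proof.
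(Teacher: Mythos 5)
Your proof is correct and follows essentially the same route as the paper's: the ``if'' direction is identical (your extra case $\beta=0$ is vacuous since $\alpha\neq 0$ forces $\beta\neq 0$ by Fact~(4)), and in the ``only if'' direction your pigeonhole on the nondecreasing $\ell^1$ norms $a_k=|\delta_k|$ is the same count as the paper's dimension-drop argument, since $\dim(\psi_{\bw\bu}(Q_0)\cap\psi_{\bw'\bu'}(Q_0))=d-|\pi(\bw\bu)-\pi(\bw'\bu')|$ by Lemma~\ref{lem:BarSpon-basic-fact}, and your coordinatewise upgrade from $a_k=a_{k+1}$ to $\delta_k=\delta_{k+1}$ is exactly the content of Lemma~\ref{lem:SS2}.
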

\begin{proof}
    First we prove the ``$\Longleftarrow$'' part. From Fact (4) and \eqref{eq:new6-3}, $\pi(\bw\bu)-\pi(\bw'\bu')$ is a $0$-$\pm 1$ vector. On the other hand, since $\pi(\bw)-\pi(\bw')$ is nonzero, we know from Fact (4) that $\pi(\bw\bu)-\pi(\bw'\bu')$ is also nonzero. Thus from Lemma~\ref{lem:BarSpon-conn} and \eqref{eq:new6-3}, $\psi_{\bw\bu}(K)\cap \psi_{\bw'\bu'}(K)\neq \varnothing$ so that $\psi_{\bw}(K)\cap \psi_{\bw'}(K)\neq \varnothing$.


    For the ``$\Longrightarrow$'' part, write $t=d-|\alpha|+1$ (so $1\leq t\leq d$).
    It follows from $K\subset Q_t=\bigcup_{\bu\in \D^t} \psi_{\bu}(Q_0)$ and $\psi_{\bw}(K)\cap \psi_{\bw'}(K)\neq \varnothing$ that there exist $u_1\cdots u_{t}, u'_1\cdots u'_t\in \D^t$, such that $\psi_{\bw u_1\cdots u_t}(Q_0) \cap \psi_{\bw' u'_1\cdots u'_t}(Q_0)\neq \varnothing.$
    Thus, applying similar arguments as in the proof of Lemma~\ref{lem:spon2} and noticing that
    \[
      \dim(\psi_{\bw}(Q_0)\cap \psi_{\bw'}(Q_0))=d-|\alpha|=t-1,
    \]
    there exists $1\leq j\leq t$ such that \eqref{eq:spon-temp-dimeq} holds. Write $\bu=u_1\cdots u_{j-1}, \bu'=u_1'\cdots u_{j-1}'$ and $v=u_j,v'=u_j'$. Then
    \[
      \dim(\psi_{\bw\bu v}(Q_0)\cap \psi_{\bw'\bu' v'}(Q_0))= \dim(\psi_{\bw\bu}(Q_0)\cap \psi_{\bw'\bu'}(Q_0)).
    \]
    Since $\psi_{\bw\bu}(Q_0)\cap \psi_{\bw'\bu' }(Q_0)\supset \psi_{\bw u_1\cdots u_t}(Q_0) \cap \psi_{\bw' u'_1\cdots u'_t}(Q_0)\neq \varnothing$, we know from the remark after Lemma~\ref{lem:BarSpon-conn} that $\pi(\bw\bu v)-\pi(\bw'\bu' v')=\pi(\bw\bu)-\pi(\bw'\bu')$.
\end{proof}

Let $\E_0$ be the set of all nonzero $0$-$\pm 1$ vectors $(\alpha_1,\ldots,\alpha_d)$ such that
\begin{equation}\label{eq:E0-def}
    ((N_1-1)\alpha_1,\ldots,(N_d-1)\alpha_d) \in \D-\D.
\end{equation}
Recursively, for $1\leq t\leq d-1$, we define $\E_t$ to be the set of all nonzero $0$-$\pm 1$ vectors $(\alpha_1,\ldots,\alpha_d)$ that satisfy the following condition: There exists $e\in \D-\D$ such that
\[
    (N_1\alpha_1,\ldots,N_d\alpha_d)+e\in\bigcup_{k=0}^{t-1} \E_k.
\]

By definition, it is clear that $\E_{t}\subset \E_{t+1}$ for $1\leq t\leq d-2$. Furthermore, we have the following simple lemma.
\begin{lemma}\label{lem:Spon-Algo-Recu}
    Let $\bw,\bw'\in \D^*$ be two distinct words with $|\bw|=|\bw'|$. If there exist $u,u'\in\D$ such that $\pi(\bw u)-\pi(\bw' u')\in \E_t$ for some $0\leq t\leq d-2$, then $\pi(\bw)-\pi(\bw')\in \E_{t+1}$.
\end{lemma}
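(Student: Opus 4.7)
The plan is to directly verify the two clauses in the definition of $\E_{t+1}$. Set $\alpha := \pi(\bw) - \pi(\bw')$ and $\beta := \pi(\bw u) - \pi(\bw' u')$. Applying the recursion \eqref{eq:piequalsnipii} componentwise gives $\beta_i = N_i\alpha_i + (\pi_i(u)-\pi_i(u'))$, so
\[
    (N_1\alpha_1, \ldots, N_d\alpha_d) + (u - u') = \beta,
\]
with $u - u' \in \D - \D$. Since $\beta \in \E_t \subset \bigcup_{k=0}^{t}\E_k$, this immediately supplies the witness $e := u - u'$ required by the definition of $\E_{t+1}$. So the only remaining task is to show that $\alpha$ itself is a nonzero $0$-$\pm 1$ vector.

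For $\alpha \neq 0$, I would first record that the map $\pi$ is injective on $\D^k$ for each $k$. This is because any $w\in\D$ is determined by the tuple $(\pi_1(w),\ldots,\pi_d(w))$, and by \eqref{eq:piequalsnipii} both $\pi_i(\bw|_{k-1})$ and $\pi_i(w_k)\in\{0,\ldots,N_i-1\}$ can be recovered from $\pi_i(\bw)$ via division modulo $N_i$; an easy induction on $k$ then yields injectivity. Combined with $\bw\neq\bw'$, this gives $\alpha\neq 0$.

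For $|\alpha_i|\leq 1$, I would use the trivial bound $|\pi_i(u)-\pi_i(u')|\leq N_i-1$ together with $|\beta_i|\leq 1$, which holds because $\beta\in\E_t$ is by definition a $0$-$\pm 1$ vector. If $|\alpha_i|\geq 2$, then
\[
    |\beta_i| \;=\; |N_i\alpha_i + (\pi_i(u)-\pi_i(u'))| \;\geq\; 2N_i - (N_i-1) \;=\; N_i + 1 \;\geq\; 3,
\]
contradicting $|\beta_i|\leq 1$. Hence $\alpha$ lies in $\{-1,0,1\}^d\setminus\{0\}$, and the definition of $\E_{t+1}$ is satisfied with witness $e = u - u'$.

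There is no real obstacle here: once the componentwise identity $\beta_i = N_i\alpha_i + (\pi_i(u)-\pi_i(u'))$ is written down, the lemma is a mechanical unwinding of the recursive definition of $\{\E_k\}$. The mildest subtlety is the injectivity of $\pi$ on $\D^k$, used to upgrade $\bw\neq\bw'$ into $\alpha\neq 0$; it would be prudent to state this as a small auxiliary observation before running the main argument.
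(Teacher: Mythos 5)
Your proof is correct and follows essentially the same route as the paper's: the paper gets $|\alpha_i|\le 1$ by citing its Fact (4) (whose content is exactly your inequality that $|\alpha_i|\ge 2$ would force $|\beta_i|\ge N_i+1$) and is terser about $\alpha\neq 0$, which you justify explicitly via injectivity of $\pi$ on $\D^k$. One cosmetic slip: in the sponge setting the digits satisfy $\pi_i(w)\in\{1,\ldots,N_i\}$ rather than $\{0,\ldots,N_i-1\}$, but the recovery-by-division argument, and hence the injectivity of $\pi$, still goes through.
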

\begin{proof}
    Write $\alpha=(\alpha_1,\ldots,\alpha_d)=\pi(\bw)-\pi(\bw')$. Since $\pi(\bw u)-\pi(\bw' u')\in \E_t$,
    \[
        \pi_i(\bw u)-\pi_i(\bw' u')\in \{0,\pm 1\}, \quad 1\leq i\leq d.
    \]
    Thus from Fact (4), $\pi_i(\bw)-\pi_i(\bw')\in \{0,\pm 1\}$ for all $1\leq i\leq d$. Combining this with $\bw\neq \bw'$, we know that $\pi(\bw)-\pi(\bw')$ is a nonzero $0$-$\pm 1$ vector.

    Note that by~\eqref{eq:piequalsnipii},
    \[
        (N_1\alpha_1,\ldots,N_d\alpha_d)+u-u' = \pi(\bw u)-\pi(\bw'u') \in \E_t.
    \]
    Since $u-u'\in\D-\D$, $\pi(\bw)-\pi(\bw')=\alpha\in \E_{t+1}$.
\end{proof}

The following theorem provides us with an effective algorithm to draw the Hata graph sequence of Bara\'nski sponges.

\begin{theorem}\label{thm:khataofbaranski}
  Let $\bw,\bw'\in \D^*$ be such that $|\bw|=|\bw'|$ and $\pi(\bw)-\pi(\bw')$ is a nonzero $0$-$\pm 1$ vector. Then $\psi_{\bw}(K)\cap \psi_{\bw'}(K)\neq \varnothing$ if and only if $\pi(\bw)-\pi(\bw')\in \E_{d-|\pi(\bw)-\pi(\bw')|}$.
\end{theorem}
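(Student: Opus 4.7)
My plan is to derive the theorem from Proposition~\ref{prop:spon-intersection} by unwinding the recursive definition of $\E_t$ into the condition supplied by that proposition. Set $\alpha := \pi(\bw) - \pi(\bw')$ and $s := d - |\alpha|$. I will show that $\alpha \in \E_s$ is precisely the condition certifying a non-empty intersection of $\psi_\bw(K)$ and $\psi_{\bw'}(K)$.

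First I would record two preliminary observations. The sets form an ascending chain $\E_0 \subset \E_1 \subset \cdots \subset \E_{d-1}$: for $t \geq 1$ the inclusion $\E_t \subset \E_{t+1}$ is immediate from $\bigcup_{k < t}\E_k \subset \bigcup_{k \leq t}\E_k$, while $\E_0 \subset \E_1$ is checked by choosing $v, v' \in \D$ with $v'_i - v_i = (N_i - 1)\alpha_i$ for $\alpha \in \E_0$, so that $(N_i \alpha_i) + (v_i - v'_i) = \alpha \in \E_0$. Second, if $\alpha$ is a nonzero $0$-$\pm 1$ vector and $e \in \D - \D$ is such that $\beta := (N_i \alpha_i + e_i)$ is again a $0$-$\pm 1$ vector, then $\beta$ is automatically nonzero, because at any coordinate $i$ with $\alpha_i \neq 0$ one has $|\beta_i| \geq N_i|\alpha_i| - (N_i - 1) \geq 1$.

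The core step is the equivalence: $\alpha \in \E_s$ if and only if there exist $\bu, \bu' \in \D^* \cup \{\vartheta\}$ with $|\bu| = |\bu'| \leq s$ and $v, v' \in \D$ satisfying $\pi(\bw \bu v) - \pi(\bw' \bu' v') = \pi(\bw \bu) - \pi(\bw' \bu')$. For the forward direction, I iterate the defining property of $\E_s$: starting from $\beta_0 = \alpha \in \E_s$, each step produces $e_j = u_j - u'_j \in \D - \D$ and $\beta_j := (N_i \beta_{j-1, i} + e_{j, i})$ lying in some $\E_k$ with $k \leq s - j$, which is upgraded to $\beta_j \in \E_{s-j}$ by the inclusions. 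The recursion~\eqref{eq:piequalsnipii} identifies $\beta_j$ with $\pi(\bw u_1 \cdots u_j) - \pi(\bw' u'_1 \cdots u'_j)$. After $s$ steps, $\beta_s \in \E_0$ furnishes $v, v' \in \D$ with $(N_i - 1)\beta_{s, i} = v'_i - v_i$, which rearranges to the required identity with $\bu = u_1 \cdots u_s$ and $\bu' = u'_1 \cdots u'_s$. For the backward direction, given $\bu = u_1 \cdots u_t$, $\bu'$, $v$, $v'$ satisfying the identity, I define $\beta_j := \pi(\bw u_1 \cdots u_j) - \pi(\bw' u'_1 \cdots u'_j)$; the identity yields $\beta_t \in \E_0$, and then a reverse induction using $\beta_{j+1} = (N_i \beta_{j, i}) + (u_{j+1, i} - u'_{j+1, i})$ combined with the definition of $\E_{k+1}$ shows $\beta_{t-j} \in \E_j$ for $0 \leq j \leq t$. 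Hence $\alpha = \beta_0 \in \E_t \subset \E_s$.

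Combining this equivalence with Proposition~\ref{prop:spon-intersection} finishes the proof. The main delicate point is the backward direction: I need every intermediate $\beta_j$ to be a nonzero $0$-$\pm 1$ vector so that the membership $\beta_{t-j} \in \E_j$ is even well-defined. The $0$-$\pm 1$ property descends along the chain from $\beta_t$ via Fact (4), which gives $|\beta_{j, i}| \leq |\beta_{j+1, i}|$ in each coordinate, while the nonzero condition follows from the second preliminary observation applied at each link of the chain (a vanishing $\beta_j$ would propagate backward to force $\alpha = 0$, contradicting the hypothesis).
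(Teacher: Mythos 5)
Your proposal is correct. The ``only if\,'' half is essentially the paper's: you extract the witness $(\bu,\bu',v,v')$ from Proposition~\ref{prop:spon-intersection} and climb back up by reverse induction, which is exactly an iterated application of Lemma~\ref{lem:Spon-Algo-Recu} (your handling of the nonzero and $0$-$\pm1$ properties of the intermediate $\beta_j$ via Fact~(4) matches the lemma's proof). The genuine difference is in the ``if\,'' half: the paper proves it by a separate geometric induction on the index $t$ of $\E_t$, using Corollary~\ref{cor:spon1} at the base and the nesting of cells at each step, whereas you unwind the recursion defining $\E_s$ purely combinatorially until you reach $\E_0$, thereby manufacturing a witness for the backward implication of Proposition~\ref{prop:spon-intersection} and letting that proposition carry all the geometry. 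This buys a cleaner statement --- a single combinatorial equivalence between $\alpha\in\E_{d-|\alpha|}$ and the witness condition of Proposition~\ref{prop:spon-intersection}, with the geometry quarantined in that one proposition --- at the small cost of needing the full inclusion chain $\E_0\subset\E_1\subset\cdots$, and in particular $\E_0\subset\E_1$, which the paper only needs in the special case $s=0$ (where it verifies $\beta\in\E_1$ directly) but which you correctly supply as a preliminary observation.
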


\begin{proof}
  Write $\alpha=(\alpha_1,\ldots,\alpha_d)=\pi(\bw)-\pi(\bw')$ and let $t=d-|\alpha|$ (so $0\leq t\leq d-1$). We first prove the ``if\," part by induction on $t$. By definition, if $\alpha=\pi(\bw)-\pi(\bw')\in \E_0$, then \eqref{eq:E0-def} holds. Since $\alpha$ is a nonzero $0$-$\pm 1$ vector, we have by Corollary~\ref{cor:spon1} that $\psi_{\bw}(K)\cap \psi_{\bw'}(K)\neq \varnothing$. Thus the ``if\," part holds for $t=0$.

  Assume that the ``if\," part holds for $0\leq t\leq m$, where $0\leq m\leq d-2$ is an integer. When $t=m+1$ and $\alpha=\pi(\bw)-\pi(\bw')\in \E_{m+1}$, there exist $v,v'\in \D$ such that
  \[
    (N_1 \alpha_1,\ldots,N_d \alpha_d)+(v-v')\in \bigcup_{k=0}^m \E_{k}.
  \]
  So $\pi(\bw v)-\pi(\bw' v')\in \bigcup_{k=0}^m \E_k$ (recall~\eqref{eq:piequalsnipii}). By the inductive hypothesis, $\psi_{\bw v}(K)\cap \psi_{\bw' v'}(K)\neq \varnothing$, which implies that $\psi_{\bw}(K)\cap \psi_{\bw'}(K)\neq \varnothing$. Thus the ``if\," part holds for $t=m+1$.

  Now we prove the ``only if\," part. Suppose $\psi_{\bw}(K)\cap \psi_{\bw'}(K)\neq \varnothing$. From Proposition~\ref{prop:spon-intersection}, there exist $\bu,\bu'\in \D^*\cup\{\vartheta\}$ with $|\bu|=|\bu'|\leq t$ and $v,v'\in \D$ such that \eqref{eq:new6-3} holds. Letting $\beta=\pi(\bw\bu)-\pi(\bw'\bu')$, we can rewrite \eqref{eq:new6-3} as
  \begin{equation}\label{eq:new6-5}
    (N_1\beta_1,\ldots,N_d\beta_d)+v-v'=(\beta_1,\ldots,\beta_d),
  \end{equation}
  which implies that $((N_1-1)\beta_1,\ldots,(N_d-1)\beta_d)=v'-v\in \D-\D$ and $\beta$ is a $0$-$\pm 1$ vector. Since $\pi(\bw')-\pi(\bw)$ is nonzero, from Fact (4), $\beta=\pi(\bw\bu)-\pi(\bw'\bu')$ is also nonzero. Thus $\beta\in \E_0$.

  Let $s=|\bu|=|\bu'|$. If $s=0$, then $\bw\bu=\bw,\bw'\bu'=\bw'$ and hence $\pi(\bw)-\pi(\bw')=\beta\in \E_0$.
  Furthermore, from \eqref{eq:new6-5}, $(N_1\beta_1,\ldots,N_d\beta_d)+v-v'=(\beta_1,\ldots,\beta_d)\in \E_0$  so that $\beta\in \E_1$. Thus $\beta\in \E_0\cap\E_1 \subset \E_t$.

  Now assume that $s\geq 1$. Write $\bu=u_1\cdots u_s$ and $\bu'=u'_1 \cdots u'_s$. From $\pi(\bw\bu)-\pi(\bw'\bu')=\beta\in \E_0$ and Lemma~\ref{lem:Spon-Algo-Recu}, we see that $\pi(\bw u_1\cdots u_{s-1})-\pi(\bw' u'_1\cdots u'_{s-1})\in \E_1$. Applying Lemma~\ref{lem:Spon-Algo-Recu} repeatedly, we obtain that $\pi(\bw)-\pi(\bw')\in \E_s\subset\E_t$ since $t\geq s\geq 1$.
\end{proof}

\subsection{Existence of local cut points of Bara\'nski carpets}
In this subsection, we will explain why our results on the existence of (local) cut points of GSCs are also applicable to Bara\'nski carpets.

Assume that $K=K(\Psi_{\D})$ is a connected Bara\'nski carpet. Similarly as in the GSC cases, we call $K$ \emph{fragile} if we can decompose $\D$ as $\D=\D_1\cup \D_2$ such that
\[
    \Big(\bigcup_{w\in \D_1} \psi_w(K) \Big) \cap \Big(\bigcup_{w\in \D_2} \psi_w(K) \Big)
\]
is a singleton. The carpet $K$ is called \emph{non-fragile} if it is not fragile.

Since it makes no essential difference when we consider the arguments in Sections 2-5 with small squares replaced by small rectangles, one can modify those arguments and show the following results.

\begin{enumerate}
    \item If $K$ is fragile, then $K$ has cut points and $\chi(\Gamma_k)\geq |\D|^{k-1}-1$ for all $k\geq 2$, where $\Gamma_k$ is the $k$-th Hata graph of $K$ (analogous to Lemma~\ref{lem:local1} and Proposition~\ref{prop:fragilelower});
    \item A non-fragile connected Bara\'nski carpet has cut points if and only if $\chi(\Gamma_k)\geq |\D|^{k-1}$ for all $k\geq 2$ (analogous to Theorem~\ref{thm:nonfragile});
    \item A connected Bara\'nski carpet has local cut points but no cut points if and only if there are disjoint subsets $\mathcal{I},\mathcal{J}\subset\D^n$ as in Theorem~\ref{thm:local}.
\end{enumerate}

In conclusion, we can extend Theorems~\ref{thm:main} and~\ref{thm:local} as follows.


\begin{theorem}
    A connected Bara\'nski carpet $K(\Psi_{\D})$ has cut points if and only if $\chi(\Gamma_k)\geq |\D|^{k-1}-1$ for all $k\geq 2$.
\end{theorem}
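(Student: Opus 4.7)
The plan is to reduce the theorem to the GSC template already established in Sections 3 and 4, by verifying that each ingredient survives the passage from squares to rectangles. Throughout, $K=K(\Psi_{\D})$ is a connected Bara\'nski carpet, $\Gamma_k$ is its $k$-th Hata graph, and the basic geometric facts (Lemma~\ref{lem:BarSpon-basic-fact}, Lemma~\ref{lem:SS2}, Proposition~\ref{prop:spon-intersection}) are specialized to $d=2$.

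First I would split into the two parallel cases. Call $K$ \emph{fragile} if $\D=\D_1\cup\D_2$ disjointly with $\bigl(\bigcup_{w\in\D_1}\psi_w(K)\bigr)\cap\bigl(\bigcup_{w\in\D_2}\psi_w(K)\bigr)$ a singleton, and \emph{non-fragile} otherwise. For the fragile case, Lemma~\ref{lem:local1} is purely topological and applies verbatim, so a fragile Bara\'nski carpet has at least one cut point; then I would reproduce the argument of Proposition~\ref{prop:fragilelower}, using the Bara\'nski analogue of the ``four cells at a common vertex'' configuration (Figure~\ref{fig:4common})—the key input being that if four level-$k$ rectangles share a common vertex $x$, then, as in the GSC case, $\{(0,0),(1,0),(0,1),(1,1)\}\subset K$, which forces the fragility-defining intersection to be strictly larger than $\{x\}$. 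This gives $\chi(\Gamma_k)\geq|\D|^{k-1}-1$ for all $k\geq 2$.

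For the non-fragile case I would transplant the machinery of Section~4: the notion of well-separated words, Theorem~\ref{thm:onedire} and its converse Theorem~\ref{thm:foreversep}, Lemmas~\ref{lem:wellsep1}--\ref{lem:equivalent}, and Corollary~\ref{cor:nonfragilelast} are all formulated purely in terms of $\psi_{\bw}(K)$ and the Hata graphs, so they carry over without change. The sufficiency direction (if $\chi(\Gamma_k)\geq|\D|^{k-1}$ eventually, then $K$ has cut points) then follows by the same compactness/subsequence argument used at the end of Section~4.1. The necessity direction (non-fragile $+$ cut points $\Rightarrow$ $\chi(\Gamma_k)\geq|\D|^{k-1}$ for all $k\geq 2$) requires the cut-point-with-unique-representation argument of Section~4.2, which again uses only the IFS formalism. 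Combining the two cases exactly as in the proof of Theorem~\ref{thm:main} gives the desired equivalence.

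The main obstacle is verifying the geometric classification that underlies Proposition~\ref{prop:4-3}, Proposition~\ref{prop:1and3}, and Lemma~\ref{lem:fourvertex}, since rectangles have different horizontal and vertical contraction ratios $p_{1,j}$ and $p_{2,j}$. The potentially delicate point is that a rectangle and its neighbor may share a side whose length depends on the digit, so the ``corner digit'' analysis must be redone coordinate-by-coordinate. However, the pairwise adjacency of $\psi_w([0,1]^2)$ and $\psi_{w'}([0,1]^2)$ is still governed by $\pi(w)-\pi(w')\in\{0,\pm1\}^2\setminus\{0\}$, and the characterization of when $\psi_w(K)\cap \psi_{w'}(K)$ is a singleton versus an infinite set is determined by exactly the same $\D$-$\D$ conditions that appeared in the GSC Cases~1--4 of Section~3 (with $N-1$ replaced by the appropriate one-sided ``extreme digit'' conditions in each coordinate). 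Once this catalogue is in place, the fragility criteria of Proposition~\ref{prop:1and3} and Proposition~\ref{prop:frag-suff} transfer, and the Section~4.3 proof of the necessity of $\chi(\Gamma_k)\geq|\D|^{k-1}$ in the non-fragile case—which is the most technical step—goes through unchanged. Assembling these pieces yields the theorem.
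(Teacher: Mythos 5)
Your proposal is correct and follows essentially the same route as the paper: the paper's own treatment in Section~6.2 consists precisely of the observation that the fragile/non-fragile dichotomy and the arguments of Sections~3--4 carry over once squares are replaced by rectangles, after which the two cases are combined exactly as in the proof of Theorem~\ref{thm:main}. Your identification of the genuinely geometric steps (Proposition~\ref{prop:4-3}, Proposition~\ref{prop:1and3}, Lemma~\ref{lem:fourvertex}) as the only places requiring re-verification, with adjacency still governed by $\pi(\bw)-\pi(\bw')$ being a nonzero $0$-$\pm1$ vector, matches the paper's intent.
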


\begin{theorem}
    Let  $K=K(\Psi_{\D})$ be a connected Bara\'nski carpet with no cut points. Then $K$ contains local cut points if and only if there are disjoint subsets $\mathcal{I},\mathcal{J}$ of $\D^n$ for $n=1$ or $n=2$ such that
    \[
       \Big( \bigcup_{\i\in \mathcal{I}}\psi_{\i}(K) \Big)\cap \Big( \bigcup_{\i\in \mathcal{J}}\psi_{\i}(K) \Big)
    \]
    is a singleton, say $\{x\}$, and $\mathcal{I}\cup \mathcal{J}=\{\i\in\D^n: x\in\psi_{\i}(K)\}$.
\end{theorem}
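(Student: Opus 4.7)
The plan is to mirror the proof of Theorem~\ref{thm:local} step by step, substituting level-$k$ rectangles $\psi_{\i}([0,1]^2)$ for level-$k$ squares and verifying that each geometric argument only relies on the grid structure of the attractor rather than on the common scaling factor $1/N$. The ``if\,'' direction is essentially formal: given the configuration $\mathcal{I},\mathcal{J}$ with singleton intersection $\{x\}$, since $K$ is locally connected (being a connected attractor of a contracting IFS, by Hata~\cite{Hat85}), one picks a connected neighborhood $U$ of $x$ inside a small ball that misses every $\psi_{\j}(K)$ with $\j\in\D^n\setminus(\mathcal{I}\cup\mathcal{J})$, writes $U=(U\cap F_{\mathcal{I}})\cup(U\cap F_{\mathcal{J}})$, and invokes Lemma~\ref{lem:local1} verbatim.

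For the ``only if\,'' direction, I would first establish the analogue of Theorem~\ref{thm:local1}: if $x$ is a local cut point then $x$ is a cut point of $E'_m(x):=\bigcup_{\i\in\Omega_m(x)}\psi_{\i}(K)$ for some $m\geq 1$. The proof carries over because the only input needed is that $\operatorname{diam}(E'_m(x))\to 0$, which is immediate since each $\psi_w$ is a strict contraction. Next I would do the case analysis on $|\Omega_m(x)|\in\{1,2,3,4\}$ as in Cases~1--4 after Theorem~\ref{thm:local1}: the case $|\Omega_m(x)|=1$ rules itself out because it would make $\psi_{\i}^{-1}(x)$ a genuine cut point of $K$; the case $|\Omega_m(x)|=4$ is excluded because then $x$ is a common vertex of four adjacent level-$m$ rectangles, forcing the four corner digits into $\D$, and each pair of edge-adjacent rectangles produces a non-trivial intersection so that $E'_m(x)\setminus\{x\}$ remains connected; the cases $|\Omega_m(x)|\in\{2,3\}$ yield precisely the singleton decomposition required in Definition~\ref{de:locallyfragile}, giving local fragility of $\D^m$.

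The substantive step is the analogue of Proposition~\ref{prop:locallast}: if $\D^n$ is locally fragile for some minimal $n$, then $n\leq 2$. Here I would again separate the cases $|\Omega_n(x)|=2$ and $|\Omega_n(x)|=3$. In each case one examines the relative position of the two (or three) level-$n$ rectangles involved. If they meet only at the common vertex $x$, then their level-$(n-1)$ parents also meet only at $x$ (or share a common coordinate), contradicting minimality of $n$. Otherwise the rectangles are edge-adjacent along a vertical or horizontal grid line, which forces their level-$1$ parents to be edge-adjacent along the corresponding direction; using the self-affine scaling $\psi_{i_1}$ (a diagonal affine map), one transports the singleton-intersection configuration from inside the parent rectangles into a pair of level-$2$ rectangles sitting inside $\psi_{i_1}([0,1]^2)\cup\psi_{j_1}([0,1]^2)$, showing that $\D^2$ itself is locally fragile.

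The main obstacle is this last transport argument. In the GSC case one invokes self-similarity to directly copy the cell configuration; for Bara\'nski carpets the maps $\psi_w$ have different horizontal and vertical contraction ratios, so ``rescaling'' deforms rectangles anisotropically. The point to verify carefully is that the combinatorial grid-adjacency relations and the fact that a pair of cells meet in a single point are preserved under the affine identifications $\psi_{\bw}\circ\psi_{\bw'}^{-1}$, because these maps send axis-aligned rectangles to axis-aligned rectangles and preserve the $\D$-coded cell structure at every level. Once this is checked, the induction on $n$ and the contradiction with the minimality of $n$ run exactly as in the proof of Proposition~\ref{prop:locallast}, completing the argument.
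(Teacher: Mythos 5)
Your proposal is correct and follows essentially the same route as the paper: the paper itself proves the Bara\'nski case only by asserting that the arguments of Section~5 (Theorem~\ref{thm:local1}, the case analysis on $|\Omega_m(x)|$, Proposition~\ref{prop:local1} and Proposition~\ref{prop:locallast}) carry over when squares are replaced by rectangles, which is exactly the transfer you carry out. Your extra care at the ``transport'' step is well placed and resolves correctly, since adjacency, corner-incidence and the singleton nature of cell intersections are governed by the combinatorial data $\pi(\bw)-\pi(\bw')$ and the digit positions, all of which are preserved under the diagonal affine identifications despite the anisotropic contraction ratios.
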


\section{Further remarks}
\subsection{A gap phenomenon}

Since any planar set with Hausdorff dimension $<1$ is totally disconnected (see~\cite[Proposition 3.5]{Fal14}), a GSC $F=F(N,\D)$ is totally disconnected if $|\D|<N$. When $|\D|=N$, $F$ is connected if and only if $\D$ is of one of the following form:
\begin{enumerate}
    \item $\D=\{(i,j): 0\leq j\leq n-1\}$ for some $0\leq i\leq n-1$;
    \item $\D=\{(i,j): 0\leq i\leq n-1\}$ for some $0\leq j\leq n-1$;
    \item $\D=\{(i,i): 0\leq i\leq n-1\}$;
    \item $\D=\{(i,n-i): 0\leq i\leq n-1\}$.
\end{enumerate}
In these cases, $F$ is a line segment. One can also show that $F$ is not only disconnected but also totally disconnected if $|\D|=N$ but $\D$ is not of one of the above four forms. For details, please refer to~\cite{RXpre}. Then a natural question arises: \emph{For any $k> N$, is there a digit set $\D$ with exactly $k$ elements such that the associated GSC is connected?} The following theorem indicates a gap phenomenon.

\begin{theorem}\label{thm:mainlast}
    Let $F=F(N,\D)$ be a connected GSC with $|\D|>N$. Then $|\D|\geq 2N-1$.
\end{theorem}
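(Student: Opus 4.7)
The strategy is to combine a projection argument with a case analysis of the Hata graph $\Gamma_1$.

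First, since $F$ is connected, both projections $\pi_x(F)$ and $\pi_y(F)$ are connected subsets of $[0,1]$. Each projection coincides with the attractor of the one-dimensional IFS $\{x\mapsto(x+i)/N : i\in \pi_\ast(\mathcal{D})\}$, and a direct inspection of the second iterate of this $1$D IFS shows such an attractor is connected iff its digit set is either a singleton or all of $\{0,1,\ldots,N-1\}$. A singleton projection would force $F$ into one row or column and bound $|\mathcal{D}|\leq N$, contradicting $|\mathcal{D}|>N$; hence $\pi_x(\mathcal{D})=\pi_y(\mathcal{D})=\{0,1,\ldots,N-1\}$, and every row and every column of the $N\times N$ grid contains at least one cell of $\mathcal{D}$.

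Next, suppose for contradiction that $|\mathcal{D}|\leq 2N-2$. Writing $r_j=\#\{i:(i,j)\in \mathcal{D}\}$, we have $r_j\geq 1$ and $\sum_j r_j\leq 2N-2$, so at least $2N-|\mathcal{D}|\geq 2$ rows satisfy $r_j=1$, and symmetrically for columns. A cell in a singleton row has no horizontal partner in $\mathcal{D}$, so it can be attached to $\Gamma_1$ only through a vertical or diagonal edge. By the analysis in Section~2, each of the four edge types (horizontal, vertical, diagonal, anti-diagonal) is globally enabled by a specific condition on $\mathcal{D}-\mathcal{D}$: the diagonal (resp.\ anti-diagonal) type demands both corresponding corner cells to lie in $\mathcal{D}$, while the horizontal (resp.\ vertical) type demands either a row (column) containing cells at both extreme columns (rows), or a corner-plus-shifted-pair condition.

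The main obstacle is then the case analysis according to which subset of edge types is active. If only a single diagonal type is active and no horizontal/vertical type is, then $\Gamma_1$ is contained in one diagonal chain of $\mathcal{D}$, and the row/column covering constraint forces $|\mathcal{D}|=N$, contradicting $|\mathcal{D}|>N$. If a spanning-row activator (C.1) or its column analogue (D.1) is in play, one obtains at least two cells straddling the extreme coordinates in the activating row (column), plus cells covering the remaining $N-1$ rows and columns; carefully tracking the overlaps shows the total count is at least $2N-1$. The remaining subcases, in which only corner-based activators (C.2), (C.3), (D.2), (D.3) are in play, each demand ``shifted-pair" cells in addition to the corners, and again force the same bound. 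Assembling these subcases to extract the uniform lower bound $|\mathcal{D}|\geq 2N-1$ is the technical heart of the proof.
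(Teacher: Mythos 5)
Your opening projection step is correct and is a nice observation (the projection of $F$ onto each axis is the attractor of the projected one-dimensional IFS, and such an attractor is connected only if its digit set is a singleton or all of $\{0,\ldots,N-1\}$; with $|\D|>N$ this forces every row and every column to be occupied). Your classification of which edge types of $\Gamma_1$ can be active also correctly mirrors the analysis of Section~2. However, the proof has a genuine gap: the entire quantitative content of the theorem --- actually producing the number $2N-1$ --- is asserted rather than proved. The phrases ``carefully tracking the overlaps shows the total count is at least $2N-1$'' and ``again force the same bound'' are exactly the claims that need an argument, and your closing sentence concedes that ``assembling these subcases \dots is the technical heart of the proof.'' Moreover, it is not clear that your chosen invariant (counting singleton rows/columns and noting that their cells admit no horizontal/vertical edges) is strong enough to close those cases: you never explain how the existence of at least two singleton rows contradicts connectedness when, say, the spanning-row activator is in play, and a priori such cells could still be attached through vertical or diagonal edges.

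For comparison, the paper's proof takes a different and fully quantitative route. In each of four cases (determined by which corner pairs lie in $\D$), it selects two paths in $\Gamma_1$ joining ``antipodal'' digits --- e.g.\ $(0,0)\to(N-1,N-1)$ and $(0,N-1)\to(N-1,0)$, or $(x_0,0)\to(x_0,N-1)$ and $(0,y_0)\to(N-1,y_0)$ --- and uses the fact that each edge of $\Gamma_1$ changes the coordinates in a restricted way to bound each path's length from below by $N$. If the two paths are vertex-disjoint one gets $|\D|\geq 2N$; otherwise one looks at the first and last common vertices, splits the union into five subpaths, and sums length estimates of the form $p\geq\max\{a,b\}+1$ (or $p\geq|a-x_0|+b+1$, etc.) to obtain $|\D|\geq 2N-1$. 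If you want to salvage your approach, you would need to replace the asserted case conclusions with an explicit count of this kind; as written, the argument does not establish the bound.
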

\begin{proof}
The proof is divided into four cases.

\textbf{Case 1}. $\{(0,0),(0,N-1),(N-1,0),(N-1,N-1)\}\subset\D$. In this case, a pair of level-$1$ cells has a non-empty intersection if and only if the correpsonding level-$1$ squares are adjacent. More precisely, for $(a_1,b_1),(a_2,b_2)\in\D$,
\begin{equation}\label{eq:gap1}
    \vp_{(a_1,b_1)}(F)\cap \vp_{(a_2,b_2)}(F)\neq\varnothing \Longleftrightarrow \max\{|a_1-a_2|,|b_1-b_2|\}\leq 1.
\end{equation}
Since $F$ is connected, the associated Hata graph is connected. Therefore, we can find two paths in that graph: $(0,0)=i_1\to i_2\to\cdots\to i_m=(N-1,N-1)$ and $(0,N-1)=j_1\to j_2\to\cdots\to j_n=(N-1,0)$. It follows from \eqref{eq:gap1} that $m\geq N$ and $n\geq N$. If these two paths do not share a common vertex then $|\D|\geq 2N$, so we may suppose the contrary. Letting
\begin{equation}\label{eq:pq-def}
    p=\min\{1\leq t\leq m: i_t\in\{j_1,\ldots,j_n\}\}, \quad p'=\max\{1\leq t\leq m: i_t\in\{j_1,\ldots,j_n\}\},
\end{equation}
we see that $i_1,\ldots,i_{p-1},i_{p'+1},\ldots,i_m,j_1,\ldots,j_n$ are distinct elements in $\D$, implying that
\begin{equation}\label{eq:DCard-pq}
    |\D|\geq (p-1)+(m-p')+n.
\end{equation}
For convenience, denote $q,q'$ and $a,b,c,d$ be such that $j_{q}=i_p=(a,b)$ and $j_{q'}=i_{p'}=(c,d)$. We may also assume that $q\leq q'$ (the case when $q>q'$ can be similarly discussed). Consider the following five paths:
\begin{equation}\label{eq:fivepath}
    \begin{gathered}
        (0,0)=i_1\to\cdots\to i_p=(a,b),\quad (c,d)=i_{p'}\to\cdots\to i_m=(N-1,N-1), \\
        (0,N-1)=j_1\to\cdots\to j_q=(a,b),\quad (a,b)=j_q\to\cdots\to j_{q'}=(c,d),\\
        (c,d)=j_{q'}\to\cdots\to j_n=(N-1,0).
    \end{gathered}
\end{equation}
By~\eqref{eq:gap1}, we have the following estimates of their lengths:
\begin{equation*}
    \begin{gathered}
        p\geq \max\{a,b\}+1, \quad m-p'+1\geq \max\{N-1-c,N-1-d\}+1, \\
        q\geq \max\{a,N-1-b\}+1, \quad q'-q+1\geq \max\{|c-a|,|d-b|\}+1,\\
        n-q'+1\geq \max\{N-1-c,d\}+1.
    \end{gathered}
\end{equation*}
As a consequence,
\begin{align*}
    |\D| &\geq (p-1)+(m-p')+n \\
    & = (p-1)+(m-p')+q+(q'-q)+(n-q') \\
    &\geq b+(N-1-d)+(N-1-b+1)+|c-a|+d \geq 2N-1.
\end{align*}



\textbf{Case 2}. $\{(0,0),(N-1,N-1)\}\nsubseteq\D$ and $\{(0,N-1),(N-1,0)\}\nsubseteq\D$. In this case, we have for all $(a_1,b_1),(a_2,b_2)\in\D$ that
\begin{equation}\label{eq:gap2}
    \vp_{(a_1,b_1)}(F)\cap \vp_{(a_2,b_2)}(F)\neq\varnothing \Longrightarrow |a_1-a_2|+|b_1-b_2|=1,
\end{equation}
i.e., the two correpsonding level-$1$ squares are up-down or left-right adjacent. Furthermore,
\begin{align*}
    \vp_{(a,b)}(F)\cap \vp_{(a+1,b)}(F)\neq\varnothing \Longleftrightarrow \exists y_0 \text{ such that } (0,y_0),(N-1,y_0)\in\D, \\
    \vp_{(a,b)}(F)\cap \vp_{(a,b+1)}(F)\neq\varnothing \Longleftrightarrow \exists x_0 \text{ such that } (x_0,0),(x_0,N-1)\in\D.
\end{align*}
Since $F$ is connected and $|\D|>N$, it is not difficult to see that both $x_0$ and $y_0$ as above must exist. Applying an analogous argument as in Case 1 gives us the desired result. But for readers' convenience, we present the detailed proof here. Note that there are two paths $(x_0,0)=i_1\to i_2\to\cdots\to i_m=(x_0,N-1)$ and $(0,y_0)=j_1\to j_2\to\cdots\to j_n=(N-1,y_0)$ in the Hata graph. It follows from~\eqref{eq:gap2} that $m\geq N$ and $n\geq N$. If these two paths do not share a common vertex then $|\D|\geq 2N$, so we may suppose the contrary. Defining $p$ and $q$ as in \eqref{eq:pq-def}, we see that \eqref{eq:DCard-pq} holds.
Choose $q,q'$ and $a,b,c,d$ as before, i.e., $j_{q}=i_p=(a,b)$ and $j_{q'}=i_{p'}=(c,d)$ and again assume that $q\leq q'$. Replacing $(0,0)$, $(N-1,N-1)$, $(0,N-1)$ and $(N-1,0)$ in \eqref{eq:fivepath} by $(x_0,0)$, $(x_0,N-1)$, $(0,y_0)$ and $(N-1,y_0)$, respectively, we obtain five paths.
By~\eqref{eq:gap2}, we have the following estimates of their lengths:
\begin{equation*}
    \begin{gathered}
        p\geq |a-x_0|+b+1, \quad m-p'+1\geq |c-x_0|+(N-1-d)+1, \\
        q\geq a+|b-y_0|+1, \quad q'-q+1\geq |c-a|+|d-b|+1,\\
        n-q'+1\geq (N-1-c)+|d-y_0|+1.
    \end{gathered}
\end{equation*}
As a consequence,
\begin{align*}
    |\D|
    &\geq (p-1)+(m-p')+q+(q'-q)+(n-q') \\
    &\geq (|a-x_0|+b)+(|c-x_0|+N-1-d)+(a+|b-y_0|+1)\\
    &\quad\quad +(|c-a|+|d-b|)+(N-1-c+|d-y_0|) \\
    &\geq (x_0-a+b)+(c-x_0+N-1-d)+(a+y_0-b+1) \\
    &\quad\quad +(|c-a|+|d-b|)+(N-1-c+d-y_0) \geq 2N-1.
\end{align*}


\textbf{Case 3}. $\{(0,0),(N-1,N-1)\}\subset\D$ while $\{(0,N-1),(N-1,0)\}\nsubseteq\D$. Since $|\D|>N$, at least one of the following subcases holds.

\textbf{Subcase 3.1}. If $i,j\in\D$ are such that $\vp_i([0,1]^2)$ and $\vp_j([0,1]^2)$ are left-right adjacent then $\vp_i(F)\cap \vp_j(F)\neq\varnothing$. More precisely, if $i-j=(\pm 1,0)$ then $i,j$ are adjacent in the Hata graph. This requires that there is some $y_0$ such that $(0,y_0)\in\D$ and either $(N-1,y_0)$ or $(N-1,y_0-1)\in\D$. Here we only consider the former case since the latter one can be similarly discussed. Then for every pair of $(a_1,b_1),(a_2,b_2)\in\D$, every path in the Hata graph connecting them has a length at least
\begin{equation}\label{eq:gap3}
    \left\{
        \begin{array}{ll}
            \max\{|a_1-a_2|,|b_1-b_2|\}+1, & (a_1-a_2)(b_1-b_2)\geq 0, \\
            |a_1-a_2|+|b_1-b_2|+1, & (a_1-a_2)(b_1-b_2)<0.
        \end{array}
    \right.
\end{equation}

Note that there are two paths $(0,0)=i_1\to i_2\to\cdots\to i_m=(N-1,N-1)$ and $(0,y_0)=j_1\to j_2\to\cdots\to j_n=(N-1,y_0)$ in the Hata graph. It follows from~\eqref{eq:gap3} that $m\geq N$ and $n\geq N$ and hence we may assume similarly as before that these two paths share at least one common vertex. Defining $p$ and $q$ as in \eqref{eq:pq-def}, we see that \eqref{eq:DCard-pq} holds.
Again denote $q,q'$ and $a,b,c,d$ be such that $j_{q}=i_p=(a,b)$ and $j_{q'}=i_{p'}=(c,d)$ and again assume that $q\leq q'$. Replacing $(0,N-1)$ and $(N-1,0)$ in \eqref{eq:fivepath} by $(0,y_0)$ and $(N-1,y_0)$, respectively, we obtain five paths.
First note that \eqref{eq:gap3} implies that
\[
    p\geq b+1, \quad m-p'+1\geq (N-1-d)+1, \quad q'-q+1\geq c-a+1,
\]
\[
          q\geq \left\{\begin{array}{ll} \max\{a,b-y_0\}+1, & b\geq y_0, \\ a+(y_0-b)+1, & b<y_0, \end{array}\right. 
\]
and
\[
        n-q'+1\geq \left\{\begin{array}{ll} \max\{N-1-c,y_0-d\}+1, & d\leq y_0, \\ (N-1-c)+(d-y_0)+1, & d>y_0. \end{array}\right.
\]
Thus
\[
   (p-1)+(m-p')+(q'-q)\geq b+ (N-1-d)+(c-a)= N-1+b-d+c-a.
\]
When $b\geq y_0$ and $d\leq y_0$,
\begin{align*}
    |\D| &\geq (p-1)+(m-p')+(q'-q)+q+(n-q') \\
    &\geq (N-1+b-d+c-a)+(a+1)+(N-1-c) \\
    &= 2N-1+(b-d) \geq 2N-1.
\end{align*}
When $b\geq y_0$ and $d>y_0$,
\begin{align*}
    |\D| &\geq (N-1+b-d+c-a)+ (a+1)+(N-1-c+d-y_0) \\
    &= 2N-1+(b-y_0) \geq 2N-1.
\end{align*}
When $b< y_0$ and $d\leq y_0$,
\begin{align*}
    |\D| &\geq (N-1+b-d+c-a)+(a+y_0-b+1)+(N-1-c) \\
    &= 2N-1+(y_0-d) \geq 2N-1.
\end{align*}
When $b< y_0$ and $d > y_0$,
\begin{align*}
    |\D| &\geq (N-1+b-d+c-a)+(a+y_0-b+1)+(N-1-c+d-y_0) \\
    &= 2N-1.
\end{align*}

\textbf{Subcase 3.2}. If $i,j\in\D$ are such that $\vp_i([0,1]^2)$ and $\vp_j([0,1]^2)$ are up-down adjacent then $\vp_i(F)\cap \vp_j(F)\neq\varnothing$. We can apply an analogous argument as in Subcase 3.1 to obtain the desired estimate.

\textbf{Case 4}. $\{(0,0),(N-1,N-1)\}\nsubseteq\D$ while $\{(0,N-1),(N-1,0)\}\subset\D$. This can be similarly discussed as Case 3.
%
\end{proof}

With the above theorem in hand, the next example establishes Theorem~\ref{thm:gap}.

\begin{example}
   Let $N\geq 2$. For every $2N-1\leq k\leq N^2-1$, there is a digit set $\D$ with $|\D|=k$ such that the associated GSC $F=F(N,\D)$ is connected. For example, write $k=N+m(N-1)+p$ where $m\in\{1,\ldots,N\}$, $k=p+1\pmod{N-1}$ and $0\leq p< N-1$ and let
    \begin{align*}
        \D=&\{(0,j): 0\leq j\leq N-1\} \\
        & \cup\{(i,j): 1\leq i\leq N-1, 0\leq j\leq m\} \cup \{(i,m+1): 1\leq i\leq p\}.
    \end{align*}
    Figure~\ref{fig:exalast} depicts a GSC where $N=4$ and $|\D|=11$.
    \begin{figure}[htbp]
        \centering
        \includegraphics[width=3.8cm]{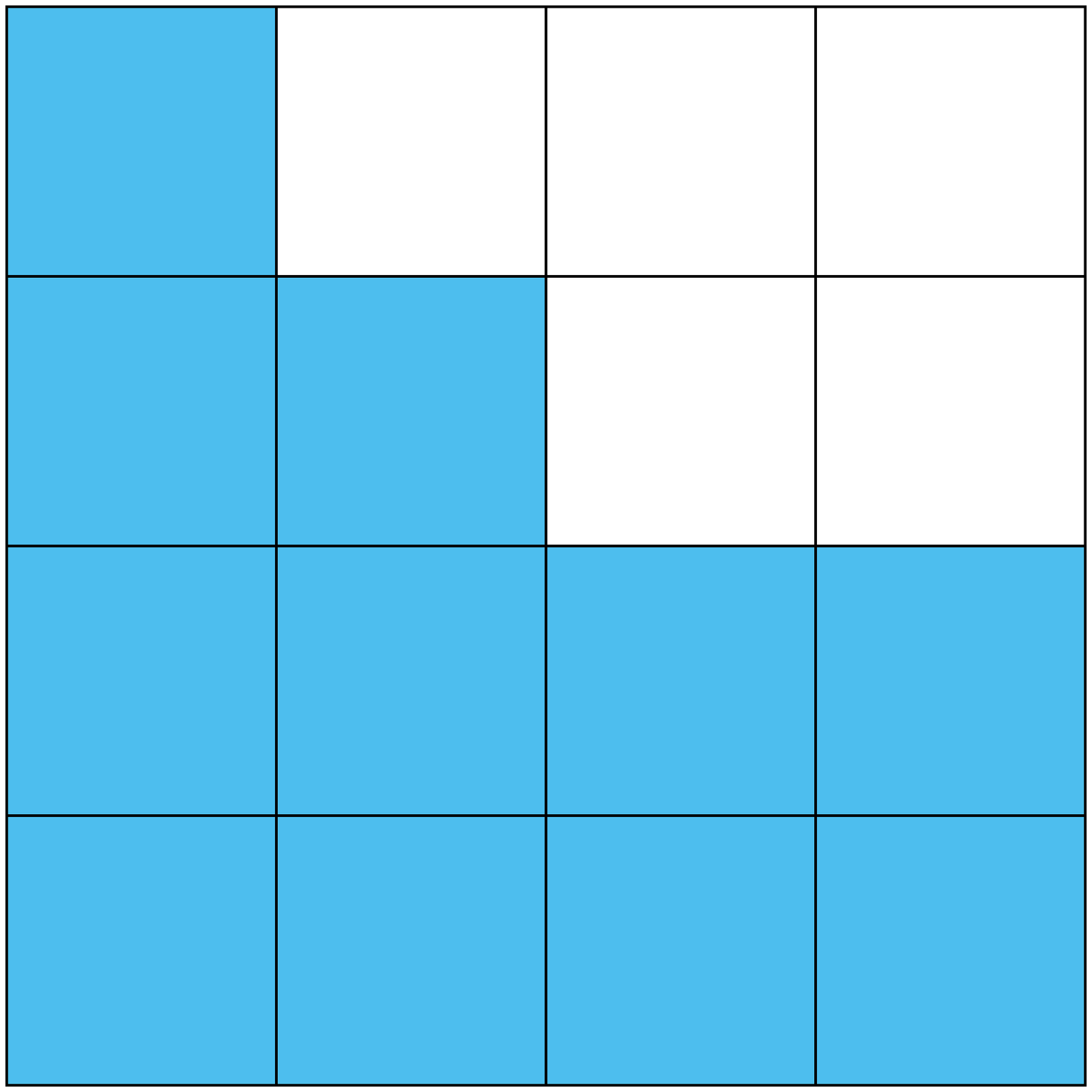} \qquad
        \includegraphics[width=3.8cm]{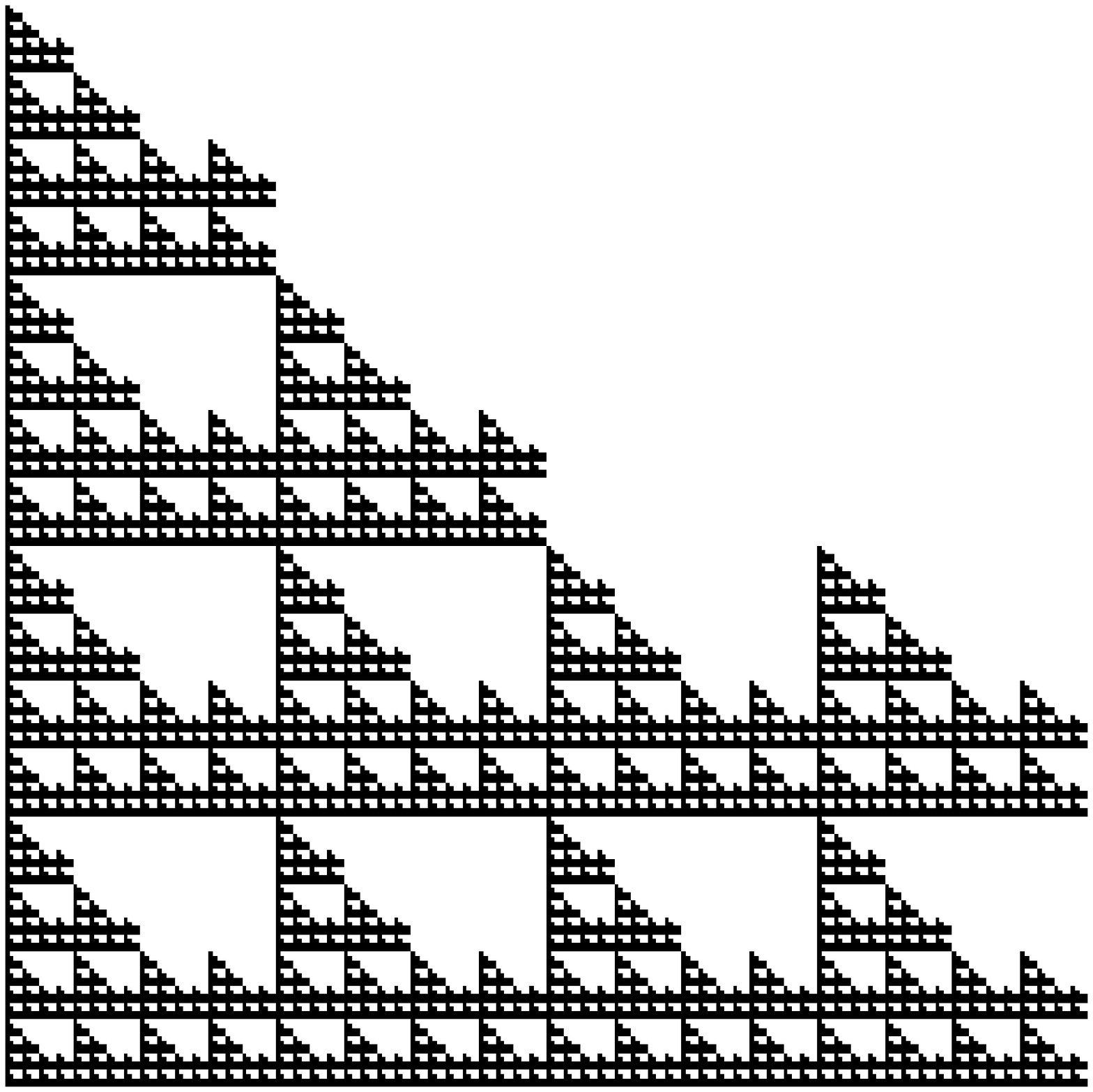}
        \caption{A connected GSC $F=F(N,\D)$ with $N=4$ and $|\D|=11$}
        \label{fig:exalast}
    \end{figure}
\end{example}

\subsection{An improvement on the lower bound of Theorem~\ref{thm:nonfragile}}

With some effort, we are able to make a small improvement on $\chi(\Gamma_k)$ for non-fragile connected GSCs with cut points as in Proposition~\ref{prop:chi-betterBd}.
In order to prove this proposition, we need the following lemma.




\begin{lemma}\label{lem:simplefact}
    Let $u\in\D$ and let $i_1i_2i_3\in\D^3$ be such that $i_1\neq u$. If $\vp_{i_1i_2i_3}(F)$ is the only one level-$3$ cell in $\bigcup_{j\in\D\setminus\{u\}}\vp_j(F)$ which intersects $\vp_u(F)$, then $F$ is fragile.
\end{lemma}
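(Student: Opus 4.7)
The plan is to reduce Lemma~\ref{lem:simplefact} to Proposition~\ref{prop:1and3} applied to the index $u$: it suffices to show that there is only one level-$3$ cell in $\vp_u(F)$ that meets $\bigcup_{j\in\D\setminus\{u\}}\vp_j(F)$, for then Proposition~\ref{prop:1and3} immediately yields fragility. A bootstrapping argument turns the hypothesis into a nested uniqueness: since every level-$k$ cell (with $k\leq 3$) meeting $\vp_u(F)$ must contain a level-$3$ cell meeting $\vp_u(F)$, and that level-$3$ cell is $\vp_{i_1i_2i_3}(F)$ by hypothesis, one obtains that $\vp_{i_1}$ is the unique level-$1$ cell outside $\vp_u$ meeting $\vp_u(F)$, $\vp_{i_1i_2}$ is the unique level-$2$ cell of $\vp_{i_1}$ meeting $\vp_u(F)$, and $\vp_{i_1i_2i_3}$ is the unique level-$3$ cell of $\vp_{i_1i_2}$ meeting $\vp_u(F)$. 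Consequently
\[
    \vp_u(F) \cap \bigcup_{j\in\D\setminus\{u\}}\vp_j(F) \;=\; \vp_u(F)\cap\vp_{i_1i_2i_3}(F),
\]
and the claim reduces to: only one level-$3$ cell of $\vp_u$ meets $\vp_{i_1i_2i_3}(F)$.

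If $\vp_u([0,1]^2)$ and $\vp_{i_1}([0,1]^2)$ are only corner-adjacent, the intersection lies in a single vertex and the claim is immediate. Otherwise, after a suitable reflection/rotation I may assume $u-i_1=(1,0)$; the position analysis from Section~2 together with the nested uniqueness forces both $\vp_{i_1i_2}$ and $\vp_{i_1i_2i_3}$ to lie on the right edge of $\vp_{i_1}$, i.e.\ the first coordinates of $i_2$ and $i_3$ both equal $N-1$.

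The heart of the argument is a contradiction. Suppose two distinct level-$3$ cells $\vp_{uv_1w_1}(F)$ and $\vp_{uv_2w_2}(F)$ of $\vp_u$ both meet $\vp_{i_1i_2i_3}(F)$. Since the right face of $\vp_{i_1i_2i_3}([0,1]^2)$ is a vertical segment of length $1/N^3$, a short geometric analysis of the level-$2$ and level-$3$ horizontal grids on the left edge of $\vp_u$ shows this can occur only when that segment is flush with a level-$2$ horizontal boundary inside $\vp_u$. A case check then forces $i_3\in\{(N-1,0),(N-1,N-1)\}$ (so $i_3$ is a corner digit) and that two neighboring level-$2$ cells on the left edge of $\vp_u$ each meet $\vp_{i_1i_2i_3}(F)$. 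By Lemma~\ref{lem:fourvertex}, the meeting point is the vertex $q:=\vp_{i_1i_2}(i_3/(N-1))$ of $\vp_{i_1i_2}([0,1]^2)$; asking $q$ to lie in the $F$-copies of the two adjacent level-$2$ cells of $\vp_u$ forces $(0,0),(0,N-1)\in\D$, while $q\in\vp_{i_1i_2i_3}(F)$ together with $i_3$ being a corner digit forces the matching corner digit ($(N-1,N-1)$ in the first case, $(N-1,0)$ in the second) to lie in $\D$.

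With these digits in $\D$ I would exhibit a second offending level-$3$ cell on the right edge of $\vp_{i_1}$: in the case $i_3=(N-1,N-1)$ the cell $\vp_{i_1(N-1,N-1)(N-1,N-1)}(F)$ contains the vertex $\vp_{i_1}((1,1))$ (by $(N-1,N-1)\in\D$), and this vertex coincides with the top-left vertex of $\vp_u([0,1]^2)$ and hence lies in $\vp_u(F)$ (by $(0,N-1)\in\D$). The case $i_3=(N-1,0)$ is entirely analogous via $\vp_{i_1(N-1,0)(N-1,0)}(F)$ and the bottom-right vertex. The geometric constraint $v_1\neq v_2$ forces the second coordinate of $i_2$ to be strictly away from the corresponding corner value, so the new cell is genuinely distinct from $\vp_{i_1i_2i_3}(F)$, contradicting the hypothesis. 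The main obstacle is the geometric bookkeeping in this contradiction step---verifying that the two freshly forced corner digits of $\D$ together with the constraint on the second coordinate of $i_2$ do produce a level-$3$ cell of $\vp_{i_1}$ distinct from $\vp_{i_1i_2i_3}$---and streamlining the symmetric cases $u-i_1\in\{(-1,0),(0,\pm 1)\}$ uniformly by reflection.
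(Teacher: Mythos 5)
Your overall strategy---reducing to Proposition~\ref{prop:1and3} applied to $u$ by showing that only one level-$3$ cell of $\vp_u(F)$ meets $\bigcup_{j\in\D\setminus\{u\}}\vp_j(F)$---is viable in principle, and the bootstrapping reduction to ``only one level-$3$ cell of $\vp_u$ meets $\vp_{i_1i_2i_3}(F)$'' is correct. But the heart of your contradiction argument rests on an unjustified claim: that two distinct level-$3$ cells of $\vp_u$ meeting $\vp_{i_1i_2i_3}(F)$ can occur ``only when that segment is flush with a level-$2$ horizontal boundary inside $\vp_u$.'' This is not a consequence of the grid geometry. The two offending cells can perfectly well be two vertically adjacent level-$3$ cells lying inside the \emph{same} level-$2$ cell of $\vp_u$, both containing an endpoint of the right face of $\vp_{i_1i_2i_3}([0,1]^2)$ (a level-$3$ grid corner interior to a level-$2$ cell of $\vp_u$); such a configuration merely requires, say, $(N-1,N-1),(0,0),(0,N-1)\in\D$ together with two suitable digits making those grid squares into cells. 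Ruling it out is possible, but only by invoking the hypothesis again: one must show that the digits forced into $\D$ by this configuration produce a \emph{second} level-$3$ cell of $\vp_{i_1}(F)$ meeting $\vp_u(F)$ (for instance, $(0,N-1),(N-1,N-1)\in\D$ makes every pair of left--right adjacent cells of equal level intersect, which then yields an extra offending cell above $\vp_{i_1i_2i_3}$ or at the corner of $\vp_{i_1i_2}$). Your write-up skips exactly this case, and since the subsequent deductions ($i_3$ is a corner digit, the two cells sit in neighbouring level-$2$ cells of $\vp_u$, the forced digits $(0,0),(0,N-1)$) all flow from the flush claim, the proof as written has a genuine gap.

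For comparison, the paper avoids the difficulty entirely by reversing the roles: it notes that $\vp_{i_1}(F)$ is the unique level-$1$ cell other than $\vp_u(F)$ meeting $\vp_u(F)$ and that $\vp_{i_1i_2i_3}(F)$ is the unique level-$3$ cell of $\vp_{i_1}(F)$ meeting $\vp_u(F)$; the first part of the proof of Proposition~\ref{prop:1and3} (with $i_1$ in the role of $i$ and $u$ in the role of $j$) then shows that only one level-$2$ cell of $\vp_u(F)$ meets $\vp_{i_1}(F)$, so Proposition~\ref{prop:4-3} gives that $\vp_{i_1}(F)\cap\vp_u(F)$ is a singleton, and fragility is immediate. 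Controlling level-$2$ cells of $\vp_u$ via Proposition~\ref{prop:4-3} is strictly easier than controlling its level-$3$ cells, which is precisely where your approach runs into trouble.
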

\begin{proof}
    Clearly, $\vp_{i_1}(F)$ is the only one level-$1$ cell in $\bigcup_{j\in\D\setminus\{u\}}\vp_j(F)$ which intersects $\vp_u(F)$. From the first part of the proof of Proposition~\ref{prop:1and3}, we see that $\vp_{i_1}(F)\cap\vp_{u}(F)$ is a singleton. Thus
    \[
        \vp_u(F) \cap \Big( \bigcup_{j\in\D\setminus\{u\}}\vp_j(F) \Big)
    \]
    is a singleton. So $F$ is fragile.
\end{proof}

\begin{proof}[Proof of Proposition~\ref{prop:chi-betterBd}]
Assume on the contrary that $\chi(\Gamma_k)\leq |\D|^{k-1}+|\D|^{k-3}-1$ for some $k\geq 3$ and let $i_1\cdots i_k$ be the cut vertex achieving $\chi(\Gamma_k)$. Then there is some $u\in\D$ such that the following two statements hold:
\begin{enumerate}
    \item $u\D^{k-1}$ and $\{j\D^{k-1}:j\neq i_1,u\}$ belong to different connected components of $\Gamma_k-\{i_1\cdots i_k\}$;
    \item $u\D^{k-1}$ and $\j\D^{k-3}$ belong to different connected components of $\Gamma_k-\{i_1\cdots i_k\}$ for all $\j\in\D^3\setminus\{i_1i_2i_3\}$.
\end{enumerate}
As a consequence, $u\D^{2}$ and $\{j\D^{2}:j\neq i_1,u\}$ belong to different connected components of $\Gamma_3-\{i_1i_2i_3\}$, and $u\D^2$ and $\j$ belong to different components for all $\j\in\D^3\setminus\{i_1i_2i_3\}$. Equivalently, $\vp_{i_1i_2i_3}(F)$ is only one level-$3$ cell in $\bigcup_{j\in\D\setminus\{u\}}\vp_j(F)$ which intersects $\vp_{u}(F)$. It then follows from Lemma~\ref{lem:simplefact} that $F$ is fragile, which leads to a contradiction.
\end{proof}

\bigskip

\noindent{\bf Acknowledgements.}
The work of Dai is supported in part by NSFC grants 11771457 and 11971500.
The work of Luo is supported in part by NSFC grant 11871483.
The work of Ruan is supported in part by NSFC grant 11771391, ZJNSF grant LY22A010023 and the Fundamental Research Funds
for the Central Universities of China grant 2021FZZX001-01. The work of Wang
is supported in part by the Hong Kong Research Grant Council grants 16308518 and 16317416 and
HK Innovation Technology Fund ITS/044/18FX, as well as Guangdong-Hong Kong-Macao Joint
Laboratory for Data-Driven Fluid Mechanics and Engineering Applications. We are grateful to Professor Christoph Bandt for helpful discussions.

\small
\bibliographystyle{amsplain}

\end{document}